\definecolor{myblue}{rgb}{0, 0, 0.66}
\newtheorem{lemma}{Lemma}
\newtheorem{definition}[lemma]{Definition}
\newtheorem{proposition}[lemma]{Proposition}
\newtheorem{therm}[lemma]{Theorem}
\newtheorem{corollary}[lemma]{Corollary}
\newcommand{\systemname}[1]{\texttt{\color{darkgray}#1}\xspace}
\newcommand{\Agda}{\systemname{Agda}}
\newcommand{\HoTTAgda}{\systemname{HoTT}-\Agda}
\newcommand{\Coq}{\systemname{Coq}}
\newcommand{\CubicalAgda}{\systemname{Cubical} \systemname{Agda}}
\newcommand{\Lean}{\systemname{Lean}}
\newcommand{\mathlib}{\systemname{mathlib}}
\newcommand{\bZ}{\mathbb{Z}}
\newcommand{\bZTwo}{\bZ/2\bZ}
\newcommand{\cbase}{\mathsf{base}}
\newcommand{\CP}{\ensuremath{\mathbb{C}P^2}}
\newcommand{\RP}{\ensuremath{\mathbb{R}P^2}}
\newcommand{\RPinf}{\ensuremath{\mathbb{R}P^{\infty}}}
\newcommand{\KTwoOne}{\ensuremath{K(\bZ/2\bZ,1)}}
\newcommand{\cmark}{\ding{51}}%
\newcommand{\xmark}{\ding{55}}%
\newcommand{\refl}{\mathsf{refl}}
\newcommand{\killDiag}{\ensuremath{\mathsf{kill-}\Delta}}
\newcommand{\inrop}{\mathsf{inr}}
\newcommand{\inlop}{\mathsf{inl}}
\newcommand{\pushop}{\mathsf{push}}
\newcommand{\inr}[1]{\inrop\,{#1}}
\newcommand{\inl}[1]{\inlop\,{#1}}
\newcommand{\push}[1]{\pushop\,{#1}}
\newcommand{\ap}[2]{\mathsf{ap}_{#1}(#2)}
\newcommand{\apfunct}[2]{\mathsf{ap}_F^2\mathsf{\textsf{-}funct}(#1,#2)}
\newcommand{\app}[3]{\mathsf{ap}^{2}_{#1}(#2,#3)}
\newcommand{\Loop}{\mathsf{loop}}
\newcommand{\LoopG}[1]{\ensuremath{\mathsf{loop}_k\,{#1}}}
\newcommand{\trunc}[1]{\lvert\,#1\,\rvert}
\newcommand{\ntype}[1]{\normalfont{\textsf{${#1}$-Type}}}
\newcommand{\truncT}[1]{\lVert#1\rVert}
\newcommand{\fib}[2]{\mathsf{fib}_{#1}(#2)}
\newcommand{\Square}[4]{\ensuremath{#1 =_{#4}^{#3} #2}}
\newcommand{\KO}{\ensuremath{\mathsf{0}_k}}
\newcommand{\cfcod}{\normalfont{\ensuremath{\mathsf{cfcod}}}}
\newcommand{\timeswedge}[2]{\ensuremath{{#1}\tilde{\vee}{#2}}}
\newcommand{\cofib}[1]{\ensuremath{\mathsf{cofib}\,#1}}
\newcommand{\im}[1]{\mathsf{im}(#1)}
\newcommand{\transport}[3]{\ensuremath{\mathsf{transport}^{#1}(#2,#3)}}
\newcommand{\north}{\mathsf{north}}
\newcommand{\south}{\mathsf{south}}
\newcommand{\merid}[1]{\mathsf{merid}\,#1}
\newcommand{\plustimes}[2]{\ensuremath{{#1} \oplus {#2}}}
\definecolor{myred}{rgb}{.94, 0.05, 0.05}
\definecolor{mygreen}{rgb}{0 , 0.55, 0}
\newcommand{\cmarkgreen}{{\color{mygreen}{\cmark}}}
\newcommand{\xmarkred}{{\color{myred}{\xmark}}}
\definecolor{dkblue}{rgb}{0,0.1,0.5}
\definecolor{lightblue}{rgb}{0,0.5,0.5}
\definecolor{dkgreen}{rgb}{0,0.6,0}
\definecolor{dkbrown}{rgb}{0.4,0,0}
\definecolor{dkviolet}{rgb}{0.6,0,0.8}
\begin{document}

\title{Computational Synthetic Cohomology Theory in Homotopy Type Theory}

\author{
  Axel Ljungström\\
  \footnotesize{Stockholm University, Sweden}\\
  \footnotesize{\texttt{axel.ljungstrom@math.su.se}}
  \and
  Anders Mörtberg\\
  \footnotesize{Stockholm University, Sweden}\\
  \footnotesize{\texttt{anders.mortberg@math.su.se}}
}
\date{}


\maketitle



\paragraph*{Abstract:}
  This paper discusses the development of synthetic cohomology in
  Homotopy Type Theory (HoTT), as well as its computer formalisation. The
  objectives of this paper are (1) to generalise previous work on integral
  cohomology in HoTT by the current authors and Brunerie~(\citeyear{BLM})
  to cohomology with arbitrary coefficients and (2) to provide the mathematical details of, as well as extend, results underpinning the computer formalisation of cohomology rings by the current authors and Lamiaux~(\citeyear{CohomRings23}).
  %
  %
  With respect to objective~(1), we provide new direct definitions
  of the cohomology group operations and of the cup product, which,
  just as in~\citep{BLM}, enable significant simplifications of many earlier proofs in
  synthetic cohomology theory. In particular,
  the new definition of the cup product allows us to give the first
  complete formalisation of the axioms needed to turn the cohomology
  groups into a graded commutative ring. We also establish that this
  cohomology theory satisfies the HoTT formulation of the
  Eilenberg-Steenrod axioms for cohomology and study the classical
  Mayer-Vietoris and Gysin sequences. With respect to objective (2), we characterise the
  cohomology groups and rings of various spaces, including the
  spheres, torus, Klein bottle, real/complex projective planes, and infinite real projective space.
  All results have been formalised in
  \CubicalAgda and we obtain multiple new numbers, similar to the
  famous `Brunerie number', which can be used as benchmarks for
  computational implementations of HoTT. Some of these numbers are
  infeasible to compute in \CubicalAgda and hence provide new
  computational challenges and open problems which are much easier to
  define than the original Brunerie number.

\paragraph*{Acknowledgements:}
The authors are grateful to Guillaume Brunerie and Thomas Lamiaux for
collaborating with us and co-authoring earlier related work in
\citep{BLM} and \citep{CohomRings23}. We are also grateful to Evan
Cavallo for insightful comments and many cubical tricks. The first author would also like to thank David Wärn for many fruitful discussions on related work which inspired the proof of Proposition~\ref{prop:RPinf-cohom}. Finally, we wish to thank Dan Christensen for his helpful comments on an older version of this paper.

This paper is based upon research supported by the Swedish Research
Council (Vetenskapsrådet) under Grant No.~2019-04545. The research has
also received funding from the Knut and Alice Wallenberg Foundation
through the Foundation's program for mathematics.

\section{Introduction}
\label{sec:intro}

A fundamental idea in algebraic topology is that spaces can be
analysed in terms of homotopy invariants---functorial assignments of
algebraic objects to spaces. The close correspondence between types
and spaces in Homotopy Type Theory and Univalent Foundations (HoTT/UF)
allows these concepts to be developed \emph{synthetically} using type
theory. This was explored in the HoTT Book \citep{HoTT13}, written
during the IAS special year on HoTT/UF in 2012--2013, and has since
led to the formalisation of many results from homotopy theory in
HoTT/UF. Using these results, homotopy groups of
many spaces---represented as types---have been characterised. However, just
like in classical algebraic topology, these groups tend to be
complicated to compute. Because of this, other topological invariants
that are easier to compute, like cohomology, have also been developed
synthetically in HoTT/UF.

Intuitively, the cohomology groups $H^n(X,G)$ of a space $X$, relative
to an abelian group $G$, characterise the connected components of $X$
as well as its $(n+1)$-dimensional holes. Figure \ref{fig:circles}
depicts the circle, $\mathbb{S}^1$, and two circles which have been
glued together in a point, i.e.\ the \emph{wedge sum}
$\mathbb{S}^1 \vee \mathbb{S}^1$. The fact that these spaces have a
different number of holes is captured using, e.g.\ singular
cohomology, by the existence of isomorphisms
$H^1(\mathbb{S}^1,G) \cong G$ and
$H^1(\mathbb{S}^1 \vee \mathbb{S}^1,G) \cong G \times G$, which
geometrically means that they have one, respectively two,
$2$-dimensional holes (i.e.\ empty interiors). As cohomology groups
are homotopy invariants, this then means that the spaces cannot be
continuously deformed into each other.

\begin{figure}[h!]
  \centering
  \begin{mathpar}
  \begin{tikzpicture}[scale=0.5]
    \draw[color=blue!90](0.5,0) circle (1.5);
  \end{tikzpicture}
  \and
  \begin{tikzpicture}[scale=0.5]
    \draw[color=blue!90](5,0) circle (1.5);
    \draw[color=blue!90](8,0) circle (1.5);
    \filldraw[color=blue!90, very thick](6.5,0) circle (0.05);
  \end{tikzpicture}
  \end{mathpar}
  \caption{$\mathbb{S}^1$ and $\mathbb{S}^1 \vee \mathbb{S}^1$.}
  \label{fig:circles}
\end{figure}

The usual formulation of
singular cohomology using cochain complexes relies on taking the
underlying set of topological spaces when defining the singular
cochains \citep{Hatcher2002}. This operation is \emph{not} invariant
under homotopy equivalence, which makes it impossible to use when
formalising cohomology synthetically. For the construction of
cohomology in HoTT/UF, we instead rely on Brown
representability~\citep{BrownRepr} and define cohomology groups as homotopy classes of
maps into Eilenberg-MacLane spaces (as defined in HoTT by
\citet{LicataFinster14}). This approach to
synthetic cohomology theory was initially studied at the IAS special
year \citep{ShulmanBlog13} and is classically provably equivalent to
the singular cohomology of the spaces we consider in this paper. It has since been expanded with many
classical results, for example the Eilenberg-Steenrod axioms for
cohomology~\citep{CavalloMsc15}, cellular
cohomology~\citep{BuchholtzFavonia18}, the Atiyah-Hirzebruch and Serre
spectral sequences \citep{FlorisPhd}, and it plays a key role in
the synthetic proof that $\pi_4(\mathbb{S}^3) \cong \bZ/2\bZ$ by
\citet{Brunerie16}.

This paper develops the theory of synthetic cohomology from a
computational perspective with the aim of characterising cohomology
groups and rings of various spaces. This is achieved by extending
earlier work by \citet{BLM} where integral cohomology $H^n(X,\bZ)$ was
developed in \CubicalAgda. \citet{BLM} also computed $\bZ$-cohomology
groups of various classical spaces synthetically, including the
spheres, torus, real/complex projective planes, and Klein bottle. This
was aided by a new synthetic construction of the group
structure on $H^n(X,\bZ)$ with better computational properties than
the one already defined in HoTT by \citet{LicataFinster14}.
%
%
\citet{BLM} also redefined the cup product
$\smile \,: H^n(X,\bZ) \times H^m(X,\bZ) \to H^{n+m}(X,\bZ)$ which led
to substantially simplified proofs and better computational properties
compared to the one originally defined synthetically by
\citet{Brunerie16}. \citet{CohomRings23} organised this into a ring
$H^*(X,R)$, for a general ring $R$, and various such cohomology rings
were computed for different spaces. The authors have also used some of
these results, and extensions thereof, in a complete \CubicalAgda
formalisation of Brunerie's synthetic proof that
$\pi_4(\mathbb{S}^3) \simeq \bZ/2\bZ$ \citep{LICS23}. However, due to
page constraints many proofs and constructions were omitted from
\citep{BLM}, \citep{CohomRings23}, and \citep{LICS23}. The aim of the
current paper is to spell these details out, develop new results, and
generalise various results already present in the previous papers and
the general HoTT literature. Hence, our aim is to provide a more comprehensive
synthetic treatment of cohomology in HoTT/UF, with a view towards
effective computations. The main contents and contributions of this paper include:

\begin{itemize}
\item We spell out the details of how to generalise the
  computationally well-behaved integral cohomology theory of
  \citet{BLM} to general cohomology groups $H^n(X,G)$. We first do
  this for Eilenberg-MacLane spaces in Section
  \ref{sec:emgroupstructure}, which then induce the corresponding
  operations on cohomology in Section \ref{sec:cohomology}.
\item In Section \ref{sec:omegakgn} we prove that $K(G,n) \simeq
  \Omega K(G,n+1)$ by a direct encode-decode proof, avoiding the
  use of the Freudenthal suspension theorem.
\item In Sections \ref{sec:cupprodgroupcoeff} and
  \ref{sec:cupprodringcoeff} we spell out the details of how to
  generalise the theory of integral cohomology rings of
  \citet{Brunerie16} to general cohomology rings $H^*(X,R)$ as in
  \citep{CohomRings23}. In particular, this involves a general form of
  the cup product
  $\smile \,: H^n(X,G_1) \times H^m(X,G_2) \to H^{n+m}(X,G_1 \otimes
  G_2)$, inspired by a Book HoTT formalisation by \citet{Baumann18}.
\item To ensure that our definition of $H^n(X,G)$ is sensible, we
  verify that it satisfies the HoTT formulation of the
  Eilenberg-Steenrod axioms for cohomology in
  Section \ref{sec:eilenbergsteenrodaxioms}, which implies the
  existence of the Mayer-Vietoris sequence in Section
  \ref{sec:mayervietoris}, as originally shown in HoTT by \citet{CavalloMsc15}.
\item We generalise the synthetic definition of the
  Thom isomorphism and Gysin sequence of \citet{Brunerie16} from
  $H^n(X,\bZ)$ to cohomology with arbitrary commutative ring coefficients in Section
  \ref{sec:gysin}.
\item The cohomology theory is then used to compute various $H^n(X,G)$ and
  $H^*(X,R)$ for concrete values of $X$, $G$, and $R$ in Section
  \ref{sec:direct}. We do this for the spaces studied by \citet{BLM}
  (spheres, torus, real/complex projective planes, and Klein
  bottle), generalising those cohomology group computations from $\bZ$
  to arbitrary $G$. We also provide details for how to carry out the
  cohomology ring computations of \citet{CohomRings23}. Finally, we
  also have some new synthetic computations of these invariants for
  $\RPinf$.
\end{itemize}

The present paper is carefully written so that all proofs are
constructive. Furthermore, all results have been formalised in
\CubicalAgda \citep{cubicalagda2} which has enabled us to do concrete
computations with the operations defined in the paper. A summary file
linking the paper to the formalised results can be found here:
\url{https://github.com/agda/cubical/blob/master/Cubical/Papers/ComputationalSyntheticCohomology.agda}

In order to facilitate efficient computer computations we have been
careful to give as direct constructions as possible. This allows us to
define various new numbers similar to the famous `Brunerie number'
\citep{Brunerie16}, but which are much simpler and still intractable
to compute in \CubicalAgda. In Section \ref{sec:computations}, we
collect various open computational problems and benchmark challenges
for \CubicalAgda and related systems in which univalence and HITs have
computational content. However, while everything has been formalised
in \CubicalAgda, we have been careful to write the paper in the
informal style of the HoTT Book. This means that the results in the
paper, except for those in Section \ref{sec:computations}, can all be
interpreted in suitably structured $\infty$-topoi \citep{Shulman19},
despite this not yet being known for results formalised in the
particular cubical type theory of \CubicalAgda.

While the paper is written in Book HoTT, we take some liberties and
sometimes deviate from the notational conventions of the book. We
detail this, as well some convenient cubical ideas that are useful
also when working in Book HoTT, in Section \ref{sec:background}.

\section{Background on HoTT/UF and notations}
\label{sec:background}

\defcitealias{HoTT13}{HoTT Book}

We assume familiarity with the contents and notations of the HoTT Book
\citep{HoTT13}, from here on referred to as \citepalias{HoTT13}. In
this section we briefly recall a few key definitions and introduce
some of our notational conventions.

\begin{definition}[Binary $\mathsf{ap}$]\label{def:ap2}
  Given a binary function $f : A \to B \to C$, we denote by
  $\mathsf{ap}^2_f$
  the function
  \[(a_0 = a_1) \times (b_0 = b_1) \to f(a_0,b_0) = f(a_1,b_1) \]
  The following path exists\footnote{Note that we could have defined
    $\mathsf{ap}^2_f$ in terms of $\mathsf{ap}$ so that
    $\mathsf{ap}^2_f\mathsf{-funct}$ holds definitionally. We choose
    not to do this in order to stay neutral with respect to the
    flavour of HoTT our proof holds in---the usual definition of
    $\mathsf{ap}^2_f$ in cubical type theory does not make this kind
    of functoriality hold definitionally.} for each
    $p : a_0 = a_1$ and $q : b_0 = b_1$:
  \[\mathsf{ap}^2_f\mathsf{-funct}(p , q) : \mathsf{ap}^2_f(p,q) = \mathsf{ap}_{f(-,b_0)}(p) \cdot \mathsf{ap}_{f(a_1,-)}(q)\]
\end{definition}

We define the \emph{(homotopy) fibre} of a function $f : A \to B$ over
a point $b :B$ as in \citepalias[Definition 4.2.4]{HoTT13}, i.e.\ as
$\fib{f}{b} := \Sigma_{a : A}(f\,a = b)$. We use the
\emph{contractible maps} definition of equivalences in
\citepalias[Definition 4.4.1]{HoTT13} and say that a function $f : A \to B$
is an \emph{equivalence} if, for each $b:B$, we have that $\fib{f}{b}$
is contractible \citepalias[Definition 3.11.1]{HoTT13}. We will simply say
\emph{proposition} for mere propositions
\citepalias[Definition~3.1.1]{HoTT13} and use \emph{set} and
\emph{$n$-type} (with $n \geq -2$) as in \citepalias[Section 3.1 and
7]{HoTT13}.

A \emph{pointed type} \citepalias[Definition 2.1.7]{HoTT13} is a pair
$(A,\star_A)$ where $A$ is a type and $\star_A : A$ is a chosen
basepoint of $A$. We often omit $\star_A$ and simply write $A$ for the
pointed type $(A,\star_A)$. We always take $\star_A$ to denote the
basepoint of a pointed type $A$. Given two pointed types $A$ and $B$,
we denote by $A \to_\star B$ the type of \emph{pointed functions} from
$A$ to $B$ \citepalias[Definition 8.4.1]{HoTT13}, i.e.\ the type of pairs
$(f,p)$ where $f : A \to B$ is a function and
$p : f(\star_A) = \star_B$. We often simply write $f : A \to_\star B$
and take $p$ implicit. If $f$ further is an equivalence, we write
$f : A \simeq_\star B$.

A class of pointed types particularly important for us is that of
\emph{H-spaces}. We borrow the definition (including notation) from \citet[Definition
  2.5.1]{Brunerie16}.
\begin{definition}[H-spaces]\label{def:hspace}
  An H-space is a pointed type $A$ equipped with a multiplication
  $\mu : A \times A \to A$ and homotopies
  \begin{align*}
    \mu_\mathsf{l} &: (a : A) \to \mu(\star_A,a) = a\\
    \mu_\mathsf{r} &: (a : A) \to \mu(a,\star_A) = a\\
    \mu_{\mathsf{lr}} &: \mu_{\mathsf{l}}(\star_A) = \mu_{\mathsf{r}}(\star_A)
  \end{align*}
\end{definition}
The notion of an H-space is closely related to \emph{homogeneous
  types}. 
\begin{definition}[Homogeneous types]\label{def:homogeneous}
  A pointed type $A$ is homogeneous if, for every $a : A$ there is a
  pointed equivalence $(A,\star_A) \simeq_\star (A,a)$.
\end{definition}
%
%

The loop space of a pointed type $A$ is defined as $\Omega(A) :=
(\star_A = \star_A)$. Path composition defines an invertible H-space
structure on $\Omega(A)$. Consequently, $\Omega(A)$ is also
homogeneous.

We will also rely on a few standard higher inductive types (HITs).
For spheres, and many other constructions, we need
suspensions. We use the standard definition from \citetalias[Section 6.5]{HoTT13}, which we recall here:
\begin{definition}[Suspensions]
  The suspension of a type $A$, denoted $\Sigma A$, is defined as a
  HIT with the following constructors:
  \begin{itemize}
  \item $\north,\south : \Sigma A$
    \item $\mathsf{merid} : A \to \north = \south$
  \end{itemize}
\end{definition}
Using this, we define the n-spheres by:
\[
  \mathbb{S}^n =
  \begin{cases}
    \mathsf{Bool} & \text{when } n = 0 \\
    \Sigma \mathbb{S}^{n-1} & \text{otherwise}
  \end{cases}
\]
In fact, suspensions are just a special case of \emph{(homotopy) pushouts}:
\begin{definition}[Homotopy pushouts]
  Given a span of functions $A \xleftarrow{f} C \xrightarrow{g} B$, we
  define its (homotopy) pushout, denoted $A \sqcup^C B$, as a HIT with
  the following constructors:
  \begin{itemize}
  \item $\inlop : A \to A \sqcup^C B$
  \item $\inrop : B \to A \sqcup^C B$
  \item $\pushop : (c : C) \to \inl{(f(c))} = \inr{(g(c))}$
  \end{itemize}
\end{definition}

We will also make use of $n$-truncations. These are defined using the
`hub and spoke' definition of \citepalias[Section 7.3]{HoTT13} and, as
usual, we denote the $n$-truncation of a type $A$ by $\truncT{A}_n$
and write $\trunc{a} : \truncT{A}_n$ for its canonical elements.
Following \citepalias[Definition 7.5.1]{HoTT13}, we say that a type $A$ is
\emph{$n$-connected} if $\truncT{A}_n$ is contractible and a function
$f : A \to B$ is said to be \emph{$n$-connected} if the fibre of $f$
over any $b : B$ is $n$-connected.

\subsection{Dependent paths and cubical thinking}


While this paper is written in Book HoTT, it is still often helpful to use
ideas from cubical type theory and `think cubically'. One reason for this is that iterated path types are
conveniently represented by higher cubes. This cubical approach to
Book HoTT was explored in depth by \citet{LicataBrunerie15} and we
will here outline some cubical ideas relevant to this paper.

We will often speak of \emph{dependent} path types---in particular
squares of paths. Given two paths $p : x = y$ and $q : z = w$, we cannot
ask whether $p = q$, since this is not well-typed. We could, however,
ask whether $p$ and $q$ are equal \emph{modulo} composition with some
other paths $r : x = z$ and $s : y = w$. We say that we ask for a
\emph{filler} of the square
\[\begin{tikzcd}[ampersand replacement=\&]
	z \&\& w \\
	x \&\& y
	\arrow["q", from=1-1, to=1-3]
	\arrow["p"', from=2-1, to=2-3]
	\arrow["s"', from=2-3, to=1-3]
	\arrow["r", from=2-1, to=1-1]
\end{tikzcd}\]
by which we mean a proof of the identity
$$\mathsf{transport}^{X \mapsto X}(\app{=}{r}{s},p) = q$$
(or,
equivalently, $r^{-1}\cdot p \cdot s = q$, or
$\mathsf{Square}\,r\,p\,q\,s$ using the notation of
\citet[Section~IV.C]{LicataBrunerie14}.
In cubical type theory, this would simply amount to providing a term
of type $\mathsf{PathP}\,(\lambda i.\,r\,i = s\,i)\,p\,q$. We note also
that the type of filler of a degenerate square
\[\begin{tikzcd}[ampersand replacement=\&]
	x \&\& y \\
	x \&\& y
	\arrow["q", from=1-1, to=1-3]
	\arrow["p"', from=2-1, to=2-3]
	\arrow["{\mathsf{refl}}"', from=2-3, to=1-3]
	\arrow["{\mathsf{refl}}", from=2-1, to=1-1]
\end{tikzcd}\]
precisely coincides with the type $p = q$.

As with regular paths, we may apply functions also to squares. For
instance, given a function $f : A \to B$ and a filler $sq$ of the
following square in $A$:
\[\begin{tikzcd}[ampersand replacement=\&]
	z \&\& w \\
	x \&\& y
	\arrow["q", from=1-1, to=1-3]
	\arrow["p"', from=2-1, to=2-3]
	\arrow["r"', from=2-3, to=1-3]
	\arrow["s", from=2-1, to=1-1]
\end{tikzcd}\]
we get the following filler in $B$:
\[\begin{tikzcd}[ampersand replacement=\&]
	f(z) \&\& f(w) \\
	f(x) \&\& f(y)
	\arrow["\ap{f}{q}", from=1-1, to=1-3]
	\arrow["\ap{f}{p}"', from=2-1, to=2-3]
	\arrow["\ap{f}{r}"', from=2-3, to=1-3]
	\arrow["\ap{f}{s}", from=2-1, to=1-1]
\end{tikzcd}\]
We will, with some minor abuse of notation\footnote{Technically, the
  outer $\mathsf{ap}$ should be dependent function application,
  i.e.\ $\mathsf{apd}$, if we were to follow HoTT Book notation.
  We will not distinguish the two and use $\mathsf{ap}$ for both.},
denote this filler by $\mathsf{ap}_{\mathsf{ap}_f}(\mathsf{sq})$

Let us give another example of manipulations of squares which will
play a crucial roll in Section~\ref{sec:direct}. There is a map which
takes a square filler and returns a filler of the same square, flipped
along its diagonal:
\[\begin{tikzcd}[ampersand replacement=\&]
	z \&\& w \\
	x \&\& y
	\arrow["q", from=1-1, to=1-3]
	\arrow["p"', from=2-1, to=2-3]
	\arrow["s"', from=2-3, to=1-3]
	\arrow["r", from=2-1, to=1-1]
\end{tikzcd}
\overset{\mathsf{flip}}{\rightsquigarrow}
\begin{tikzcd}[ampersand replacement=\&]
	y \&\& w \\
	x \&\& z
	\arrow["s", from=1-1, to=1-3]
	\arrow["r"', from=2-1, to=2-3]
	\arrow["q"', from=2-3, to=1-3]
	\arrow["p", from=2-1, to=1-1]
\end{tikzcd}
\]
The map is easily defined by path induction. In cubical type theory,
$\mathsf{flip}$ simply takes a square $I^2 \to A$ and flips the order
of the arguments. This operation is homotopically non-trivial in a
crucial sense. Consider the type of fillers of the square
\[\begin{tikzcd}[ampersand replacement=\&]
	x \&\& x \\
	x \&\& x
	\arrow["{\mathsf{refl}}", from=1-1, to=1-3]
	\arrow["{\mathsf{refl}}"', from=2-1, to=2-3]
	\arrow["{\mathsf{refl}}"', from=2-3, to=1-3]
	\arrow["{\mathsf{refl}}", from=2-1, to=1-1]
\end{tikzcd}\]
This is precisely the path type $\mathsf{refl}_x = \mathsf{refl}_x$,
i.e.\ $\Omega^2(A,x)$. Since all sides of the square are the same, $\mathsf{flip}$ defines an endofunction
$\Omega^2(A,x) \to \Omega^2(A,x)$. One might suspect that it reduces
to the identity in this degenerate setting, but this is actually not
the case. Rather, we get the following:
\begin{lemma}\label{lem:flip-id}
  Given $p : \Omega^2(A,x)$, we have $\mathsf{flip}(p) = p^{-1}$.
\end{lemma}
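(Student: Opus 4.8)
The plan is to argue cubically, treating $p$ as a function $I^2 \to A$ that is constant at $x$ on the boundary of the square, exactly as in the paragraph preceding the statement. A direct path induction on $p$ is unavailable, since $p : \Omega^2(A,x)$ has both of its endpoints pinned to $\refl_x$, so instead of inducting I would exhibit an explicit higher path relating the two squares in question. Unfolding the definitions, $\mathsf{flip}(p)$ is the square $(i,j) \mapsto p(j,i)$, whereas the inverse in the loop space $\Omega(\Omega(A,x),\refl_x)$ reverses the outer coordinate, $p^{-1} = (i,j) \mapsto p(\sim i, j)$. The goal is therefore to produce a filler of $\Omega^2(A,x)$ witnessing that $(i,j)\mapsto p(j,i)$ equals $(i,j) \mapsto p(\sim i, j)$.

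First I would construct this identification as a single $3$-cube $H : I \to (I^2 \to A)$ with $H(0) = \mathsf{flip}(p)$ and $H(1) = p^{-1}$, built by reindexing the two arguments of $p$ along the connections $\wedge$ and $\vee$ together with the coordinate reversal $\sim$. The key design constraint is that, for every value of the interpolation variable, the resulting $(i,j)$-square must again be constant at $x$ on its boundary, so that $H$ genuinely lives in $\Omega^2(A,x)$ rather than merely in the type of squares. The appearance of $p^{-1}$, rather than $p$ itself, is forced by the orientation reversal inherent in transposing the two path directions: swapping the inner and outer coordinates runs the outer loop ``backwards'', which is precisely inversion in $\Omega^2(A,x)$. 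In the Book HoTT reading, this same cube is obtained by a nested path induction defining $\mathsf{flip}$ on a general square and then whiskering away the unit coherences that appear once all four sides degenerate to $\refl_x$.

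The hard part will be the $3$-dimensional coherence: choosing the connection/reversal reindexing so that all intermediate squares stay degenerate on the boundary, and then checking that the two end faces are exactly $\mathsf{flip}(p)$ and $p^{-1}$ with the correct orientation. Getting the sign right, i.e.\ confirming that the transpose yields $p^{-1}$ and not $p$, is the genuine content of the lemma, since $\mathsf{flip}$ is visibly an involution of $\Omega^2(A,x)$ and both the identity and inversion are candidate involutions. In the Book HoTT presentation this manifests as the bookkeeping of left/right unit laws and associativity coherences introduced when reducing the general flip to the fully degenerate case; cubically it is a routine but fiddly manipulation of connection squares, which is why phrasing the whole argument in terms of $I^2 \to A$ is advantageous.
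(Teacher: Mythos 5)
You correctly identify the content of the lemma (transposition of a $2$-loop is inversion, and the whole difficulty is the ``sign''), and you correctly note that direct path induction on $p$ is blocked because both endpoints are pinned to $\refl_x$. However, your primary route --- a single reindexing cube $H : I^3 \to A$ obtained by precomposing $p$ with connections and reversals --- is deferred at exactly the point where the lemma's content lives, and as stated it does not go through. The obstruction is the boundary constraint you yourself flag: one needs a de~Morgan map $\phi : I^3 \to I^2$ with $\phi(0,i,j)=(j,i)$, $\phi(1,i,j)=(\mathord{\sim} i,j)$, and $\phi(t,-,-)$ carrying $\partial I^2$ into $\partial I^2$ for every $t$. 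The natural candidates (e.g.\ componentwise interpolation $(\mathord{\sim} t\wedge j)\vee(t\wedge\mathord{\sim} i)$, etc.) send interior points of the faces $i=0$, $i=1$ into the interior of $I^2$, so the intermediate squares are not constant on their boundary and $H$ does not live in $\Omega^2(A,x)$. This is not ``routine but fiddly connection manipulation'': one is forced to introduce genuine Kan compositions on the side faces, i.e.\ non-trivial coherence cells, and organising those is the proof.

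The paper resolves this by the move you mention only in passing as ``the Book HoTT reading'', and that move is in fact the key step rather than an optional gloss: \emph{generalise} the statement from degenerate squares to an arbitrary filler $Q$ of a square with free sides $p,q,r,s$, and assert a cube whose top and bottom are $\mathsf{flip}(Q)$ and $Q^{-1}$ and whose four side faces are the evident coherence fillers (defined by path induction on the sides). In that generalised form one may path-induct on $p$, $r$, $s$, then on $Q$ itself, reducing to a cube all of whose faces are $\refl_{\refl_x}$; specialising back to $p=q=r=s=\refl_x$ recovers the lemma, with the side faces degenerating to $\refl$. If you want to keep your cubical presentation, you should either carry out this generalisation explicitly or exhibit the required $\mathsf{hcomp}$s; merely positing the interpolating reindexing leaves the actual content of the lemma unproved.
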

\begin{proof}
  Let $Q$ be the following filler.
  \[\begin{tikzcd}[ampersand replacement=\&]
	z \& w \\
	x \& y
	\arrow[""{name=0, anchor=center, inner sep=0}, "p"', from=2-1, to=2-2]
	\arrow["s"', from=2-2, to=1-2]
	\arrow["r", from=2-1, to=1-1]
	\arrow[""{name=1, anchor=center, inner sep=0}, "q", from=1-1, to=1-2]
	\arrow["Q"{description}, shorten <=4pt, shorten >=4pt, Rightarrow, from=0, to=1]
\end{tikzcd}\]
Consider the following cube:
\[\begin{tikzcd}[ampersand replacement=\&]
	\&\& y \&\&\& w \\
	\\
	x \&\&\& z \\
	\&\& x \&\&\& y \\
	\\
	z \&\&\& w
	\arrow["{r^{-1}}"{description}, from=6-1, to=4-3]
	\arrow["q"{description}, from=6-1, to=6-4]
	\arrow["p"{description}, from=4-3, to=4-6]
	\arrow["{s^{-1}}"{description}, from=6-4, to=4-6]
	\arrow["{r^{-1}}"{description}, from=6-1, to=3-1]
	\arrow["p"{description}, from=3-1, to=1-3]
	\arrow["p"{description}, from=4-3, to=1-3]
	\arrow["r"{description}, from=3-1, to=3-4]
	\arrow["{q^{-1}}"{description}, from=6-4, to=3-4]
	\arrow["q"{description}, from=3-4, to=1-6]
	\arrow["s"{description}, from=4-6, to=1-6]
	\arrow["s"{description}, from=1-3, to=1-6]
\end{tikzcd}\]
Let its bottom be filled by $Q^{-1}$ and its top be filled by
$\mathsf{flip}(Q)$. All sides have their obvious fillers defined by
path induction on $p$, $q$, $r$ and $s$. We claim that the cube has a
filler. We prove this by first applying path induction on $p$,$r$ and
$s$, transforming $Q$ to a path $\mathsf{refl}_x = q$. Finally, after
applying path induction on $Q$ we are left to fill a cube with
$\mathsf{refl}_{\mathsf{refl}_x}$ on each side, which is trivial.

Specialising the above argument to the case when $p,q,r$ and $s$ are
all $\mathsf{refl}_x$ and $Q$ is arbitrary, we have our lemma.
\end{proof}

\section{Eilenberg-MacLane spaces}
\label{sec:EM}

In order to define representable cohomology in HoTT, we will need to
define Eilenberg-MacLane spaces. These are spaces $K(G,n)$ associated
to an abelian\footnote{Technically, this is only needed when $n>1$,
  but in this paper we will, for simplicity, always assume $G$ to be
  abelian.} group $G$ and a natural number $n$ such that
$\pi_n(K(G,n)) \simeq G$ and all other homotopy groups vanish. These
spaces carry the structure of an H-space which we will later see
induces the group structure on cohomology. We will also see that they
come equipped with a graded multiplication which, also in
Section~\ref{sec:cohomology}, will be lifted to define cohomology
rings.

The definition of Eilenberg-MacLane spaces and their H-space structure
in HoTT is due to \citet{LicataFinster14}. The special case of
integral Eilenberg-MacLane spaces, i.e.\ $K(\bZ,n)$, was considered by
\citet{Brunerie16}, who gave an alternative and very compact
definition in terms of truncated spheres. This definition of
$K(\bZ,n)$ was also considered by \citet{BLM}. In this section, we will
generalise the optimised proofs regarding integral Eilenberg-MacLane
spaces and their H-space structure found in~\citep{BLM} to a definition
of general Eilenberg-MacLane spaces following~\citet{LicataFinster14}.

In order to ease the notation when describing the induction principles of $K(G,n)$, we will define it one
step at a time. The crucial step is $K(G,1)$, which will be defined in
terms of the following construction.


\begin{definition}
  Given a type $A$, we define $L(A)$ by the HIT
  \begin{itemize}
    \item $\star : L(A)$
    \item $\Loop_k : A \to \star = \star$
    \end{itemize}
\end{definition}

Given a group $G$, the type $L(G)$ is \emph{almost} the first Eilenberg-MacLane
space of $G$. By adding a constructor connecting the group structure
on $G$ with the H-space structure
on $\Omega(L(G))$, we approximate it further.
\begin{definition}[Raw Eilenberg-MacLane spaces]
  We define the first raw Eilenberg-MacLane space, denoted
  $\widetilde{K}(G,1)$, by the following HIT
  \begin{itemize}
  \item $\iota : L(A) \to \widetilde{K}(G,1)$
  \item For every $g_1,g_2 : G$, a filler $\mathsf{sq}(g_1,g_2)$ of the square
    \[\begin{tikzcd}[ampersand replacement=\&]
	\iota(\star) \&\&\& \iota(\star) \\
	\iota(\star) \&\&\& \iota(\star)
	\arrow["{\ap{\iota}{\LoopG{(g_1 + g_2)}}}", from=1-1, to=1-4]
	\arrow["{\ap{\iota}{\LoopG{g_1}}}"', from=2-1, to=2-4]
	\arrow["{\ap{\iota}{\LoopG{g_2}}}"', from=2-4, to=1-4]
	\arrow["{\mathsf{refl}}", from=2-1, to=1-1]
\end{tikzcd}\]
  \end{itemize}
  For $n > 1$, we define $\widetilde{K}(G,n) := {\Sigma
    (\widetilde{K}(G,n-1))}$.
\end{definition}

In practice, we will omit the $\iota$ and simply write e.g.\
$\star : \widetilde{K}(G,1)$ and
$\LoopG{g} : \Omega(\widetilde{K}(G,1))$. We let $0_k$ denote the
basepoint of $\widetilde{K}(G,n)$, i.e.\ $0_k := \star$ when $n=1$ and
$0_k := \north$ when $n>1$.

\begin{definition}[Eilenberg-MacLane spaces]
  Given an integer $n \geq 1$ and an abelian group $G$, we define the
  $n$th Eilenberg-MacLane space of $G$  by
  $K(G,n) := \truncT{\widetilde{K}(G,n)}_n$. The zeroth
  Eilenberg-MacLane space is simply $K(G,0) := G$.
\end{definition}

The type $K(G,n)$ is pointed by $0_G$ when $n = 0$ and $\trunc{0_k}$
when $n\geq 1$. With some abuse of notation, we will simply write $0_k$ to
denote the basepoint of $K(G,n)$.

Eilenberg-MacLane spaces come with important elimination
principles. Let $n \geq 1$, the fundamental elimination principle of
$K(G,n)$ is given in Figure~\ref{elim:KG} and says that, given a
fibration of $n$-types $P : K(G,n) \to \ntype{n}$, in order to define a section
$(x : K(G,n)) \to P(x)$, it suffices to describe its action on
canonical elements $\trunc{x} : K(G,n)$, where $x:\widetilde{K}(G,n)$.
If $P$ is set-valued and $n = 1$, it suffices to define the section
for elements in $L(G)$---see Figure~\ref{elim:KG-set}. In other words,
we do not need to prove that our construction respects the
$\mathsf{sq}$-constructor.
Finally, it is easy to see that if $P$ is a family of $(n-2)$-types,
then any section $(x : K(G,n)) \to P(x)$ is uniquely determined by its
action on the basepoint $0_k : K(G,n)$---see Figure~\ref{elim:KG-triv}.

\begin{figure}[h!]%
    \centering
    \subfloat[\centering $n$-type elimination \label{elim:KG}]{\begin{tikzcd}[ampersand replacement=\&]
	\& \Sigma_{x : K(G,n)}{P(x)} \\
	\widetilde{K}(G,n) \& {K}(G,n)
	\arrow["{\mathsf{fst}}"', from=1-2, to=2-2]
	\arrow["{|-|}"', from=2-1, to=2-2]
	\arrow[from=2-1, to=1-2]
	\arrow["{\exists!}"', bend right=40, dashed, from=2-2, to=1-2]
  \end{tikzcd}}%
    \quad
    \subfloat[\centering $0$-type elimination for $K(G,1)$ \label{elim:KG-set}]{\begin{tikzcd}[ampersand replacement=\&]
	\& \Sigma_{x : K(G,1)}{P(x)} \\
	L(G) \& {K}(G,1)
	\arrow["{\mathsf{fst}}"', from=1-2, to=2-2]
	\arrow["{|-|\circ \iota}"', from=2-1, to=2-2]
	\arrow[from=2-1, to=1-2]
	\arrow["{\exists!}"', bend right=40, dashed, from=2-2, to=1-2]
\end{tikzcd}}%
    \quad
    \subfloat[\centering $(n-2)$-type elimination \label{elim:KG-triv}]{
      \begin{tikzcd}[ampersand replacement=\&]
	\& \Sigma_{x : K(G,n)}{P(x)} \\
	\mathbbm{1} \& {K}(G,n)
	\arrow["{\mathsf{fst}}"', from=1-2, to=2-2]
	\arrow["{\_ \, \mapsto 0_k}"', from=2-1, to=2-2]
	\arrow[from=2-1, to=1-2]
	\arrow["{\exists!}"', bend right=40, dashed, from=2-2, to=1-2]
      \end{tikzcd}}%
    \caption{Elimination principles for $K(G,n)$}%
    \label{fig:example}%
\end{figure}

We will, in Section~\ref{sec:omegakgn}, see that
$\Omega^n K(G,n) \simeq G$ but, before this, let us establish that $K(G,n)$ is
$(n-1)$-connected. These two facts imply that $\pi_n(K(G,n)) \simeq G$
and that all other homotopy groups vanish, which therefore shows that
$K(G,n)$ indeed is an Eilenberg-MacLane space.

\begin{proposition}
  \label{prop:K-connected}
  The type $K(G,n)$ is $(n-1)$-connected.
\end{proposition}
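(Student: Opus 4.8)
The plan is to prove the statement by induction on $n$, after first reducing it to a connectivity statement about the raw space $\widetilde{K}(G,n)$. The case $n = 0$ is immediate, since $(-1)$-connectedness amounts to mere inhabitation and $K(G,0) = G$ is pointed by $0_G$. For $n \geq 1$, I would first observe that it suffices to show that $\widetilde{K}(G,n)$ is $(n-1)$-connected. Indeed, since $n - 1 \leq n$, iterated truncation collapses, i.e.\ $\truncT{\truncT{A}_n}_{n-1} \simeq \truncT{A}_{n-1}$ (\citepalias{HoTT13}), so
\[
  \truncT{K(G,n)}_{n-1}
  = \truncT{\truncT{\widetilde{K}(G,n)}_n}_{n-1}
  \simeq \truncT{\widetilde{K}(G,n)}_{n-1},
\]
and the latter is contractible exactly when $\widetilde{K}(G,n)$ is $(n-1)$-connected.

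For the base case $n = 1$, I would show directly that $\widetilde{K}(G,1)$ is $0$-connected. Its sole point constructor is $\iota(\star)$ (as $L(G)$ has a single point constructor $\star$), while the remaining constructors $\Loop_k$ and $\mathsf{sq}$ contribute only a path and a square. Since proving $\truncT{\widetilde{K}(G,1)}_0$ contractible with centre $\trunc{\iota(\star)}$ reduces, after fixing the centre, to a family of propositions $\trunc{x} = \trunc{\iota(\star)}$, I may use the elimination principle of the HIT and only need to treat the point constructor, where the goal $\trunc{\iota(\star)} = \trunc{\iota(\star)}$ holds by reflexivity. For the inductive step, suppose $\widetilde{K}(G,n-1)$ is $(n-2)$-connected. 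Since $\widetilde{K}(G,n) = \Sigma\,\widetilde{K}(G,n-1)$ by definition, I would invoke the standard result that the suspension of a $k$-connected type is $(k+1)$-connected, yielding that $\widetilde{K}(G,n)$ is $(n-1)$-connected, as required.

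The only real content is the suspension step, so the main obstacle is the lemma that suspension raises connectivity by one. I would treat this as a reusable auxiliary result: concretely, if $A$ is $k$-connected, then one shows $\truncT{\Sigma A}_{k+1}$ is contractible by exhibiting $\trunc{\north}$ as its centre and using the elimination principle of the suspension, together with the connectivity of $A$, to contract $\trunc{\south}$ and the meridians into $\trunc{\north}$. Importantly, this argument needs only the elementary connectivity of suspensions and not the full Freudenthal suspension theorem, which is consistent with the paper's stated aim of avoiding Freudenthal. The base case and the truncation reduction are then routine.
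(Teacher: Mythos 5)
Your proof is correct, but it takes a genuinely different route from the paper's. The paper's proof is a two-liner: it picks $\trunc{\KO}$ as the centre of contraction, observes that the family $x \mapsto (\trunc{\KO} = x)$ over $\truncT{K(G,n)}_{n-1}$ consists of $(n-2)$-types, and then invokes the $(n-2)$-type elimination principle of Figure~\ref{elim:KG-triv} (stated just before the proposition) to reduce the required section to its value at the basepoint, where $\refl$ suffices. You instead unfold the $n$-truncation via $\truncT{\truncT{A}_n}_{n-1} \simeq \truncT{A}_{n-1}$, and prove connectivity of the raw spaces $\widetilde{K}(G,n)$ by induction on $n$, with the inductive step carried by the standard lemma that suspension raises connectivity by one. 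Both arguments are sound, and you are right that only the elementary suspension--connectivity lemma is needed, not Freudenthal. The trade-off: the paper's version is shorter and reuses infrastructure it needs anyway (the elimination principles of Figure~\ref{fig:example}), whereas yours is self-contained and makes the underlying $\mathbb{N}$-induction explicit. In the end the mathematical content is the same, since the $(n-2)$-type elimination principle the paper leans on is itself justified by essentially the suspension induction you carry out by hand; your proof just inlines what the paper has factored out.
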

\begin{proof}
  We want to show that $\truncT{K(G,n)}_{n-1}$ is contractible.  When
  $n = 0$, the statement is trivial, since $\truncT{A}_{-1}$ is
  contractible for any pointed type $A$. Let $n \geq 1$, we choose the
  centre of contraction to be $\trunc{\KO}$. We now want to construct
  a section $(x : \truncT{K(G,n)}_{n-1}) \to \trunc{\KO} = x$. Since
  $\truncT{K(G,n)}_{n-1}$ is an $(n-1)$-type, all path types over it
  are $(n-2)$-types. Thus, it suffices to construct the section over
  $\trunc{\KO}$, which is trivial by $\refl : \trunc{\KO} =
  \trunc{\KO}$.
\end{proof}

The type ${K}(G,n)$ is functorial in $G$, i.e.\ for any homomorphism
$\phi : G_1 \to G_2$, we have an induced map $\phi_n : K(G_1,n) \to
K(G_2,n)$. We define this as a map
$\widetilde{K}(G_1,n) \to \widetilde{K}(G_2,n)$. When $n = 0$, we set
$\phi_0 := \phi$. For $n = 1$, $\phi_1$ is defined by
\begin{align*}
  \phi_1(\star) &:= \star \\
  \ap{\phi_1}{\LoopG{g}} &:= \LoopG{(\phi_0(g))}
\end{align*}

This respects the $\mathsf{sq}$ constructor of $\widetilde{K}(G_1,n)$
since $\phi$ is assumed to be a homomorphism. For $n \geq 2$, we
define it recursively using functoriality of $\Sigma$ and the fact
that $\widetilde{K}(G,n) := {\Sigma (\widetilde{K}(G,n-1))}$:

\begin{center}
\begin{tikzcd}[ampersand replacement=\&]
	{\widetilde{K}(G_1,n)} \&\& {\widetilde{K}(G_2,n)} \\
	{\Sigma(\widetilde{K}(G_1,n-1))} \&\& {\Sigma(\widetilde{K}(G_2,n-1))}
	\arrow[Rightarrow, no head, from=1-1, to=2-1]
	\arrow["{\Sigma(\phi_{n-1})}", from=2-1, to=2-3]
	\arrow["{\phi_n}", dashed, from=1-1, to=1-3]
	\arrow[Rightarrow, no head, from=1-3, to=2-3]
\end{tikzcd}
\end{center}

We also remark that there is a map
$\sigma_n : K(G,n) \to \Omega(K(G,n+1))$. We define it for raw
Eilenberg-MacLane spaces by
\begin{align*}
  \sigma_n(x) :=\begin{cases} \LoopG{g} & \text{ if $n = 0$}\\
  \merid{x} \cdot (\merid{0_k})^{-1} &\text{ otherwise}
  \end{cases}
\end{align*}
As $\Omega(K(G,n+1))$ is an $n$-type, this definition also induces, via $n$-type elimination for Eilenberg-MacLane spaces (see Figure~\ref{elim:KG}),
a map $K(G,n) \to \Omega(K(G,n+1))$:
\begin{center}
\begin{tikzcd}[ampersand replacement=\&]
	{\widetilde{K}(G,n)} \&\& {\Omega(\widetilde{K}(G,n+1))} \& {\Omega(K(G,n+1))} \\
	{K(G,n)}
	\arrow["{\sigma_n}", from=1-1, to=1-3]
	\arrow["{\mathsf{ap}_{\trunc{-}}}", from=1-3, to=1-4]
	\arrow[from=1-1, to=2-1]
	\arrow[dashed, from=2-1, to=1-4]
\end{tikzcd}
\end{center}
With some abuse of
notation, we also denote this map by $\sigma_n$. The following fact follows immediately by construction of $\sigma_n$.
\begin{lemma}\label{lem:funct-susp-commute}
  Given a group homomorphism $\phi : G_1 \to G_2$, we have $$\ap{\phi_{n+1}}{\sigma_n(x)} =_{\Omega(K(G_2,n))} \sigma_n(\phi_n(x))$$ for $x : K(G_1,n)$.
\end{lemma}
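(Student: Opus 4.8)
The plan is to push the statement down from the Eilenberg--MacLane spaces to the raw spaces $\widetilde{K}(G_i,n)$, where $\sigma_n$ and $\phi_{n+1}$ are given by explicit defining equations, and then transport the resulting identity back up through the $n$-truncation. The statement should indeed follow essentially by construction, so the real content is organising the truncation bookkeeping so that the underlying computation on raw spaces is exposed.

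First I would reduce to canonical elements. For a fixed $x$, the goal $\ap{\phi_{n+1}}{\sigma_n(x)} = \sigma_n(\phi_n(x))$ is an identity between two elements of $\Omega(K(G_2,n+1))$. Since $K(G_2,n+1) = \truncT{\widetilde{K}(G_2,n+1)}_{n+1}$ is an $(n+1)$-type, its loop space is an $n$-type, and hence this identity type is an $(n-1)$-type, in particular an $n$-type. The family $x \mapsto \big(\ap{\phi_{n+1}}{\sigma_n(x)} = \sigma_n(\phi_n(x))\big)$ is therefore a family of $n$-types over $K(G_1,n)$, so by the fundamental elimination principle of Figure~\ref{elim:KG} it suffices to prove the identity for canonical elements $x = \trunc{\tilde x}$ with $\tilde x : \widetilde{K}(G_1,n)$.

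Next I would unfold the two induced maps on such canonical elements. By construction $\phi_n$ on $K(G_1,n)$ is the functorial action of the $n$-truncation on the underlying raw map, so $\phi_n(\trunc{\tilde x}) = \trunc{\phi_n(\tilde x)}$, and $\sigma_n(\trunc{\tilde x}) = \ap{\trunc{-}}{\sigma_n(\tilde x)}$ by the defining diagram for $\sigma_n$. Combining these with naturality of $\trunc{-}$ (i.e.\ $\phi_{n+1}\circ\trunc{-}$ and $\trunc{-}\circ\phi_{n+1}$ agree on raw spaces) and functoriality of $\mathsf{ap}$, both sides of the goal become $\ap{\trunc{-}}{-}$ applied to raw terms, so it suffices to establish the raw identity $\ap{\phi_{n+1}}{\sigma_n(\tilde x)} = \sigma_n(\phi_n(\tilde x))$ in $\Omega(\widetilde{K}(G_2,n+1))$.

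Finally I would prove the raw identity by cases on $n$. For $n = 0$ it is immediate from the defining equation $\ap{\phi_1}{\LoopG g} = \LoopG{\phi_0(g)}$. For $n \geq 1$, we have $\sigma_n(\tilde x) = \merid{\tilde x}\cdot(\merid{0_k})^{-1}$ and $\phi_{n+1} = \Sigma(\phi_n)$, whose computation rule gives $\ap{\Sigma(\phi_n)}{\merid y} = \merid{\phi_n(y)}$. Distributing $\mathsf{ap}$ over composition and inversion then yields $\ap{\phi_{n+1}}{\sigma_n(\tilde x)} = \merid{\phi_n(\tilde x)}\cdot(\merid{\phi_n(0_k)})^{-1}$, and since $\phi_n$ preserves the basepoint ($\phi_n(0_k) = 0_k$ by construction) this equals $\merid{\phi_n(\tilde x)}\cdot(\merid{0_k})^{-1} = \sigma_n(\phi_n(\tilde x))$, as required. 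The only genuine obstacle is the middle step: one must check that truncation naturality and the definition of $\sigma_n$ on canonical elements line up so that both sides reduce to $\ap{\trunc{-}}{-}$ of the raw terms. Once that is in place the raw computation is the routine $\mathsf{ap}$-manipulation above, which is exactly why the fact follows immediately by construction.
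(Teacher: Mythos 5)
Your proposal is correct and matches the paper, which gives no explicit argument but simply asserts that the lemma ``follows immediately by construction of $\sigma_n$''; your reduction to canonical elements via the elimination principle of Figure~\ref{elim:KG}, followed by unfolding $\phi_{n+1} = \Sigma(\phi_n)$, $\sigma_n(\tilde x) = \merid{\tilde x}\cdot(\merid{0_k})^{-1}$, and basepoint preservation, is exactly the computation being left implicit. (The subscript $\Omega(K(G_2,n))$ in the statement is a typo for $\Omega(K(G_2,n+1))$, as your proof correctly assumes.)
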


\subsection{Group structure}
\label{sec:emgroupstructure}

One of the key contributions of \citet{BLM} was the computationally
optimised definition of the H-space structure on $K(\bZ,n) :=
\truncT{\mathbb{S}^n}_n$ (when $n \geq 1$). Our goal here is to
rephrase this optimised construction to $K(G,n)$ defined as above. The
following proposition is crucial. It is a special case of~\citepalias[Lemma
  8.6.2.]{HoTT13} for which we provide a direct proof in order to
make future constructions relying on it more computationally efficient.

\begin{proposition}
  \label{prop:wedgeconn}
  Let $n,m\geq 1$ and assume we are
  given a fibration $\widetilde{K}(G_1,n) \times
  \widetilde{K}(G_2,m) \to \ntype{(n+m-2)}$, sections $f : (x :
  \widetilde{K}(G_1,n)) \to P(x,\KO)$ and $g : (y : \widetilde{K}(G_2,m))
  \to P(\KO,y)$, and a path $p : f(0_k) = g(0_k)$.
In this case, there is a unique section $$\timeswedge{f}{g} : ((x , y)
: {\widetilde{K}(G_1,n) \times \widetilde{K}(G_2,m)}) \to P(x,y)$$ equipped with homotopies
\begin{align*}
  &l : (x : \widetilde{K}(G_1,n)) \to
  (\timeswedge{f}{g})(x,0_k) = f(x) \\
  &r : (y : \widetilde{K}(G_2,m))
  \to (\timeswedge{f}{g})(0_k,y) = g(y)
\end{align*}
satisfying $l(0_k)^{-1} \cdot r(0_k) = p$.
\end{proposition}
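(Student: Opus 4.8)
The plan is to prove this by the standard wedge-connectivity argument, reducing the construction of a section over the whole product to the construction of a single point in a fibre over the basepoint of $\widetilde{K}(G_1,n)$. Write $A := \widetilde{K}(G_1,n)$ and $B := \widetilde{K}(G_2,m)$, both pointed at $0_k$. First I would record the connectivity facts underlying everything: by Proposition~\ref{prop:K-connected} (together with the observation that $\widetilde{K}(G,k)$ and its $k$-truncation have the same $(k-1)$-truncation, or alternatively by iterated suspension connectivity), the type $A$ is $(n-1)$-connected and $B$ is $(m-1)$-connected. The only external input I need is the connectivity principle for sections, a standard consequence of \citepalias[Section 7.5]{HoTT13}: if $C$ is a pointed $k$-connected type and $P : C \to \mathsf{Type}$ is valued in $l$-types, then the evaluation map $\mathsf{ev}_{c_0} : (\Pi_{c:C}\,P(c)) \to P(c_0)$ has $(l-k-1)$-truncated fibres, and in particular is an equivalence once $l \leq k-1$.

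The key construction is the family $Q : A \to \mathsf{Type}$ given by
$$Q(x) := \Sigma_{h : (y : B) \to P(x,y)}\bigl(h(0_k) = f(x)\bigr),$$
which is exactly the fibre $\fib{\mathsf{ev}_{0_k}}{f(x)}$ of the evaluation map $\mathsf{ev}_{0_k} : ((y:B)\to P(x,y)) \to P(x,0_k)$. A section $\bar{Q} : (x:A)\to Q(x)$ is precisely the data of a section $\timeswedge{f}{g}(x,-)$ over each fibre $\{x\}\times B$ together with the homotopy $l$, so the whole problem reduces to producing a $\bar{Q}$ extending the obvious element at $x = 0_k$. Applying the connectivity principle with $C = B$ (which is $(m-1)$-connected) and $P(x,-)$ valued in $(n+m-2)$-types shows that each fibre of $\mathsf{ev}_{0_k}$, hence each $Q(x)$, is a $\bigl((n+m-2)-(m-1)-1\bigr) = (n-2)$-type.

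Now I apply the principle a second time, this time with $C = A$, which is $(n-1)$-connected, and the family $Q$, which is valued in $(n-2)$-types. Since $(n-2) \leq (n-1)-1$, the restriction map $((x:A)\to Q(x)) \to Q(0_k)$ is an equivalence; this gives both existence and the asserted uniqueness. It therefore suffices to specify the value at $0_k$, and I take it to be $(g,\,p^{-1}) : Q(0_k)$, using $g : (y:B)\to P(0_k,y)$ and $p^{-1} : g(0_k) = f(0_k)$. Unpacking the resulting section $\bar{Q}$ yields $\timeswedge{f}{g}(x,y) := \mathsf{fst}(\bar{Q}(x))(y)$, the homotopy $l(x) := \mathsf{snd}(\bar{Q}(x))$, and—using that the equivalence identifies $\bar{Q}(0_k)$ with $(g,p^{-1})$, so that $\mathsf{fst}(\bar{Q}(0_k)) = g$—the homotopy $r$ obtained by evaluating the first projection of this identification at each $y$.

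I expect the only genuinely delicate step to be the final coherence $l(0_k)^{-1}\cdot r(0_k) = p$. This is not automatic: it requires transporting the identification $\bar{Q}(0_k) = (g,p^{-1})$ through the $\Sigma$-type defining $Q$, observing that its first component (evaluated at $0_k$) produces $r(0_k)$ while its second component records how $l(0_k) = \mathsf{snd}(\bar{Q}(0_k))$ is carried onto $p^{-1}$, and then running a short path-algebra computation in the fibre $P(0_k,0_k)$ relating $p^{-1}$, $l(0_k)$ and $r(0_k)$. Finally, to obtain the computational directness the paper is after, I would inline the two appeals to the connectivity principle as explicit truncation eliminations on $A$ and $B$ rather than invoking \citepalias[Section 7.5]{HoTT13} as a black box, so that $\timeswedge{f}{g}$ reduces well on the point constructors.
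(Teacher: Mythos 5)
Your argument is correct, but it is essentially the route the paper deliberately declines to take. You reduce the statement to two applications of the connectivity principle for sections over a connected type (the evaluation fibration $Q(x) := \fib{\mathsf{ev}_{0_k}}{f(x)}$ is $(n-2)$-truncated because $\widetilde{K}(G_2,m)$ is $(m-1)$-connected, and then the $(n-1)$-connectivity of $\widetilde{K}(G_1,n)$ makes restriction to $Q(0_k)$ an equivalence); this is precisely the proof of \citepalias[Lemma 8.6.2]{HoTT13}, of which the proposition is a special case, and your bookkeeping of the coherence $l(0_k)^{-1}\cdot r(0_k) = p$ via the $\Sigma$-type identification is the right way to finish it. The paper instead gives a \emph{direct} proof by induction on $n$ and $m$: the base case $n=m=1$ is handled by explicit $L(G_1)$-induction (defining $(\timeswedge{f}{g})(\star,y) := g(y)$ and the action on $\mathsf{loop}_k$ by conjugation with $p$), and the inductive step uses the suspension presentation $\widetilde{K}(G_1,n) = \Sigma\widetilde{K}(G_1,n-1)$ to define the section on $\north$, $\south$ and $\merid{}$, invoking the induction hypothesis to fill the resulting square. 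What your approach buys is brevity and a clean derivation of uniqueness from an equivalence of section spaces; what the paper's approach buys is that the defining equations $(\timeswedge{f}{g})(x,0_k) = f(x)$ and $(\timeswedge{f}{g})(0_k,y) = g(y)$ hold (nearly) definitionally on constructors, which is what makes the subsequent definitions of $+_k$ and $\smile_k$ compute. Your closing remark that one could ``inline the two appeals to the connectivity principle'' gestures at this, but inlining a truncation-elimination argument does not by itself yield the definitional unit laws; the genuine induction on $n,m$ is doing real work there. As a proof of the proposition as stated, however, yours is complete.
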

\begin{proof}
  An easy way of proving this is to note that the canonical map
  \[
  i_{\vee} : {\widetilde{K}(G_1,n) \vee \widetilde{K}(G_2,m)} \to
  {\widetilde{K}(G_1,n) \times \widetilde{K}(G_2,m)}
  \]
     is $((n-1) +
     (m-1))$-connected, thereby inducing a section via the map
     \[
     f \vee g
  : (x : {\widetilde{K}(G_1,n) \vee \widetilde{K}(G_1,m)}) \to P
     (i_\vee(x))
     \]
     From the point of view of computer formalisation,
  however, this proof leads to poor computational behaviour of
  subsequent definitions. For this reason, we provide a direct proof.

  We proceed by induction on $n$ and $m$. We first consider the
  base-case: $n=m=1$. We are to construct a section $\timeswedge{f}{g}
  : ((x,y): \widetilde{K}(G_1,1) \times \widetilde{K}(G_2,1)) \to
  P(x,y)$. Since $P$ is a set, it suffices to define a section
  $\timeswedge{f}{g} : ((x,y): L(G_1) \times L(G_2)) \to P(x,y)$.
  We proceed by $L(G_1)$-induction on $x$. For the point constructor, we define
  \begin{align*}
    (\timeswedge{f}{g})(\star,y) &:= g(y)
  \end{align*}
  We now need to define $\ap{(\timeswedge{f}{g})(-,y)}{\LoopG{a}}$ for $a : G_1$. Since $P$ is set valued, it suffices to do so when $y$ is $\star$. We define
  \begin{align*}
    \ap{(\timeswedge{f}{g})(-,\star)}{\LoopG{a}} &:= p^{-1}\cdot \ap{f}{\LoopG{a}} \cdot p
  \end{align*}
We note that this
construction satisfies the additional properties by default: we
construct $l : (x : \widetilde{K}(G_1,1)) \to (\timeswedge{f}{g})(x,\KO)
= f(x)$ by induction on $x$. Since this is a proposition, it suffices
to construct $l(0_k) : g(\KO) = f(\KO)$, which we do by $l(0_k) :=
p^{-1}$.
We construct $r : (y : \widetilde{K}(G_2,1)) \to
(\timeswedge{f}{g})(\KO,x)$ simply by $r(y) := \refl$, since the
equality holds by definition. We finally need to show that ${l(0_k)}^{-1}
\cdot r(0_k)= p$ which is immediate by construction of $l$ and $r$.

We proceed by induction on $n$, letting $n \geq 2$ and omitting the case when $n = 1$ and $m\geq 2$ which is entirely symmetric. Let us define $\timeswedge{f}{g} : ((x , y) :
{\widetilde{K}(G_1,n) \times \widetilde{K}(G_2,m)}) \to P(x,y)$. We stress that since
$n \geq 2$, we have that $\widetilde{K}(G_1,n) = \Sigma
\widetilde{K}(G_1,n-1)$ by definition. Hence, we may define
$\timeswedge{f}{g}$ by suspension induction on $x$. Let us start with the
action on point constructors.
\begin{align*}
  (\timeswedge{f}{g})(\mathsf{north},y) &:= g(y)\\
  (\timeswedge{f}{g})(\mathsf{south},y) &:= \transport{P(-,y)}{\merid{\KO}}{g(y)}
\end{align*}
We now need to define $\ap{\timeswedge{f}{g}}{-,y}(\merid{a}) :
\Square{g(y)}{\transport{P(-,y)}{\merid{\KO}}{g(y)}}{P(-,y)}{\merid{a}}$
for \linebreak $a :\widetilde{K}(G_1,n-1)$ and $y : \widetilde{K}(G_2,m)$. The type
of $\ap{\timeswedge{f}{g}}{-,y}(\merid{a})$ is an $(n+m-3)$-type, and
thus, by induction hypothesis, it suffices to construct the following:
\begin{align*}
  f' &: (a : \widetilde{K}(G_1,n-1))\to\Square{g(\KO)}{\transport{P(-,\KO)}{\merid{\KO}}{g(\KO)}}{P(-,\KO)}{\merid{a}} \\
  g' &: (y : \widetilde{K}(G_2,m))\to \Square{g(y)}{\transport{P(-,y)}{\merid{\KO}}{g(y)}}{P(-,y)}{\merid{\KO}} \\
  p' &: f'(\KO) = g'(\KO)
\end{align*}
By composition with $p$, the target type of $f'$ is
equivalent to
$\Square{f(\north)}{f(\south)}{P(-,\KO)}{\merid{a}}$, of which we have
a term: $\ap{f}{\merid{a}}$. The target type of $g'$ is equivalent to
$g(y) = g(y)$ and we simply give the term $\refl$. The construction of $p'$
is an easy but technical lemma. Thus we have constructed
$\timeswedge{f}{g}$. Now note that $r : (y : \widetilde{K}(G_2,m)) \to
(\timeswedge{f}{g})(\KO,y) = g(y)$ holds by $\refl$. The path $l : (x
: \widetilde{K}(G_1,n-1)) \to (\timeswedge{f}{g})(x,\KO) = g(y)$ is
easily constructed using the corresponding $l$- and $r$-paths from the
inductive hypothesis. This can be done so that $l(\KO) =
r(\KO)$ holds almost by definition.
\end{proof}

Our goal now is to define addition $+_k : K(G,n) \times K(G,n) \to
K(G,n)$. When $n = 0$, it is simply addition in $G$. For $n \geq 1$,
we note that, by the elimination principle
for $K(G,n)$ in Figure~\ref{elim:KG}, it suffices
to define a corresponding operation $+_k : \widetilde{K}(G,n) \times
\widetilde{K}(G,n) \to K(G,n)$.
When $n = 1$, we define it explicitly by
\begin{align*}
  \star +_k y &:= y\\
  \ap{\_ +_k \star}{\LoopG{g}}  &:= \LoopG{g} \\
  \ap{x \mapsto \, \to \ap{\_ +_k x}{\LoopG{g_1}}} {\LoopG{g_2}}  &:= Q
\end{align*}
where $Q$ is a filler of the following square
\[\begin{tikzcd}[ampersand replacement=\&]
	\star \& \star \\
	\star \& \star
	\arrow["{\LoopG{g_1}}", from=1-1, to=1-2]
	\arrow["{\LoopG{g_1}}"', from=2-1, to=2-2]
	\arrow["{\LoopG{g_2}}"', from=2-2, to=1-2]
	\arrow["{\LoopG{g_2}}", from=2-1, to=1-1]
\end{tikzcd}\]
which is easily constructed using that $\LoopG{}$ preserves composition
by definition of $K(G,1)$. All other cases are immediate since
$K(G,1)$ is $1$-truncated. For higher values of $n$, the construction
is made straightforward by Proposition~\ref{prop:wedgeconn}: Indeed, in this case, we have
that $K(G,n)$ is of h-level $n\leq
n + n -2$. Therefore, we may define $+_k$ simply by
\begin{align}
  \label{eq1}
  x +_k 0_k & := x \\
  \label{eq2}
  0_k +_k y & := y
\end{align}
Proposition \ref{prop:wedgeconn} also requires us to prove the above
definitions to coincide when $x= 0_k$ and $y = 0_k$, which they do by
$\refl$. Thus, we have defined $+_k : \widetilde{K}(G,n) \times
\widetilde{K}(G,n) \to K(G,n)$, which lifts to an addition $K(G,n)
\times K(G,n) \to K(G,n)$ which we, abusing notation, will also denote
by $+_k$.

The laws governing $+_k$ are remarkably easy to prove.
\begin{proposition}
  \label{prop:+-unit}
  The addition $+_k$ has $0_k$ as a left- and
  right-unit. Furthermore, these are coherent in the sense that we
  have a filler of the following square \textnormal{
    \[\begin{tikzcd}[ampersand replacement=\&]
	{0_k + 0_k} \& {0_k} \\
	{0_k + 0_k} \& {0_k}
	\arrow["{\textsf{r-unit}}(0_k)", from=1-1, to=1-2]
	\arrow[Rightarrow, no head, from=1-1, to=2-1]
	\arrow["{\textsf{l-unit}}(0_k)"', from=2-1, to=2-2]
	\arrow[Rightarrow, no head, from=1-2, to=2-2]
\end{tikzcd}\]}
\end{proposition}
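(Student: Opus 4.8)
The plan is to follow the case split $n = 0$, $n = 1$, $n \geq 2$ used in the definition of $+_k$, and in each case exhibit the two unit homotopies together with the required coherence. Recall from Section~\ref{sec:background} that a filler of the displayed square, whose vertical sides are both $\refl$, is precisely a proof that $\textsf{r-unit}(0_k) = \textsf{l-unit}(0_k)$ as paths $0_k +_k 0_k = 0_k$; so the coherence amounts to showing these two paths agree. When $n = 0$ the space $K(G,0) = G$ is a set and the unit laws are just the group axioms of $G$; the coherence is then automatic, since the path type $0_G +_k 0_G = 0_G$ is a proposition and hence any square over it has a filler.

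For $n = 1$, I would take the left unit law to hold by $\refl$: since $\star +_k y := y$, the path $\textsf{l-unit}(y) : 0_k +_k y = y$ is the identity. The right unit $\textsf{r-unit} : (x : K(G,1)) \to x +_k 0_k = x$ I would construct by the set-valued elimination principle of Figure~\ref{elim:KG-set}. On the point constructor set $\textsf{r-unit}(\star) := \refl$ (using $\star +_k \star = \star$); the only remaining obligation, the case of $\LoopG{g}$, is a path in a set and hence reduces to verifying that the relevant transport of $\refl$ is again $\refl$, which in turn follows from the defining clause $\ap{\_ +_k \star}{\LoopG{g}} = \LoopG{g}$ at $n = 1$. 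Both unit laws then compute to $\refl$ at $0_k$, so the coherence square is filled by $\refl$.

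For $n \geq 2$, recall that the addition was defined via Proposition~\ref{prop:wedgeconn} with the constant fibration $P(x,y) := K(G,n)$, sections $f := \mathsf{id}$ and $g := \mathsf{id}$ (clauses~\eqref{eq1} and~\eqref{eq2}), and $p := \refl$. The proposition directly supplies the homotopies $l : (x) \to x +_k 0_k = x$ and $r : (y) \to 0_k +_k y = y$, which are exactly the right and left unit laws, and it guarantees $l(0_k)^{-1} \cdot r(0_k) = p = \refl$. Cancelling yields $\textsf{r-unit}(0_k) = l(0_k) = r(0_k) = \textsf{l-unit}(0_k)$, which is the desired filler. In each case the homotopies are first defined on $\widetilde{K}(G,n)$ and then transported to $K(G,n)$ along $\trunc{-}$ using the elimination principle of Figure~\ref{elim:KG}; since coherence is a statement about paths, it is preserved by this lift.

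The main (and only mild) obstacle is the loop case of the $n = 1$ right unit, where one must check that the transport of $\refl$ along $\LoopG{g}$ is again $\refl$; this is immediate once one observes $\ap{\_ +_k \star}{\LoopG{g}} = \LoopG{g} = \ap{\mathsf{id}}{\LoopG{g}}$, so the two sides of the transported equation vary in the same way. For $n \geq 2$ there is essentially nothing left to do, as the coherence is handed to us verbatim by the concluding clause of Proposition~\ref{prop:wedgeconn}.
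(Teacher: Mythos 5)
Your proof is correct and follows essentially the same route as the paper: the $n=0$ case from the group laws of $G$, the $n=1$ case by set-elimination into $L(G)$ where the unit laws hold definitionally (so the coherence square is $\refl$), and the $n\geq 2$ case by reading off $l$, $r$, and the coherence $l(0_k)^{-1}\cdot r(0_k)=\refl$ supplied by Proposition~\ref{prop:wedgeconn}. The extra detail you give on the $\LoopG{g}$ case of the $n=1$ right-unit law is exactly the content the paper compresses into ``these equalities hold definitionally by $L(G)$ induction.''
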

\begin{proof}
  The statement follows from the group structure on $G$ when $n =
  0$. When $n = 1$ the target type is a set, since $K(G,1)$ is
  $1$-truncated (and hence path types over it are sets). By the
  set-elimination rule for $K(G,n)$ (see~Figure~\ref{elim:KG-set}), it
  suffices to show that $\trunc{x} +_k 0_k = 0_k = 0_k +_k \trunc{x}$
  for $x : L(G)$. These equalities hold definitionally by $L(G)$
  induction. Hence, we also get the filler of the square by
  $\refl$. When $n \geq 2$, the statement follows immediately by the
  defining equations~\eqref{eq1}~and~\eqref{eq2}, using the family of
  paths $l$ and $r$ and the coherence between them, as given in
  Proposition \ref{prop:wedgeconn}.
\end{proof}

In fact, modulo $\mathbb{N}$-induction, by the direct proof of
Proposition \ref{prop:wedgeconn}, we get that the left-and right-unit
laws are definitionally equal at $0_k$, with both definitionally equal
to $\refl$.

\begin{proposition}
  \label{prop:+-comm}
  The addition $+_k$ is commutative.
\end{proposition}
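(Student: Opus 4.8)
The plan is to induct on $n$. For $n=0$, commutativity of $+_k$ is literally commutativity of the abelian group $G$. For $n \geq 1$, the key observation is that for fixed $x,y$ the goal $x +_k y = y +_k x$ is a path in the $n$-type $K(G,n)$, hence an element of an $(n-1)$-type. Thus the family $P(x,y) := (x +_k y = y +_k x)$ is a family of $(n-1)$-types, and since $n-1 \leq 2n-2 = n+n-2$ whenever $n \geq 1$, it is in particular a family of $(n+n-2)$-types. Using the $n$-type elimination principle of Figure~\ref{elim:KG} in each variable, it suffices to inhabit $P(\trunc{x},\trunc{y})$ for $x,y : \widetilde{K}(G,n)$, which puts us exactly in position to apply Proposition~\ref{prop:wedgeconn} with $G_1 = G_2 = G$ and $m = n$.

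To apply Proposition~\ref{prop:wedgeconn} I would supply the two boundary sections together with a corner coherence. On the first axis I take
\[\textsf{f}(x) := \textsf{r-unit}(x) \cdot \textsf{l-unit}(x)^{-1} : x +_k 0_k = 0_k +_k x,\]
and on the second axis
\[\textsf{g}(y) := \textsf{l-unit}(y) \cdot \textsf{r-unit}(y)^{-1} : 0_k +_k y = y +_k 0_k,\]
where $\textsf{l-unit}$ and $\textsf{r-unit}$ are the unit laws established in Proposition~\ref{prop:+-unit}. These are well-typed precisely because the unit laws unfold $P(x,0_k)$ and $P(0_k,y)$ to $x = x$ and $y = y$. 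Proposition~\ref{prop:wedgeconn} then produces a section $\timeswedge{\textsf{f}}{\textsf{g}}$ with $\timeswedge{\textsf{f}}{\textsf{g}}(x,y) : x +_k y = y +_k x$ for all $x,y$, which is exactly the commutativity path we reduced to. Note that no separate argument is needed for the explicitly defined $n=1$ addition: there $P$ is set-valued, matching the base case $n=m=1$ of Proposition~\ref{prop:wedgeconn}, and the abelianness of $G$ has already been used upstream to make $+_k$ and its unit laws well-defined.

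The remaining datum, and the main obstacle, is the corner coherence $p : \textsf{f}(0_k) = \textsf{g}(0_k)$. Here $\textsf{f}(0_k) = \textsf{r-unit}(0_k) \cdot \textsf{l-unit}(0_k)^{-1}$ and $\textsf{g}(0_k) = \textsf{l-unit}(0_k) \cdot \textsf{r-unit}(0_k)^{-1}$, both living in $0_k +_k 0_k = 0_k +_k 0_k$. These agree precisely by the coherence square between the two unit laws at $0_k$ proved in Proposition~\ref{prop:+-unit}, which identifies $\textsf{r-unit}(0_k)$ with $\textsf{l-unit}(0_k)$ and hence forces $\textsf{f}(0_k) = \textsf{g}(0_k)$. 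In fact, invoking the remark following Proposition~\ref{prop:+-unit} that, modulo $\bN$-induction, both unit laws are definitionally $\refl$ at $0_k$, both $\textsf{f}(0_k)$ and $\textsf{g}(0_k)$ reduce to $\refl$ and one may simply take $p := \refl$. Beyond this, the only care required is the h-level bookkeeping in the first paragraph, ensuring the $(n-1)$-truncated path family genuinely meets the $(n+n-2)$-truncation hypothesis of Proposition~\ref{prop:wedgeconn}; this holds for every $n \geq 1$, and the fact that the whole argument is non-circular follows since Proposition~\ref{prop:+-unit} is established before, and independently of, commutativity.
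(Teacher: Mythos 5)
Your proof is correct and follows essentially the same route as the paper: reduce to $\widetilde{K}(G,n)$ by truncation elimination, note the goal is an $(n-1)$-type so Proposition~\ref{prop:wedgeconn} applies, feed it the two boundary sections built from the unit laws, and settle the corner using the coherence square from Proposition~\ref{prop:+-unit}. The only difference is that you spell out the explicit path compositions where the paper says ``in the obvious way.''
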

\begin{proof}
  When $n = 0$, the statement is simply that $G$ is abelian, which we
  have assumed.  Let $n \geq 1$.  By truncation elimination, it
  suffices to construct paths $\trunc{x} +_k \trunc{y} = \trunc{y} +_k
  \trunc{x}$ for $x,y:\widetilde{K}(G,n)$. This path type is
  $(n-1)$-truncated which is sufficiently low for Proposition
  \ref{prop:wedgeconn} to apply. Hence it suffices to construct paths
  $p_x : \trunc{x} + 0_k = 0_k + \trunc{x}$ and $q_y : 0_k +_k
  \trunc{y} = \trunc{y} + 0_k$ and show that $p_{0_k} = q_{0_k}$. We
  construct both paths as compositions of the left- and right-unit
  laws of $+_k$ in the obvious way. These definitions coincide over
  $0_k$ by the second part of Proposition \ref{prop:+-unit}.
\end{proof}

The following is equally direct to prove (using Proposition~\ref{prop:wedgeconn}), so we omit its
proof.
\begin{proposition}
  \label{prop:+-assoc}
  The addition $+_k$ is associative.
\end{proposition}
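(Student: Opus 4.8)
The plan is to mirror the proof of Proposition~\ref{prop:+-comm} as closely as possible; the only genuinely new ingredient is that associativity is a statement in three variables while Proposition~\ref{prop:wedgeconn} combines two. As before, the case $n = 0$ is just associativity of the group $G$, so I would assume $n \geq 1$. By truncation elimination it then suffices to construct, for $x, y, z : \widetilde{K}(G,n)$, a path
$$\alpha(x,y,z) : (\trunc{x} +_k \trunc{y}) +_k \trunc{z} = \trunc{x} +_k (\trunc{y} +_k \trunc{z}).$$
Since this lives in the $n$-type $K(G,n)$, its type is an $(n-1)$-type.

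The idea is to fix $x$ and apply Proposition~\ref{prop:wedgeconn} in the remaining two variables. Concretely, for fixed $x$ I would take the fibration $(y,z) \mapsto \big[(x +_k y) +_k z = x +_k (y +_k z)\big]$, which is valued in $(n-1)$-types and hence, since $n - 1 \leq 2n - 2$ for $n \geq 1$, in $(2n-2)$-types as required. The two sections demanded by the proposition are exactly the boundary slices $z = 0_k$ and $y = 0_k$, on which associativity reduces to a composite of unit laws: for the $z = 0_k$ slice I would set $f(y) := \textsf{r-unit}(x +_k y) \cdot \ap{x +_k -}{\textsf{r-unit}(y)}^{-1}$, and for the $y = 0_k$ slice $g(z) := \ap{- +_k z}{\textsf{r-unit}(x)} \cdot \ap{x +_k -}{\textsf{l-unit}(z)}^{-1}$. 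Both are defined directly, with no further induction, from the unit laws supplied by Proposition~\ref{prop:+-unit}. Since this construction goes through for every $x$, it assembles into the desired section $\alpha$ over $\widetilde{K}(G,n)^3$.

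The one step requiring care—the analog of ``these coincide over $0_k$'' in the commutativity proof—is the coherence $f(0_k) = g(0_k)$ in the fibre over $(x,0_k,0_k)$. After using $\textsf{l-unit}(0_k) = \textsf{r-unit}(0_k)$ (the second part of Proposition~\ref{prop:+-unit}) to identify the right-hand factors, this reduces to the identity $\textsf{r-unit}(x +_k 0_k) = \ap{- +_k 0_k}{\textsf{r-unit}(x)}$, which is precisely the naturality square of the homotopy $\textsf{r-unit} : (- +_k 0_k) \sim \mathsf{id}$ applied to the path $\textsf{r-unit}(x)$, once the outer occurrences of $\textsf{r-unit}(x)$ are cancelled. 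I expect this coherence to be the main obstacle; everything else is a routine combination of unit laws and $\mathsf{ap}$. The choice of which variable to fix is immaterial, as the analogous argument with $z$ fixed is entirely symmetric.
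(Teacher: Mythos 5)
Your proof is correct and follows exactly the route the paper intends: the paper omits the proof, remarking only that associativity is ``equally direct to prove (using Proposition~\ref{prop:wedgeconn})'', and your argument---fixing $x$, applying Proposition~\ref{prop:wedgeconn} to the $(n-1)$-type-valued fibration in $(y,z)$ with boundary sections built from the unit laws, and discharging the coherence at $(x,0_k,0_k)$ via the second part of Proposition~\ref{prop:+-unit} together with naturality of $\textsf{r-unit}$---is precisely that argument, mirroring the proof of Proposition~\ref{prop:+-comm}.
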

Propositions~\ref{prop:+-unit},~\ref{prop:+-comm}~and~\ref{prop:+-assoc}
amount precisely to showing that $K(G,n)$ is a commutative,
associative H-space. There is, however, more to the story. In HoTT, it
is easy to define an inversion on $K(G,n)$. When $n = 0$, it is simply
negation in $G$. When $n \geq 1$, we have two different options. One
approach is to note that for any $x : K(G,n)$, the map
$y \mapsto x + y$ is an equivalence. This is
proved by observing that the property of being an equivalence is a
proposition. The elimination rule of $K(G,n)$ into $(n-2)$-types tells
us that it is enough to show that the map is an equivalence when $x$
is $0_k$; in this case the map is simply the identity, and thus also
an equivalence. The unique element in the fibre of $y \mapsto x + y$
over $0_k$ is taken to be the inverse of $x$. This proof also gives
the desired cancellations laws by construction. This is the approach
used in e.g.\ \citep{Brunerie16,LicataFinster14}.

While elegant, the above approach has one major disadvantage: the
action of $-_k$ on any element other than $0_k$ is described using
transports. One consequence of this is that the terms produced by
$-_k$ often are hard to work with directly. Another consequence is
that terms described using $-_k$ tend to explode in size, which leads
to poor computatonal behaviour in implementations in proof
assistants. For these reasons, we choose the obvious direct
construction of $-_k$. We define it as a map
$-_k : \widetilde{K}(G,n) \to \widetilde{K}(G,n)$. When $n = 1$, we
define
\begin{align*}
  -_k \,\star &:= \star\\
  \ap{-_k}{\LoopG{g}} &:= (\LoopG{g})^{-1}\\
  \ap{\mathsf{ap}_{-_k}}{\mathsf{sq}\,g_1\,g_2} &:= Q
\end{align*}
where $Q$ is a filler of the square
\[\begin{tikzcd}[ampersand replacement=\&]
	\star \&\& \star \\
	\star \&\& \star
	\arrow[Rightarrow, no head, from=2-1, to=1-1]
	\arrow["{(\LoopG{g_1})^{-1}}"', from=2-1, to=2-3]
	\arrow["{\LoopG{(g_1+g_2)}^{-1}}", no head, from=1-1, to=1-3]
	\arrow["{(\LoopG{g_2})^{-1}}"', from=2-3, to=1-3]
\end{tikzcd}\]
which is easy to construct using the functoriality of $\LoopG{}$.
For $n \geq 2$, we define it by
\begin{align*}
  -_k \, \north &:= \north\\
  -_k \, \south &:= \north\\
  \ap{-_k}{\merid{g}} &:= \sigma_n(g)^{-1}
\end{align*}

For the cancellation laws, we will need the following lemma. We remark
that it is only is well-typed in the case when $n = 1$ is fixed or
when $n$ is on the form $(2 + m)$.
The fact that it is well-typed exploits the fact that
$0_k = 0_k +_k 0_k$ holds definitionally modulo the case distinction
on $n$.
\begin{proposition}
  \label{prop:ap-+}
  Let $n \geq 1$. Given $p, q : \Omega (K(G,n))$, we have
  $\app{+_k}{p}{q} = p \cdot q$.
\end{proposition}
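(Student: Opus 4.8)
The plan is to reduce the binary $\mathsf{ap}$ to a composite of two ordinary $\mathsf{ap}$'s along the two unit directions and then collapse each factor using the unit laws of $+_k$. Since $p,q : \Omega(K(G,n))$ are loops at $0_k$, I instantiate the functoriality path of Definition~\ref{def:ap2} with $a_0 = a_1 = b_0 = b_1 = 0_k$, which gives
\[
\app{+_k}{p}{q} = \ap{(-) +_k 0_k}{p} \cdot \ap{0_k +_k (-)}{q}.
\]
This equation is well-typed exactly because $0_k +_k 0_k \equiv 0_k$ holds definitionally after the case split on $n$ recorded in the remark preceding the statement, so that both factors on the right and the target $p \cdot q$ genuinely live in $\Omega(K(G,n))$. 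It therefore suffices to prove the two identities $\ap{(-) +_k 0_k}{p} = p$ and $\ap{0_k +_k (-)}{q} = q$, after which congruence of path composition finishes the argument.

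For the left factor I would use that the left-unit law is essentially definitional: by~\eqref{eq2} for $n \geq 2$, and by the clause $\star +_k y := y$ for $n = 1$, the map $0_k +_k (-)$ is the identity, so $\ap{0_k +_k (-)}{q} = \ap{\mathsf{id}}{q} = q$. The right factor is the interesting one, since the map $h := (-) +_k 0_k : K(G,n) \to K(G,n)$ is only \emph{propositionally} the identity, via the right-unit homotopy $\textsf{r-unit}$ of Proposition~\ref{prop:+-unit}. Here I invoke naturality of $\textsf{r-unit} : h \sim \mathsf{id}$ against the loop $p$: the naturality square reads $\textsf{r-unit}(0_k) \cdot \ap{\mathsf{id}}{p} = \ap{h}{p} \cdot \textsf{r-unit}(0_k)$, and since $\ap{\mathsf{id}}{p} = p$ and $\textsf{r-unit}(0_k) = \refl$ — which is precisely the coherence recorded in Proposition~\ref{prop:+-unit} and the subsequent remark that the unit laws are definitionally $\refl$ at $0_k$ — this degenerates to $\ap{h}{p} = p$, as desired.

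The main obstacle is establishing and correctly exploiting the coherence $\textsf{r-unit}(0_k) = \refl$, on which the whole naturality step turns; everything else is formal manipulation of paths. This is also why the statement is sensitive to the shape of $n$: both the definitional equality $0_k +_k 0_k \equiv 0_k$ and the triviality of the unit homotopies at $0_k$ are only available once $n = 1$ and $n = 2 + m$ are distinguished, matching the well-typedness caveat in the remark. I would accordingly run the argument by that same case distinction, reusing in each case the unit laws and their basepoint coherences already set up in Propositions~\ref{prop:+-unit} and~\ref{prop:wedgeconn}; no deeper induction on $n$ is required.
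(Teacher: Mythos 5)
Your proof is correct and follows the paper's argument exactly: decompose $\app{+_k}{p}{q}$ via $\mathsf{ap}^2_{+_k}\textsf{-funct}$ into $\ap{x\mapsto x +_k 0_k}{p} \cdot \ap{y\mapsto 0_k +_k y}{q}$, then collapse each factor using the unit laws, which is legitimate precisely because those homotopies are $\refl$ at $0_k$. The only small imprecision is the claim that $0_k +_k (-)$ is \emph{judgmentally} the identity on $K(G,n)$ — it is defined via truncation elimination and only computes to the identity on canonical elements, so strictly one should run the same naturality-of-homotopy argument you give for the right factor on the left factor too, which is exactly how the paper treats both components symmetrically.
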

\begin{proof}
  We have
  \[
\app{+_k}{p}{q} = \ap{x\mapsto x +_k 0_k}{p} \cdot \ap{y\mapsto 0_k +_k y}{q} = p \cdot q
\]
The final step consists in applying the right- and left-unit laws to
both components. This is justified since they agree at $0_k$.
\end{proof}

We write $x -_k y$ for $x +_k (-_k\,y)$ and have that the directly
defined $-_k$ indeed is inverse to $+_k$ by the following proposition:

\begin{proposition}
  For $x : K(G,n)$, we have $x -_k x = 0_k = (-_k\,x) +_k x$.
\end{proposition}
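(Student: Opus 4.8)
The plan is to prove only the single equation $(-_k\,x) +_k x = 0_k$ and then obtain $x -_k x = x +_k (-_k\,x) = 0_k$ for free by composing with commutativity (Proposition~\ref{prop:+-comm}), which supplies a path $x +_k (-_k\,x) = (-_k\,x) +_k x$. Choosing this order rather than $x +_k (-_k\,x)$ is what makes the argument clean, since in $(-_k\,z) +_k z$ the \emph{summand held fixed} in each unit law is always $0_k$. We induct on $n$. For $n = 0$ both equations are the inverse laws of the abelian group $G$. For $n \geq 1$, the goal $(-_k\,x) +_k x = 0_k$ is a path in the $n$-type $K(G,n)$, hence an $(n-1)$-type and in particular an $n$-type, so the fundamental eliminator of Figure~\ref{elim:KG} reduces the problem to defining the section on canonical elements $\trunc{x}$ with $x : \widetilde{K}(G,n)$.

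For $n = 1$ the fibre is a set, so by the set-eliminator of Figure~\ref{elim:KG-set} it suffices to treat $x : L(G)$. On the point constructor we have $(-_k\,\star) +_k \star = \star +_k \star = \star = 0_k$ definitionally, so we give $\refl$. For the loop $\LoopG{g}$ we must fill a square whose only nontrivial side is the diagonal $\ap{z \mapsto (-_k\,z) +_k z}{\LoopG{g}}$. By the chain rule for binary application this equals $\app{+_k}{\ap{-_k}{\LoopG{g}}}{\LoopG{g}} = \app{+_k}{(\LoopG{g})^{-1}}{\LoopG{g}}$, and since both arguments are loops at $0_k$, Proposition~\ref{prop:ap-+} rewrites it to $(\LoopG{g})^{-1} \cdot \LoopG{g} = \refl$. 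The diagonal being $\refl$, the square fills, and as the fibre is a set the remaining coherence is automatic.

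For $n \geq 2$ we have $\widetilde{K}(G,n) = \Sigma\,\widetilde{K}(G,n-1)$ and argue by suspension induction, writing $D(z) := (-_k\,z) +_k z$. On $\north$ we have $D(\north) = 0_k +_k \north = \north = 0_k$, so we take $\refl$; on $\south$ we have $D(\south) = 0_k +_k \south = \south$, so we take $\ap{\trunc{-}}{\merid{0_k}}^{-1} : \trunc{\south} = \trunc{\north} = 0_k$. The meridian case is the crux of the proof: we must fill the square having these two paths as its sides, $\refl$ along the bottom, and $\ap{D}{\merid{g}}$ along the top. Here $\ap{D}{\merid{g}} = \app{+_k}{\sigma(g)^{-1}}{\merid{g}}$, where $\sigma$ is the map from the definition of $-_k$, so that $\sigma(g) = \merid{g} \cdot (\merid{0_k})^{-1}$ and $\ap{-_k}{\merid{g}} = \sigma(g)^{-1}$. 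Applying the functoriality of binary application from Definition~\ref{def:ap2} gives $\ap{D}{\merid{g}} = \ap{-\, +_k 0_k}{\sigma(g)^{-1}} \cdot \ap{0_k +_k\,-}{\merid{g}}$, and here both factors simplify \emph{definitionally}: by the defining equations~\eqref{eq1}~and~\eqref{eq2} the functions $-\, +_k 0_k$ and $0_k +_k\,-$ are literally the identity, so the factors are just $\sigma(g)^{-1}$ and $\merid{g}$.

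Thus $\ap{D}{\merid{g}} = \sigma(g)^{-1} \cdot \merid{g} = \bigl(\merid{0_k} \cdot (\merid{g})^{-1}\bigr) \cdot \merid{g}$, which collapses to $\merid{0_k}$ by routine path algebra, and the square then closes since $\merid{0_k} \cdot (\merid{0_k})^{-1} = \refl$. The main obstacle is precisely this meridian square: its bookkeeping — the chain rule, the functoriality splitting, and the cubical filling relating the two endpoint paths — is the only delicate part, but it goes through cleanly exactly because the definitional unit laws \eqref{eq1} and \eqref{eq2} eliminate all transport terms. Composing the resulting proof of $(-_k\,x) +_k x = 0_k$ with commutativity completes the proof of both equations.
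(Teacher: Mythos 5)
Your proof is correct and follows essentially the same route as the paper's: reduce to a single cancellation law via commutativity (Proposition~\ref{prop:+-comm}), handle $n=0$ by the group laws on $G$, handle $n=1$ by set-elimination down to $L(G)$ with $\refl$ on the point and Proposition~\ref{prop:ap-+} on $\LoopG{g}$, and handle $n\geq 2$ by suspension induction, splitting the binary $\mathsf{ap}$ along the unit laws --- the paper compresses this last case to ``entirely analogous,'' so your explicit meridian computation is just a fleshed-out version of the same argument. The only nitpick is that $(-)\,+_k 0_k$ is the identity only up to the homotopy $l$ supplied by Proposition~\ref{prop:wedgeconn} rather than ``literally''/definitionally as you claim, but since $l(0_k)=\refl$ this is exactly the justification already used in the proof of Proposition~\ref{prop:ap-+} and changes nothing.
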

\begin{proof}
  By commutativity of $+_k$, it it suffices to show that
  $x -_k x = 0_k$. When $n = 0$, this comes from the group structure
  on $G$. When $n = 1$, we note that the goal type is a set, which
  means that it suffices to show the statement for $x : L(G)$. We have
  $\star -_k \star = 0_k$ by $\refl$. For $\LoopG{}$, we need to show
  that
    \[
    \app{x,y\mapsto x -_k y}{\LoopG{g}}{\LoopG{g}} = \refl
    \]
    By Proposition~\ref{prop:ap-+}, the LHS above is equal to
    $\LoopG{g} \cdot (\LoopG{g})^{-1}$ which is equal to $\refl$. The
    case when $n \geq 2$, is entirely analogous, again using~Proposition~\ref{prop:ap-+} for the path constructors.
\end{proof}

And there we have it: the $3$-tuple $(K(G,n),+_k,0_k)$ forms, for any
$n$, a commutative and associative H-space with inverse given by $-_k$. We will see in
Section~\ref{sec:cohomology} that this trivially gives an induced
group structure on cohomology groups, but let us first take a closer look at
$\sigma_n$ and also examine the multiplicative structure on $K(G,n)$.

\subsection{$K(G,n)$ vs. $\Omega K(G,n+1)$}
\label{sec:omegakgn}

We will soon see that $\sigma_n : K(G,n) \to \Omega K(G,n+1)$ in
fact is an equivalence. It will be crucial when computing
cohomology groups of spaces. However, we do not need it yet:
Proposition~\ref{prop:ap-+} already provides us with a strong link
between the structures on $K(G,n)$ and $\Omega K(G,n+1)$. For
instance, we may already now deduce the commutativity of
$\Omega K(G,n+1)$ from the commutativity of $+_k$:
\begin{proposition}
  \label{prop:comm-path}
  For $x : K(G,n)$ and $p,q : x = x$, we have $p \cdot q = q \cdot p$.
\end{proposition}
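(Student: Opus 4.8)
The plan is to reduce to the basepoint and then run an Eckmann--Hilton-style argument powered by Proposition~\ref{prop:ap-+}. When $n = 0$ there is nothing to prove: $K(G,0) = G$ is a set, so $x = x$ is contractible. Hence assume $n \geq 1$.

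First I would reduce to the case $x = \KO$. Recall that for any $x : K(G,n)$ the map $\varphi_x := (y \mapsto x +_k y)$ is an equivalence, and that $\varphi_x(\KO) = x +_k \KO = x$ by the right-unit law; thus $\varphi_x : (K(G,n),\KO) \simeq_\star (K(G,n),x)$, witnessing that $K(G,n)$ is homogeneous. Conjugating $\mathsf{ap}_{\varphi_x}$ by the unit path gives an equivalence $\Psi : \Omega(K(G,n),\KO) \simeq \Omega(K(G,n),x)$, and both $\mathsf{ap}_{\varphi_x}$ and conjugation preserve path composition. Writing each loop at $x$ as $\Psi(p')$, commutativity then transfers: $\Psi(p') \cdot \Psi(q') = \Psi(p' \cdot q') = \Psi(q' \cdot p') = \Psi(q') \cdot \Psi(p')$.

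It remains to treat the basepoint, i.e.\ $p, q : \Omega(K(G,n))$. Here I would chain three equalities,
\[ p \cdot q \;=\; \app{+_k}{p}{q} \;=\; \app{+_k}{q}{p} \;=\; q \cdot p, \]
whose outer two steps are Proposition~\ref{prop:ap-+} applied to $(p,q)$ and to $(q,p)$. The middle step is where commutativity of $+_k$ (Proposition~\ref{prop:+-comm}) enters: its commutativity homotopy $c_{a,b} : a +_k b = b +_k a$ exhibits $+_k$ and its argument-swap $(a,b)\mapsto b +_k a$ as homotopic binary maps, so the two-variable naturality of $c$ relates $\app{+_k}{p}{q}$ and $\app{+_k}{q}{p}$ up to conjugation by the loop $c_{\KO,\KO}$.

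The main obstacle is this middle step: I must check that $c_{\KO,\KO}$ is identifiable with $\refl$, so that the naturality square collapses to $\app{+_k}{p}{q} = \app{+_k}{q}{p}$. This holds because the commutativity path at $(\KO,\KO)$ is assembled from the left- and right-unit laws, which --- as noted after Proposition~\ref{prop:+-unit} --- are definitionally $\refl$ at $\KO$; the same definitional equation $\KO +_k \KO = \KO$ is what makes the displayed chain well-typed. The remaining work --- setting up the two-variable naturality filler and the conjugations in the reduction --- is routine path algebra.
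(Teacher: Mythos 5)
Your proof is correct, and its heart --- the chain $p \cdot q = \app{+_k}{p}{q} = \app{+_k}{q}{p} = q \cdot p$ at the basepoint, with the outer steps given by Proposition~\ref{prop:ap-+} and the middle step by naturality of the commutativity homotopy --- is exactly the paper's argument. Where you diverge is the reduction to $x = 0_k$: the paper observes that the goal $(p,q : x = x) \to p \cdot q = q \cdot p$ is a family of $(n-2)$-types over $K(G,n)$ (two levels of path types over an $n$-type) and invokes the elimination principle of Figure~\ref{elim:KG-triv} directly, whereas you route through homogeneity, transporting loops along the translation equivalence $y \mapsto x +_k y$. Both work, but your detour is strictly longer: proving that translation is an equivalence itself uses the same $(n-2)$-type elimination, so you are re-deriving the connectivity fact the paper applies in one line, and you additionally have to manage the conjugations and the compatibility of $\Psi$ with composition. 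On the other hand, your treatment of the middle step is more careful than the paper's terse ``by commutativity of $+_k$'': you correctly identify that the two-variable naturality square only collapses because the commutativity path $c_{0_k,0_k}$ --- assembled from the unit laws --- is identified with $\refl$ via the coherence in Proposition~\ref{prop:+-unit}, which is precisely the point that makes the paper's one-line justification legitimate.
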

\begin{proof}
  When $n = 0$, we are automatically done, since $G$ is a set. For
  $n \geq 1$, it suffices, by $(n-2)$-type elimination, to show the
  statement when $x$ is $0_k$. Let $p,q: \Omega K(G,n)$. We have
  \begin{align*}
    p \cdot q = \app{+_k}{p}{q} = \app{+_k}{q}{p} = q \cdot p
  \end{align*}
  where the second equality comes from the commutativity of $+_k$.
\end{proof}

The usefulness of the above proof (as opposed to one using an
equivalence $K(G,n) \simeq \Omega K(G,n+1)$ as in~\citep{Brunerie16})
is that it is rather direct and is easily shown to be homotopically
trivial when $p$ and $q$ are $\refl$.

On a similar note, we may also provide a direct proof of the following proposition.
\begin{proposition}
  \label{prop:ap--}
  Let $n \geq 1$ and $p: \Omega K(G,n)$, we have $\ap{-_k}{p} = p^{-1}$.
\end{proposition}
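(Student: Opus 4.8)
The plan is to avoid unfolding the defining clauses of $-_k$ (which would force us to reason separately about the $\mathsf{sq}$-constructor for $n=1$ and about $\merid$ and $\sigma_n$ for $n \geq 2$, all under a truncation) and instead to pin down the action of $-_k$ on loops indirectly, via the cancellation law $x -_k x = 0_k$ established above together with Proposition~\ref{prop:ap-+}. The key preliminary observation is that $\ap{-_k}{p}$ is itself a loop in $\Omega K(G,n)$: since $-_k$ fixes the basepoint definitionally (both $-_k\,\star \equiv \star$ and $-_k\,\north \equiv \north$), we have $\ap{-_k}{p} : 0_k = 0_k$ for any $p : \Omega K(G,n)$, which is exactly what is needed to feed it into Proposition~\ref{prop:ap-+} as a second argument.

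First I would compute $\ap{x \mapsto x -_k x}{p}$ in two ways. On the one hand, writing $x -_k x$ as $x +_k (-_k\,x)$ and using functoriality of $\mathsf{ap}$ through the pairing $x \mapsto (x, -_k\,x)$, this application factors through $+_k$ as the binary application $\app{+_k}{p}{\ap{-_k}{p}}$. By Proposition~\ref{prop:ap-+}, and since both arguments are loops at $0_k$, this equals $p \cdot \ap{-_k}{p}$.

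On the other hand, the cancellation law provides a homotopy $H : (x : K(G,n)) \to x -_k x = 0_k$, i.e.\ a homotopy from $\lambda x.\,x -_k x$ to the constant map at $0_k$. By naturality of $H$ along $p$, together with the fact that applying $\mathsf{ap}$ to the constant map gives $\refl$, I obtain $\ap{x \mapsto x -_k x}{p} \cdot H(0_k) = H(0_k)$, and hence $\ap{x \mapsto x -_k x}{p} = \refl$ after cancelling $H(0_k)$. Combining the two computations gives $p \cdot \ap{-_k}{p} = \refl$, and therefore $\ap{-_k}{p} = p^{-1}$, as required.

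The main obstacle is bookkeeping rather than conceptual: I must keep track of the $\mathbb{N}$-case distinction (the $n=1$ versus $n = 2+m$ split) that makes Proposition~\ref{prop:ap-+}, and hence the factorisation through $+_k$, well-typed, since $0_k \equiv 0_k +_k 0_k$ holds only modulo that split. The naturality step is entirely standard once phrased via the groupoid laws, and the endpoints of $\ap{-_k}{p}$ collapse to $0_k$ definitionally, so no transports are introduced along the way; this is precisely the payoff of having chosen the direct, transport-free definition of $-_k$.
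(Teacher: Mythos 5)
Your proof is correct, but it takes a genuinely different route from the paper's. The paper proves the statement by an encode--decode argument: it defines an $\mathsf{hProp}$-valued fibration $\mathsf{Code}_x(p)$ over $(p : \star = x)$ with $\mathsf{Code}_\star(p) := (\ap{-_k}{p} = p^{-1})$, checks that this respects the action of $\LoopG{g}$ using the defining clause $\ap{-_k}{\LoopG{g}} = (\LoopG{g})^{-1}$ together with commutativity of $\Omega K(G,1)$ (Proposition~\ref{prop:comm-path}), and then produces a section by path induction; the case $n > 1$ is declared analogous. You instead run a uniqueness-of-inverses argument at the loop level: computing $\ap{x \mapsto x -_k x}{p}$ once via the factorisation through $\mathsf{ap}^2_{+_k}$ and Proposition~\ref{prop:ap-+}, and once via naturality of the cancellation homotopy $H(x) : x -_k x = 0_k$, yields $p \cdot \ap{-_k}{p} = \refl$. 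This is sound: the cancellation law and Proposition~\ref{prop:ap-+} are both established before this point, the cancellation law's proof does not depend on the present statement (it only uses the definitional computation of $\ap{-_k}{\LoopG{g}}$ on generators), and your observation that $-_k$ fixes the basepoint definitionally makes $\ap{-_k}{p}$ a genuine element of $\Omega K(G,n)$ so that Proposition~\ref{prop:ap-+} applies. What your approach buys is uniformity in $n$ and independence from the case analysis on the constructors of $\widetilde{K}(G,n)$ and from Proposition~\ref{prop:comm-path}; what the paper's approach buys is a proof that works directly from the definition of $-_k$ without routing through the cancellation law, which keeps the resulting proof term closer to the generators and is arguably preferable for the computational aims of the formalisation. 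Do spell out the naturality square and the $n = 1$ versus $n = 2 + m$ split explicitly if you formalise this, as those are the only places where coherences could silently accumulate.
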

\begin{proof}
  We give the proof for $n = 1$ as the case when $n > 1$ is
  entirely analogous.
  Let us define a fibration $\mathsf{Code}_x: (p : \star = x) \to \mathsf{hProp}$ for
  $x:K(G,1)$. As $\mathsf{hProp}$ is a set, it suffices to define
  $\mathsf{Code}_x(p)$ for $x : L(G)$. We do this by induction on
  $x$ and define
  \begin{align*}
    \mathsf{Code}_\star(p) := (\ap{-_k}{p} = p^{-1})
  \end{align*}
  which is a proposition since $K(G,1)$ is $0$-connected. We need to
  check that this definition respects the action of $\mathsf{Code}$ on
  $\LoopG{g}$ for $g:G$. This amounts to showing that, for any $p :
  \Omega K(G,1)$, we have an equivalence
  \begin{align*}
    (\ap{-_k}{p} = p^{-1}) \simeq (\ap{-_k}{p \cdot \LoopG{g}} &= (p \cdot \LoopG{g})^{-1})
  \end{align*}
  We rewrite the terms on the RHS as follows:
  \begin{align*}
    \ap{-_k}{p \cdot \LoopG{g}} &= \ap{-_k}{p} \cdot  \ap{-_k}{\LoopG{g}}
    \\ &= \ap{-_k}{p} \cdot {(\LoopG{g})}^{-1}
  \end{align*}
  and
  \begin{align*}
(p \cdot \LoopG{g})^{-1} &= (\LoopG{g})^{-1} \cdot p^{-1} \\
&= p^{-1} \cdot (\LoopG{g})^{-1}
  \end{align*}
Thus, we only need to construct an equivalence.
  \begin{align*}
    (\ap{-_k}{p} = p^{-1}) \simeq (\ap{-_k}{p} \cdot (\LoopG{g})^{-1} &= p^{-1} \cdot (\LoopG{g})^{-1})
  \end{align*}
  This equivalence is given by $\mathsf{ap}_{q \mapsto q \cdot
    (\LoopG{g})^{-1}}$ and thereby the construction of $\mathsf{Code}$
  is complete.

  We now note that we have a section
  $(p : \star = x) \to \mathsf{Code}_x(p)$ for any $x:K(G,1)$ since, by path
  induction, it suffices to note that $\refl$ is an element of
  $\mathsf{Code}_\star(\refl)$. Hence, in particular, we have an
  element of $\mathsf{Code}_\star(p)$ for all $p : \Omega K(G,1)$,
  which is what we wanted to show.
\end{proof}

The following proposition is
  also crucial and its proof can be found in
  \citep[Lemma~4.4]{LicataFinster14}.
\begin{proposition}
  \label{prop:+-hom}
  For $x,y : K(G,n)$, we have $\sigma_n(x +_k y) = \sigma_n(x) \cdot
  \sigma_n(y)$.
\end{proposition}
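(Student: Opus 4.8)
The plan is to reduce the claim, via the wedge-connectivity principle of Proposition~\ref{prop:wedgeconn}, to the two cases in which one of the arguments is $0_k$. First observe that the case $n = 0$ is essentially immediate: here $\sigma_0(g) = \LoopG{g}$ and $+_k$ is addition in $G$, so the goal $\LoopG{(g+h)} = \LoopG{g}\cdot\LoopG{h}$ is exactly the path witnessed by the $\mathsf{sq}$-constructor of $\widetilde{K}(G,1)$, pushed into $\Omega K(G,1)$ along the truncation map $\trunc{-}$ (which preserves composition). So assume $n \geq 1$ and set $P(x,y) := (\sigma_n(x +_k y) = \sigma_n(x)\cdot\sigma_n(y))$, a path type in $\Omega K(G,n+1)$. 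Since $K(G,n+1)$ is $(n+1)$-truncated, $\Omega K(G,n+1)$ is $n$-truncated, so $P(x,y)$ is an $(n-1)$-type and in particular an $(n+n-2)$-type.

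Because $P(x,y)$ is $(n-1)$-truncated (hence $n$-truncated), truncation elimination lets me reduce to $x,y : \widetilde{K}(G,n)$, and Proposition~\ref{prop:wedgeconn} then reduces the goal to producing (i) a section $f : (x : \widetilde{K}(G,n)) \to \sigma_n(x +_k 0_k) = \sigma_n(x)\cdot\sigma_n(0_k)$, (ii) a section $g : (y : \widetilde{K}(G,n)) \to \sigma_n(0_k +_k y) = \sigma_n(0_k)\cdot\sigma_n(y)$, and (iii) a coherence $p : f(0_k) = g(0_k)$. Both $f$ and $g$ are routine. For $f$, the unit law $x +_k 0_k = x$ (definitional for $n \geq 2$ by~\eqref{eq1}, and by $L(G)$-induction for $n = 1$) identifies the left-hand side with $\sigma_n(x)$, while the cancellation $\rho : \sigma_n(0_k) = \refl$ coming from $\sigma_n(0_k) = \merid{0_k}\cdot(\merid{0_k})^{-1}$, together with the right-unit law of path composition, identifies the right-hand side with $\sigma_n(x)$ as well. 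The construction of $g$ is symmetric, using~\eqref{eq2} and the left-unit law of composition. Neither $f$ nor $g$ depends on its argument except through the factor $\sigma_n(x)$ (resp. $\sigma_n(y)$).

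The hard part will be the coherence $p$. At $0_k$ both $f(0_k)$ and $g(0_k)$ are paths $\sigma_n(0_k) = \sigma_n(0_k)\cdot\sigma_n(0_k)$ (using that $0_k +_k 0_k = 0_k$, with the two unit laws of $+_k$ agreeing at $0_k$ by Proposition~\ref{prop:+-unit}); the two differ only in whether the cancellation $\rho$ is applied to the right or the left occurrence of $\sigma_n(0_k)$, i.e.\ via the right- versus the left-unit law of composition. To reconcile them I would generalise $\sigma_n(0_k)$ to an arbitrary loop $\ell : \Omega K(G,n+1)$ equipped with a proof $\rho : \ell = \refl$, and then perform path induction on $\rho$. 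When $\ell$ is $\refl$, the right- and left-unit laws both reduce to $\refl$ and the two paths become definitionally equal, yielding $p$. (For $n = 1$ the target $P(0_k,0_k)$ is a set, so $p$ is a mere proposition and this reduction certainly suffices.) Feeding $f$, $g$, and $p$ into Proposition~\ref{prop:wedgeconn} produces the desired section over all of $\widetilde{K}(G,n) \times \widetilde{K}(G,n)$, completing the proof.
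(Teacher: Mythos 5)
Your proof is correct. The paper does not actually prove this proposition itself but defers to Licata and Finster (Lemma 4.4), and your argument — reducing via Proposition~\ref{prop:wedgeconn} to the two cases where one argument is $0_k$, and discharging the coherence at $(0_k,0_k)$ by generalising $\sigma_n(0_k)$ together with its trivialising path $\rho$ and then path-inducting on $\rho$ — is essentially that proof, carried out with the paper's own wedge-connectivity lemma in the same style as Propositions~\ref{prop:+-unit}--\ref{prop:+-assoc}.
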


Because $\sigma_n$ is pointed, Proposition~\ref{prop:+-hom}
immediately gives us the following corollary (which is proved just
like in group theory).
\begin{corollary}
  \label{prop:--hom}
  For $x: K(G,n)$, we have $\sigma_n(-_k \, x) = \sigma_n(x)^{-1}$.
\end{corollary}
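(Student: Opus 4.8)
The plan is to mimic the standard group-theoretic argument that a two-sided inverse is uniquely determined, using Proposition~\ref{prop:+-hom} to transport the relation between $+_k$ and path composition across $\sigma_n$. Two ingredients are needed, both already available: first, that $\sigma_n$ is pointed, so $\sigma_n(0_k) = \refl$; and second, the cancellation law $x +_k (-_k\,x) = 0_k$ for all $x : K(G,n)$, established in the proposition immediately preceding Section~\ref{sec:omegakgn}.

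First I would apply $\sigma_n$ to the cancellation law $x +_k (-_k\,x) = 0_k$ and rewrite the left-hand side using Proposition~\ref{prop:+-hom}. This yields
\[ \sigma_n(x) \cdot \sigma_n(-_k\,x) = \sigma_n(x +_k (-_k\,x)) = \sigma_n(0_k) = \refl, \]
so that $\sigma_n(-_k\,x)$ is exhibited as a right inverse of $\sigma_n(x)$ in the group $(\Omega K(G,n+1), \cdot, \refl)$.

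Next I would solve for $\sigma_n(-_k\,x)$ using only the groupoid laws for path composition. Composing both sides on the left with $\sigma_n(x)^{-1}$ and simplifying with associativity together with the left-inverse law $\sigma_n(x)^{-1} \cdot \sigma_n(x) = \refl$ gives $\sigma_n(-_k\,x) = \sigma_n(x)^{-1}$, which is the claim.

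I do not expect any genuine obstacle: the argument is purely equational, and both of its inputs—Proposition~\ref{prop:+-hom} and the pointedness of $\sigma_n$—are already in hand. The only point worth checking is that the cancellation law holds uniformly in $n$ (in particular at $n = 0$, where $-_k$ is negation in $G$), which it does by the preceding proposition; the group-theoretic manipulation then applies verbatim in every degree, so no separate case analysis on $n$ is required.
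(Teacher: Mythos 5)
Your proof is correct and is exactly the argument the paper intends: the paper dispatches this corollary with the remark that it follows from Proposition~\ref{prop:+-hom} and the pointedness of $\sigma_n$ ``just like in group theory,'' and your write-up simply spells out that standard uniqueness-of-inverses computation. No gaps.
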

%
We are now ready for the main result of the section, namely that
$\Omega K(G,n+1) \simeq K(G,n)$. The original proof in HoTT is due
to~\citet[Theorem 4.3]{LicataFinster14} who used encode-decode proofs
in the cases $n=0$ and $n=1$, and the Freudenthal suspension theorem
(formalised in HoTT by~\citet{FavoniaFinster+16}) in the case $n \geq
2$. Here we use an identical proof in the case $n = 1$. However, it is
possible to avoid the Freudenthal suspension theorem completely by
simply noting that the proof of Licata~and~Finster when $n = 1$
actually can be used for all $n \geq 1$. We primarily wish to avoid
the Freudenthal suspension theorem as circumventing it makes the
computer formalisation more computationally viable. 
%
\begin{therm}
  \label{thm:deloop}
  The map $\sigma_n : K(G,n) \to \Omega K(G,n+1)$ is an equivalence.
\end{therm}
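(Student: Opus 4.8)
The plan is to give an encode--decode argument characterising $\Omega K(G,n+1)$, following the $n = 1$ case of Licata and Finster but keeping the construction uniform in $n \geq 1$. Throughout I use that $\widetilde{K}(G,n+1) = \Sigma\widetilde{K}(G,n)$ and hence $K(G,n+1) = \truncT{\Sigma\widetilde{K}(G,n)}_{n+1}$, together with the fact that for each $a : K(G,n)$ the translation $y \mapsto y +_k a$ is an equivalence $K(G,n) \simeq K(G,n)$ (since $+_k$ is a commutative, associative H-space operation with inverse $-_k$). First I would define a fibration of $n$-types $\mathsf{Code} : K(G,n+1) \to \ntype{n}$. Since $\ntype{n}$ is an $(n+1)$-type it suffices to define $\mathsf{Code}$ on $\Sigma\widetilde{K}(G,n)$, which I do by suspension recursion: set $\mathsf{Code}(\north) := K(G,n) =: \mathsf{Code}(\south)$ and, for $a : \widetilde{K}(G,n)$, let $\mathsf{Code}(\merid a) := \mathsf{ua}(y \mapsto y +_k \trunc{a})$. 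By construction $\mathsf{Code}(0_k) = K(G,n)$, and transporting along $\sigma_n(\trunc{a})$ acts as $y \mapsto (y +_k \trunc{a}) -_k 0_k = y +_k \trunc{a}$, since transport along $\mathsf{ap}_{\trunc{-}}((\merid 0_k)^{-1})$ is the identity.

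Next I would define the two comparison maps. The map $\mathsf{encode} : (y : K(G,n+1)) \to (0_k = y) \to \mathsf{Code}(y)$ is given by $\mathsf{encode}(y,p) := \transport{\mathsf{Code}}{p}{0_k}$. The map $\mathsf{decode} : (y : K(G,n+1)) \to \mathsf{Code}(y) \to (0_k = y)$ is defined by induction on $y$; on the basepoint I set $\mathsf{decode}(0_k,x) := \sigma_n(x)$, so that $\sigma_n = \mathsf{decode}(0_k,-)$ by definition, while on $\south$ it sends $z$ to $\mathsf{ap}_{\trunc{-}}(\merid{-})$ lifted through the $n$-truncation. The meridian case requires, for $a : \widetilde{K}(G,n)$ and $x : K(G,n)$, an identification of $\mathsf{decode}(\south, x +_k \trunc{a})$ with $\sigma_n(x) \cdot \mathsf{ap}_{\trunc{-}}(\merid a)$; unfolding the definition of $\sigma_n$ this reduces precisely to the homomorphism property $\sigma_n(x +_k \trunc{a}) = \sigma_n(x) \cdot \sigma_n(\trunc{a})$, i.e.\ Proposition~\ref{prop:+-hom}. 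This is the single structural input that makes $\mathsf{decode}$ well-defined uniformly in $n$.

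It then remains to prove that $\mathsf{encode}$ and $\mathsf{decode}$ are mutually inverse over each $y$. For $\mathsf{decode}(y,\mathsf{encode}(y,p)) = p$ I would use path induction on $p$, reducing to $\mathsf{decode}(0_k, 0_k) = \sigma_n(0_k) = \refl$, which holds since $\merid 0_k \cdot (\merid 0_k)^{-1} = \refl$. For $\mathsf{encode}(y,\mathsf{decode}(y,c)) = c$ I would induct on $y$ using the $(n+1)$-truncation; the point case amounts to $\mathsf{encode}(0_k,\sigma_n(c)) = c$, which reduces via the transport computation above to $(c +_k 0_k) -_k 0_k = c$, and the meridian case is once more governed by Proposition~\ref{prop:+-hom}. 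Combining the two directions at $y = 0_k$ exhibits $\mathsf{decode}(0_k,-) = \sigma_n$ as a quasi-inverse to $\mathsf{encode}(0_k,-) : \Omega K(G,n+1) \to K(G,n)$, hence $\sigma_n$ is an equivalence. I expect the main obstacle to be the bookkeeping of the dependent paths in the meridian cases: one must transport the relevant path data across $\mathsf{Code}(\merid a)$ and align it with Proposition~\ref{prop:+-hom}, and arranging these higher coherences so that they hold and compute well---uniformly in $n$ and without invoking Freudenthal---is where the real work, and the computational payoff, lies.
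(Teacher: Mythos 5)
Your proposal is correct and follows essentially the same route as the paper's proof: an encode--decode argument with $\mathsf{Code}$ defined by suspension recursion via $\mathsf{ua}$ applied to translation by $\trunc{a}$, with $\mathsf{decode}$ built from $\sigma_n$ and the meridian coherence supplied by Proposition~\ref{prop:+-hom} (the paper additionally invokes Corollary~\ref{prop:--hom} because its transport computation lands on $\sigma_n(x -_k \trunc{a})$). The only differences are cosmetic---left versus right translation in $\mathsf{Code}(\merid\,a)$ and the exact formula for $\mathsf{decode}$ at $\south$---and do not affect the argument.
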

\begin{proof}
  For $n = 0$, we refer to the proof in~\citep{LicataFinster14}. For
  $n \geq 1$, we choose to provide an encode-decode proof similar to
  their $n = 1$ case. We define
  $\mathsf{Code} : K(G,n+1) \to \ntype{n}$ by
\begin{align*}
  \mathsf{Code}(\trunc{\north}) &:= K(G,n) \\
  \mathsf{Code}(\trunc{\south}) &:= K(G,n) \\
  \ap{\mathsf{Code}\,\circ\, \trunc{-}}{\merid\,a} &:= \mathsf{ua}(x \mapsto \trunc{a} +_k x)
\end{align*}
Here $\mathsf{ua}$ is the part of univalence which turns an equivalence into a path. Clearly the map to which it is applied is an equivalence.
For any $t : K(G,n+1)$, we have a map $\mathsf{encode}_t : 0_k = t \to
\mathsf{Code}(t)$ defined by
$\mathsf{encode}_t(p):=\transport{\mathsf{Code}}{p}{0_k}$ (this is
equivalent to path induction on $p$). We define $\mathsf{decode}_t :
\mathsf{Code}(t) \to 0_k = t$ by truncation elimination and
$K(G,n)$-induction on $t$. For the point constructors, we define it by
\begin{align*}
  \mathsf{decode}_{\trunc{\north}}(x) &= \sigma_n(x)\\
  \mathsf{decode}_{\trunc{\south}}(x) &= \sigma_n(x)\cdot \ap{\trunc{-}}{\merid{0_k}}
\end{align*}
For the path constructor, we need to, after some simple rewriting of the goal, provide a path
\[
\sigma_n(x-_k\trunc{a}) = \sigma_n(x)\cdot \sigma_n(\trunc{a})^{-1}
\]
for $x : K(G,n)$ and $a : \widetilde{K}(G,n)$ which is easily done using Proposition~\ref{prop:+-hom} and Corollary~\ref{prop:--hom}. We can now show that, for any $t : K(G,n+1)$ and $p
: 0_k = t$, we have $\mathsf{encode}_t(\mathsf{decode}_t(p)) = p$ by
path induction on $p$. In particular, we have
$\mathsf{decode}_{0_k}(\mathsf{encode}_{0_k}(p)) = p$ for all $p :
\Omega K(G,n+1)$. We can also show that
$\mathsf{encode}_{0_k}(\mathsf{decode}_{0_k}(x)) = x$ for all
$x:K(G,n)$ by simply unfolding the definitions of
$\mathsf{decode}_{0_k}$ and $\mathsf{encode}_{0_k}$. Indeed, after applying truncation elimination on $x$, we have:
\begin{align*}
  \mathsf{encode}_{0_k}(\mathsf{decode}_{0_k}(\trunc{a})) &:= \mathsf{encode}_{0_k}(\sigma_n(\trunc{a})) \\
  &:= \transport{\mathsf{Code}}{\sigma_n(\trunc{a})}{0_k}\\
  &= \transport{\mathsf{Code}\circ \trunc{-}}{\merid(0_k)^{-1}}{\transport{\mathsf{Code}\circ \trunc{-}}{\merid{a}}{0_k}} \\
  &= \transport{\mathsf{Code}\circ \trunc{-}}{\merid(0_k)^{-1}}{\trunc{a}+0_k}\\
  &=(-_k 0_k) +_k (\trunc{a}+0_k)\\
  &= \trunc{a}
\end{align*}
and we are done.
\end{proof}

Note that this implies that we have $\Omega^n (K(G,n)) \simeq G$. By
Proposition~\ref{prop:+-hom}, this equivalence is an isomorphism of
groups. Together with Proposition~\ref{prop:K-connected}, this shows
that $K(G,n)$ indeed is an Eilenberg-MacLane space.

\subsection{The cup product with group coefficients}
\label{sec:cupprodgroupcoeff}

There is an additional structure definable already on
Eilenberg-MacLane spaces: the cup product. The first construction in
HoTT is due to~\citet{Brunerie16}, who defined it for integral
cohomology, i.e.  $\smile_k \, : K(\bZ,n) \times K(\bZ,m) \to
K(\bZ,n+m)$. This definition was later extended to
$\smile_k \, : K(G_1,n) \times K(G_2,m) \to K(G_1 \otimes G_2,n+m)$ by
\citet{Baumann18}. The issue with the definitions of these cup
products is that they rely heavily on the properties of smash
products. As smash products have turned out to be rather difficult to
reason about in HoTT \citep{brunerie18}, not all cup product axioms have been formally
verified using these definitions. For this reason, a new definition of
the cup product on $\bZ$-cohomology was given by~\citet{BLM} which
allowed for a complete proof of the graded commutative ring laws. Our
goal in this section is to generalise this definition and get a map
$\smile_k \, : K(G_1,n) \times K(G_2,m) \to K(G_1 \otimes G_2,n+m)$
satisfying the expected laws.

Before defining the cup product, we need to define tensor products of
abelian groups. In HoTT, we can do this very directly using the
obvious HIT:
\begin{definition}
  Given abelian groups $G_1$ and $G_2$ we define their tensor product
  $G_1 \otimes G_2$ to be the HIT generated by 
  \begin{itemize}
    \item points $g_1 \otimes g_2 : G_1 \otimes G_2$, for each $g_1 :G_1$ and $g_2:G_2$
    \item points $\plustimes{x}{y}$ for each pair of points $x,y : G_1 \otimes G_2$
    \item for any $x,y,z : G_1 \otimes G_2$, paths
      \begin{align*}
        \plustimes{x}{y} &= \plustimes{y}{x} \\
        \plustimes{x}{(\plustimes{y}{z})} &= \plustimes{(\plustimes{x}{y})}{z} \\
        \plustimes{(0_{G_1} \otimes 0_{G_1})}{x} &= x
      \end{align*}
    \item for any $x : G_1$ and $y,z : G_2$,
      paths $x \otimes (y +_{G_2} z) = \plustimes{(x \otimes y)}{(x \otimes z)}$
    \item for any $x, y : G_1$ and $z : G_2$,
      paths $(x +_{G_1} y) \otimes z = \plustimes{(x \otimes z)}{(y \otimes z)}$
    \item a constructor $\normalfont{\textsf{is-set} : \mathsf{isSet}\,(G_1 \otimes G_2)}$
  \end{itemize}
\end{definition}
One easily verifies that this construction has the expected universal
property. Consequently, a map out of $G_1 \otimes G_2$ into a set is
well-defined if and only if it is structure preserving. When mapping into a
proposition, it suffices to construct the map over canonical elements
$g_1 \otimes g_2$ and show that the proposition respects $\oplus$.

Let us return to the problem at hand: defining the cup product. We
note first that it is very easy to define a preliminary cup product
\begin{align}
  \label{red-cup}
  {\smile}_k' : \widetilde{K}(G_1,n) \times K(G_2,m) \to K(G_1 \otimes G_2,n+m)
\end{align}
When $n = 0$, we define $g \smile_k' (-)$ by the functorial action of $K(-,m)$ on the group homomorphism given by $g \otimes (-) : G_2 \to G_1 \otimes G_2$. When $n = 1$, we inductively define:
\begin{align*}
  \star \smile_k' y &:= 0_k\\
  \ap{x \mapsto x \smile_k'\,y}{\LoopG{g}} &:= \sigma_m(g \smile_k' y)
\end{align*}
The fact that this respects the $\mathsf{sq}$ constructor is easy to
check. For $n \geq 2$, the idea is the same:
\begin{align*}
  \north \smile_k' y &= 0_k\\
  \south \smile_k' y &= 0_k\\
  \ap{x \mapsto x \smile_k'\,y}{\merid\,a} &= \sigma_{(n-1)+m}(a \smile_k' y)
\end{align*}
We would now like to be able to lift $\smile_k'$ to a full cup product
$\smile_k \, : K(G_1,n) \times K(G_2,m) \to K(G_1 \otimes G_2,n+m)$:
\[\begin{tikzcd}[ampersand replacement=\&]
	{\widetilde{K}(G_1,n) \times K(G_2,m)} \& {K(G_1\otimes G_2,n+m)} \\
	{K(G_1,n) \times (K(G_2,m)}
	\arrow["{\smile_k'}", from=1-1, to=1-2]
	\arrow[from=1-1, to=2-1]
	\arrow[dotted, from=2-1, to=1-2]
\end{tikzcd}\]
or, after currying:
\[\begin{tikzcd}[ampersand replacement=\&]
	{\widetilde{K}(G_1,n)} \& {(K(G_2,m) \to K(G_1\otimes G_2,n+m))} \\
	{K(G_1,n)}
	\arrow["{\smile_k'}", from=1-1, to=1-2]
	\arrow[from=1-1, to=2-1]
	\arrow[dotted, from=2-1, to=1-2]
\end{tikzcd}\]
However, we run into an obstruction: for this lift to
exist, we require the function type ${K(G_2,m) \to K(G_1\otimes G_2,n+m)}$ to be an
$n$-type, which it is not unless $m = 0$. Fortunately, a very trivial
observation makes the lift possible: we want our cup product to
satisfy $x \smile_k 0_k = 0_k$. In other words, we may see the
cup product as a map into a pointed function type:
\begin{align*}
  \smile_k \, : K(G_1,n) \to (K(G_2,m) \to_\star K(G_1 \otimes G_2,n+m))
\end{align*}
In this case, we do get a lift
\[\begin{tikzcd}[ampersand replacement=\&]
	{\widetilde{K}(G_1,n)} \& {(K(G_2,m) \to_\star K(G_1\otimes G_2,n+m))} \\
	{K(G_1,n)}
	\arrow["{\smile_k'}", from=1-1, to=1-2]
	\arrow[from=1-1, to=2-1]
	\arrow["{\smile_k}"', dotted, from=2-1, to=1-2]
\end{tikzcd}\]
and thereby the cup product is defined. The existence of the lift is
motivated by the following result which follows from a more general
result~\citep[Corollary 1]{BDR18}, but has a very simple direct proof
when the spaces involved are Eilenberg-MacLane spaces.

\begin{proposition}
  \label{prop:pointed-hlevel}
  The space $K(G_1,n) \to_\star K(G_2,n+m)$ is $m$-truncated.
\end{proposition}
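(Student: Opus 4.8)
The plan is to exhibit the pointed mapping space as a homotopy fibre and then read off its truncation level from the connectivity of $K(G_1,n)$ recorded in Proposition~\ref{prop:K-connected}. Write $A := K(G_1,n)$ and $B := K(G_2,n+m)$, and consider the evaluation map $\mathsf{ev} : (A \to B) \to B$ sending $f \mapsto f(\star_A)$. By the definition of pointed maps, the fibre $\fib{\mathsf{ev}}{\star_B}$ is the type of pairs $(f,p)$ with $p : f(\star_A) = \star_B$, which is precisely $A \to_\star B$. Hence it suffices to prove that $\mathsf{ev}$ is an $m$-truncated map, i.e.\ that all of its fibres are $m$-types.

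First I would note that $\mathsf{ev}$ is precomposition with the basepoint inclusion $\iota : \mathbbm{1} \to A$ along the constant family $B$: under the identification $(\mathbbm{1} \to B) \simeq B$ the map $s \mapsto s \circ \iota$ is exactly $\mathsf{ev}$. Next I would compute the connectivity of $\iota$. The fibre of $\iota$ over $a : A$ is the based path type $\star_A = a$; since $A$ is $(n-1)$-connected by Proposition~\ref{prop:K-connected}, each such path type is $(n-2)$-connected (passing to path spaces lowers connectivity by one), so $\iota$ is an $(n-2)$-connected map. I would then invoke the quantitative connectivity--truncation principle (\citepalias[cf.\ Lemma 7.5.7]{HoTT13}): precomposition along an $i$-connected map, valued in a family of $j$-types, is a $(j-i-2)$-truncated map. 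With $i = n-2$ and $j = n+m$ this gives truncation level $(n+m)-(n-2)-2 = m$, so $\mathsf{ev}$ is $m$-truncated and in particular its fibre $A \to_\star B$ is an $m$-type, as claimed.

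The main work is purely in the index bookkeeping: checking that the basepoint inclusion into an $(n-1)$-connected type is $(n-2)$-connected, and that the cited lemma then lands at exactly level $m$. The reason this is `simple' for Eilenberg--MacLane spaces, rather than requiring the general machinery of~\citep[Corollary 1]{BDR18}, is that the only nontrivial input---the connectivity of the domain---is already supplied by Proposition~\ref{prop:K-connected}. The one genuinely degenerate case is $n = 0$, where $A \simeq G_1$ is a set and $\iota$ is only $(-2)$-connected, placing it outside the cleanest range of the lemma; there I would argue directly, noting that $(A \to B)$ and $B$ are both $m$-types, so $\fib{\mathsf{ev}}{\star_B}$ is a $\Sigma$-type over an $m$-type with $(m-1)$-truncated path fibres, and is therefore again an $m$-type.
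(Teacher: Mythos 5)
Your proof is correct, but it follows a genuinely different route from the paper's. The paper works with iterated loop spaces: it reduces to showing $\Omega^{m+1}(K(G_1,n) \to_\star K(G_2,n+m))$ contractible, commutes $\Omega^{m+1}$ past the pointed mapping space, identifies $\Omega^{m+1} K(G_2,n+m)$ with $K(G_2,n-1)$ (iterating Theorem~\ref{thm:deloop}), and then kills $K(G_1,n) \to_\star K(G_2,n-1)$ using the $(n-2)$-type elimination rule of Figure~\ref{elim:KG-triv}. You instead present $K(G_1,n) \to_\star K(G_2,n+m)$ as the fibre of evaluation over $\star_B$, identify evaluation with precomposition along the basepoint inclusion $\mathbbm{1} \to K(G_1,n)$, observe that this inclusion is $(n-2)$-connected because $K(G_1,n)$ is $(n-1)$-connected, and apply the quantitative connectivity--truncation lemma; the index arithmetic $(n+m)-(n-2)-2 = m$ is right, and your separate handling of $n=0$ is correct (though not strictly necessary: every map is $(-2)$-connected, so the same bookkeeping covers that case too). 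Your route is the more general one --- it uses only the connectivity of the domain and the truncation level of the codomain, so it amounts to a direct proof of the relevant instance of \citep[Corollary~1]{BDR18}, and the fibre-wise formulation automatically controls the loop spaces at \emph{all} basepoints of the mapping space, a point the paper's computation of $\Omega^{m+1}$ at the constant map has to address separately. What the paper's route buys is independence from the Lemma~7.5.7 machinery and a proof whose computational content is easy to trace through the elimination principles of $K(G,n)$ --- a relevant concern here, since this proposition sits on the critical path of the definition of $\smile_k$ and the paper is explicitly optimising for feasible computation.
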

\begin{proof}
  It is enough to show that $\Omega^{m+1}(K(G_1,n) \to_\star
  K(G_2,n+m))$ is contractible. We have
  \begin{align*}
    \Omega^{m+1} (K(G_1,n) \to_\star K(G_2,n+m)) &\simeq (K(G_1,n) \to_\star \Omega^{m+1} K(G_2,n+m)) \\
    &\simeq (K(G_1,n) \to_\star K(G_2,n-1))
  \end{align*}
  The fact that $(K(G_1,n) \to_\star K(G_2,n-1))$ is contractible
  follows easily from the the $(n-2)$- type elimination rule for
  $K(G_1,n)$ in Figure~\ref{elim:KG-triv}.
\end{proof}
This concludes the construction of $\smile_k$. Let us now prove the various ring laws governing it. We first note that since $\smile_k'$ is pointed in both arguments, this also applies to $\smile_k$. Thus, we have verified the first law:
\begin{proposition}
  For $x : K(G,n)$, we have $x \smile_k 0_k = 0_k$ and $0_k \smile_k x = 0_k$.
\end{proposition}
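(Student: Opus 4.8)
The plan is to read off both equations directly from the way $\smile_k$ was constructed, so that almost no work remains beyond unfolding definitions. Recall that $\smile_k$ was obtained by lifting $\smile_k'$ along the $n$-truncation map $\widetilde{K}(G_1,n) \to K(G_1,n)$, where $\smile_k'$ is regarded as a map into the \emph{pointed} function type $K(G_2,m) \to_\star K(G_1 \otimes G_2,n+m)$. The two equations are precisely the assertions that $\smile_k$ is pointed in its second and first arguments, respectively, and each is inherited from the analogous property of $\smile_k'$.

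For the first equation, $x \smile_k 0_k = 0_k$, I would observe that this is nothing but the pointedness datum that $\smile_k$ carries by virtue of its codomain: since $\smile_k : K(G_1,n) \to (K(G_2,m) \to_\star K(G_1\otimes G_2,n+m))$, every value $x \smile_k (-)$ is by definition a pointed function, and its accompanying proof of pointedness \emph{is} a path $x \smile_k 0_k = 0_k$. The only thing that must be in place beforehand is that $\smile_k'$ really lands in pointed functions, i.e.\ that $z \smile_k' 0_k = 0_k$ for every $z : \widetilde{K}(G_1,n)$; this is checked during the construction by induction on $z$. The point constructors ($\star$, or $\north$ and $\south$) give $0_k$ by $\refl$ via the defining equations, and the path constructors cohere because $\sigma$ is pointed and the inductive structure supplies the corresponding fact one dimension down. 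For $n = 0$ the map $g \smile_k' (-)$ is the functorial action of $K(-,m)$ on a group homomorphism, and functorial maps $\phi_n$ are pointed by construction, so $g \smile_k' 0_k = 0_k$.

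For the second equation, $0_k \smile_k x = 0_k$, I would use that the lift computes definitionally on canonical elements: since $0_k = \trunc{0_k}$ in $K(G_1,n)$, we get $0_k \smile_k x = 0_k \smile_k' x$. When $n \geq 1$ the basepoint $0_k$ of $\widetilde{K}(G_1,n)$ is $\star$ (for $n=1$) or $\north$ (for $n \geq 2$), and both satisfy $\star \smile_k' x = 0_k$ and $\north \smile_k' x = 0_k$ by the very first defining equation; hence the equation holds by $\refl$. The case $n = 0$ is the single place needing a short argument: here $0_{G_1} \smile_k (-)$ is the functorial action of $K(-,m)$ on the homomorphism $0_{G_1} \otimes (-) : G_2 \to G_1 \otimes G_2$, which by the tensor relation $0_{G_1} \otimes g = 0$ is the zero homomorphism. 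I would conclude $0_{G_1} \smile_k x = 0_k$ by factoring this homomorphism through the trivial group, whose Eilenberg--MacLane space is contractible, so that the induced map is constantly $0_k$.

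The main obstacle, such as it is, is pure bookkeeping: checking the coherence of the pointedness proof at the path constructors when verifying that $\smile_k'$ lands in pointed functions, and treating the $n = 0$ case, where pointedness in the first argument is not definitional but follows from the triviality of the zero homomorphism. Everything else reduces to $\refl$.
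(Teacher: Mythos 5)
Your proposal is correct and takes essentially the same route as the paper, which disposes of this proposition in one line by noting that $\smile_k'$ is pointed in both arguments and that this property is inherited by the lift $\smile_k$; you are simply spelling out the details (the pointedness datum in the codomain for the first equation, definitional computation on the basepoint constructor plus the zero-homomorphism argument at $n=0$ for the second) that the paper leaves implicit.
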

We now set out to prove the remaining laws.
This turns out to be rather difficult to do directly: we want to prove
these laws using $K(G,n)$-elimination via Proposition
\ref{prop:pointed-hlevel}, but this forces us to prove equalities of
pointed maps, rather than just maps. For instance, for
left-distributivity we have to prove that for any two points
$x,y: K(G_1,n)$ we have that the two maps
\begin{align*}
  z &\mapsto (x +_k y) \smile_k z \\
  z &\mapsto (x \smile_k z) +_k (y \smile_k z)
\end{align*}
are equal as \emph{pointed} maps living in $K(G_2,m) \to_\star K(G_1
\otimes G_2 , n + m)$. This is far more involved than proving equality
of the underlying maps: a proof of equality of pointed maps
$(f,p),(g,q) : A \to_\star B$ consists not only of a homotopy $h : (a
: A) \to f\,a = g\,a$, but also
a coherence $\Square{p}{q}{h(\star_A)}{\refl}$. Fortunately, there is
a trick to automatically infer the coherence.  We attribute this
result to Evan Cavallo.\footnote{The original form of the lemma was
  conjectured only for Eilenberg-MacLane spaces and appears as
  \citep[Lemma~14]{BLM}. Its proof, and generalisation to arbitrary
  homogeneous types, is attributed to Cavallo, whose original formalisation can be
  found in~\citep{HomogeneousFormalization}. The result has later been generalised by~\citet[Lemma 2.7]{buchholtz2023central}}


\begin{lemma}[Evan's trick]
  \label{lem:Evan}
  Given two pointed functions $(f,p), (g,q) : A \to_\star B$ with $B$ homogeneous.
  If $f = g$, then there is a path of pointed functions $(f,p) = (g,q)$.
\end{lemma}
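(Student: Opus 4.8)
The plan is to first spell out exactly what a path of pointed functions is, so as to isolate the one coherence datum that homogeneity is meant to supply. Writing $A \to_\star B$ as the $\Sigma$-type $\Sigma_{h : A \to B}(h\,\star_A = \star_B)$, a path $(f,p) = (g,q)$ is, by the characterisation of paths in $\Sigma$-types (\citepalias[Theorem 2.7.2]{HoTT13}), a pair consisting of an underlying path $\gamma : f = g$ in $A \to B$ together with a proof that $\transport{h \mapsto h\,\star_A = \star_B}{\gamma}{p} = q$. Since transport in a path family is composition, this second component is precisely a filler of the square $\Square{p}{q}{\ap{\mathsf{ev}_{\star_A}}{\gamma}}{\refl}$, where $\mathsf{ev}_{\star_A}(h) := h\,\star_A$ is evaluation at the basepoint (this is exactly the coherence $\Square{p}{q}{h(\star_A)}{\refl}$ described above the statement, with $h$ the homotopy associated to $\gamma$). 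Hence the whole difficulty is concentrated in the behaviour of $\gamma$ at the single point $\star_A$.

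Next I would reduce to a normal form. Given the hypothesis $\gamma_0 : f = g$, based path induction on $\gamma_0$ (keeping $f$ fixed, and generalising over $q$) identifies $g$ with $f$ and turns the goal into producing a pointed path $(f,p) = (f,q)$ for arbitrary $p,q : f\,\star_A = \star_B$. By the analysis above, such a path amounts to a self-homotopy $\gamma : f = f$ together with a coherence $\ap{\mathsf{ev}_{\star_A}}{\gamma} = p \cdot q^{-1}$. In other words, it now suffices to construct a self-homotopy of $f$ whose value at $\star_A$ realises the prescribed loop $r := p \cdot q^{-1}$ based at $f\,\star_A$.

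This is where homogeneity enters. From Definition~\ref{def:homogeneous} I extract, for each $b : B$, a self-equivalence $\phi_b : B \simeq B$ together with a pointing path $\pi_b : \phi_b(\star_B) = b$, depending continuously on $b$. Conjugating $\ap{\phi_b}{-}$ by $\pi_b$ yields, for each $b$, a \emph{translation} map $\mathsf{tr}_b : \Omega(B,\star_B) \to \Omega(B,b)$ which is an equivalence because $\phi_b$ is. I then set $\rho := \mathsf{tr}_{f\,\star_A}^{-1}(r) : \Omega(B,\star_B)$ and define the self-homotopy $h : (a : A) \to f(a) = f(a)$ by $h(a) := \mathsf{tr}_{f(a)}(\rho)$; applying function extensionality gives $\gamma := \mathsf{funext}(h) : f = f$. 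The computation rule for $\mathsf{funext}$ then delivers the required coherence:
\[ \ap{\mathsf{ev}_{\star_A}}{\gamma} = h(\star_A) = \mathsf{tr}_{f\,\star_A}(\rho) = \mathsf{tr}_{f\,\star_A}(\mathsf{tr}_{f\,\star_A}^{-1}(r)) = r. \]
Assembling $\gamma$ with this coherence produces the desired pointed path $(f,p) = (f,q)$, which by the reduction in the previous paragraph is exactly what we needed. Note that the continuity of the family $\phi_b$ in $b$ is precisely what makes $h$ a genuine homotopy, and that invertibility of $\mathsf{tr}_{f\,\star_A}$ is what lets us solve for $\rho$; both are consequences of homogeneity.

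I expect the main obstacle to lie entirely in the last step: not the existence of $\gamma$, but verifying that the loop it induces at $\star_A$ is genuinely the prescribed $r$ once all the pointing paths $\pi_b$, the conjugations defining $\mathsf{tr}$, and the $\Sigma$-transport have been accounted for. Lining up these book-keeping squares is exactly the coherence the lemma promises to infer automatically, and carrying it out by hand (without the homogeneous structure) is what one wants to avoid. A clean formalisation would therefore phrase the translation family $\mathsf{tr}$ and its interaction with $\mathsf{ev}_{\star_A}$ cubically, so that the final square holds with as little manual path algebra as possible.
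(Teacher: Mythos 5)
Your argument is correct, but it takes a genuinely different route from the paper's. You normalise the goal by path induction on the given path $f = g$, reducing to $(f,p) = (f,q)$, and then attack the remaining coherence directly: you manufacture a self-homotopy $h$ of $f$ whose value at $\star_A$ is the prescribed discrepancy loop $r = p \cdot q^{-1}$, by using homogeneity to build a compatible family of translation equivalences $\Omega(B,\star_B) \simeq \Omega(B,b)$ and pushing a single loop $\rho$ through it. The paper instead leaves the underlying homotopy $h$ untouched and absorbs the discrepancy into the \emph{pointing} data: it forms $r := p^{-1} \cdot h(\star_A) \cdot q$, observes that $P := \ap{(B,-)}{r}$ is a self-identification of the pointed type $(B,\star_B)$ which homogeneity forces to equal $\refl$, and then transports $(f,p)$ along $P$, after which both components of the goal hold essentially by construction. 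Both proofs consume the same untruncated homogeneity structure $(b:B) \to (B,\star_B) \simeq_\star (B,b)$; the paper's version is a little shorter because it never needs to invert a translation map or appeal to function extensionality's computation rule, whereas yours makes the realised basepoint loop completely explicit. One minor point to flag in your write-up: the step $\ap{\mathsf{ev}_{\star_A}}{\gamma} = h(\star_A)$ holds only propositionally in Book HoTT (via $\mathsf{happly} \circ \mathsf{funext} \sim \mathsf{id}$), so it contributes one more bookkeeping path of exactly the kind the lemma is meant to hide; this is harmless for correctness but relevant to the computational concerns that motivate the paper's formulation.
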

\begin{proof}
  By assumption, we have a homotopy $h : (x : A) \to f(x) = g(x)$. We
  construct $r : \star_B = \star_B $ by
  \begin{align*}
    r := p^{-1} \cdot h(\star_A) \cdot q
  \end{align*}
  and define $P : (B,\star_B) =_{\mathcal{U}_*} (B,\star_B)$ by
  \begin{align*}
    P := \ap{(B,-)}{r}
  \end{align*}
  We get $P = \refl$ as an easy consequence of the homogeneity of
  $B$. Hence, instead of proving that $(f,p) = (g,q)$, it is enough to
  prove that $\transport{{A \to_\star (-)}}{P}{(f,p)} = (g, q)$. The
  transport only acts on $p$ and $q$, so the identity of the first
  components holds by $h$. For the second components, we are reduced
  to proving that $p \cdot r = h(\star_A) \cdot q$. This is true
  immediately by the construction of $r$.
\end{proof}

\begin{proposition}[Left-distributivity of $\smile_k$]
  Let $x,y : K(G_1,n)$ and $z : K(G_2,m)$. We have
  \[
  (x +_k y) \smile_k z = x \smile_k z +_k y \smile_k z
  \]
\end{proposition}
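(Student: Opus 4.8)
The plan is to prove left-distributivity by induction on $n$ using the elimination principles for $K(G_1,n)$, reducing everything to the case where we check the statement at the basepoint via Evan's trick (Lemma~\ref{lem:Evan}). The key structural observation is that, since we want to prove an equality of maps $K(G_2,m) \to_\star K(G_1 \otimes G_2, n+m)$, we can apply Proposition~\ref{prop:pointed-hlevel}, which tells us this pointed function type is $m$-truncated. More importantly, the target $K(G_1 \otimes G_2, n+m)$ is homogeneous (being a loop-space-like H-space, or more directly since it carries the $+_k$ H-space structure and all $K(G,n)$ are homogeneous), so Evan's trick applies: to prove the two pointed maps $z \mapsto (x+_k y)\smile_k z$ and $z \mapsto (x\smile_k z)+_k(y\smile_k z)$ are equal \emph{as pointed maps}, it suffices to prove they are equal as plain maps.

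\medskip

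First I would fix $z : K(G_2,m)$ and aim to prove the pointwise identity $(x+_k y)\smile_k z = (x\smile_k z)+_k(y\smile_k z)$ for all $x,y : K(G_1,n)$, treating this as an equality in $K(G_1\otimes G_2, n+m)$. The strategy is double induction on $x$ and $y$. Since the cup product $\smile_k$ was defined via the lift through $\widetilde K(G_1,n)$, and since $+_k$ on higher $K(G,n)$ was characterised by the defining equations $x +_k 0_k = x$ and $0_k +_k y = y$ coming from Proposition~\ref{prop:wedgeconn}, the natural approach is to use Proposition~\ref{prop:wedgeconn} again: it suffices to verify left-distributivity when $y = 0_k$ and when $x = 0_k$, together with a coherence at $x = y = 0_k$. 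When $y = 0_k$, both sides reduce to $x \smile_k z$ using $x +_k 0_k = x$ and the basepoint law $0_k \smile_k z = 0_k$ together with the right-unit law for $+_k$; symmetrically for $x = 0_k$. The coherence at the basepoint should hold by $\refl$ or near-$\refl$ thanks to the definitional unit laws noted after Proposition~\ref{prop:+-unit}.

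\medskip

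For the base case $n = 1$, I would instead work directly with the generators of $K(G_1,1)$: by set-elimination (Figure~\ref{elim:KG-set}) the goal type is a set once $z$ is fixed (since $K(G_1\otimes G_2, 1+m)$ has the relevant h-level), so it suffices to check the $\LoopG{g}$ generators. Here the key computation uses that $\ap{(-)\smile_k' z}{\LoopG{g}} = \sigma_m(g\smile_k' z)$ by definition, that $\sigma_m$ is a homomorphism ($\sigma_m(a+_k b)=\sigma_m(a)\cdot\sigma_m(b)$, Proposition~\ref{prop:+-hom}), and that $\app{+_k}{p}{q}=p\cdot q$ on loops (Proposition~\ref{prop:ap-+}). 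Distributivity of $\LoopG{}$ over $+$ in $G_1$ (the defining $\mathsf{sq}$ constructor) then matches the two sides. The inductive step for $n\geq 2$ runs through suspension induction on $\widetilde K(G_1,n)$ in the same spirit, again reducing path-constructor obligations via $\sigma$ and Proposition~\ref{prop:ap-+}.

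\medskip

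The main obstacle I anticipate is \emph{not} the pointwise algebra but the bookkeeping needed to legitimately apply Proposition~\ref{prop:wedgeconn} in the double induction: one must phrase the statement so that the relevant fibration over $\widetilde K(G_1,n)\times\widetilde K(G_1,n)$ lands in $(n+n-2)$-types, check that $K(G_1\otimes G_2,n+m)$ is of sufficiently low h-level relative to $2n-2$, and supply the basepoint coherence path $p$. Getting the h-level accounting right—so that the wedge-connectivity principle actually applies and the two candidate sections genuinely agree definitionally at $0_k$—is the delicate part; the rest reduces to the unit laws and the homomorphism property of $\sigma$. Crucially, Evan's trick is what lets us avoid constructing the higher coherence for pointed-map equality by hand, so I would emphasise invoking Lemma~\ref{lem:Evan} at the very start to discharge the pointedness obligation once and for all.
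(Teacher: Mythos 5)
Your second paragraph is essentially the paper's proof: view the two sides as pointed maps in $K(G_2,m) \to_\star K(G_1\otimes G_2,n+m)$, bound the h-level via Proposition~\ref{prop:pointed-hlevel}, apply Proposition~\ref{prop:wedgeconn} to the pair $(x,y)$, discharge the pointedness obligation with Lemma~\ref{lem:Evan}, and finish with the unit laws for $+_k$ and $\smile_k$ (the coherence at $0_k$ holding by $\refl$). Three corrections. First, the h-level bookkeeping you flag as delicate is where you slip: Proposition~\ref{prop:pointed-hlevel} makes $K(G_2,m)\to_\star K(G_1\otimes G_2,m+n)$ an $n$-type, not an $m$-type, and it is the $n$ that matters --- the fibration handed to Proposition~\ref{prop:wedgeconn} lives over $\widetilde{K}(G_1,n)\times\widetilde{K}(G_1,n)$ and must be valued in $(2n-2)$-types, which the path types of an $n$-type (being $(n-1)$-types) are exactly when $n\geq 1$; with your stated bound the required inequality $(m-1)\leq 2n-2$ would fail in general. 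Second, you omit the case $n=0$: wedge connectivity requires $n\geq 1$ and $K(G_1,0)=G_1$ is not built from $\widetilde{K}$, so this case needs a separate (easy) induction on $m$, as in the paper. Third, your penultimate paragraph --- explicit $\LoopG{g}$ computations for $n=1$ and suspension induction for $n\geq 2$ --- is redundant once Proposition~\ref{prop:wedgeconn} has been invoked: after the wedge reduction nothing remains but unit laws, and no appeal to $\sigma_m$, Proposition~\ref{prop:+-hom}, or Proposition~\ref{prop:ap-+} is needed. Keeping both routes side by side suggests you have not committed to one; the wedge-connectivity route alone suffices and is the one the paper takes.
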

\begin{proof}
  When $n = 0$, the result follows by a straightforward induction on
  $m$. When $n \geq 1$, we generalise and, for $x,y : K(G_1,n)$,
  consider the two functions
  \begin{align*}
  z &\mapsto (x +_k y) \smile_k z \\
  z &\mapsto (x \smile_k z) +_k (y \smile_k z)
  \end{align*}
  We claim that these are equal as \emph{pointed} functions living in
  $K(G_2,m) \to_\star K(G_1\otimes G_2 , n + m)$. By Proposition
  \ref{prop:pointed-hlevel}, all path types over $K(G_2,m) \to_\star
  K(G_1\otimes G_2 , n + m)$ are $(n-1)$-types.
  This allows us to apply truncation elimination and Proposition
  \ref{prop:wedgeconn}. In combination with Lemma \ref{lem:Evan}, it
  suffices to construct, for each $z : K(G_2,m)$:
  \begin{align*}
    l(y) &: (0_k +_k \trunc{y}) \smile_k z = (0_k \smile_k z) +_k (\trunc{y} \smile_k z)  \\
    r(x) &: (\trunc{x} +_k 0_k) \smile_k z = (\trunc{x} \smile_k z) +_k (0_k \smile_k z) \\
    q &: l(0_k) = r(0_k)
  \end{align*}
  which is direct using the unit laws
  for $+_k$ and $\smile_k$. The obvious construction makes $q$ hold
  by $\refl$.
\end{proof}
Right-distributivity follows by an almost identical proof, so we omit
it.
\begin{proposition}[Right-distributivity of $\smile_k$]
  Let $x: K(G_1,n)$ and $y,z : K(G_2,m)$. We have
  \[
  x \smile_k (y +_k z) = x \smile_k y +_k x \smile_k z
  \]
\end{proposition}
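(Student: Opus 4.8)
The plan is to mirror the proof of left-distributivity, exchanging the roles of the two tensor factors: since the addition now takes place in the second argument, I would vary over the first. First I would dispatch the base case $m = 0$ separately, as Proposition~\ref{prop:wedgeconn} is unavailable there. In this case $y, z : K(G_2,0) = G_2$, and the claim $x \smile_k (y +_{G_2} z) = x \smile_k y +_k x \smile_k z$ is proved by a straightforward induction on $n$: at $n = 0$ it is exactly the right-bilinearity path $x \otimes (y +_{G_2} z) = (x \otimes y) \oplus (x \otimes z)$ built into the tensor-product HIT, and the inductive step is pushed through $\sigma_n$ using that it is a homomorphism (Proposition~\ref{prop:+-hom}).

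For the main case $m \geq 1$, I would generalise over $y, z : K(G_2,m)$ and consider the two pointed functions
\[
x \mapsto x \smile_k (y +_k z)
\qquad\text{and}\qquad
x \mapsto (x \smile_k y) +_k (x \smile_k z),
\]
both living in $K(G_1,n) \to_\star K(G_1 \otimes G_2, n+m)$; they are pointed because $\smile_k$ vanishes on $0_k$ in its first argument. By Proposition~\ref{prop:pointed-hlevel} this pointed function type is $m$-truncated, so its path types are $(m-1)$-types. This is low enough to combine truncation elimination with Proposition~\ref{prop:wedgeconn} applied to the pair $(y,z) : \widetilde{K}(G_2,m) \times \widetilde{K}(G_2,m)$, the connectivity hypothesis holding since $m - 1 \leq (m-1) + (m-1)$ whenever $m \geq 1$.

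Together with Evan's trick (Lemma~\ref{lem:Evan}), which lets me argue about the underlying unpointed maps and then recover the basepoint coherence for free, the goal reduces to producing, for each $x : K(G_1,n)$, the boundary data
\begin{align*}
  l(z) &: x \smile_k (0_k +_k \trunc{z}) = (x \smile_k 0_k) +_k (x \smile_k \trunc{z}) \\
  r(y) &: x \smile_k (\trunc{y} +_k 0_k) = (x \smile_k \trunc{y}) +_k (x \smile_k 0_k) \\
  q    &: l(0_k) = r(0_k)
\end{align*}
all of which are immediate from the unit laws for $+_k$ and for $\smile_k$, with the obvious construction making $q$ hold by $\refl$.

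The main obstacle is the same one as in the left-distributive case, and indeed there is no essentially new difficulty: the naive route forces one to prove an equality of \emph{pointed} maps, which carries a coherence obligation beyond the pointwise homotopy, and this is precisely what Evan's trick discharges automatically once we know the codomain $K(G_1 \otimes G_2, n+m)$ is homogeneous---which it is, being an H-space whose left addition is an equivalence. The only genuine care relative to left-distributivity is the level bookkeeping after swapping the factors (now $(m-1)$-types and $(2m-2)$-connectivity in place of $(n-1)$ and $(2n-2)$), which is exactly what forces the separate $m = 0$ treatment.
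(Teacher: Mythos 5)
Your proposal is correct and is essentially the argument the paper has in mind: the paper omits this proof as ``almost identical'' to left-distributivity, and your mirrored version---viewing the two sides as pointed maps out of $K(G_1,n)$, applying Proposition~\ref{prop:pointed-hlevel} and Proposition~\ref{prop:wedgeconn} to the pair $(y,z)$, discharging the basepoint coherence with Lemma~\ref{lem:Evan}, and treating $m=0$ separately via the tensor-product bilinearity constructor---is exactly that proof with the level bookkeeping correctly swapped.
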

Before moving on to associativity, we will need the following
lemma. As pointed out by~\citet{BLM}, this lemma also appears
in~\citep[Proposition 6.1.1]{Brunerie16} where it is used for the
construction of the Gysin sequence. Its proof relies on e.g.\ the
pentagon identity for smash products which, at the time, was open in HoTT. In our setting, which like that of \citet{BLM} uses pointed
maps instead of smash products, it holds by construction.
\begin{lemma}
  \label{lem:ap-cup}
  For $x : K(G_1,n)$ and $y : K(G_2,m)$, we have
  $$\ap{x\mapsto x \smile_k y}{\sigma_n(x)} = \sigma_{n+m}(x \smile_k y)$$
\end{lemma}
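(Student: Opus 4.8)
The plan is to show that the identity holds essentially by the defining equations of $\smile_k'$ on the path constructors of $\widetilde{K}(G_1,n+1)$, after reducing the general point $x$ to a canonical one. Concretely, for $n \geq 1$ and $x = \trunc{a}$ with $a : \widetilde{K}(G_1,n)$, the map $\sigma_n$ unfolds to $\sigma_n(\trunc{a}) = \ap{\trunc{-}}{\merid{a} \cdot (\merid{0_k})^{-1}}$, and since $\smile_k$ is the lift of $\smile_k'$ we have $(z \mapsto z \smile_k y) \circ \trunc{-} = (b \mapsto b \smile_k' y)$ on $\widetilde{K}(G_1,n+1)$ by the computation rule of the lift. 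Pushing $\mathsf{ap}$ through the truncation and through $\cdot$ and $(-)^{-1}$, the left-hand side becomes $\ap{\smile_k' y}{\merid{a}} \cdot \ap{\smile_k' y}{\merid{0_k}}^{-1}$, which by the very definition of $\smile_k'$ on $\merid$ (at level $n+1$) equals $\sigma_{n+m}(a \smile_k' y) \cdot \sigma_{n+m}(0_k \smile_k' y)^{-1}$. The second factor is $\refl$ since $0_k \smile_k' y = 0_k$ and $\sigma_{n+m}$ is pointed, leaving $\sigma_{n+m}(a \smile_k' y) = \sigma_{n+m}(\trunc{a} \smile_k y)$, as desired. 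The base case $n = 0$ is even more immediate: there $\sigma_0(x) = \LoopG{x}$ and the level-$1$ clause $\ap{\smile_k' y}{\LoopG{g}} = \sigma_m(g \smile_k' y)$ gives the claim directly.

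The step I expect to require the most care is not this computation---which indeed holds by construction---but the passage from canonical elements $\trunc{a}$ to an arbitrary $x : K(G_1,n)$. The naive predicate $x \mapsto (\ap{\smile_k y}{\sigma_n(x)} = \sigma_{n+m}(x \smile_k y))$ is an identity in $\Omega K(G_1 \otimes G_2, n+m+1)$ and is therefore only $(n+m-1)$-truncated, which is too high for the $K(G_1,n)$-eliminator of Figure~\ref{elim:KG} as soon as $m \geq 2$. To get around this I would prove the stronger, better-behaved statement obtained by letting $y$ vary: namely that the two maps $y \mapsto \ap{\smile_k y}{\sigma_n(x)}$ and $y \mapsto \sigma_{n+m}(x \smile_k y)$ agree as \emph{pointed} maps in $K(G_2,m) \to_\star \Omega K(G_1 \otimes G_2, n+m+1)$. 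By Theorem~\ref{thm:deloop} this pointed-map type is equivalent to $K(G_2,m) \to_\star K(G_1 \otimes G_2, n+m)$, which by Proposition~\ref{prop:pointed-hlevel} is $n$-truncated; hence the corresponding equality is an $(n-1)$-type, in particular an $n$-type, and the eliminator now applies so that it suffices to treat $x = \trunc{a}$.

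It then remains to establish this pointed-map equality over a canonical element $\trunc{a}$. Here I would invoke Evan's trick (Lemma~\ref{lem:Evan}): since the codomain $\Omega K(G_1 \otimes G_2, n+m+1)$ is a loop space and hence homogeneous, it is enough to exhibit the underlying unpointed homotopy, i.e.\ to prove the pointwise identity for every $y$---which is exactly the computation of the first paragraph. Finally, projecting the resulting equality of pointed maps to its underlying homotopy and evaluating at $y$ recovers the statement as printed. The only remaining bookkeeping is the routine $\mathsf{ap}$-functoriality manipulations together with the verification that both families are genuinely pointed, both sending $y = 0_k$ to $\refl$ using $x \smile_k 0_k = 0_k$.
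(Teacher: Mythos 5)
Your proposal is correct and follows essentially the same route as the paper's proof: reduce to canonical elements by viewing both sides as pointed maps in $K(G_2,m) \to_\star \Omega K(G_1\otimes G_2,(n+1)+m) \simeq K(G_2,m)\to_\star K(G_1\otimes G_2,n+m)$, which is an $n$-type by Proposition~\ref{prop:pointed-hlevel}, then discharge the basepoint coherence via Evan's trick (Lemma~\ref{lem:Evan}) and conclude by the defining equations of $\smile_k'$ on $\merid$ together with pointedness of $\sigma_{n+m}$. The only difference is expository: you make the truncation-level obstruction and its resolution more explicit than the paper does.
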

\begin{proof}
  When $n = 0$, the statement holds by definition. Let us consider the
  case $n \geq 1$. The maps we are comparing are of type $K(G_1,n) \to K(G_2,m)
  \to_\star \Omega (K(G_1 \otimes G_2 , (n + 1) + m))$. The
  type $$K(G_2,m) \to_\star \underset{\simeq K(G_1\otimes
    G_2,n+m)}{\underbrace{\Omega (K(G_1 \otimes G_2 , (n + 1) + m))}}$$ is an
  $n$-type by Proposition~\ref{prop:pointed-hlevel}, and hence we may
  use $K(G,n)$-elimination on $x$. Hence, we want to construct an equality of pointed functions
  \[(y\mapsto \ap{x\mapsto x \smile_k y}{\sigma_n(a)}) = (y \mapsto \sigma_{n+m}(a \smile_k' y))\]
  for $a : \widetilde{K}(G,n)$. By Lemma~\ref{lem:Evan}, it suffices to
  show this equality for underlying functions. This follows trivially:
\begin{align*}
  \ap{x\mapsto x \smile_k y}{\sigma_n(a)} &:= \ap{x\mapsto \trunc{x} \smile_k y}{\merid{a} \cdot (\merid{0_k})^{-1}} \\
  &= \ap{x\mapsto \trunc{x} \smile_k y}{\merid{a}} \cdot (\ap{x\mapsto \trunc{x} \smile_k y}{\merid{0_k}})^{-1}  \\
  &:= \sigma_{n+m}(a \smile_k' y) \cdot \sigma_{n+m}(0_k \smile_k' y)^{-1}\\
  &= \sigma_{n+m}(a \smile_k' y)
\end{align*}
\end{proof}

With Lemma \ref{lem:ap-cup}, associativity is straightforward to prove. 
\begin{proposition}
  The cup product is associative, i.e.\ the following diagram
  commutes:
\[\begin{tikzcd}[ampersand replacement=\&]
	{K(G_1,n)\times K(G_2,m) \times K(G_3,k)} \&\&\& {K(G_1\otimes (G_2 \otimes G_3),n+(m+k))} \\
	\&\&\& {K((G_1\otimes G_2) \otimes G_3,(n+m)+k)}
	\arrow["{x,y,z\mapsto x \smile_k (y \smile_k z)}", from=1-1, to=1-4]
	\arrow["{x,y,z\mapsto (x \smile_k y) \smile_k z}"', sloped, from=1-1, to=2-4]
	\arrow["\sim", sloped, from=2-4, to=1-4]
      \end{tikzcd}\]
\end{proposition}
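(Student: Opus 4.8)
The plan is to prove associativity by induction on $n$, the dimension of the first factor, generalising over $m$, $k$, and the three groups. Write $\alpha : (G_1 \otimes G_2) \otimes G_3 \to G_1 \otimes (G_2 \otimes G_3)$ for the associativity isomorphism of tensor products (obtained from its universal property); the vertical map in the diagram is then the functorial image $\alpha_{n+m+k}$ together with the identification $(n+m)+k = n+(m+k)$ of dimensions. As in the proofs of distributivity, the key move is to \emph{curry} and view both composites as functions of $x$ valued in the iterated pointed function type
$$K(G_2,m) \to_\star \big(K(G_3,k) \to_\star K(G_1 \otimes (G_2 \otimes G_3), n+m+k)\big).$$
This type is $n$-truncated (by Proposition~\ref{prop:pointed-hlevel} for the inner layer, together with its more general form \citep[Corollary 1]{BDR18} to absorb the nested pointed mapping space), so it is legitimate to establish the equality by $K(G_1,n)$-elimination (Figure~\ref{elim:KG}), reducing $x$ to a canonical $\trunc{a}$ with $a : \widetilde{K}(G_1,n)$. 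At each pointed layer, Lemma~\ref{lem:Evan} (Evan's trick) will let us upgrade homotopies of underlying maps to homotopies of \emph{pointed} maps, since Eilenberg--MacLane spaces are homogeneous.

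I would first dispatch the base case $n = 0$, which is the compatibility of $\alpha$ with the cup product when $x : G_1$ acts by the functorial $K(-,\cdot)$-image of $x \otimes (-)$; this is a routine induction on $m$ (and within it on $k$), using naturality of $\smile_k'$ in its second argument and the defining constructors of $\otimes$. For $n \geq 1$ I would proceed by suspension induction on $a : \widetilde{K}(G_1,n) = \Sigma\,\widetilde{K}(G_1,n-1)$ (respectively $L(G_1)$-induction when $n=1$). On the point constructors $\north$ and $\south$ (resp.\ $\star$) both composites evaluate to the zero map by pointedness of $\smile_k$, so these cases hold by $\refl$.

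The crux is the meridian constructor, where one must match $\ap{-}{\merid a}$ of the two composites for $a : \widetilde{K}(G_1,n-1)$. Evaluated at $(y,z)$, both sides reduce to $\sigma$ of a triple product one dimension lower: the left reduces to $\sigma_{(n-1)+m+k}\big(a \smile_k' (y \smile_k z)\big)$ directly from the definition of $\smile_k'$ on meridians, while the right — after pulling $\ap{-}{\merid a}$ successively through the outer cup product and the associator, invoking Lemma~\ref{lem:ap-cup} to pass through the inner cup product along $\sigma$, and using Lemma~\ref{lem:funct-susp-commute} to push $\alpha$ past $\sigma$ — reduces to $\sigma_{(n-1)+m+k}\big(\alpha((a \smile_k' y) \smile_k z)\big)$. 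Since $\sigma$ is a function, it then suffices to identify $a \smile_k' (y \smile_k z)$ with $\alpha((a \smile_k' y) \smile_k z)$, which is precisely the associativity statement one dimension lower and is supplied by the inductive hypothesis.

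The main difficulty I anticipate is bookkeeping rather than conceptual. Pulling $\ap{-}{\merid a}$ through the nested cup products requires care about which argument each $\ap$ differentiates and about the $(\merid{0_k})^{-1}$ correction hidden inside $\sigma$, and Lemmas~\ref{lem:ap-cup}~and~\ref{lem:funct-susp-commute} must be instantiated at exactly the right dimensions for the suspension indices to align. Moreover, because we are proving equalities of genuinely \emph{pointed} maps inside a doubly-iterated pointed function type, the basepoint coherences coming from the two $\to_\star$ layers and from each use of Evan's trick must be checked to agree at $0_k$; as in Proposition~\ref{prop:wedgeconn}, the practical art is to arrange the definitions so that these coherences hold by $\refl$.
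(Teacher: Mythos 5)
Your proposal is correct and follows essentially the same route as the paper's proof: induction on $n$ with the two composites curried into the doubly-iterated pointed function type, $n$-truncatedness of the codomain via Proposition~\ref{prop:pointed-hlevel}, reduction of the meridian case to the inductive hypothesis through Lemmas~\ref{lem:ap-cup} and~\ref{lem:funct-susp-commute}, and a base case $n=0$ handled by nested induction on $m$ and $k$ down to associativity of the tensor product. The only cosmetic difference is that you make the use of Evan's trick explicit where the paper leaves it implicit.
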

\begin{proof}
  We induct on $n$. Let us first consider the inductive step and save the base case for last. To this end, let $n \geq 1$. We may consider the two functions we are comparing
  as doubly pointed functions of type
  \[K(G_1,n) \to (K(G_2,m) \to_\star (K(G_3,k) \to_\star K(G_1 \otimes (G_2 \otimes G_3) , n + m + k)))\]
  Applying an appropriate variant of Proposition
  \ref{prop:pointed-hlevel}, the codomain of the above function type is
  an $n$-type. When $x$ is a point constructor of $K(G_1,n)$, the
  diagram in the statement commutes by $\refl$. For the higher
  constructor ($\merid$ or $\LoopG{}$, depending on the value of $n$),
  we are done if we can show, for $x : K(G_1,n-1)$, that
  \begin{align*}
    \ap{x \mapsto x \smile_k (y \smile_k z)}{\sigma_{n-1}(x)} = \ap{x \mapsto \alpha_{n+m+k} ((x \smile_k y) \smile_k z)}{\sigma_{n-1}(x)}
  \end{align*}
   where $\alpha : (G_1 \otimes G_2) \otimes G_3 \simeq G_1 \otimes
   (G_2 \otimes G_3)$ and, recall, $\alpha_{p} : K((G_1 \otimes G_2)
   \otimes G_3,p) \to K(G_1 \otimes (G_2 \otimes G_3),p)$ is the
   functorial action of $K(-,p)$ on $\alpha$. We have
  \begin{align*}
    \ap{x \mapsto x \smile_k (y \smile_k z)}{\sigma_{n-1}(x)} &=
    \sigma_{(n-1)+(m+k)}(x \smile_k (y \smile_k z)) & \text{ (Lemma~\ref{lem:ap-cup})} \\
    &= \sigma_{(n-1)+(m+k)}(\alpha_{(n-1)+(m+k)} ((x \smile_k y) \smile_k z)) & \text{ (Ind. hyp.)} \\
    &= \ap{\alpha_{n+m+k}}{\sigma_{((n-1) + m)+k}((x \smile_k y) \smile_k z)} &\text{ (Lemma~\ref{lem:funct-susp-commute})} \\
    &= \ap{\alpha_{n+m+k}}{\ap{x \mapsto x \smile_k z}{\sigma_{((n-1) + m)}(x \smile_k y)}} &\text{ (Lemma~\ref{lem:ap-cup})} \\
    &= \ap{\alpha_{n+m+k}}{\ap{x \mapsto x \smile_k z}{{\ap{x \mapsto x \smile y}{\sigma_{(n-1)}(x)}}}} &\text{ (Lemma~\ref{lem:ap-cup})} \\
    &= \ap{x \mapsto \alpha_{n+m+k} ((x \smile_k y) \smile_k z)}{\sigma_{n-1}(x)}
  \end{align*}
  which concludes the inductive step.

  Let us return to the base case, i.e. the case $n = 0$. In this case,
  we fix $g_1 : G_1$ and instead consider the functions $y,z \mapsto
  g_1 \smile_k(y\smile_k z)$ and $y,z \mapsto (g_1 \smile_k y)\smile_k
  z$ to be of type $$K(G_2,m) \to K(G_3,k) \to_\star K(G_1\otimes
  (G_2 \otimes G_3) , m + k).$$ We induct on $m$. The inductive step
  follows in the same way as the inductive step above. For the base
  case, we fix $y$ to be $g_2 : G_2$ and are left to prove $g_1
  \smile_k (g_2 \smile_k z) = (g_1 \smile_k g_2) \smile_k z$ for $z :
  K(G_3,k)$. Again, this is follows by induction on $k$. The base case
  is given by associativity of $G_1\otimes (G_2 \otimes G_3)$ and the
  inductive step follows in exactly the same way as before.
\end{proof}
Let us verify graded commutativity. Although our argument is a direct generalisation of that of \citet[Proposition 18]{BLM}, we remark that a similar albeit somewhat more high-level argument appears in~\citep[Section~4.3]{Warn2023}.
\begin{proposition}
  \label{prop:cup-comm}
  The cup product is graded commutative, i.e.\ the following diagram
  commutes:
\[\begin{tikzcd}[ampersand replacement=\&]
	{K(G_1,n)\times K(G_2,m)} \&\&\& {K(G_1\otimes G_2,n+m)} \\
	\&\&\& {K(G_2\otimes G_1,m+n)}
	\arrow["{x,y\hspace{1mm}\mapsto (-1)^{nm}(y \smile_k x)}"', sloped, from=1-1, to=2-4]
	\arrow["\smile_k", from=1-1, to=1-4]
	\arrow["\sim", sloped, from=2-4, to=1-4]
\end{tikzcd}\]
\end{proposition}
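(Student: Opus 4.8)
The plan is to follow the same inductive pattern as the associativity proof, using Lemma~\ref{lem:ap-cup} to trade a cup product of a suspended class for the suspension of a cup product, while the sign $(-1)^{nm}$ is generated by the diagonal flip of Lemma~\ref{lem:flip-id}. Writing $\tau : G_2 \otimes G_1 \simeq G_1 \otimes G_2$ for the swap isomorphism and $\tau_{n+m}$ for its functorial lift to $K(-,n+m)$, the goal is
\[ x \smile_k y = (-1)^{nm}\,\tau_{n+m}(y \smile_k x), \]
where $(-1)^{nm}$ abbreviates $-_k$ iterated $nm$ times. The base case $n = m = 0$ is exactly commutativity of the tensor product, $g_1 \otimes g_2 = \tau(g_2 \otimes g_1)$, which holds by construction of $G_1 \otimes G_2$, with the sign trivial since $nm = 0$.

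For the inductive step I would induct on $n$ (the case $n = 0$, $m \geq 1$ being handled symmetrically by inducting on $m$). As in the associativity proof, I regard the two maps under comparison as pointed functions of $y$ valued in $K(G_2,m) \to_\star K(G_1 \otimes G_2, n+m)$; by Proposition~\ref{prop:pointed-hlevel} this codomain is $n$-truncated, so its path types are $(n-1)$-types and we may combine truncation elimination with $K(G_1,n)$-elimination on $x$, while by Lemma~\ref{lem:Evan} it suffices throughout to compare underlying, non-pointed functions. On the point constructors of $K(G_1,n)$ both sides reduce to $0_k$. On the higher constructor we must compare $\ap{x \mapsto x \smile_k y}{\sigma_{n-1}(x)}$ with $\ap{x \mapsto (-1)^{nm}\tau_{n+m}(y \smile_k x)}{\sigma_{n-1}(x)}$ for $x : K(G_1,n-1)$. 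Lemma~\ref{lem:ap-cup} rewrites the former as $\sigma_{(n-1)+m}(x \smile_k y)$, and on the latter one pulls $\sigma$ out through the sign and through $\tau_{n+m}$, using that $\sigma$ is a homomorphism (Proposition~\ref{prop:+-hom}, Corollary~\ref{prop:--hom}) and that it commutes with functorial maps (Lemma~\ref{lem:funct-susp-commute}), so that the inductive hypothesis for $(n-1,m)$ applies---up to the reordering of the two suspension coordinates that now sit in parallel directions.

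The main obstacle, and the homotopical core, is precisely this reordering. The class being varied comes from the first factor, whereas in $y \smile_k x$ it sits in the second slot, so pushing $\sigma$ outward on the swapped side forces the corresponding right-handed analogue of Lemma~\ref{lem:ap-cup}; once both orders are expressed as suspensions of the same cup product, they differ by transposing two parallel loop coordinates, i.e.\ by a diagonal flip, which by Lemma~\ref{lem:flip-id} is inversion. This is what manufactures one factor of $(-1)$ each time a coordinate of the left factor is commuted past a coordinate of the right factor, accumulating to $(-1)^{nm}$ across the double induction. The residual work is bookkeeping: verifying that the sign and the swap $\tau_{n+m}$ interact coherently with $+_k$ and $\sigma$, and that all basepoint coherences degenerate to $\refl$ so that Evan's trick genuinely applies, ultimately reducing matters to the $n = m = 1$ building block where forming the cup square in the two orders differs exactly by a flip.
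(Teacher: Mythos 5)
Your proposal matches the paper's proof in all essentials: both run a double induction (the paper uses the measure $n+m$ with WLOG $n\geq m$, you use a lexicographic induction on $(n,m)$, which gives access to the same inductive hypotheses at $(n-1,m)$, $(n-1,m-1)$ and $(n,m-1)$), both reduce to underlying functions via Proposition~\ref{prop:pointed-hlevel}, wedge-connectivity elimination and Lemma~\ref{lem:Evan}, both push suspensions through the cup product with Lemma~\ref{lem:ap-cup}, and both extract the sign $(-1)^{nm}$ from the diagonal flip of Lemma~\ref{lem:flip-id}, bottoming out in the $n=m=1$ square. The only cosmetic difference is that the paper orders its chain of equalities so that only the left-handed Lemma~\ref{lem:ap-cup} is ever invoked (the right-handed version being simulated by an application of the inductive hypothesis), whereas you appeal to a right-handed analogue directly; the sign bookkeeping $1+m(n-1)+(n-1)(m-1)+(m-1)n\equiv nm \pmod 2$ is the same either way.
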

\begin{proof}
  We may, without loss of generality, assume that $n \geq m$. We
  induct on the quantity $n+m$. The base-case when $n = m = 0$ is
  trivial. Let us consider the two cases $n = m = 1$ and $n,m \geq 2$
  and omit the two cases when $n \geq 2$ and $m < 2$ as they
  follow in an entirely analogous manner. In what follows, let $c_p :
  K(G_2\otimes G_1,p) \to K(G_1 \otimes G_2,p)$ denote the commutator.

We start with the case $n = m = 1$. In this case, we would like to be
able to apply the set-elimination rule for $K(-,1)$. As before, fix $x
: K(G_1,1)$. We strengthen the goal by asking for \linebreak $x \smile_k (-)$ and
$y \mapsto (-1)^{nm}(y \smile_k x)$ to be equal as pointed functions
living in $K(G_2,1) \to_\star K(G_1\otimes G_2 , 2)$. The type of such
functions is a $1$-type, and hence any path type over it is a
set. Thus, we may apply set-elimination and assume that $x :
L(G_1)$. We may now repeat the argument for $y : K(G_2,n)$: we would
like $(-) \smile_k y$ and $x \mapsto (-1)^{nm}(y \smile_k x)$ to be equal
as pointed functions living in $L(G_1) \to_\star K(G_1\otimes G_2,
2)$. By an very similar argument to that of Proposition
\ref{prop:pointed-hlevel}, this pointed function type is again a
$1$-type, which justifies set-elimination being applied to
$y$. Combining this with Lemma \ref{lem:Evan}, it is enough to show
that
  \begin{align*}
    x \smile_k y = - c_{2}(y \smile_k x)
  \end{align*}
  for $x : L(G_1)$ and $y : L(G_2)$. We do this by $L(G_1)$
  induction. When either $x$ or $y$ is a point constructor, the
  identity holds by $\refl$. The remaining case amounts to (by
  evaluation of $c_2$) the following identity
  \begin{align*}
    \ap{x \mapsto \ap{x \smile_k (-)}{\LoopG{h}}}{\LoopG{g}} = \mathsf{flip}(\ap{x \mapsto \ap{x \smile_k (-)}{\LoopG{h}}}{\LoopG{g}}^{-1})
  \end{align*}
  which holds by Lemma~\ref{lem:flip-id}.

  We give a rough sketch of the case $n,m \geq 2$. We may apply
  $K(G_1,n)$-elimination again, by the same argument as in the
  previous case. The crucial step this time is verifying
  \begin{align}
    \label{eq:cup-comm}
    \ap{x \mapsto \ap{x \smile_k (-)}{\merid\,h}}{\merid\,g} = \ap{x
      \mapsto \ap{(-1)^{nm} (x \,\smile_k
        (-))}{\merid\,h}}{\merid\,g}
  \end{align}
We note that the above identity and the identities that follow only
are well-typed up to some coherences which we brush under the rug for
the sake of readability. We notice that \eqref{eq:cup-comm} follows if
we can show that for $x : K(G_1,n-1)$, $y : K(G_2,m-1)$, we have
\begin{align}
  \label{eq:cup-comm-alt}
    \ap{x \mapsto \ap{x \smile_k (-)}{\sigma_{m-1}(y)}}{\sigma_{n-1}(x)} = \ap{x
      \mapsto \ap{(-1)^{nm} (x \,\smile_k
        (-))}{\sigma_{m-1}(y)}}{\sigma_{n-1}(x)}
\end{align}
Let us, for further readability, omit the equivalence $c_{p}$
in the following equations. Some of the
following equalities hold up to multiplication by $(-1)^p$ for some $p
: \mathbb{N}$; this will be indicated in the right-hand column.
\begin{align*}
  \ap{a \mapsto \ap{a \smile_k (-)}{\sigma_{m-1}(y)}}{\sigma_{n-1}(x)}
  &= \ap{a \mapsto \ap{(-) \smile_k a}{\sigma_{n-1}(x)}}{\sigma_{m-1}(y)} & (-1)\\
  &= \ap{\sigma_{(n-1)+m}}{\ap{x \smile_k (-)}{\sigma_{m-1}(y)}} \\
  &= \ap{\sigma_{m+(n-1)}}{\ap{(-) \smile_k x}{\sigma_{m-1}(y)}} & m(n-1) \\
  &= \ap{\sigma_{(m-1)+(n-1)+1}}{\sigma_{(m-1)+(n-1)}(y \smile_k x)} \\
  &= \ap{\sigma_{{(n-1)+(m-1)+1}}}{\sigma_{{(n-1)+(m-1)}}(x \smile_k y)} & (n-1)(m-1) \\
  &= \ap{\sigma_{n+(m-1)}}{\ap{(-) \smile_k y}{\sigma_{n-1}(x)}} \\
  &= \ap{\sigma_{m+(n-1)}}{\ap{y \smile_k (-)}{\sigma_{n-1}(x)}} & (m-1)n \\
  &= \ap{a
      \mapsto \ap{(-) \,\smile
        a}{\sigma_{m-1}(y)}}{\sigma_{n-1}(x)}
\end{align*}

Since $(-1)+n(m-1) + (n-1)(m-1) + (n-1)m =_{\bZ/2\bZ} nm$, the above identity
holds up to multiplication by $(-1)^{nm}$, which gives
\eqref{eq:cup-comm-alt}.
\end{proof}

\subsection{The cup product with ring coefficients}
\label{sec:cupprodringcoeff}

We now consider the special case of $\smile_k \, : K(G_1,n) \times
K(G_2,m) \to K(G_1\otimes G_2,n + m)$ where $G_1 = G_2 = R$ for a (not
necessarily commutative) ring $R$. Let $\mathsf{mult}_{n} : K(R\otimes
R, n) \to K(R,n)$ be the map induced by
the multiplication $R \otimes R \to R$. Now, consider the
composition
\[\begin{tikzcd}[ampersand replacement=\&]
	{K(R,n)\times K(R,m)} \&\& {K(R\otimes R,n+m)} \&\& {K(R,n+m)}
	\arrow["{\smile_k}", from=1-1, to=1-3]
	\arrow["{\mathsf{mult}_{n+m}}", from=1-3, to=1-5]
\end{tikzcd}\]
Let us, with some overloading of notation, use $\smile_k :
K(R,n)\times K(R,m) \to K(R,n)$ to denote also the composition
above. Note that the above tensor product is a tensor product of
underlying abelian groups and not of rings, although this is of no
consequence for the construction. Using the results from Section~\ref{sec:cupprodgroupcoeff}, we easily arrive at the following results. Their proofs
all follow the same strategy and we will only sketch the proof of
Proposition \ref{prop:cup-comm-ring}.
\begin{proposition}
  \label{prop:cup-props}
  The cup product $\smile_k$ with ring coefficients is associative,
  left- and right-distributive and has $0_k$ as annihilating element.
\end{proposition}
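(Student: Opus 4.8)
The plan is to obtain each ring-coefficient law from its group-coefficient counterpart in Section~\ref{sec:cupprodgroupcoeff} by pushing the final map $\mathsf{mult}$ through the corresponding identity. To keep the two products apart, write $\smile_k^R$ for the ring-coefficient product of this subsection and reserve $\smile_k$ for the group-coefficient, tensor-valued one, so that $a \smile_k^R b = \mathsf{mult}(a \smile_k b)$ with $\mathsf{mult}$ taken in the appropriate degree. Two auxiliary facts carry the whole argument. First, the functorial action $\psi_p : K(G,p) \to K(G',p)$ of a group homomorphism $\psi$ is pointed by construction and additive, i.e.\ $\psi_p(a +_k b) = \psi_p(a) +_k \psi_p(b)$; for $p \geq 2$ this follows by reducing to the coordinate axes via Proposition~\ref{prop:wedgeconn}, where it is immediate from the unit equations~\eqref{eq1} and~\eqref{eq2} together with pointedness of $\psi_p$, and the cases $p \leq 1$ are a routine induction. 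Second, the group-coefficient cup product is natural in both arguments: for homomorphisms $\phi_1 : G_1 \to G_1'$ and $\phi_2 : G_2 \to G_2'$ one has $(\phi_1)_n(a) \smile_k (\phi_2)_m(b) = (\phi_1 \otimes \phi_2)_{n+m}(a \smile_k b)$. This naturality is established by induction on $n$ in the same manner as the other cup-product laws, the higher constructors being dispatched by Lemmas~\ref{lem:ap-cup} and~\ref{lem:funct-susp-commute}.

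Granting these, the three non-multiplicative laws are formal. For the annihilating element, $x \smile_k^R 0_k = \mathsf{mult}(x \smile_k 0_k) = \mathsf{mult}(0_k) = 0_k$ by group-coefficient annihilation and pointedness of $\mathsf{mult}$, and the symmetric identity is identical. For left-distributivity, apply $\mathsf{mult}$ to the group-coefficient identity $(x +_k y) \smile_k z = (x \smile_k z) +_k (y \smile_k z)$ and split the right-hand side using additivity of $\mathsf{mult}$; right-distributivity is symmetric.

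Associativity is the only law that invokes the ring structure. Set $\mu : R \otimes R \to R$ for the multiplication, so that $\mathsf{mult}_p = \mu_p$. Applying naturality in the second argument to the inner product, and in the first argument to the outer product, rewrites both composites as functorial actions applied to the purely group-theoretic triple product $(x \smile_k y) \smile_k z$: the left-associated product $(x \smile_k^R y) \smile_k^R z$ becomes $(\mu \circ (\mu \otimes \mathrm{id}_R))_{n+m+k}((x \smile_k y) \smile_k z)$, while $x \smile_k^R (y \smile_k^R z)$, after invoking the group-coefficient associator $\alpha : (R \otimes R) \otimes R \simeq R \otimes (R \otimes R)$, becomes $(\mu \circ (\mathrm{id}_R \otimes \mu) \circ \alpha)_{n+m+k}((x \smile_k y) \smile_k z)$. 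Since $K(-,p)$ is functorial and respects equality of homomorphisms, it suffices to prove $\mu \circ (\mu \otimes \mathrm{id}_R) = \mu \circ (\mathrm{id}_R \otimes \mu) \circ \alpha$ as homomorphisms $(R \otimes R) \otimes R \to R$. As the target $R$ is a set, these agree as soon as they do on generators $(r_1 \otimes r_2) \otimes r_3$, where the equation reads $(r_1 r_2) r_3 = r_1 (r_2 r_3)$, i.e.\ associativity of multiplication in $R$.

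The main obstacle is the naturality lemma for the group-coefficient cup product, which is the single ingredient not already recorded in Section~\ref{sec:cupprodgroupcoeff}; its proof repeats the dimension induction used for the earlier cup-product laws and is where all the real homotopical work sits. Once naturality is available, every ring law collapses to an identity of group homomorphisms into the set $R$, verified on tensor generators by the relevant ring axiom---which is exactly the common strategy underlying the remaining ring-coefficient results.
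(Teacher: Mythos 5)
Your proposal is correct and follows essentially the same strategy the paper uses (and only sketches, via the graded-commutativity case of Proposition~\ref{prop:cup-comm-ring}): push each group-coefficient law through $\mathsf{mult}$ and reduce the remaining obligation to an equality of homomorphisms out of an iterated tensor product, checked on generators using the corresponding ring axiom. You also correctly isolate the one ingredient the paper leaves implicit, namely naturality of the tensor-valued cup product in both arguments, which is indeed proved by the same dimension induction as the other cup-product laws.
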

\begin{proposition}
  \label{prop:cup-comm-ring}
  The cup product $\smile_k$ with coefficients in a commutative ring
  is graded-commutative.
\end{proposition}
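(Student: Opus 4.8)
The plan is to reduce the claim to the already-established graded commutativity of the group-coefficient cup product (Proposition~\ref{prop:cup-comm}), exploiting that commutativity of $R$ is exactly what is needed to make the coefficient swap disappear after post-composing with $\mathsf{mult}$. Recall that the ring cup product is by definition $x \smile_k y = \mathsf{mult}_{n+m}(x \smile_k y)$, where on the right $\smile_k$ denotes the group-coefficient cup product into $K(R \otimes R, n+m)$ and $\mathsf{mult}_{n+m} : K(R \otimes R, n+m) \to K(R,n+m)$ is the functorial image of the multiplication homomorphism $\mu : R \otimes R \to R$.

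First I would apply Proposition~\ref{prop:cup-comm} with $G_1 = G_2 = R$. Writing $c_{n+m} : K(R \otimes R, n+m) \to K(R \otimes R, n+m)$ for the commutator, i.e.\ the functorial image of the swap $\tau : R \otimes R \to R \otimes R$ sending $a \otimes b \mapsto b \otimes a$, this gives at the level of the group-coefficient product
\[ x \smile_k y = (-1)^{nm}\, c_{n+m}(y \smile_k x) \]
where I suppress the coherence identifying $K(-,m+n)$ with $K(-,n+m)$. Post-composing with $\mathsf{mult}_{n+m}$ then yields
\[ \mathsf{mult}_{n+m}(x \smile_k y) = \mathsf{mult}_{n+m}\big((-1)^{nm}\, c_{n+m}(y \smile_k x)\big). \]

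Next I would push $\mathsf{mult}_{n+m}$ inwards. Since $\mathsf{mult}_{n+m}$ is the functorial action of $K(-,n+m)$ on a group homomorphism, it preserves $+_k$ and $-_k$, and hence commutes with the sign $(-1)^{nm}$ (which is either the identity or $-_k$), moving it to the outside. The key remaining observation is that functoriality of $K(-,n+m)$ gives $\mathsf{mult}_{n+m} \circ c_{n+m} = (\mu \circ \tau)_{n+m}$, and because $R$ is commutative we have $\mu(\tau(a \otimes b)) = \mu(b \otimes a) = ba = ab = \mu(a \otimes b)$, so $\mu \circ \tau = \mu$ as homomorphisms $R \otimes R \to R$. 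Therefore $\mathsf{mult}_{n+m} \circ c_{n+m} = \mathsf{mult}_{n+m}$, and combining the two displayed identities yields
\[ \mathsf{mult}_{n+m}(x \smile_k y) = (-1)^{nm}\,\mathsf{mult}_{n+m}(y \smile_k x), \]
which is exactly the asserted graded commutativity of the ring cup product.

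The main obstacle is bookkeeping rather than mathematical depth: one must verify that the functorial map $\mathsf{mult}_{n+m}$ genuinely preserves the additive structure (so that the sign factors out cleanly), that functoriality of $K(-,p)$ under composition of homomorphisms holds so that $\mathsf{mult}_{n+m}\circ c_{n+m}$ can be rewritten as $(\mu\circ\tau)_{n+m}$, and that the coherences Proposition~\ref{prop:cup-comm} brushes under the rug—especially the transport identifying $K(-,m+n)$ with $K(-,n+m)$ and its interaction with $c_{n+m}$—are tracked correctly. Once these routine facts are discharged, the collapse $\mu \circ \tau = \mu$ afforded by commutativity of $R$ is what does all the real work, and no re-derivation of the sign $(-1)^{nm}$ is needed since it is inherited verbatim from the group case.
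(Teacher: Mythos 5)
Your proposal is correct and follows essentially the same route as the paper: the paper also factors the claim through Proposition~\ref{prop:cup-comm} for the left half and the identity $\mathsf{mult}_{n+m}\circ c = \mathsf{mult}_{m+n}$ (a consequence of commutativity of $R$) for the right half, presented as a two-square commutative diagram rather than equationally. No gap.
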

\begin{proof}
  Let $c_p$ denote the commutator $K(G\otimes H,p) \to K(H \otimes
  G,p)$. Consider the following diagram.
\[\begin{tikzcd}[ampersand replacement=\&]
	{K(R,n)\times K(R,m)} \&\& {K(R\otimes R,n+m)} \&\& {K(R,n+m)} \\
	{K(R,m)\times K(R,n)} \&\& {K(R\otimes R,m+n)} \&\& {K(R,m+n)}
	\arrow["{\smile_k}", from=1-1, to=1-3]
	\arrow[from=1-1, to=2-1]
	\arrow["{\smile_k}", from=2-1, to=2-3]
	\arrow["{\mathsf{mult}_{n+m}}", from=1-3, to=1-5]
	\arrow["{\mathsf{mult}_{m+n}}", from=2-3, to=2-5]
	\arrow["{(-1)^{nm}}", from=1-5, to=2-5]
	\arrow["{c\,\circ\,(-1)^{nm}}"', from=1-3, to=2-3]
\end{tikzcd}\]
The statement is that the outer square commutes. By Proposition
\ref{prop:cup-comm} the left-square commutes. It is easy to verify,
using commutativity of $R$, that $\mathsf{mult}_{n+m} \circ c =
\mathsf{mult}_{m+n}$. Hence, the right-square commutes, and we are
done.
\end{proof}

Finally, we note that we now have access to a neutral element,
i.e.\ $1_R : K(R,0)$ which is a left and right unit for $\smile_k$:

\begin{proposition}
  For $x : K(R,n)$, we have $x \smile_k 1_R = x = 1_R \smile_k x$.
\end{proposition}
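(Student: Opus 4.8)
The plan is to prove the two unit laws separately, since the cup product $\smile_k$ is defined by recursion on its \emph{first} argument only. The left unit $1_R \smile_k x = x$ turns out to be essentially free, whereas the right unit $x \smile_k 1_R = x$ requires an induction on $n$. Note that, since $R$ is not assumed commutative, we cannot deduce one law from the other via graded commutativity (Proposition~\ref{prop:cup-comm-ring}), so both must be established directly.

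For $1_R \smile_k x$ I would exploit that $1_R : K(R,0)$ lives in dimension $0$. By the base case of the group-coefficient cup product, the map $1_R \smile_k (-)$ is the functorial action $(1_R \otimes -)_n : K(R,n) \to K(R\otimes R,n)$ of $K(-,n)$ on the homomorphism $r \mapsto 1_R \otimes r$, and the ring cup product postcomposes this with $\mathsf{mult}_n$. Hence $1_R \smile_k (-) = \mathsf{mult}_n \circ (1_R \otimes -)_n$. Using that $K(-,n)$ is functorial (it preserves identities and composition), this equals the functorial action of $K(-,n)$ on the composite homomorphism $\mathsf{mult} \circ (1_R \otimes -) : R \to R$. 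By the left unit law of $R$ this composite is $\mathsf{id}_R$, so $1_R \smile_k (-) = (\mathsf{id}_R)_n = \mathsf{id}_{K(R,n)}$, giving $1_R \smile_k x = x$. Crucially, this argument uses only the left unit law of $R$, not commutativity.

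For $x \smile_k 1_R = x$ I would induct on $n$. When $n = 0$ the statement is exactly the right unit law $x \cdot 1_R = x$ of $R$. For $n \geq 1$, the family $P(x) := (x \smile_k 1_R = x)$ is a family of $(n-1)$-types, so by the $n$-type elimination principle for $K(R,n)$ (Figure~\ref{elim:KG}) it suffices to produce a section over the canonical elements $\trunc{a}$ with $a : \widetilde{K}(R,n)$, which I would construct by induction on $a$. On the point constructors $\trunc{a}$ reduces to $0_k$ and $\trunc{a} \smile_k 1_R$ reduces to $0_k$ as well (pointedness of $\smile_k$ in its first argument), so these cases are immediate, with the usual correction by $\ap{\trunc{-}}{\merid{0_k}}$ in the $\south$ case. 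The heart of the argument is the path constructor ($\LoopG{g}$ when $n = 1$, $\merid{b}$ when $n \geq 2$): after rewriting the generator in terms of $\sigma_{n-1}$, I would reduce to comparing $\ap{x \mapsto x \smile_k 1_R}{\sigma_{n-1}(z)}$ with $\ap{\mathsf{id}}{\sigma_{n-1}(z)} = \sigma_{n-1}(z)$ for $z : K(R,n-1)$.

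This last comparison is where Lemma~\ref{lem:ap-cup} enters, in its ring-coefficient form: combining the group-coefficient statement of Lemma~\ref{lem:ap-cup} (with $m = 0$) with the compatibility of $\mathsf{mult}$ and $\sigma$ from Lemma~\ref{lem:funct-susp-commute}, one obtains $\ap{x \mapsto x \smile_k 1_R}{\sigma_{n-1}(z)} = \sigma_{n-1}(z \smile_k 1_R)$, and the induction hypothesis $z \smile_k 1_R = z$ turns the right-hand side into $\sigma_{n-1}(z)$, matching $\ap{\mathsf{id}}{\sigma_{n-1}(z)}$. I expect the main obstacle to be the bookkeeping in this path-constructor step: translating the statement that the two sections agree on $\merid{b}$ into a filler of the required square over $\merid{b}$, reconciling the basepoint corrections at $\north$ and $\south$ with the $\sigma$-computation. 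This is routine cubical manipulation of exactly the kind already carried out in Lemma~\ref{lem:ap-cup} and the associativity proof, but it is the only genuinely technical point; everything else is forced.
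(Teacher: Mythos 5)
Your proposal is correct and matches the paper's (one-line) proof, which is exactly ``a straightforward induction on $n$ using the recursive definition of the cup product'': your right-unit argument via Lemma~\ref{lem:ap-cup} (with $m=0$) together with Lemma~\ref{lem:funct-susp-commute} and the induction hypothesis is the intended content of that induction. Your treatment of the left unit via functoriality of $K(-,n)$ applied to $\mathsf{mult}\circ(1_R\otimes -) = \mathsf{id}_R$ is a clean way to package that half (the needed facts that $K(-,n)$ preserves composition and identities are themselves easy inductions), and your observation that graded commutativity cannot be invoked for noncommutative $R$ is apt.
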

\begin{proof}
  By a straightforward induction on $n$, using the recursive
  definition of the cup product.
\end{proof}

\section{Cohomology}
\label{sec:cohomology}

For the construction of cohomology groups, we rely on
\citet{BrownRepr} representability: given $n:\mathbb{N}$, an abelian group $G$
and a type $X$, we define the \emph{nth
  cohomology group of X with coefficients in $G$} by\footnote{Here, one could more generally have chosen a dependent version by letting $H^n(X,G_{(-)}) := \truncT{(x : X) \to K(G_x,n)}_0$ for any family of abelian groups $G_x$ over $X$ as is done by e.g.~\citet[Definition 5.4.2]{FlorisPhd}. We choose the non-dependent version for ease of presentation, although everything which follows can be read with the dependent version in mind.}
\begin{align*}
  H^n(X,G) := \truncT{X \to K(G,n)}_0
\end{align*}

This construction is clearly contravariantly functorial by
pre-composition. The group structure on $K(G,n)$ naturally carries
over to $H^n(X,G)$ by point-wise application of $+_k$ and $-_k$. The
neutral element is, in both cases, the constant map $\_ \, \mapsto
0_k$. In what follows, we will denote addition, inversion and the
neutral element in $H^n(X,G)$ by $+_h$, $-_h$ and $0_h$ respectively.
Explicitly, they are defined by
\begin{align*}
  \trunc{f} +_h \trunc{g} &:= \trunc{x \mapsto f(x) +_k g(x)}\\
  -_h \trunc{f} &:= \trunc{x \mapsto -_k f(x)}\\
  0_h &:= \trunc{\_ \, \mapsto 0_k}
\end{align*}

In the same way, our construction of the cup product also carries over to cohomology groups. In particular, the cup product with ring coefficients in
Section~\ref{sec:cupprodringcoeff} gives us a cup product on cohomology
\begin{align*}
  \smile \, : H^n(X,R) \times H^m(X,R) \to H^{n+m}(X,R)
\end{align*}
satisfying the usual graded ring axioms and, when
$R$ is commutative, graded commutativity. In addition, it induces a multiplication on
$H^*(X,R) = \oplus_{n : \mathbb{N}} H^n(X,R)$, turning it into a
graded-commutative (assuming $R$ is commutative) ring known as the \emph{cohomology ring} of $X$
with coefficients in $R$. This was formalised in \CubicalAgda
by~\citet{CohomRings23}.
This gives an even more fine-grained invariant than just the
cohomology groups $H^n(X,G)$, as it can help to distinguish spaces for
which all cohomology groups agree. As a classic example which was
also considered by \citet{CohomRings23}, consider
Figure~\ref{fig:torusandmickeymouse} which depicts a torus and the
`Mickey Mouse space' consisting of a sphere glued together with two
circles.

\begin{figure}[h!]
  \centering
  \begin{mathpar}
    \begin{tikzpicture}[scale=0.5]
      \useasboundingbox (-3,-1.5) rectangle (3,1.5);
      \draw[color=blue!90] (0,0) ellipse (3 and 1.5);
      \begin{scope}
        \clip (0,-1.8) ellipse (3 and 2.5);
        \draw[color=blue!90] (0,2.2) ellipse (3 and 2.5);
      \end{scope}
      \begin{scope}
        \clip (0,2.2) ellipse (3 and 2.5);
        \draw[color=blue!90] (0,-2.2) ellipse (3 and 2.5);
      \end{scope}
      \draw[color=blue!90] (0,-1.5) to[bend right=100] (0,-0.3);
      \draw[color=blue!90,densely dotted] (0,-1.5) to[bend left=100] (0,-0.3);
    \end{tikzpicture}
    \and
    \begin{tikzpicture}[scale=0.5]
      \draw[color=blue!90](0.5,0) circle (1.5);

      \draw[color=blue!90](0.5,1.55) circle (0.6);
      \draw[color=blue!90](-2.35,1.55) circle (0.6);
      \draw[color=blue!90] (-2.4,0) to[bend right=40] (0.58,0);
      \draw[color=blue!90,densely dotted] (-2.4,0) to[bend left=40] (0.58,0);
    \end{tikzpicture}
  \end{mathpar}
  \caption{$\mathbb{T}^2$ and $\mathbb{S}^2 \vee \mathbb{S}^1 \vee \mathbb{S}^1$.}
  \label{fig:torusandmickeymouse}
\end{figure}

Let $X$ be either of the two spaces in the figure; one can prove that
$H^0(X,\bZ) \cong \bZ$ as they are both connected, $H^1(X,\bZ) \cong \bZ
\times \bZ$ as they each have two 2-dimensional holes (the circle in
the middle of the torus and the one going around the interior, and the
`ears' of Mickey), and $H^2(X,\bZ) \cong \bZ$ as they have one
3-dimensional hole each (the interior of the torus and Mickey's
head). Furthermore, their higher cohomology groups are all trivial as
they do not have any further higher dimensional cells. The cohomology
groups are hence insufficient to tell the spaces apart. However, one
can show that the cup product on $\mathbb{S}^2 \vee \mathbb{S}^1 \vee
\mathbb{S}^1$ is trivial, while for $\mathbb{T}^2$ it is not. So
$H^*(\mathbb{S}^2 \vee \mathbb{S}^1 \vee \mathbb{S}^1,\bZ) \not \cong
H^*(\mathbb{T}^2,\bZ)$, which again means that the spaces cannot be
equivalent (see Section \ref{subsec:proofsbycomp} for details on how this can be proved by computer computation in \CubicalAgda).

\subsection{Reduced cohomology}
\label{sec:reduced}

When $X$ is pointed, we may also define \emph{reduced} cohomology
groups by simply requiring all functions to be pointed:
\begin{align*}
  \widetilde{H}^n(X,G) := \truncT{X \to_\star K(G,n)}_0
\end{align*}

Just like before, the additive structure on $K(G,n)$ carries over directly
to $\widetilde{H}(X,G)$. As we will soon see, this definition is
equivalent to the non-reduced version when $n\geq 1$ but has the
advantage of vanishing for connected spaces when $n = 0$.

\begin{proposition}
  The first projection (sending a pointed map to its underlying
  function) induces an isomorphism
  $\widetilde{H}^n(X,G) \cong H^n(X,G)$ for $n \geq 1$.
\end{proposition}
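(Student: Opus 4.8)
The plan is to show that the map induced by the first projection $U : (X \to_\star K(G,n)) \to (X \to K(G,n))$, which I will write $U_*$, is an isomorphism of abelian groups. It is a homomorphism by construction, since the additive structures on $\widetilde{H}^n(X,G)$ and $H^n(X,G)$ are defined by the same pointwise operations; hence it suffices to prove that $U_*$ is a bijection. Both injectivity and surjectivity are mere propositions, so throughout I may freely untruncate the $0$-truncations occurring in the definitions of $\widetilde{H}^n(X,G)$ and $H^n(X,G)$.

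For surjectivity, fix $\trunc{f} : H^n(X,G)$ with $f : X \to K(G,n)$; it suffices to produce a pointed map whose underlying function is $f$, i.e.\ a path $f(\star_X) = 0_k$. Since $n \geq 1$, Proposition~\ref{prop:K-connected} gives that $K(G,n)$ is $0$-connected, so $f(\star_X) = 0_k$ is merely inhabited. As surjectivity is a proposition, I may extract an actual path $q$, and then $\trunc{(f,q)}$ is a preimage since $U(f,q) = f$ definitionally.

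For injectivity, suppose $(f,p),(g,q) : X \to_\star K(G,n)$ satisfy $U_*\trunc{(f,p)} = U_*\trunc{(g,q)}$, i.e.\ $\trunc{f} = \trunc{g}$ in $H^n(X,G)$, which unfolds to a mere equality of the underlying unpointed functions. The goal $\trunc{(f,p)} = \trunc{(g,q)}$ is a proposition, so I may assume a genuine path $f = g$. Here lies the main obstacle: a path between the underlying functions does not by itself yield a path of pointed maps, since the latter carries an additional coherence relating $p$, $q$, and the homotopy at the basepoint. This is precisely the gap closed by Evan's trick (Lemma~\ref{lem:Evan}), which applies whenever the codomain is homogeneous. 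And $K(G,n)$ is homogeneous: for each $a : K(G,n)$ the map $y \mapsto a +_k y$ is an equivalence sending $0_k$ to $a +_k 0_k = a$, hence a pointed equivalence $(K(G,n),0_k) \simeq_\star (K(G,n),a)$ in the sense of Definition~\ref{def:homogeneous}. Thus Lemma~\ref{lem:Evan} upgrades $f = g$ to $(f,p) = (g,q)$, and applying $\trunc{-}$ closes the goal.

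I expect the injectivity step, and specifically the appeal to homogeneity of $K(G,n)$ that licenses Evan's trick, to be the only genuinely delicate point; the remaining manipulations are routine, with the connectivity of $K(G,n)$ (Proposition~\ref{prop:K-connected}) supplying the one nontrivial input for surjectivity. A more classical alternative would run the long exact sequence of the evaluation fibration $(X \to K(G,n)) \xrightarrow{f \mapsto f(\star_X)} K(G,n)$, whose fibre over $0_k$ is $X \to_\star K(G,n)$ and which is split by the constant maps, so that connectedness of the base together with triviality of the connecting map yields the isomorphism; but the direct argument above stays closer to the tools already developed here.
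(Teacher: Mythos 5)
Your proof is correct: both the surjectivity step (extracting a path $f(\star_X) = 0_k$ from the $0$-connectedness of $K(G,n)$, licensed because surjectivity is a proposition) and the injectivity step (untruncating $\trunc{f}=\trunc{g}$ to a genuine path of underlying functions and then invoking Evan's trick via homogeneity of $K(G,n)$) go through, and your justification of homogeneity via $y \mapsto a +_k y$ matches what the paper has already established. The paper uses the same two key ingredients---connectedness and Lemma~\ref{lem:Evan}---but packages them differently: rather than arguing bijectivity propositionally, it writes down an explicit inverse $F^{-1}\trunc{f} := \trunc{x \mapsto f(x) -_k f(\star_X)}$ (pointed automatically by the cancellation law) and checks the two round trips, using connectedness only to verify $F \circ F^{-1} = \mathsf{id}$ and Evan's trick only to reduce $F^{-1}\circ F = \mathsf{id}$ to the same computation. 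The practical difference is that the paper's inverse is a concrete term with reasonable computational behaviour, in keeping with the paper's emphasis on normalisation in \CubicalAgda, whereas your argument recovers the inverse only indirectly from contractibility of fibres; on the other hand, your decomposition is arguably cleaner to read and makes explicit exactly where each hypothesis ($n \geq 1$ for connectedness, homogeneity for the coherence) is used.
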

\begin{proof}
  Let $F$ denote the function in question. The fact that it is a
  homomorphism is trivial. We may construct its inverse $F^{-1} :
  H^n(X,G) \to \widetilde{H}^n(X,G)$ explicitly by
  \begin{align*}
    F^{-1}\,\trunc{f}:= \trunc{ x \mapsto f(x) -_k f(\star_X)}
  \end{align*}
  In order to see that $F(F^{-1} \trunc{f}) = \trunc{f}$, we note that
  this is a proposition. By connectedness of $K(G,1)$, we may
  assume we have a path $f(\star_X) = 0_k$. Hence
  \begin{align}\label{eq:red-cohom}
    F(F^{-1}\,\trunc{f}):= \trunc{ x \mapsto f(x) -_k f(\star_X)} = \trunc{f}
  \end{align}
  For the other direction, we need to check that
  \begin{align*}
    F^{-1}(F\,\trunc{(f,p)}) = \trunc{(f,p)}
  \end{align*}
  By Lemma \ref{lem:Evan}, we only need to verify that the first
  projections agree, and we are reduced to the second equality
  in~\eqref{eq:red-cohom}, which we have already verified.
\end{proof}

\begin{proposition}
  $\widetilde{H}^0(X,G) \oplus G \cong H^0(X,G)$.
\end{proposition}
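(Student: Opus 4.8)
The plan is to exploit the fact that, for $n = 0$, every type in sight is a set, so both cohomology groups collapse to plain function types. Since $K(G,0) = G$ is a set, the $0$-truncation in the definition of $H^0(X,G)$ is redundant, giving $H^0(X,G) \cong (X \to G)$. Likewise, the type of pointed maps $X \to_\star G$ is again a set (it is a $\Sigma$-type of the set $X \to G$ over the family $f \mapsto (f(\star_X) = 0_G)$ of propositions), so $\widetilde{H}^0(X,G) \cong (X \to_\star G)$. Under these identifications the group operations are just the pointwise operations inherited from $G$, so it suffices to exhibit a group isomorphism $(X \to_\star G) \oplus G \cong (X \to G)$, which is simply the splitting of a function by its value at the basepoint.

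First I would define the candidate map $\Phi : H^0(X,G) \to \widetilde{H}^0(X,G) \oplus G$ sending $f$ to the pair consisting of the reduced function $x \mapsto f(x) -_k f(\star_X)$, pointed by the evident path $f(\star_X) -_k f(\star_X) = 0_G$, together with the element $f(\star_X) : G$. Its inverse $\Psi$ takes a pair $(g,c)$, with $g$ pointed, to the function $x \mapsto g(x) +_k c$. Both round-trips are then immediate abelian-group calculations: from $f$ one recovers $x \mapsto (f(x) -_k f(\star_X)) +_k f(\star_X) = f(x)$; from $(g,c)$ one computes that the resulting function has value $c$ at $\star_X$ (using $g(\star_X) = 0_G$) and reduced part $x \mapsto (g(x) +_k c) -_k c = g(x)$, so that $\Phi(\Psi(g,c)) = (g,c)$.

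It then remains to check that $\Phi$ is a homomorphism, which is where commutativity of $G$ enters: the basepoint component $f \mapsto f(\star_X)$ is manifestly additive, and the reduced component is additive precisely because $(f_1(x) +_k f_2(x)) -_k (f_1(\star_X) +_k f_2(\star_X)) = (f_1(x) -_k f_1(\star_X)) +_k (f_2(x) -_k f_2(\star_X))$ after rearranging with the abelian group laws.

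I do not expect a genuine obstacle here. Because everything takes place in sets there are no higher coherences to manage, and in particular the pointedness datum carried by elements of $X \to_\star G$ never causes trouble: any two proofs of $h(\star_X) = 0_G$ agree since $G$ is a set, which is the degenerate, set-level instance of the phenomenon handled in general by Lemma~\ref{lem:Evan}. The only content is therefore the bookkeeping of the basepoint splitting together with the verification of the abelian group laws recorded above.
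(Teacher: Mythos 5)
Your proposal is correct and matches the paper's proof essentially exactly: the paper likewise splits a function $f : X \to G$ into the reduced part $x \mapsto f(x) -_G f(\star_X)$ and the basepoint value $f(\star_X)$, with inverse $(g,c) \mapsto (x \mapsto g(x) +_G c)$, and dispatches the round-trips and the homomorphism property by the abelian group laws on $G$. Your additional remark that the set-truncations are redundant at level $0$ is a harmless presentational refinement of the paper's "after applying the appropriate set truncations."
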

\begin{proof}
  We define $\phi : (X \to_\star G) \times G \to (X \to G)$ by
  \[
  \phi(f,g) := x \mapsto f(x) +_G g
  \]
  and $\psi : (X \to G) \to (X \to_\star G) \times G$ by
  \[
  \psi(f) := ((x \mapsto f(x) -_G f(\star_X)) , f(\star_X))
  \]
  The fact that these maps cancel out follows immediately from the
  group laws on $G$. This induces, after applying the appropriate set truncations, an equivalence $H^0(X,G) \simeq
  \widetilde{H}^0(X,G) \oplus G$. The fact that it is an isomorphism
  is also trivial using the group laws on $G$.
\end{proof}


\subsection{Eilenberg-Steenrod axioms for cohomology}
\label{sec:eilenbergsteenrodaxioms}

In order to verify that our cohomology theory is sound, let us verify
the \emph{Eilenberg-Steenrod Axioms}~\citep{eilenberg1952foundations}
for the reduced version of cohomology $\widetilde{H}^n(X,G)$. These
axioms have been studied previously in HoTT by
\citet{ShulmanBlog13,CavalloMsc15,BuchholtzFavonia18,FlorisPhd}. To
state the \textbf{Exactness} axiom, we need to introduce cofibres
(also known as \emph{mapping cones}).

\begin{definition}
  Given a function $f : X \to Y$, the cofibre of $f$, denoted
  $\cofib{f}$, is pushout of the span
  $\mathbbm{1} \leftarrow X \xrightarrow{f} Y$. We will use the name
  $\cfcod$ for the inclusion $\mathsf{inr}: Y \to \cofib{f}$.
\[\begin{tikzcd}[ampersand replacement=\& , row sep = small]
	X \&\& Y \\
	\\
	{\mathbbm{1}} \&\& {\mathsf{cofib\,f}}
	\arrow["f", from=1-1, to=1-3]
	\arrow[from=1-1, to=3-1]
	\arrow[from=3-1, to=3-3]
	\arrow["{\mathsf{cfcod}}", from=1-3, to=3-3]
	\arrow["\lrcorner"{anchor=center, pos=0.125, rotate=180}, draw=none, from=3-3, to=1-1]
\end{tikzcd}\]
\end{definition}

We will also need \emph{wedge sums}.

\begin{definition}
  Given a family of pointed types $X_i$ over an indexing type $I$, the wedge
  sum of $X_i$, denoted $\bigvee_i X_i$, is the cofibre of the
  inclusion $j : I \to \Sigma_{i:I}X_i$ defined by
  $j(i) = (i,\star_{X_i})$.
\end{definition}

From a constructive point of view, we will primarily be interested in
wedge sums indexed by a type satisfying the set-level axiom of choice
(see e.g.\ \citep[Section 3.2.3]{CavalloMsc15}, \citep[Section
5.4]{FlorisPhd}, and \citep[Section 6]{BuchholtzFavonia18} for further
discussions of this).

\begin{definition}
  A type $I$ is said to satisfy the set-level axiom of choice
  ($\mathsf{AC}_0$) if for any family of types $X_i$ over $I$, the
  canonical map
  \[
  \truncT{(i : I) \to X_i}_0 \to \left((i : I) \to \truncT{X_i}_0\right)
  \]
  is an equivalence.
\end{definition}
In particular, any finite set $I$ satisfies $\mathsf{AC}_0$. We may
now state the Eilenberg-Steenrod axioms for reduced cohomology.
\begin{definition}
  A $\bZ$-indexed family of contravariant functors
  $E^n : \mathcal{U}_\star \to \mathsf{AbGroup}$ is said to be an
  ordinary, reduced cohomology theory if the following axioms hold.
  \begin{itemize}
  \item \textbf{Suspension:} there is an isomorphism
    $E^n(X) \cong E^{n+1}(\Sigma\,X)$ natural in $X$.
  \smallskip
  \item \textbf{Exactness:} given a function $f : X \to_\star Y$,
    there is an exact sequence
  \begin{align*}
    E^n(\cofib{f}) \xrightarrow{E^n \,\cfcod} E^n(Y) \xrightarrow{E^n\,f} E^n(X)
  \end{align*}
  \item \textbf{Dimension:} for $n : \mathbb{Z}$ with $n \neq 0$, we have
  $E^n(\mathbb{S}^0) \cong \mathbbm{1}$.
  \smallskip
\item \textbf{Additivity:} for $n : \mathbb{Z}$ and a family of
  pointed types $X_i$ over a type $I$ satisfying $\mathsf{AC}_0$, the
  map $E^n(\bigvee_i X_i) \to \Pi_{i:I}(E^n(X_i))$ induced by the
  functorial action of $E^n$ on the inclusion
  $X_i \to \bigvee_{i:I} X_i$ is an isomorphism of groups.
  \end{itemize}
\end{definition}

The following theorem was originally proved in Book HoTT by
\citet{CavalloMsc15}. We spell out some of the details for our setup here. In what follows, let $\widetilde{H}^n(X,G) := \mathbbm{1}$ for $n < 0$.

\begin{therm}
  Given an abelian group $G$, the reduced cohomology theory
  $\widetilde{H}^n(-,G)$ satisfies the Eilenberg-Steenrod Axioms.
\end{therm}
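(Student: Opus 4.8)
The plan is to verify each of the four axioms separately, drawing on results already established in the excerpt. Most of the work reduces to the equivalence $\sigma_n : K(G,n) \simeq \Omega K(G,n+1)$ from Theorem~\ref{thm:deloop} and the homogeneity of Eilenberg-MacLane spaces, which lets us apply Evan's trick (Lemma~\ref{lem:Evan}) freely when manipulating pointed maps.

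\textbf{Suspension.} This is the heart of the matter and I expect it to be the main obstacle. The goal is a natural isomorphism $\widetilde{H}^n(X,G) \cong \widetilde{H}^{n+1}(\Sigma X, G)$. The plan is to use the loop-space/suspension adjunction for pointed maps together with Theorem~\ref{thm:deloop}. Concretely, a pointed map $\Sigma X \to_\star K(G,n+1)$ is determined by its action on the meridians, which corresponds via the adjunction to a pointed map $X \to_\star \Omega K(G,n+1)$, and postcomposing with the inverse of $\sigma_n$ gives a pointed map $X \to_\star K(G,n)$. I would make this precise as an equivalence of pointed mapping types
\[
(\Sigma X \to_\star K(G,n+1)) \simeq (X \to_\star \Omega K(G,n+1)) \simeq (X \to_\star K(G,n)),
\]
where the second step is postcomposition with $\sigma_n^{-1}$, and then apply $\truncT{-}_0$. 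The hard part is checking that the resulting map is a \emph{group} homomorphism and that it is natural in $X$; the group structure check will need Proposition~\ref{prop:+-hom} (that $\sigma_n$ is additive), and naturality follows from functoriality of the adjunction. For $n < 0$ the statement is degenerate since both sides are $\mathbbm{1}$.

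\textbf{Exactness and Dimension.} For exactness, given $f : X \to_\star Y$, the cofibre sits in a pushout square, and the plan is to show the sequence $\widetilde{H}^n(\cofib{f}) \to \widetilde{H}^n(Y) \to \widetilde{H}^n(X)$ is exact at $\widetilde{H}^n(Y)$. The inclusion $\widetilde{H}^n(\cofib f) \to \widetilde{H}^n(Y)$ picks out exactly those classes that extend over the cofibre, and by the universal property of the pushout a pointed map $Y \to_\star K(G,n)$ extends to $\cofib f$ precisely when its composite with $f$ is null-homotopic (as a pointed map); this is a direct unfolding of the pushout recursion principle, using that $\mathbbm{1} \to_\star K(G,n)$ is contractible. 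For dimension, I would compute $\widetilde{H}^n(\mathbb{S}^0,G)$ directly: since $\mathbb{S}^0 = \mathsf{Bool}$ is the pointed two-point set, a pointed map $\mathbb{S}^0 \to_\star K(G,n)$ is just a choice of image for the non-basepoint, so $\widetilde{H}^n(\mathbb{S}^0,G) \cong \truncT{K(G,n)}_0$, which is contractible for $n \geq 1$ by $(n-1)$-connectedness (Proposition~\ref{prop:K-connected}) and trivial for $n < 0$ by convention.

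\textbf{Additivity.} For a family $X_i$ over an index type $I$ satisfying $\mathsf{AC}_0$, the plan is to use the universal property of the wedge sum: a pointed map $\bigvee_i X_i \to_\star K(G,n)$ is equivalent to a family of pointed maps $(i : I) \to (X_i \to_\star K(G,n))$, since the wedge is a pushout gluing all the basepoints. Applying $\truncT{-}_0$ and invoking $\mathsf{AC}_0$ to commute the truncation past the $\Pi$ over $I$ then yields the isomorphism $\widetilde{H}^n(\bigvee_i X_i, G) \cong \Pi_{i:I} \widetilde{H}^n(X_i, G)$. One then checks this is a group homomorphism, which is immediate since $+_h$ is defined pointwise and the equivalence is just rearranging components. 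Throughout, the main subtlety is bookkeeping of basepoint coherences in the pointed mapping types, which Lemma~\ref{lem:Evan} handles by reducing equalities of pointed maps to equalities of underlying maps whenever the codomain is homogeneous---and every $K(G,n)$ is homogeneous, being an H-space with an invertible loop structure.
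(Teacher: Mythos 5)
Your proposal follows essentially the same route as the paper for all four axioms: the $\Sigma \dashv \Omega$ adjunction combined with Theorem~\ref{thm:deloop} for suspension, unfolding the pushout recursion principle of the cofibre for exactness, $(n-1)$-connectedness of $K(G,n)$ for dimension, and the universal property of the wedge plus $\mathsf{AC}_0$ for additivity. The one slip is the boundary case $n=-1$ of the suspension axiom: there the right-hand side is $\widetilde{H}^0(\Sigma X,G)$, which is \emph{not} $\mathbbm{1}$ by the convention (that only covers strictly negative degrees), so you must actually prove it vanishes --- the paper does this by observing that $\Sigma X$ is $0$-connected, whence every pointed map into the set $G$ is constantly $0_G$.
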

\begin{proof}
  We verify that $\widetilde{H}^n(-,G)$ satisfies the four axioms.
  \begin{itemize}
  \item \textbf{Suspension:} this axiom is trivially satisfied when
    $n < -1$. When $n = -1$, we need to verify that
    $\widetilde{H}^0(\Sigma\,X,G)$ is trivial, which follows
    immediately by connectedness of $\Sigma\,X$.
    For $n \geq 0$, the suspension axiom follows
    immediately from the fact that $\Sigma$ and $\Omega$ are adjoint.
    We have
  \begin{align*}
    \widetilde{H}^{n+1}(\Sigma\,X,G) &= \truncT{\Sigma X \to_\star K(G,n+1)}_0 \\
    &\simeq \truncT{X \to_\star \Omega \,K(G,n+1)}_0 \\
    &\simeq \truncT{X \to_\star K(G,n)}_0 \\
    &= \widetilde{H}^n(X,G)
  \end{align*}
  It is easy to see that the above equivalence of types is a
  homomorphism and is natural in $X$. This concludes the proof of the suspension
  axiom.
  \smallskip
\item \textbf{Exactness:}
  Let $f : X \to Y$. We want to show that the sequence
  \begin{align*}
    \widetilde{H}^n(\cofib{f},G) \xrightarrow{\cfcod^*} \widetilde{H}^n(Y,G) \xrightarrow{f^*} \widetilde{H}^n(X,G)
  \end{align*}
  is exact.\footnote{Recall that the action on maps by $\widetilde{H}^n(-,G)$ is defined by precomposition, i.e.\ $\widetilde{H}^n(f,G) := f^*$.}
  In other words, we want to show that, given an element $\trunc{g} : \widetilde{H}^n(Y,G)$, we have $\trunc{g} \in \ker(f^*)$ iff $g \in \im{\cfcod^*}$. For the left-to-right implication, the assumption $\trunc{g} \in \ker(f^*)$ gives us a homotopy\footnote{This homotopy only exists up to propositional truncation, but since we are proving a proposition, this is not an obstacle.} $p : (x : X) \to 0_k =
  g(f(x))$. Using this, we construct a function $g_{\mathsf{cf}} :
  \cofib{f} \to K(G,n)$ by
  \begin{align*}
    g_{\mathsf{cf}}(\inl{\star}) &= 0_k \\
    g_{\mathsf{cf}}(\inr{y}) &= g(y) \\
    \ap{g_{\mathsf{cf}}}{\mathsf{push}\,x} &= p(x)
  \end{align*}
  The fact that $\cfcod^*(\trunc{g_{\mathsf{cf}}}) = \trunc{g}$
  holds by construction, and hence $\trunc{g} \in \im{\cfcod^*}$.

  \hspace{15pt} For the other direction, we simply want
  to show that the composition $f^* \circ \cfcod^*$ is
  null-homotopic. This holds by construction, since
  $\cfcod \circ f : X \to \cofib{f}$ is null-homotopic by definition
  of $\cofib{f}$.
  \smallskip
\item \textbf{Dimension:} when $n < 0$, the statement is trivial. When
  $n > 0$, we have
\begin{align*}
  \widetilde{H}^n(\mathbb{S}^0,G) = \truncT{\mathbb{S}^0 \to_\star K(G,n)}_0 \simeq \truncT{K(G,n)}_0
\end{align*}
Since $K(G,n)$ is $(n-1)$-connected, $\truncT{K(G,n)}_0$
vanishes for $n > 0$.
  \smallskip
\item \textbf{Additivity:} Again, the statement is trivial when
  $n < 0$. For the non-trivial case, the result follows from a simple
  rewriting of $\widetilde{H}^n(\bigvee_{i:I} X_i,G)$ using the elimination
  principle for $\bigvee_{i:I} X_i$:
\begin{align*}
  \widetilde{H}^n\left(\bigvee_{i:I} X_i,G\right) &:= \bigg\Vert \bigvee_{i:I} X_i \to_\star K(G,n) \bigg\Vert_0 \\
  &\simeq \Bigg\Vert \sum_{f : (\sum_{i:I} X_i) \to K(G,n)} \sum_{x_0 : K(G,n)} \left(((i : I) \to f(i,\star_{X_i}) = x_0) \times (x_0 = 0_k)\right) \Bigg\Vert_0\\
  &\simeq \Bigg\Vert \sum_{f : (\sum_{i:I} X_i) \to K(G,n)} \sum_{(x_0,p) : \sum_{x : K(G,n)} (x = 0_k)} \left((i : I) \to f(i,\star_{X_i}) = x_0\right) \Bigg\Vert_0\\
  &\simeq \Bigg\Vert\sum_{f : (\sum_{i:I} X_i) \to K(G,n)} ((i : I) \to f(i,\star_{X_i}) = 0_k) \Bigg\Vert_0 \\
  &\simeq \truncT{(i : I) \to X_i \to_\star K(G,n)}_0\\
  &\simeq (i : I) \to \truncT{X_i \to_\star K(G,n)}_0 & (\mathsf{AC}_0)\\
  &= \Pi_{i:I}\widetilde{H}^n(X_i,G)
\end{align*}
where the step from the third to fourth line is simply a deletion of the (contractible) singleton type $\sum_{x : K(G,n)} (x = 0_k)$.
The fact that this composition of equivalences indeed gives the usual homomorphism $\widetilde{H}^n\left(\bigvee_{i:I} X_i,G\right) \to \Pi_{i:I}\widetilde{H}^n(X_i,G)$ is immediate by construction. \qedhere
\end{itemize}
\end{proof}



Just like in traditional algebraic topology, many standard results and
constructions follow directly from the axioms; see e.g.\
\citep{CavalloMsc15,BuchholtzFavonia18} for concrete examples,
including the Mayer-Vietoris sequence \citep[Section~4.5]{CavalloMsc15}.




\subsection{The Mayer-Vietoris sequence}
\label{sec:mayervietoris}

The Mayer-Vietoris sequence is surprisingly easy to construct in
HoTT. It was first developed in HoTT by \citet{CavalloMsc15} for
axiomatic cohomology theories and then for the specific
$\bZ$-cohomology theory of \citet{Brunerie16} who constructed the
sequence directly. This direct construction in no way depends on the
use of $\bZ$-coefficients and the exact same proof translates to our
theory. In this short section, we will briefly recall this
construction. All notations are borrowed from \citet{Brunerie16}.

Consider three types $X,Y,Z$ with functions $f : X \to Y$ and
$g : X \to Z$. Let $D$ denote the pushout of the span
$Y \xleftarrow{f} X \xrightarrow{g} Z$. There is a natural map
$\widetilde{d} : (X \to K(G,n)) \to (D \to K(G,n+1))$ given by
\begin{align*}
  \widetilde{d}(\gamma)(\inl{y}) &:= 0_k \\
  \widetilde{d}(\gamma)(\inr{z}) &:= 0_k \\
  \ap{\widetilde{d}(\gamma)}{\push{x}} &:= \sigma_n(\gamma(x))
\end{align*}
This lifts to a map $d : H^n(X,G) \to H^{n+1}(D,G)$ by
$d(\trunc{\gamma}) = \trunc{\widetilde{d}(\gamma)}$. There is also a
natural map $\Delta : H^{n}(Y) \times H^{n}(Z) \to H^n(X,G)$ given by
\[
  \Delta(\alpha,\beta) := f^*(\alpha) -_h g^*(\beta)
\]
Finally, we have a map $i : H^n(D,G) \to H^n(Y,G) \times H^n(Z,G)$ given by
\[
  i(\delta) := (\mathsf{inl}^*(\delta), \mathsf{inr}^*(\delta))
\]

\begin{therm}[Mayer-Vietoris sequence]
  The families of functions $i$, $\Delta$ and $d$ give rise to a long
  exact sequence on the form:
\[\begin{tikzcd}[ampersand replacement=\&]
	{H^0(D,G)} \&\& {H^0(Y,G) \times H^0(Z,G)} \&\& {H^0(X,G)} \\
	{H^1(D,G)} \&\& {H^1(Y,G) \times H^1(Z,G)} \&\& {H^1(X,G)} \\
	{H^2(D,G)} \&\& \dots
	\arrow["i", from=1-1, to=1-3]
	\arrow["\Delta", from=1-3, to=1-5]
	\arrow["d"', from=1-5, to=2-1,bend right=2]
	\arrow["i",,from=2-1, to=2-3]
	\arrow["\Delta", from=2-3, to=2-5]
	\arrow["d"', from=2-5, to=3-1,bend right=2]
	\arrow["i",,from=3-1, to=3-3]
\end{tikzcd}\]
\end{therm}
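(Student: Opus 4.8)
The plan is to realise the three families of maps $i$, $\Delta$, $d$ as the homotopy long exact sequence of a single fibre sequence of mapping spaces, exactly as in~\citep{Brunerie16,CavalloMsc15}. The starting observation is that for any type $W$ the mapping space $W \to K(G,N)$, pointed at the constant map $\_ \, \mapsto 0_k$, satisfies $\Omega(W \to K(G,N)) \simeq (W \to \Omega K(G,N)) \simeq (W \to K(G,N-1))$, where the first equivalence is function extensionality and the second is pointwise postcomposition with the inverse of $\sigma_{N-1}$ from Theorem~\ref{thm:deloop}. Iterating and applying $\pi_0 = \truncT{-}_0$ yields natural isomorphisms $\pi_j(W \to K(G,N)) \cong H^{N-j}(W,G)$, under which loop concatenation corresponds to $+_h$ by Proposition~\ref{prop:ap-+}. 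Thus it suffices to exhibit a fibre sequence of these mapping spaces whose induced long exact sequence, after this reindexing, is the Mayer--Vietoris sequence.

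First I would construct the fibre sequence. Fix $N$ and write $K := K(G,N)$. Consider the space-level difference map $\Delta : (Y \to K) \times (Z \to K) \to (X \to K)$ sending $(u,v)$ to $x \mapsto u(f(x)) -_k v(g(x))$. Using function extensionality and the invertibility of $-_k$, its fibre over the constant map is
\[
\fib{\Delta}{\_ \, \mapsto 0_k} \simeq \sum_{(u,v)} \big((x : X) \to u(f(x)) = v(g(x))\big),
\]
and the universal property of the pushout $D$ identifies the right-hand side with $(D \to K)$, compatibly with the projection to $(Y \to K) \times (Z \to K)$, which is precisely $i$. Hence we obtain a fibre sequence
\[
(D \to K) \xrightarrow{\;i\;} (Y \to K) \times (Z \to K) \xrightarrow{\;\Delta\;} (X \to K).
\]

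I would then invoke the standard homotopy long exact sequence of a fibre sequence, as in \citepalias[Section 8.4]{HoTT13}. Substituting the isomorphisms $\pi_j(W \to K) \cong H^{N-j}(W,G)$ and writing $m := N - j$, the generic segment $\pi_j F \to \pi_j E \to \pi_j B \xrightarrow{\partial} \pi_{j-1} F$ becomes the Mayer--Vietoris triple $H^m(D,G) \xrightarrow{i} H^m(Y,G)\times H^m(Z,G) \xrightarrow{\Delta} H^m(X,G) \xrightarrow{\partial} H^{m+1}(D,G)$ for all $m \leq N$; letting $N$ vary covers every degree. Since all terms are abelian groups and all maps respect $+_k$ pointwise, exactness of pointed sets automatically upgrades to exactness of abelian groups.

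The \textbf{main obstacle} is identifying the abstract connecting map $\partial$ with the explicitly defined $d$. It suffices to do this at the bottom of the sequence, i.e.\ for $\partial : \pi_1 B \to \pi_0 F$, since the higher connecting maps are obtained by looping. Under the deloop equivalence an element of $\pi_1(X \to K)$ is a map $\gamma : X \to K(G,N-1)$, which $\partial$ sends to the fibre element obtained by lifting the corresponding loop $\sigma_{N-1} \circ \gamma$ through $\Delta$; reading off this element as a map $D \to K$ gives $\inl{y} \mapsto 0_k$, $\inr{z} \mapsto 0_k$ and $\ap{-}{\push{x}} = \sigma_{N-1}(\gamma(x))$. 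This is $\widetilde{d}(\gamma)$ verbatim, so $\partial = d$ up to the identifications above. The remaining work---checking naturality of the reindexing isomorphisms and verifying that no stray signs are introduced---is routine, and is precisely where the care taken in defining $d$ via the same $\sigma_n$ that appears in Theorem~\ref{thm:deloop} pays off.
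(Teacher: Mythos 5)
Your proposal is correct in outline, but it takes a genuinely different route from the paper. The paper (following \citet[Proposition~5.2.2]{Brunerie16}) proves exactness \emph{directly}: each of the three kernel/image identities in each degree is established by hand, using the induction principle of the pushout $D$ and the explicit definitions of $i$, $\Delta$ and $d$, with no appeal to the long exact sequence of a fibration. You instead package everything into the homotopy long exact sequence of the fibre sequence $(D \to K) \to (Y \to K)\times(Z \to K) \xrightarrow{\Delta} (X \to K)$, whose key input --- that the pushout's universal property identifies $(D \to K)$ with $\fib{\Delta}{\_\mapsto 0_k}$, using that $(a -_k b = 0_k) \simeq (a = b)$ --- is correct and is the conceptual reason the sequence exists. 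Your approach buys a uniform derivation of all exactness statements from one standard theorem and explains $d$ as a connecting map; its cost is exactly the work you flag as ``routine'': identifying $\partial$ with the explicitly defined $d$ (up to a sign, harmless for exactness but not for the statement as literally given), and checking that the reindexing isomorphisms $\pi_j(W \to K(G,N)) \cong H^{N-j}(W,G)$ are natural for $i$ and $\Delta$ so that the long exact sequences for varying $N$ glue into a single sequence --- for $\Delta$ this needs $\sigma_n$ to commute with $-_k$ (Corollary~\ref{prop:--hom}). The direct proof avoids all of this bookkeeping and, in line with the paper's computational aims, produces simpler proof terms; your proof is arguably the more conceptual one. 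Neither is wrong, and nothing essential is missing from yours.
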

\begin{proof}
  Straightforward generalisation of
  \citet[Proposition~5.2.2]{Brunerie16}.
\end{proof}

This is useful to compute cohomology of spaces defined as pushouts,
e.g.\ $\mathbb{C}P^2$ (see Section \ref{sec:cp2}). Let us now consider another useful long exact
sequence developed in HoTT by \citet{Brunerie16} known as the \emph{Gysin sequence}.

\subsection{The Thom isomorphism and the Gysin sequence}
\label{sec:gysin}

\citet{Brunerie16} also introduced, for the first time in HoTT, the
Thom isomorphism and the Gysin sequence. While Brunerie's original
constructions concerned integral cohomology, we may easily generalise
them to cohomology with coefficients in an arbitrary commutative ring
$R$. In this short section, we will give a quick review of Brunerie's
proof/construction---here generalised to arbitrary coefficients
(which has no major effect on the proofs).

Let $\alpha_n : \mathbb{S}^n \to_\star K(R,n)$ denote the image of
$1_r : R$ under the equivalence
\begin{align*}
  R \simeq \Omega^n K(R,n) \simeq (\mathbb{S}^n \to_\star K(R,n))
\end{align*}
This induces a family of equivalences
$g^i : K(R,i) \to (\mathbb{S}^n \to_\star K(R,i + n))$ defined by
\begin{align*}
  g^i(x) := y\mapsto x \smile_k \alpha_n(y)
\end{align*}
Now, let $B$ be a $0$-connected, pointed type and suppose we have
fibration $Q : B \to \mathsf{Type}_\star$ with
$\psi : Q(\star_B) \simeq_\star \mathbb{S}^n$ and a $B$-indexed family
of functions $c_b : Q(b) \to_\star K(R,n)$ with
$c_{\star_B} = \alpha_n \circ \psi$. We remark that such a family
automatically exists when $B$ is further assumed to be
$1$-connected. We may use this to define a $B$-relative version of
$g^i$:
\begin{align*}
  g_b^i &: K(R,i) \to (Q(b) \to_\star K(R,i+n))\\
  g_b^i(x) &:= y \mapsto x \smile_k c_b(y)
\end{align*}
Showing that this map is an equivalence is straightforward: for
connectedness reasons, we only need to do so when $b$ is $\star_B$, in
which case we have $g_{\star_B}^i(x) = g^i(x) \circ \psi$ which we
already know defines an equivalence.

The fact that $g_b^i$ is an equivalence means, in particular, that
the following map is one too.
\begin{align*}
  \varphi^i &: (B \to K(R,i)) \to ((b : B) \to Q(b) \to_\star K(R,i+n))\\
  \varphi^i(f) &:= (b,q \mapsto f(b) \smile c_b(q))
\end{align*}
We instantiate the above with some concrete spaces. Let
$P : B \to \mathsf{Type}_\star$ be a fibration with
$P(\star_B) \simeq \mathbb{S}^{n-1}$ and let
$Q(b)$ be the suspension of $P(b)$, i.e.\ $Q(b) := \Sigma\,(P(b))$.
Suppose also that we have a family
$c_b : (Q(b) \to_\star K(R,n))$ as above (again, this is automatic if
$B$ is $1$-connected). Let $E := \Sigma_{b:B}P(b)$ be the total
space of $P$ and $\widetilde{E}$ be the associated \emph{Thom space}, i.e.\ the cofibre of the projection map
$E \to B$. It is easy to see that
$((b : B) \to Q(b) \to_\star K(R,k)) \simeq (\widetilde{E} \to_\star
K(R,k))$. Combining this with $\varphi^i$, we get an isomorphism
\begin{align*}
  \phi^i : H^i(B,R) \cong \widetilde{H}^{i+ n}(\widetilde{E},R)
\end{align*}
Finally, we may consider the long exact sequence related to the
pushout defining $\widetilde{E}$ (the first row in the following
diagram). Substituting along $\phi^{i-n}$ yields the Gysin sequence
(the second row in the diagram).
\[\begin{tikzcd}[ampersand replacement=\&]
	\dots \& {H^{i-1}(E,R)} \& {\widetilde{H}^i(\widetilde{E},R)} \& {H^i(B,R)} \& {H^i(E,R)} \& \dots \\
	\dots \& {H^{i-1}(E,R)} \& {H^{i-n}(B,R)} \& {H^i(B,R)} \& {H^i(E,R)} \& \dots
	\arrow["\phi^{i-n}", from=2-3, to=1-3]
	\arrow[from=1-2, to=1-3]
	\arrow[dashed, from=2-2, to=2-3]
	\arrow[Rightarrow, no head, from=2-2, to=1-2]
	\arrow[from=1-1, to=1-2]
	\arrow[from=2-1, to=2-2]
	\arrow["{(-)\smile e}"', from=2-3, to=2-4]
	\arrow[from=1-3, to=1-4]
	\arrow[Rightarrow, no head, from=1-4, to=2-4]
	\arrow[from=1-4, to=1-5]
	\arrow[from=2-4, to=2-5]
	\arrow[from=1-5, to=1-6]
	\arrow[from=2-5, to=2-6]
	\arrow[Rightarrow, no head, from=1-5, to=2-5]
      \end{tikzcd}\]
    Here the dashed arrow is defined as the composition $H^{i-1}(E,R) \to \widetilde{H}^i(\widetilde{E},R) \xrightarrow{(\phi^{i-n})^{-1}} H^{i-n}(B,R)$.

Above, the class $e : H^n(B,R)$ is defined by
$e := \trunc{b \mapsto c_b(\mathsf{south})}$. The commutativity of the
centre square follows immediately by construction of $\phi^{i-n}$. For
the sake of completeness, let us state the existence of the Gysin
sequence as a theorem.
\begin{therm}[The Gysin Sequence]\label{thm:Gysin}
  Let $B$ be $0$-connected and $P : B \to \mathsf{Type}$ be a
  fibration equipped with an equivalence
  $\psi : P(\star_B) \simeq \mathbb{S}^{n-1}$. Assume that the
  following condition is satisfied:
  \begin{enumerate}[($\ast$)]
  \item There is a $B$-indexed family
    $c_b : \Sigma(P(b)) \to_\star K(R,{n})$ s.t.\ $c_{\star_{B}}$ is
    the image of $1_R : R$ under the composition of isomorphisms
    \[
      R \cong H^n(\mathbb{S}^n,R)\, \overset{(\Sigma(\psi))^*}{\cong}\,
      H^n(\Sigma(P(\star_B)),R)
      \]\label{gysin-cond}
  \end{enumerate}
  Let $E$ denote the total space of $P$ and define $e : H^n(B,R)$ by $e :=
\trunc{b \mapsto c_b(\mathsf{south})}$. We get a long exact sequence
\[\begin{tikzcd}[ampersand replacement=\&]
	\dots \& {H^{i-1}(E,R)} \& {H^{i-n}(B,R)} \& {H^i(B,R)} \& {H^i(E,R)} \& \dots
	\arrow[from=1-2, to=1-3]
	\arrow["{(-)\smile e}", from=1-3, to=1-4]
        \arrow[from=1-1,to=1-2]
	\arrow[from=1-4, to=1-5]
	\arrow[from=1-5, to=1-6]
\end{tikzcd}\]
Moreover, condition \hyperlink{gysin-cond}{($\ast$)} is always satisfied when $B$ is $1$-connected.
\end{therm}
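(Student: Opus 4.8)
The plan is to assemble the sequence from the three pieces built in the discussion preceding the statement: the Thom isomorphism $\phi^{i-n} : H^{i-n}(B,R) \cong \widetilde{H}^{i}(\widetilde{E},R)$, the long exact cofibre sequence of the projection $E \to B$, and the identification of the composite $\cfcod^* \circ \phi^{i-n}$ with $(-) \smile e$. Since $\widetilde{E}$ is by definition the cofibre of $E \to B$, the exactness and suspension axioms established in Section~\ref{sec:eilenbergsteenrodaxioms} yield, exactly as in the derivation of Mayer--Vietoris in Section~\ref{sec:mayervietoris}, the long exact sequence appearing as the top row of the diagram in Section~\ref{sec:gysin}, with terms $H^{i-1}(E,R)$, $\widetilde{H}^{i}(\widetilde{E},R)$, $H^i(B,R)$ and $H^i(E,R)$ (using that reduced and unreduced cohomology agree in positive degrees). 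Substituting the isomorphism $\phi^{i-n}$ for the term $\widetilde{H}^{i}(\widetilde{E},R)$ then produces the claimed sequence, and exactness is preserved because $\phi^{i-n}$ is an isomorphism.

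The fact that $\phi^{i}$ is an isomorphism rests on the observation, already recorded above, that the fibrewise map $g_b^i(x) := y \mapsto x \smile_k c_b(y)$ is an equivalence. I would justify this using $0$-connectedness of $B$: being an equivalence is a proposition, so it suffices to verify it at $b = \star_B$, where $g_{\star_B}^i = g^i \circ \psi$ and $g^i$ is the $n$-fold suspension equivalence of Theorem~\ref{thm:deloop}, realised by cup product with $\alpha_n$ via the compatibility of cup product and suspension (Lemma~\ref{lem:ap-cup}). From this, $\varphi^i$ is an equivalence, and composing with the Thom-space adjunction $((b : B) \to \Sigma(P(b)) \to_\star K(R,k)) \simeq (\widetilde{E} \to_\star K(R,k))$ and set-truncating gives $\phi^{i}$.

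To identify the central map, I would trace a class $\trunc{f} : H^{i-n}(B,R)$ through $\cfcod^* \circ \phi^{i-n}$. By construction $\phi^{i-n}(\trunc{f})$ is the class of the fibrewise family $(b,q) \mapsto f(b) \smile_k c_b(q)$; under the adjunction above, a pointed map out of $\widetilde{E}$ sends $\inr(b) = \cfcod(b)$ to the value of the corresponding family at $\mathsf{south}$, so precomposition with $\cfcod$ yields $b \mapsto f(b) \smile_k c_b(\mathsf{south})$. As $e := \trunc{b \mapsto c_b(\mathsf{south})}$ and the cup product on cohomology is computed pointwise, this class is precisely $\trunc{f} \smile e$, so the central square commutes. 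I expect this identification to be the main obstacle: one must pin down the Thom-space adjunction concretely enough to see that restriction along $\cfcod$ really is evaluation at $\mathsf{south}$, and check that the pointedness coherences suppressed in the informal presentation do not interfere.

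Finally, for the last clause I would show condition~($\ast$) is automatic when $B$ is $1$-connected. Set $T(b) := (\Sigma(P(b)) \to_\star K(R,n))$; using $\psi$ and Theorem~\ref{thm:deloop} one has $T(\star_B) \simeq (\mathbb{S}^n \to_\star K(R,n)) \simeq R$, a set. Since $\mathsf{isSet}(T(b))$ is a proposition and $B$ is $0$-connected, $T(b)$ is a set for every $b$, so $T$ may be regarded as a map $B \to \mathsf{Set}$. As $\mathsf{Set}$ is a $1$-type and $B$ is $1$-connected, this map factors through the contractible $\truncT{B}_1$ and is therefore constant; the constant section at the prescribed element of $T(\star_B)$ supplies the required family $c_b$ with the correct value at $\star_B$, completing the proof.
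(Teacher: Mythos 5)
Your proposal follows essentially the same route as the paper: the fibrewise equivalence $g^i_b$ verified at $\star_B$ by $0$-connectedness, the resulting Thom isomorphism $\phi^i$ via the $\widetilde{E}$-adjunction, substitution into the long exact cofibre sequence, and identification of the middle map with $(-)\smile e$ by evaluation at $\mathsf{south}$. You in fact supply slightly more detail than the paper does for the final clause (the paper merely asserts that ($\ast$) holds when $B$ is $1$-connected, whereas you give the standard argument that the family of sets $T(b)$ is determined by its value at the basepoint over a $1$-connected base), and your reading of the construction is correct throughout.
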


The Gysin sequence was for instance crucially used
by~\citet{Brunerie16} to compute the integral cohomology \emph{ring} of
$\mathbb{C}P^2$ (see also Section \ref{sec:cp2}). We will also use this sequence to compute the cohomology ring of the infinite real projective space in Section \ref{sec:rpinf}.

\section{Computations of cohomology groups and rings}
\label{sec:direct}

In this section we will compute some cohomology groups and rings of
some common spaces. All of these groups/rings will be very familiar
to the traditional mathematician. Our intent is to showcase how these
computations are carried out in HoTT. What is particularly interesting
here is that the computations can often be done very directly, without
relying on more general machinery (e.g.\ the Mayer-Vietoris sequence,
spectral sequences, etc.). We have two reasons for preferring direct proofs whenever possible. Firstly,
we think that the existence of such proofs in many cases showcases the strengths of the synthetic approach to cohomology theory in HoTT. Many proofs become incredibly
short and rely solely on spelling out the computation rule for the
space in question. In particular, we can carry out these proofs
without any homological algebra. Secondly, by carefully choosing the
direct proofs, we often produce proof terms which are far simpler than
those produced by more advanced machinery. This means that we can use
the isomorphisms we construct here to make concrete computations
(i.e.\ normalisation of closed terms) in a constructive proof
assistant such as \CubicalAgda. We will discuss this more in
Section~\ref{sec:computations}.

Let us start by computing the cohomology groups of some special
types.
\begin{proposition}
  \label{prop:Hn-Unit}
  $H^n(\mathbbm{1},G) \cong \begin{cases} G & n = 0 \\ \mathbbm{1} & n \geq 1 \end{cases}$
\end{proposition}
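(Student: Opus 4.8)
The plan is to compute $H^n(\mathbbm{1},G) = \truncT{\mathbbm{1} \to K(G,n)}_0$ by first observing that the function type $\mathbbm{1} \to K(G,n)$ is equivalent to $K(G,n)$ itself, since any function out of the unit type is determined by its value at the unique point. Composing this with set truncation, we obtain $H^n(\mathbbm{1},G) \simeq \truncT{K(G,n)}_0$, and it remains to identify this truncation in the two cases.

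For $n = 0$, we have $K(G,0) := G$, which is already a set, so $\truncT{G}_0 \simeq G$ and the isomorphism $H^0(\mathbbm{1},G) \cong G$ follows. I would check that this equivalence is a group homomorphism, which is immediate since $+_h$ on $H^0$ is defined pointwise and the equivalence simply evaluates at the point of $\mathbbm{1}$, so it transports $+_h$ directly to $+_G$.

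For $n \geq 1$, the key fact is Proposition~\ref{prop:K-connected}, which states that $K(G,n)$ is $(n-1)$-connected. Since $n \geq 1$, this gives in particular that $K(G,n)$ is $0$-connected, meaning $\truncT{K(G,n)}_0$ is contractible. A contractible type is equivalent to $\mathbbm{1}$, and since it carries a trivial group structure, we obtain the group isomorphism $H^n(\mathbbm{1},G) \cong \mathbbm{1}$.

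I do not expect any serious obstacle here; the result is essentially a direct unfolding of the definition of $H^n$ combined with the connectivity of $K(G,n)$. The only mildly careful point is to confirm that the identifications are isomorphisms of groups and not merely equivalences of types, but in both cases the group structures are either trivial (for $n \geq 1$) or transported transparently through evaluation at the basepoint (for $n = 0$), so these checks are routine.
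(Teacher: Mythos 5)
Your proposal is correct and follows exactly the paper's argument: unfold $H^n(\mathbbm{1},G)$ to $\truncT{K(G,n)}_0$, identify this with $G$ when $n=0$ since $K(G,0):=G$ is a set, and use the $(n-1)$-connectedness of $K(G,n)$ (Proposition~\ref{prop:K-connected}) for $n\geq 1$. The additional remarks on checking the group homomorphism property are routine and consistent with the paper, which omits them.
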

\begin{proof}
  We have $H^n(\mathbbm{1}) := \truncT{\mathbbm{1} \to K(G,n)}_0 \simeq \truncT{K(G,n)}_0$.
This is $\truncT{G}_0 \simeq G$ when $n = 0$ and contractible when $n
> 0$ since $K(G,n)$ is $(n-1)$-connected.
\end{proof}
We also include note the following easy lemma
\begin{lemma}
  \label{lem:Hn-Trunc}
  For any type $X$, we have $H^n(X,G) \cong H^n(\truncT{X}_n,G)$.
\end{lemma}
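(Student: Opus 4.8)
The plan is to invoke the universal property of the $n$-truncation. Recall that $H^n(X,G) := \truncT{X \to K(G,n)}_0$ and that $K(G,n)$ is by definition an $n$-type. Hence, writing $\trunc{-} : X \to \truncT{X}_n$ for the canonical truncation map, precomposition yields an equivalence
$$(\truncT{X}_n \to K(G,n)) \xrightarrow{\sim} (X \to K(G,n)),$$
this being precisely the universal property of $\truncT{X}_n$ applied to the $n$-type $K(G,n)$ \citepalias[Section 7.3]{HoTT13}. This single observation contains essentially all of the mathematical content of the lemma.

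First I would apply the set-truncation $\truncT{-}_0$ to both sides of this equivalence. Since $\truncT{-}_0$ is functorial and preserves equivalences, this gives
$$H^n(\truncT{X}_n,G) = \truncT{\truncT{X}_n \to K(G,n)}_0 \simeq \truncT{X \to K(G,n)}_0 = H^n(X,G),$$
establishing the desired equivalence of underlying types. Next I would verify that this equivalence is in fact a group isomorphism. The underlying map is induced by precomposition with $\trunc{-}$, and the group operation $+_h$ on cohomology is defined pointwise via $+_k$ (and similarly for $-_h$ and $0_h$). Since precomposition commutes with any pointwise operation, the equivalence automatically respects $+_h$ and is therefore a homomorphism, hence a group isomorphism.

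The hard part is that there really is no hard part: once the universal property of truncation is in hand, the remaining steps are routine bookkeeping. The only point requiring a moment's care is confirming the homomorphism property, but this is immediate from the pointwise definition of the group structure on $H^n(-,G)$ inherited from the H-space structure on $K(G,n)$.
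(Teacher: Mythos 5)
Your proposal is correct and follows exactly the paper's argument: both invoke the universal property of $n$-truncation against the $n$-type $K(G,n)$, pass to set-truncations, and note that the homomorphism property is immediate from the pointwise definition of $+_h$. You merely spell out the homomorphism check that the paper dismisses as trivial.
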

\begin{proof}
  Using that $K(G,n)$ is $n$-truncated, we get, by the universal
  property of $n$-truncation, that
  \begin{align*}
    H^n(X,G) = \truncT{X \to K(G,n)}_0 \simeq \truncT{\truncT{X}_n \to K(G,n)}_0 = H^n(\truncT{X}_n,G)
  \end{align*}
  The fact that this equivalence is a homomorphism is trivial.
\end{proof}

Using this we can easily compute the zeroth cohomology group of a
$0$-connected type (and hence, any $n$-connected type with
$n \geq 0$).
\begin{proposition}\label{prop:cohom-connected}
  For any $0$-connected type $X$, we have $H^0(X,G) \cong G$.
\end{proposition}
\begin{proof}
  By Lemma \ref{lem:Hn-Trunc}, we have $H^0(X,G) =
  H^0(\truncT{X}_0,G)$. But $\truncT{X}_0 \simeq \mathbbm{1}$ since
  $X$ is $0$-connected. Thus, by Proposition \ref{prop:Hn-Unit}, the zeroth
  cohomology group is isomorphic to $G$.
\end{proof}


\subsection{Spheres}\label{subsec:sphere}

We now have what we need to compute the cohomology of the
$n$-sphere. Let us start with $n = 1$.

\begin{proposition}
    \label{prop:H1-S1}
    $H^1(\mathbb{S}^1,G) \cong G$.
  \end{proposition}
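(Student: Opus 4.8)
The plan is to pass to reduced cohomology and then use the loop-space description of pointed maps out of the circle together with the delooping equivalence. Since $n = 1 \geq 1$, I first invoke the isomorphism $\widetilde{H}^n(X,G) \cong H^n(X,G)$ for $n \geq 1$ established above to reduce the goal to $\widetilde{H}^1(\mathbb{S}^1,G) \cong G$, where by definition $\widetilde{H}^1(\mathbb{S}^1,G) = \truncT{\mathbb{S}^1 \to_\star K(G,1)}_0$.

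Next, using the universal property of the circle (equivalently, the suspension--loop adjunction applied to $\mathbb{S}^1 = \Sigma\,\mathbb{S}^0$, exactly as in the proof of the suspension axiom), I obtain an equivalence $(\mathbb{S}^1 \to_\star K(G,1)) \simeq \Omega K(G,1)$ sending a pointed map $f$ to $\ap{f}{\ell}$, where $\ell$ denotes the generating loop of $\mathbb{S}^1$. Composing with the delooping equivalence $\sigma_0 : K(G,0) \simeq \Omega K(G,1)$ of Theorem~\ref{thm:deloop}, together with $K(G,0) = G$, gives $(\mathbb{S}^1 \to_\star K(G,1)) \simeq G$. As $G$ is a set, applying $\truncT{-}_0$ changes nothing, so $\widetilde{H}^1(\mathbb{S}^1,G) \simeq \truncT{G}_0 \simeq G$.

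The main work is to check that this composite is a group homomorphism. The operation on $\widetilde{H}^1$ is pointwise $+_k$, so under $f \mapsto \ap{f}{\ell}$ a sum $f +_k g$ is sent to $\ap{x \mapsto f(x) +_k g(x)}{\ell}$; by functoriality of $\mathsf{ap}$ for the binary operation $+_k$ together with Proposition~\ref{prop:ap-+}, this equals $\ap{f}{\ell} \cdot \ap{g}{\ell}$. Hence pointwise addition corresponds to path composition in $\Omega K(G,1)$, and since $\sigma_0$ is a group isomorphism (Proposition~\ref{prop:+-hom}), the whole chain is an isomorphism of groups. The only genuinely nontrivial ingredient is this last identification of $+_k$ on loops with composition, which is precisely what Proposition~\ref{prop:ap-+} supplies; everything else is routine bookkeeping.
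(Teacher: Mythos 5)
Your proof is correct, but it takes a different route from the paper's. The paper stays with unreduced cohomology throughout: it uses the universal property of the circle to identify $(\mathbb{S}^1 \to K(G,1))$ with $\Sigma_{x : K(G,1)}(x = x)$, trivialises the dependence on $x$ via homogeneity of $K(G,1)$ to get $K(G,1) \times \Omega K(G,1)$, and then kills the first factor after set truncation by $0$-connectedness; you instead pass through the reduced/unreduced comparison and the pointed mapping space, which lands you in $\Omega K(G,1)$ directly. Both routes ultimately rest on the same two ingredients, namely $\sigma_0 : G \simeq \Omega K(G,1)$ (Theorem~\ref{thm:deloop} plus Proposition~\ref{prop:+-hom}) and Proposition~\ref{prop:ap-+} for the compatibility of $+_k$ with path composition. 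Your approach buys generality and brevity in the equivalence-of-types part (indeed, citing the suspension axiom for $\Sigma\,\mathbb{S}^0$ would give $\widetilde{H}^1(\mathbb{S}^1,G) \cong \widetilde{H}^0(\mathbb{S}^0,G) \cong G$ almost for free, homomorphism included); the cost is that your homomorphism check in the forward direction $f \mapsto \ap{f}{\ell}$ has to handle the basepoint coherence paths of pointed maps (the loop you extract from $(f,p)$ is really a conjugate of $\ap{f}{\ell}$ by $p$), which is the ``routine bookkeeping'' you allude to and is dischargeable by connectedness since the goal is an equality in a set. The paper avoids this entirely by verifying instead that the \emph{inverse} map $g \mapsto \trunc{f_g}$ is a homomorphism, where the explicit generators $f_g$ are pointed by $\refl$ and the whole check reduces by circle induction to the single identity $\app{+_k}{\LoopG{g_1}}{\LoopG{g_2}} = \LoopG{(g_1+g_2)}$ — the same computation your argument hinges on. The paper's more explicit description of the equivalence also matters downstream: the concrete generators $f_g$ are reused in the benchmarks and in the $\RP$ and torus computations.
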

  \begin{proof}
    A function $\mathbb{S}^1 \to K(G,1)$ is uniquely determined by its
    action on the canonical base point and the canonical loop. Hence, it corresponds to choices
    of a point $x : K(G,1)$ and a loop $x = x$. In other words,
    $(\mathbb{S}^1 \to K(G,1)) \simeq \Sigma_{x : K(G,1)} (x =
    x)$. But $K(G,1)$ is homogeneous, so
    $(x = x) \simeq \Omega K(G,1)$. Thus, we have
    \begin{align*}
      H^1(\mathbb{S}^1,G) &= \truncT{\mathbb{S}^1 \to K(G,1)}_0 \\
      &\simeq \truncT{\Sigma_{x : K(G,1)} (x = x)}_0\\
      &\simeq \truncT{K(G,1) \times \Omega K(G,1)}_0 \\
      &\simeq \truncT{K(G,1)}_0 \times G
    \end{align*}
    Now, $K(G,1)$ is $0$-connected, so $\truncT{K(G,1)}_0$
    vanishes. Thus, we have $H^1(\mathbb{S}^1,G) \simeq G$.

    We need to verify that this isomorphism is a homomorphism. It is
    easier to show that its inverse is. Let us call it $\phi$. By
    definition, $\phi(g) = \trunc{f_g}$ where
    $f_g : \mathbb{S}^1 \to K(G,1)$ is the map sending $\cbase$ to
    $\star$ and $\Loop$ to $\LoopG{g}$. Thus, we only need to verify
    that for $g_1,g_2 : G$ and $x : \mathbb{S}^1$, we have
    $f_{g_1}(x) +_k f_{g_2}(x) = f_{g_1 + g_2}(x)$. We do this by
    induction on $x$. When $x$ is $\cbase$, the goal holds by
    $\refl$. For the action on $\Loop$, we need to show that
    \begin{align*}
      \app{+_k}{\LoopG{g_1}}{\LoopG{g_2}} = \LoopG{(g_1 + g_2)}
    \end{align*}
    which holds by Proposition \ref{prop:ap-+} and functoriality of $\LoopG{}$.
  \end{proof}
  It is easy to generalise this to get all non-trivial cohomology
  groups of $\mathbb{S}^n$ for $n \geq 1$.
  \begin{proposition}
    For $n \geq 1$, we have $H^n(\mathbb{S}^n,G) \cong G$.
  \end{proposition}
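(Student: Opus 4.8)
The plan is to proceed by induction on $n$, taking the previous proposition ($H^1(\mathbb{S}^1,G) \cong G$) as the base case and using the suspension axiom to descend one dimension at a time. The key enabling facts, all already established in the excerpt, are: (i) the isomorphism $\widetilde{H}^n(X,G) \cong H^n(X,G)$ for $n \geq 1$ relating reduced and unreduced cohomology; (ii) the \textbf{Suspension} axiom $\widetilde{H}^{n}(\Sigma X,G) \cong \widetilde{H}^{n-1}(X,G)$ verified in the Eilenberg--Steenrod theorem, whose proof rested on the adjunction $\Sigma \dashv \Omega$ together with the delooping equivalence $\Omega K(G,n) \simeq K(G,n-1)$ (Theorem~\ref{thm:deloop}); and (iii) the defining identity $\mathbb{S}^n = \Sigma\,\mathbb{S}^{n-1}$.

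Concretely, for $n \geq 2$ I would first pass to reduced cohomology, then rewrite $\mathbb{S}^n$ as a suspension, apply the suspension axiom, and finally pass back to unreduced cohomology in order to invoke the induction hypothesis:
\begin{align*}
  H^n(\mathbb{S}^n,G) &\cong \widetilde{H}^n(\mathbb{S}^n,G) \\
  &= \widetilde{H}^n(\Sigma\,\mathbb{S}^{n-1},G) \\
  &\cong \widetilde{H}^{n-1}(\mathbb{S}^{n-1},G) \\
  &\cong H^{n-1}(\mathbb{S}^{n-1},G) \\
  &\cong G
\end{align*}
The reduced/unreduced passages in the first and fourth lines are legitimate since both $n$ and $n-1$ are $\geq 1$ (the latter because $n \geq 2$), and the last step is the induction hypothesis. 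When $n = 2$ this bottoms out at $\widetilde{H}^1(\mathbb{S}^1,G) \cong H^1(\mathbb{S}^1,G) \cong G$, i.e.\ the base case, so the induction closes cleanly.

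The main point requiring care — rather than a genuine obstacle — is verifying that the composite equivalence is a \emph{group} isomorphism, not merely an equivalence of types. Each constituent map is already known to be a homomorphism: the reduced/unreduced comparison was shown to respect $+_h$, and the suspension isomorphism was observed to be a homomorphism natural in $X$ (its homomorphism property ultimately tracing back to $\sigma_{n-1}$ being a group isomorphism on loop spaces, via Proposition~\ref{prop:+-hom}). Hence the composite is a homomorphism by functoriality, and I would simply remark this rather than recompute it on generators as was done for $\mathbb{S}^1$. As an alternative that avoids bouncing between reduced and unreduced cohomology, one could instead unfold the suspension--loop adjunction and the delooping equivalence repeatedly inside $\widetilde{H}^n(\mathbb{S}^n,G) = \truncT{\mathbb{S}^n \to_\star K(G,n)}_0$ all the way down to $\truncT{\mathbb{S}^0 \to_\star K(G,0)}_0 = \truncT{\mathbb{S}^0 \to_\star G}_0$, which is $\cong G$ since a pointed map out of $\mathbb{S}^0 = \mathsf{Bool}$ is determined by its value at the non-basepoint and $G$ is already a set; I would mention this as the more self-contained but notationally heavier route.
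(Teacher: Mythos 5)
Your proposal is correct and matches the paper's proof, which likewise takes Proposition~\ref{prop:H1-S1} as the base case and "simply appl[ies] the suspension isomorphism inductively" for $n>1$; you have merely spelled out the reduced/unreduced bookkeeping and the homomorphism check that the paper leaves implicit. No issues.
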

  \begin{proof}
    The case $n = 1$ is Proposition~\ref{prop:H1-S1}. For $n > 1$, we simply apply the suspension
    isomorphism inductively.
  \end{proof}
The vanishing groups can also be handled directly.
\begin{proposition}
  For $n \neq m$ and $n \geq 1$ have $H^n(\mathbb{S}^m,G) \cong \mathbbm{1}$.
\end{proposition}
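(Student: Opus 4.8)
The plan is to reduce the statement to the two facts that have already been packaged for us—the Dimension axiom for $\mathbb{S}^0$ and the vanishing of reduced degree-$0$ cohomology of a connected space—by iterating the suspension isomorphism. First I would pass from unreduced to reduced cohomology: since $n \geq 1$, the projection induces a group isomorphism $H^n(\mathbb{S}^m,G) \cong \widetilde{H}^n(\mathbb{S}^m,G)$ by the earlier proposition comparing reduced and unreduced cohomology. This step is essential, because the suspension isomorphism is a statement about \emph{reduced} cohomology and cannot be applied to $H^n$ directly.

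Next, recalling that $\mathbb{S}^m = \Sigma\,\mathbb{S}^{m-1}$ by definition, the Suspension axiom (verified in the Eilenberg--Steenrod theorem, and valid in every degree) yields a group isomorphism $\widetilde{H}^n(\mathbb{S}^m,G) \cong \widetilde{H}^{n-1}(\mathbb{S}^{m-1},G)$. Iterating this isomorphism lowers the sphere dimension and the cohomological degree in lockstep. I would then split on the sign of $n-m$.

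If $n > m$, I iterate $m$ times to obtain $\widetilde{H}^n(\mathbb{S}^m,G) \cong \widetilde{H}^{n-m}(\mathbb{S}^0,G)$; since $n-m \geq 1 \neq 0$, the Dimension axiom gives $\widetilde{H}^{n-m}(\mathbb{S}^0,G) \cong \mathbbm{1}$. This case also subsumes $m = 0$, since $n \geq 1$ forces $n > m$ there. If instead $n < m$, I iterate only $n$ times, landing at $\widetilde{H}^0(\mathbb{S}^{m-n},G)$ with $m-n \geq 1$; at each of these steps the relevant space is a genuine positive-dimensional sphere and hence a suspension, so the iteration is legitimate. As $\mathbb{S}^{m-n}$ is $0$-connected and $K(G,0) = G$ is a set, any pointed map $\mathbb{S}^{m-n} \to_\star G$ is constant at $0_G$, so $\widetilde{H}^0(\mathbb{S}^{m-n},G) \cong \mathbbm{1}$—exactly the connectedness observation already used in the proof of the Suspension axiom. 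Composing the resulting chain of group isomorphisms gives $H^n(\mathbb{S}^m,G) \cong \mathbbm{1}$.

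The hard part will be essentially nonexistent, since all the real content sits inside the suspension and dimension axioms and the connectivity of spheres; the only thing to watch is the bookkeeping—ensuring the degree-lowering suspension isomorphism is applied only in the valid range and that every map in the composite (the reduced/unreduced comparison, the suspension isomorphism, and the final dimension or connectedness identification) is a homomorphism, so that the composite is an isomorphism of groups rather than merely of types.
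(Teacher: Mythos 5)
Your proof is correct, but it diverges from the paper's in one of the two cases. For $n > m$ you and the paper do essentially the same thing: an induction on $m$ driven by the suspension isomorphism, bottoming out at $\mathbb{S}^0$ (the paper computes $\truncT{\mathbb{S}^0 \to K(G,n)}_0 \simeq \truncT{K(G,n)\times K(G,n)}_0$ directly from $(n-1)$-connectedness of $K(G,n)$ rather than citing the Dimension axiom, but that is the same computation). For $n < m$, however, the paper does not iterate suspension at all: it invokes Lemma~\ref{lem:Hn-Trunc}, $H^n(\mathbb{S}^m,G) \cong H^n(\truncT{\mathbb{S}^m}_n,G)$, and then uses that $\mathbb{S}^m$ is $(m-1)$-connected with $n \leq m-1$, so the truncation is contractible and Proposition~\ref{prop:Hn-Unit} finishes it in one step. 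Your alternative --- descending $n$ times to $\widetilde{H}^0(\mathbb{S}^{m-n},G)$ and killing it by $0$-connectedness of $\mathbb{S}^{m-n}$ --- is valid (the intermediate spheres all have dimension $\geq 1$ since $m > n$, and a pointed map from a connected type into the set $G$ is forced to be constantly $0_G$), and it has the virtue of treating both cases uniformly through the Eilenberg--Steenrod machinery. What the paper's route buys is a shorter argument with no induction in that case and, in keeping with its computational aims, a simpler proof term; what yours buys is that it works verbatim for any reduced cohomology theory satisfying the axioms, without needing the representability-specific truncation lemma. Your bookkeeping caveats (applying suspension only to genuine suspensions, and checking every map in the composite is a homomorphism) are exactly the right things to watch.
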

\begin{proof}
  Let us first consider the case $n > m$. We induct
  on $m$. When $m = 0$, we have
  \begin{align*}
    H^n(\mathbb{S}^0) = \truncT{\mathbb{S}^0 \to K(G,n)}_0 \simeq \truncT{K(G,n) \times K(G,n)}_0
  \end{align*}
  which vanishes since $K(G,n)$ is $(n-1)$-connected and $n > m = 0$. For
  larger $m$, we know by suspension that $H^n(\mathbb{S}^m) \cong
  H^{n-1}(\mathbb{S}^{m-1})$ which vanishes by the induction
  hypothesis.

  When $n<m$ we have
  \begin{align*}
    H^n(\mathbb{S}^m,G) \simeq H^n(\truncT{\mathbb{S}^m}_n,G)
  \end{align*}
  which vanishes because $\mathbb{S}^m$ is $n \leq (m-1)$-connected and $n \geq 1$.
\end{proof}

Knowing these groups, the cohomology ring of the spheres is
easily computed.

\begin{proposition}\label{prop:H*-Sn}
    Given any ring $R$ and $n \geq 1$, we have
    $H^*(\mathbb{S}^n,R) \cong R[x]/(x^2)$.
  \end{proposition}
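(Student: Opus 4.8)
The plan is to exhibit a graded ring isomorphism from the model $R[x]/(x^2)$, with the generator $x$ placed in degree $n$, onto $H^*(\mathbb{S}^n,R)$. First I would record the underlying graded group, which is already determined by the sphere cohomology groups computed above: $H^0(\mathbb{S}^n,R) \cong R$ by Proposition~\ref{prop:cohom-connected} (the sphere is $0$-connected for $n \geq 1$), $H^n(\mathbb{S}^n,R) \cong R$, and $H^k(\mathbb{S}^n,R) \cong \mathbbm{1}$ for every other $k \geq 1$. Hence, as a graded abelian group, $H^*(\mathbb{S}^n,R)$ is concentrated in degrees $0$ and $n$, with a copy of $R$ in each, matching the underlying graded group of $R[x]/(x^2)$.

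Next I would fix the generator: let $\alpha : H^n(\mathbb{S}^n,R)$ be the class corresponding to $1_R$ under the isomorphism $H^n(\mathbb{S}^n,R) \cong \Omega^n K(R,n) \cong R$, i.e.\ $\alpha = \trunc{\alpha_n}$ in the notation of Section~\ref{sec:gysin}, and let $1 = \trunc{\_ \mapsto 1_R} : H^0(\mathbb{S}^n,R)$ be the ring unit. The candidate isomorphism is the graded ring homomorphism $\Phi : R[x]/(x^2) \to H^*(\mathbb{S}^n,R)$ determined by $\Phi(x) = \alpha$ and $\Phi(r) = r \cdot 1$ for $r : R$ in degree $0$, which is well-defined since a map out of $R[x]/(x^2)$ is fixed by a ring map on the degree-$0$ part and the image of $x$.

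The crucial step---and the one that makes the relation $x^2 = 0$ respected---is purely a dimension argument: $\alpha \smile \alpha$ lives in $H^{2n}(\mathbb{S}^n,R)$, and since $n \geq 1$ we have $2n \geq 1$ and $2n \neq n$, so this group vanishes. Thus $\alpha \smile \alpha = 0_h$ and $\Phi$ is well-defined on the quotient by $(x^2)$. The remaining multiplicativity checks concern products in which at least one factor has degree $0$, and I would reduce these to the $R$-module structure: unwinding the cup product with a degree-$0$ class $r \cdot 1$ (using that, for ring coefficients, $r \smile_k (-)$ on $K(R,m)$ is the functorial action of left multiplication $s \mapsto rs$, as defined in Section~\ref{sec:cupprodringcoeff}), one sees that $(r \cdot 1) \smile \beta$ is ordinary scalar multiplication by $r$ on $H^m(\mathbb{S}^n,R)$, and that $1$ acts as the identity because $K(-,m)$ preserves identity homomorphisms. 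In particular $(r \cdot 1) \smile (s \cdot 1) = (rs) \cdot 1$ and $(r \cdot 1) \smile \alpha$ corresponds to $r$ under $H^n(\mathbb{S}^n,R) \cong R$.

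Finally, bijectivity follows degreewise from the graded group description: in degree $0$, $\Phi$ is $r \mapsto r \cdot 1$, the isomorphism $R \cong H^0(\mathbb{S}^n,R)$; in degree $n$, $\Phi$ is $r \mapsto (r \cdot 1) \smile \alpha$, which by the previous paragraph is the isomorphism $R \cong H^n(\mathbb{S}^n,R)$ sending $1_R$ to $\alpha$; and in all other degrees both sides vanish. The main obstacle is the third paragraph: carefully verifying that the cup product with a degree-$0$ class really is scalar multiplication, which requires spelling out the definition of $\smile_k$ at $n = 0$ together with functoriality of $K(-,m)$, whereas the ring relation $x^2 = 0$ itself is essentially free from the vanishing of $H^{2n}(\mathbb{S}^n,R)$.
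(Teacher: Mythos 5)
Your proposal is correct and follows essentially the same route as the paper's (much terser) proof: both arguments rest on the graded group computation from Section~\ref{subsec:sphere}, the observation that the cup product can only be non-trivial when one factor lies in degree $0$ (your explicit remark that $\alpha \smile \alpha$ dies in the vanishing group $H^{2n}(\mathbb{S}^n,R)$ is exactly what the paper leaves implicit), and the identification of the degree-$0$ cup product with $R$-multiplication. Your version merely spells out the well-definedness and bijectivity checks that the paper compresses into one sentence.
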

\begin{proof}
  The only non-trivial cohomology groups are
  $H^0(\mathbb{S}^n,R) \cong H^n(\mathbb{S}^n,R) \cong R$. Thus, the
  cup product can only be non-trivial in dimensions $0 \times n$ and
  $n \times 0$. In these dimension, the cup product is simply left
  respectively right $R$-multiplication. This gives the desired
  isomorphism by letting constants in $H^*(\mathbb{S}^n,R)$ represent
  elements coming from $H^0(\mathbb{S}^n,R)$ and elements of degree
  one represent elements coming from $H^n(\mathbb{S}^n,R)$.
\end{proof}

\subsection{The torus}

Now that we know the cohomology of spheres, let us investigate something slightly more advanced: the torus.
\begin{definition}[The torus]\label{def:torus}
  We define $\mathbb{T}^2$ as the HIT generated by the following constructors:\\
\begin{minipage}{.46\textwidth}
\begin{itemize}
\item $\star : \mathbb{T}^2$
\item $\ell_1,\ell_2 : \star = \star$
\item $\mathsf{sq} :  \Square{l_2}{l_2}{l_1}{l_1}$, i.e.\ a filler of the square:
\end{itemize}
\end{minipage}
  \begin{minipage}{.39\textwidth}
    \begin{tikzcd}[ampersand replacement=\&]
	\star \& \star \\
	\star \& \star
	\arrow["{\ell_2}", from=1-1, to=1-2]
	\arrow["{\ell_2}"', from=2-1, to=2-2]
	\arrow["{\ell_1}"', from=2-2, to=1-2]
	\arrow["{\ell_1}", from=2-1, to=1-1]
    \end{tikzcd}
  \end{minipage}
\end{definition}
It is well-known that $\mathbb{T}^2 \simeq \mathbb{S}^{1} \times \mathbb{S}^1$.\footnote{In plain HoTT, this was first fully proved by \citet{Sojakova16}. In cubical type theory, this fact is a triviality \citep[Section 3]{cubicalsynthetic}.}. We will see that this fact makes the computation of the cohomology groups of $\mathbb{T}^2$ rather direct. We follow the proof strategy of~\citet[Propositions 20--21]{BLM}, which directly translates from integral cohomology to our setting.

\begin{proposition}
  \[
  H^n(\mathbb{T}^2,G) \cong \begin{cases}
    G & \text{ if $n=0$ or $n=2$} \\
    G \times G & \text{ if $n=1$}\\
    \mathbbm{1} & \text{ otherwise}
    \end{cases}
  \]
\end{proposition}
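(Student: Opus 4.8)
The plan is to exploit the equivalence $\mathbb{T}^2 \simeq \mathbb{S}^1 \times \mathbb{S}^1$ together with the currying adjunction, reducing the whole computation to two nested applications of the universal property of $\mathbb{S}^1$ that already drove the proof of Proposition~\ref{prop:H1-S1}. The basic building block is that for \emph{any} type $Z$ a map $\mathbb{S}^1 \to Z$ is precisely a point together with a loop at it, so $(\mathbb{S}^1 \to Z) \simeq \Sigma_{z:Z}(z = z)$; and when $Z$ is homogeneous (Definition~\ref{def:homogeneous}) we may replace $(z=z)$ by $\Omega Z$ uniformly, yielding $(\mathbb{S}^1 \to Z) \simeq Z \times \Omega Z$. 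I would record this as a small auxiliary lemma, since I intend to use it twice.

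First I would curry to get $(\mathbb{T}^2 \to K(G,n)) \simeq (\mathbb{S}^1 \to (\mathbb{S}^1 \to K(G,n)))$. The type $K(G,n)$ is homogeneous, as the map $y \mapsto x +_k y$ is a pointed equivalence $(K(G,n),0_k) \simeq_\star (K(G,n),x)$, so the inner factor is $(\mathbb{S}^1 \to K(G,n)) \simeq K(G,n) \times \Omega K(G,n)$. The target $Z := K(G,n) \times \Omega K(G,n)$ is again homogeneous, being a product of a homogeneous type and a loop space, so applying the building block once more and splitting $\Omega Z \simeq \Omega K(G,n) \times \Omega^2 K(G,n)$ gives
\[
(\mathbb{T}^2 \to K(G,n)) \simeq K(G,n) \times \Omega K(G,n) \times \Omega K(G,n) \times \Omega^2 K(G,n).
\]
I would then apply $\truncT{-}_0$, which commutes with finite products, and simplify each factor using Theorem~\ref{thm:deloop} to rewrite the loop spaces as lower Eilenberg--MacLane spaces (with $\Omega^k K(G,n)$ contractible once $k>n$, since $G$ is a set) and Proposition~\ref{prop:K-connected} to kill factors: $\truncT{K(G,m)}_0 \simeq \mathbbm{1}$ for $m \geq 1$, while $\truncT{K(G,0)}_0 \simeq G$. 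Reading off the four factors $\truncT{K(G,n)}_0$, $\truncT{K(G,n-1)}_0$ (twice) and $\truncT{K(G,n-2)}_0$ by degree then yields the case split: for $n=0$ only the first survives ($\simeq G$); for $n=1$ the two middle factors survive, each $\simeq G$; for $n=2$ only the last survives ($\simeq G$); and for $n \geq 3$ every factor has $0$-connected underlying space, so the group is trivial. (The case $n=0$ also follows immediately from Proposition~\ref{prop:cohom-connected}.)

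The type-level computation above is essentially routine; the hard part will be verifying that the composite equivalence is a group \emph{homomorphism} for the pointwise $+_k$-structure on $H^n$, exactly the step that required separate care in Proposition~\ref{prop:H1-S1}. I would check this factor by factor, computing the effect of adding two representatives on the point, the two loops, and the square filler that together determine a map out of $\mathbb{T}^2$, and matching it against the product group structure; here Proposition~\ref{prop:ap-+} (that $\app{+_k}{p}{q} = p \cdot q$) is what identifies addition on the loop factors with path composition, just as in the sphere case. Additional bookkeeping is needed to confirm homogeneity of the intermediate product $Z$ and that the two currying equivalences are natural enough for the homomorphism verification to go through; I expect this compatibility check, rather than the decomposition itself, to be where the real effort lies.
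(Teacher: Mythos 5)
Your proposal is correct and follows essentially the same route as the paper: both reduce via $\mathbb{T}^2 \simeq \mathbb{S}^1 \times \mathbb{S}^1$ and currying to the circle's universal property $(\mathbb{S}^1 \to Z) \simeq \Sigma_{z:Z}(z=z)$, use homogeneity to split off loop-space factors, identify them with lower Eilenberg--MacLane spaces via Theorem~\ref{thm:deloop}, and defer the homomorphism check to the argument of Proposition~\ref{prop:H1-S1}. The only (cosmetic) difference is that the paper stops at $(\mathbb{S}^1 \to K(G,n)) \times (\mathbb{S}^1 \to K(G,n-1))$ and cites the already-computed $H^n(\mathbb{S}^1,G) \times H^{n-1}(\mathbb{S}^1,G)$, whereas you unfold one step further into four explicit factors.
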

\begin{proof}
  The case when $n=0$ follows by
  Proposition~\ref{prop:cohom-connected} since $\mathbb{T}^2$ is
  $0$-connected.
  For the case $n \geq 1$, let us inspect the underlying function space of $H^n(\mathbb{T}^2 , G)$. We have
  \begin{align*}
    (\mathbb{T}^2 \to K(G,n)) &\simeq (\mathbb{S}^1 \times \mathbb{S}^1 \to K(G,n))  \\
    &\simeq \mathbb{S}^1 \to (\mathbb{S}^1 \to K(G,n)) \\
    &\simeq \sum_{f :\,\mathbb{S}^1\to K(G,n)} (f = f)\\
    &\simeq \sum_{f :\,\mathbb{S}^1\to K(G,n)} ((x : \mathbb{S}^1) \to \Omega(K(G,n) , f(x)))\\
    &\simeq (\mathbb{S}^1\to K(G,n)) \times (\mathbb{S}^1 \to \Omega(K(G,n)))\\
    &\simeq (\mathbb{S}^1\to K(G,n)) \times (\mathbb{S}^1 \to K(G,n-1))\\
  \end{align*}
  where the step from line 3 to 4 is function extensionality and the
  step from line 4 to 5 follows by homogeneity of $K(G,n)$.  Under set
  truncation, the last type in this chain of equivalences is simply
  $H^n(S^1,G) \times H^{n-1}(S^1,G)$, which we know from
  Section~\ref{subsec:sphere} is simply $G \times G$ when $n = 1$, $G$
  when $n = 2$, and trivial for higher $n$. The fact that this map is a homomorphism follows by a similar argument to that of the corresponding statement in the proof of Proposition~\ref{prop:H1-S1} .
\end{proof}

\subsection{The real projective plane and the Klein bottle}

Let us consider two somewhat more complex (but closely related) examples: the
real projective plane and the Klein bottle. In HoTT,
we define the real projective plane, $\mathbb{R}P^2$, and the Klein
bottle, $K^2$, by HITs corresponding to their usual folding diagrams:
\begin{definition}[The real project plane]\label{def:RP2}
  We define $\mathbb{R}P^2$ as the HIT generated by the following constructors:\\
\begin{minipage}{.46\textwidth}
\begin{itemize}
\item $\star : \mathbb{R}P^2$
\item $\ell : \star = \star$
\item $\mathsf{sq} : \ell^{-1} = \ell$, i.e.\ a filler of the square:
\end{itemize}
\end{minipage}
\begin{minipage}{.54\textwidth}
\[\begin{tikzcd}[ampersand replacement=\&]
	\star \& \star \\
	\star \& \star
	\arrow["\ell",from=2-2,to=2-1]  
        \arrow["\ell",from=1-1,to=1-2]  
	\arrow[Rightarrow, no head, from=2-2, to=1-2]
	\arrow[Rightarrow, no head, from=1-1, to=2-1]
\end{tikzcd}\]
\end{minipage}
\end{definition}
\begin{definition}[The Klein bottle]\label{def:K2}
  We define $K^2$ as the HIT generated by the following
  constructors:\linebreak
  \begin{minipage}{.65\textwidth}
  \begin{itemize}
  \item $\star : K^2$
  \item $\ell_1,\ell_2 : \star = \star$
  \item $\mathsf{sq} : \Square{l_2}{l_2}{l_1^{-1}}{l_1}$, i.e.\ a filler of the
    square:
  \end{itemize}
  \end{minipage}
  \begin{minipage}{.39\textwidth}
    \begin{tikzcd}[ampersand replacement=\&]
	\star \& \star \\
	\star \& \star
	\arrow["{\ell_2}", from=1-1, to=1-2]
	\arrow["{\ell_2}"', from=2-1, to=2-2]
	\arrow["{\ell_1}"', from=2-2, to=1-2]
	\arrow["{\ell_1}"', from=1-1, to=2-1]
    \end{tikzcd}
  \end{minipage}
\end{definition}

Let us first tackle the cohomology of $\mathbb{R}P^2$. It is an easy
lemma that $\RP$ is $0$-connected, and hence $H^0(\RP,G) \cong G$. In
order to compute its higher cohomology groups, we will need to
introduce some new constructions: for an abelian group $G$ and an
integer $n \geq 0$, let $G[n]$ denote $n$-torsion subgroup of $G$,
i.e.\ the kernel of the map $n \cdot (-) : G \to G$. Let $G/n$ denote
the cokernel of the same homomorphism; that is, the group $G/\sim$
where $\sim$ is the relation $x^n = 0$.  We denote the quotient map
$G \to G/n$ by $[-]$.
\begin{proposition}\label{prop:H1-RP2}
  $H^1(\mathbb{R}P^2,G) \cong G[2]$.
\end{proposition}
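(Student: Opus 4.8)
The plan is to follow the same strategy as the proof of Proposition~\ref{prop:H1-S1}: first unfold the underlying function space $\RP \to K(G,1)$ using the recursion principle of the HIT, then simplify under set truncation using homogeneity and connectedness of $K(G,1)$, and finally check that the resulting bijection is a group homomorphism by evaluating it on generators. First I would analyse $(\RP \to K(G,1))$. A map $f : \RP \to K(G,1)$ is determined by a point $x := f(\star)$, a loop $p := \ap{f}{\ell} : x = x$, and a filler witnessing the image of $\mathsf{sq}$. Since $K(G,1)$ is a $1$-type, its path types are sets, so such a filler is unique when it exists; the $\mathsf{sq}$-constructor therefore contributes only the \emph{proposition} $p^{-1} = p$. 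This yields
\[
(\RP \to K(G,1)) \simeq \Sigma_{x : K(G,1)} \Sigma_{p : x = x} (p^{-1} = p).
\]

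Next I would compute the set truncation of the right-hand side. Exactly as in Proposition~\ref{prop:H1-S1}, homogeneity of $K(G,1)$ supplies, fibrewise in $x$, an equivalence $(x = x) \simeq \Omega K(G,1)$; crucially this equivalence is obtained by applying $\mathsf{ap}$ to a pointed equivalence and hence commutes with path inversion, so it restricts to the subtypes and the whole family splits as
\[
\Sigma_{x : K(G,1)} \Sigma_{p : x = x} (p^{-1} = p) \simeq K(G,1) \times \Sigma_{p : \Omega K(G,1)} (p^{-1} = p).
\]
Applying set truncation and using that $K(G,1)$ is $0$-connected makes the first factor contractible. The second factor is already a set, and under the group isomorphism $\Omega K(G,1) \cong G$ (sending $\LoopG{g}$ to $g$, composition to addition, and inversion to negation), the predicate $p^{-1} = p$ becomes $-g = g$, i.e.\ $2g = 0$. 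Hence $\Sigma_{p : \Omega K(G,1)}(p^{-1} = p) \cong G[2]$, giving $H^1(\RP,G) \cong G[2]$ as sets.

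Finally I would verify that this is a group isomorphism, again mirroring Proposition~\ref{prop:H1-S1} by inspecting the inverse map. Define $\phi : G[2] \to H^1(\RP,G)$ by $\phi(g) := \trunc{f_g}$, where $f_g(\star) := \star$ and $\ap{f_g}{\ell} := \LoopG{g}$; this is well defined because $2g = 0$ forces $\LoopG{g}^{-1} = \LoopG{g}$ by functoriality of $\LoopG{}$, so the $\mathsf{sq}$-constructor is respected. To see that $\phi$ is a homomorphism, it suffices to show $f_{g_1}(z) +_k f_{g_2}(z) = f_{g_1 + g_2}(z)$ for all $z : \RP$ by $\RP$-induction: the point case holds by $\refl$, the $\ell$-case reduces to
\[
\app{+_k}{\LoopG{g_1}}{\LoopG{g_2}} = \LoopG{(g_1 + g_2)},
\]
which follows from Proposition~\ref{prop:ap-+} and functoriality of $\LoopG{}$, and the $\mathsf{sq}$-case is automatic since the target is sufficiently truncated.

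The main obstacle I expect is the bookkeeping in the middle step: checking that the fibrewise equivalence supplied by homogeneity genuinely commutes with path inversion, so that the condition $p^{-1} = p$ transports to the honest $2$-torsion condition $2g = 0$ rather than to some twisted variant, and keeping careful track of the orientation of $\mathsf{sq}$ so that the imposed relation really is $p^{-1} = p$. Underlying the fact that no conjugation twist appears when the fibres $(x = x)$ are identified with $\Omega K(G,1)$ is the commutativity of $\Omega K(G,1)$ (Proposition~\ref{prop:comm-path}); this is the conceptual reason the answer comes out as the clean subgroup $G[2]$.
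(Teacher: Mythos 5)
Your proposal is correct and follows essentially the same route as the paper's proof: unfold $(\RP \to K(G,1))$ via the recursion principle into $\Sigma_{x : K(G,1)}\Sigma_{p : x = x}(p^{-1} = p)$, split off the $K(G,1)$ factor by homogeneity, kill it under set truncation by connectedness, identify the remaining $\Sigma$-type with $G[2]$ via $\Omega K(G,1) \cong G$, and check the homomorphism property on the inverse by $\RP$-induction using Proposition~\ref{prop:ap-+}. Your explicit attention to why the $\mathsf{sq}$-constructor is respected (needing $\LoopG{g}^{-1} = \LoopG{g}$ from $2g = 0$) and to the absence of a conjugation twist is slightly more careful than the paper's presentation but does not change the argument.
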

\begin{proof}
  The elimination principle for $\RP$ tells us that
  \[(\RP \to K(G,1)) \simeq \sum_{x : K(G,1)}\sum_{p : x = x} (p^{-1} = p) \simeq \sum_{x : K(G,1)}\sum_{p : x = x} (p \cdot p = \refl) \]
  Since $K(G,n)$ is homogeneous, this is equivalent to the type
  \[{K(G,1)} \times \sum_{p : \Omega K(G,1)} (p \cdot p = \refl) \]
  which under the equivalence $\Omega K(G,1) \simeq G$ is equivalent
  to the type
  \[{K(G,1)} \times \sum_{g : G} (g + g = 0_G) \]
  i.e.\  $K(G,1) \times G[2]$. Thus, we get
\begin{align*}
  H^1(\RP,G) = \truncT{\RP \to K(G,1)}_0 &\simeq \truncT{K(G,1) \times G[2] }_0 \\
  &\simeq \truncT{K(G,1)}_0 \times G[2] \\
  &\simeq G[2]
\end{align*}
We verify that the inverse of this equivalence is a homomorphism. The
inverse $\psi : G[2] \to H^1(\RP,G)$ is given by $\psi(g) = f_g$ where
$f_g : \RP \to K(G,1)$ sends $\star$ to $0_k$ and $\ell$ to
$\LoopG{g}$. We do not need to worry about its action on $\mathsf{sq}$:
for truncation reasons this is not data but a property. We need to
verify that $f_{g_1+g_2}(x) = f_{g_1}(x) +_k f_{g_2}(x)$ for $x : \RP$
and $g_1,g_2 : G$. When $x$ is $\star$, this holds by $\refl$. For the
action on $\ell$, we need to verify that
\begin{align*}
  \LoopG{(g_1+g_2)} = \app{+_k}{\LoopG{g_1}}{\LoopG{g_2}}
\end{align*}
which we already verified in the proof of Proposition
\ref{prop:H1-S1}. This concludes the proof.
\end{proof}

\begin{proposition}\label{prop:H2-RP2}
  $H^2(\RP,G) \cong G/2$.
\end{proposition}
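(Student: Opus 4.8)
The plan is to mirror the computation of $H^1(\RP,G)$ in Proposition~\ref{prop:H1-RP2}, while tracking the fact that the coherence imposed by $\mathsf{sq}$ is now genuine \emph{data} rather than a mere proposition. The elimination principle for $\RP$ identifies a map $\RP \to K(G,2)$ with a point $x : K(G,2)$, a loop $p : x = x$, and a \emph{path} $p^{-1} = p$ (the image of $\mathsf{sq}$), so that $(\RP \to K(G,2)) \simeq \sum_{x : K(G,2)} \sum_{p : x = x}(p^{-1} = p)$. Since $K(G,2)$ is homogeneous, exactly as in Proposition~\ref{prop:H1-RP2} this splits as $K(G,2) \times \sum_{p : \Omega K(G,2)}(p^{-1} = p)$. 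Applying $\truncT{-}_0$ and using that $K(G,2)$ is $1$-connected, so that $\truncT{K(G,2)}_0$ is contractible, I would obtain $H^2(\RP,G) \simeq \truncT{\sum_{p : \Omega K(G,2)}(p^{-1} = p)}_0$.

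Next I would transport the condition across the delooping equivalence $\sigma_1 : K(G,1) \simeq \Omega K(G,2)$ of Theorem~\ref{thm:deloop}. Writing $p = \sigma_1(c)$ and using that $\sigma_1$ is a homomorphism with $\sigma_1(-_k c) = \sigma_1(c)^{-1}$ (Proposition~\ref{prop:+-hom} and Corollary~\ref{prop:--hom}), the condition $p^{-1} = p$ becomes $\sigma_1(-_k c) = \sigma_1(c)$, which, since $\sigma_1$ is an equivalence, is equivalent to $-_k c = c$. Hence $H^2(\RP,G) \simeq \truncT{\sum_{c : K(G,1)} (-_k c = c)}_0$. This is the crucial structural difference with the $H^1$ case: there the analogous constraint lived in the \emph{set} $\Omega K(G,1) \cong G$ and was a proposition, carving out the subgroup $G[2]$; here it lives in the \emph{groupoid} $\Omega K(G,2) \cong K(G,1)$ and is genuine data, so passing to $\pi_0$ yields a quotient rather than a subobject.

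It then remains to identify $\truncT{\sum_{c : K(G,1)} (-_k c = c)}_0$ with $G/2$. Adding $c$ on both sides rewrites the condition as $0_k = c +_k c$, so the type is the fibre $\fib{\nu}{0_k}$ of the doubling map $\nu : K(G,1) \to K(G,1)$, $\nu(c) := c +_k c$. I would compute its components directly: since $K(G,1)$ is $0$-connected every element is merely of the form $(0_k, q)$ with $q : \Omega K(G,1) \cong G$, and transporting such a point along a loop $h : \Omega K(G,1)$ replaces $q$ by $\ap{-_k}{h}^{-1} \cdot q \cdot h = h \cdot q \cdot h$, using Proposition~\ref{prop:ap--} (that $\ap{-_k}{h} = h^{-1}$) and commutativity of $\Omega K(G,1)$ (Proposition~\ref{prop:comm-path}); under $\Omega K(G,1) \cong G$ this is precisely $q + 2h$. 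Thus two classes coincide exactly when their labels differ by an element of $2G$, whence $\pi_0$ of the fibre is $G/2G = G/2$, and the identification is a group homomorphism since the whole fibre sequence consists of homomorphisms (equivalently, one reads off the cokernel from the tail $G \xrightarrow{2 \cdot (-)} G \to \pi_0(\fib{\nu}{0_k}) \to \mathbbm{1}$ of the long exact sequence of $\nu$). As in Proposition~\ref{prop:H1-RP2}, I would finally exhibit the explicit inverse $\psi : G/2 \to H^2(\RP,G)$ sending $[g]$ to the class of the map taking $\star \mapsto 0_k$, $\ell \mapsto \refl$, and $\mathsf{sq}$ to the $2$-cell in $\Omega^2 K(G,2) \cong G$ determined by $g$, and verify directly that $\psi$ is well defined on the quotient and a homomorphism.

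The reductions in the first two paragraphs are routine adaptations of the $H^1$ argument and of the homogeneity/delooping toolkit. The real content, and the main obstacle, is the computation of $\pi_0$ of the homotopy fibre of the doubling map, together with the appearance of the factor $2$. The delicate point is that the relevant transport mixes the path $q$ with the action of inversion $-_k$ on loops, and it is precisely the identity $\ap{-_k}{h} = h^{-1}$ of Proposition~\ref{prop:ap--} (together with the degenerate-square computation $\mathsf{flip}(p) = p^{-1}$ of Lemma~\ref{lem:flip-id}, which governs the corresponding $2$-dimensional coherence) that forces the quotient to be by $2G$ rather than by $G$. Making the well-definedness of $\psi$ on $G/2$ and the homomorphism property fully precise, while tracking these coherences through the chain of equivalences, is where the bulk of the careful work lies.
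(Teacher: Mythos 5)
Your proposal is correct and follows essentially the same route as the paper: the same reduction via the elimination principle for $\RP$, homogeneity of $K(G,2)$, connectedness, and the delooping $\sigma_1$ down to $\truncT{\Sigma_{c : K(G,1)}(c +_k c = 0_k)}_0$, and the same essential computation that loops $h$ in the base $K(G,1)$ act on the fibre by $q \mapsto q + 2h$, which is what produces the quotient $G/2$. The only difference is packaging: the paper realises this computation as a pair of explicit mutually inverse maps $\phi$ (defined by set-elimination on $K(G,1)$ with $\phi_\star := [-]\circ\sigma_0^{-1}$) and $\psi[g] := (0_k,\LoopG{g})$, whereas you phrase it as computing $\pi_0$ of the homotopy fibre of the doubling map; both rest on the identity $\app{+_k}{\LoopG{g}}{\LoopG{g}} = \LoopG{(g+g)}$ and the transport calculation you describe.
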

\begin{proof} Like
  in the proof of Proposition \ref{prop:H1-RP2}, we have
  \begin{align*}
    (\RP \to K(G,2)) &\simeq \sum_{x : K(G,2)}\sum_{p : x = x} {p \cdot p = \refl} \\
    &\simeq K(G,2) \times \sum_{p : \Omega K(G,2)} {p \cdot p = \refl} \\
    &\simeq K(G,2) \times \sum_{x : K(G,1)}{x +_k x = 0_k}
  \end{align*}
  Under set truncation, the $K(G,2)$-component vanishes which leaves
  us with the type $\truncT{\sum_{x : K(G,1)}{x +_k x = 0_k}}_0$. Let
  us construct a map $\phi : \truncT{\sum_{x : K(G,1)}{x +_k x =
      0_k}}_0 \to G/2$ by means of a curried map $\phi_x : x +_k x =
  0_k \to G/2$ depending on $x : K(G,1)$. We may define it by the
  set-elimination rule for $K(G,1)$. When $x$ is $0_k$, what is
  desired is a map $\phi_{\star} : \Omega K(G,1) \to G/2$. This map
  may simply be defined by $\phi_{\star} := [-] \circ
  \sigma_0^{-1}$. We now need to describe the action on
  $\LoopG{g}$. This amounts to providing a path $[g] = [g' + g + g']$,
  which of course trivially exists in $G/2$. This defines
  $\phi(\trunc{x,p}) := \phi_x(p)$.

  The inverse
  $\psi : G/2 \to \truncT{\Sigma_{x : K(G,1)} (x+_k x = 0_k)}_0$ is defined on
  canonical elements by $\psi[g] = (0_k,\LoopG{g})$. In order to show
  that this is well-defined, we need to show that
\begin{align*}
  \trunc{(0_k, \refl)} = \trunc{(0_k,\LoopG{(g+_G g)})}
\end{align*}
This is done component-wise. For first component, we need to provide a path $0_k = 0_k$. In fact, the right choice here it \emph{not}  $\refl$ but instead
$\LoopG{g}$. Showing that the second components agree
now amounts to providing a filler of the square
\[\begin{tikzcd}[ampersand replacement=\&]
	{0_k} \&\& {0_k} \\
	\\
	{0_k} \&\& {0_k}
	\arrow["{\LoopG{(g+_G g)}}", from=1-1, to=1-3]
	\arrow["({\app{+_k}{\LoopG{g}}{\LoopG{g}})^{-1}}", from=3-1, to=1-1]
	\arrow["{\mathsf{refl}}"', Rightarrow, no head, from=3-1, to=3-3]
	\arrow["{\mathsf{refl}}"', Rightarrow, no head, from=3-3, to=1-3]
\end{tikzcd}\]
which, in turn, is equivalent to proving that $\LoopG{(g +_G g)} =
\app{+_k}{\LoopG{g}}{\LoopG{g}}$, which we have already done in~the proofs of~Propositions~\ref{prop:H1-S1}~and~\ref{prop:H1-RP2}.

Finally, let us prove that the maps cancel. Verifying that $\phi(\psi(x)) =
x$ for $x:G/2$ is very direct: since the statement is a proposition,
it suffices to to this when $x$ is on the form $[g]$. In this case, we
have $\phi(\psi[g]) := \phi_{0_k}(\LoopG{g}) :=
\sigma_0^{-1}(\sigma_0(g)) = g$.

For the other direction, the fact that we are proving a proposition
means that it suffices to show the cancellation for elements of
$\truncT{\Sigma_{x : K(G,1)} (x +_k x = 0_k)}_0$ on the
form $\trunc{(0_k,p)}$ with $p : \Omega K(G,1)$. We get
$\psi(\phi\,\trunc{0_k,p}) := \psi(\sigma_0^{-1}(p)) = \trunc{0_k,\sigma_0(\sigma_0^{-1}(p))} = \trunc{0_k,p}$.
Thus, we have shown:
\begin{align*}
H^2(\RP,G) &\simeq \bigg\Vert K(G,2) \times \sum_{x : K(G,1)}{x +_k x = 0_k} \bigg\Vert_0 \\
&\simeq \bigg\Vert \sum_{x : K(G,1)}{x +_k x = 0_k} \bigg\Vert_0\\
&\simeq G/2
\end{align*}
The argument for why this equivalence is a homomorphism is technical but similar to
the argument given in previous proofs.
\end{proof}

\begin{proposition}\label{prop:H-high-RP2}
  $H^n(\RP,G)$ is trivial for $n > 2$.
\end{proposition}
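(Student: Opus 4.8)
The plan is to mirror the computations in Propositions~\ref{prop:H1-RP2} and~\ref{prop:H2-RP2}, pushing the same reduction one dimension higher and then exploiting connectivity. First I would apply the elimination principle for $\RP$ to rewrite the underlying mapping space as
\[
(\RP \to K(G,n)) \simeq \sum_{x : K(G,n)} \sum_{p : x = x}(p^{-1} = p),
\]
and then use homogeneity of $K(G,n)$ to strip off the basepoint, obtaining $K(G,n) \times \sum_{p : \Omega K(G,n)}(p^{-1} = p)$. Since $n > 2 \geq 1$, the space $K(G,n)$ is $0$-connected, so the first factor disappears under $\truncT{-}_0$, leaving $H^n(\RP,G) \cong \truncT{\sum_{p : \Omega K(G,n)}(p^{-1} = p)}_0$. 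Next I would transport along the equivalence $\sigma_{n-1} : K(G,n-1) \simeq \Omega K(G,n)$ of Theorem~\ref{thm:deloop}. Under this equivalence the condition $p^{-1} = p$ becomes $-_k q = q$ by Corollary~\ref{prop:--hom}, equivalently $q +_k q = 0_k$, exactly as in the lower-dimensional cases. Hence the problem reduces to showing that the type
\[
T := \sum_{q : K(G,n-1)}(q +_k q = 0_k)
\]
has contractible set-truncation whenever $n - 1 \geq 2$.

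The key observation is that $T$ is precisely $\fib{d}{0_k}$, the fibre over $0_k$ of the doubling map $d : K(G,n-1) \to K(G,n-1)$, $d(q) := q +_k q$. I would therefore prove that this fibre is $0$-connected, which immediately gives $\truncT{T}_0 \simeq \mathbbm{1}$ and hence $H^n(\RP,G) \cong \mathbbm{1}$. This is where the hypothesis $n > 2$ enters essentially: for $n - 1 \geq 2$, Proposition~\ref{prop:K-connected} tells us that $K(G,n-1)$ is $(n-2)$-connected, and in particular simply connected, so $d$ is a map between simply connected types.

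The hard part will be establishing that the fibre of the doubling map is $0$-connected; this is exactly the step that fails for $n = 1,2$, where it instead produces $G[2]$ and $G/2$. I see two routes. The high-level route feeds the fibre sequence $T \to K(G,n-1) \xrightarrow{d} K(G,n-1)$ into the long exact sequence of homotopy groups: the relevant portion reads $\pi_1(K(G,n-1)) \to \pi_0(T) \to \pi_0(K(G,n-1))$, and since both outer groups vanish by simple connectedness, exactness forces $\pi_0(T)$ to be trivial; with the evident basepoint $(0_k,\refl)$ this yields $0$-connectedness.

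The elementary route, which stays closer to the paper's style, is to show directly that $\truncT{T}_0$ is contractible. It is inhabited by $\trunc{(0_k,\refl)}$, and for an arbitrary $(q,r) : T$ the (propositional) goal $\trunc{(q,r)} = \trunc{(0_k,\refl)}$ unfolds, after computing the transport in the family $x \mapsto (d(x) = 0_k)$, to the mere existence of a path $s : q = 0_k$ with $\ap{d}{s} = r$. Since $K(G,n-1)$ is connected, such an $s$ exists (and may be used freely, as we are proving a proposition), reducing the task to comparing $\ap{d}{s}$ and $r$ inside the path type $d(q) = 0_k$. This type is $0$-connected because $K(G,n-1)$ is simply connected, so the two paths are merely equal and we are done. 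I would present this elementary route, as it avoids the full long exact sequence and makes the role of simple connectedness transparent.
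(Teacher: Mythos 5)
Your proposal is correct and follows essentially the same route as the paper: the identical reduction of $H^n(\RP,G)$ to $\truncT{\sum_{x : K(G,n-1)}(x +_k x = 0_k)}_0$, followed by using $0$-connectedness of $K(G,n-1)$ to kill the base and $1$-connectedness (hence $0$-connectedness of its path spaces) to kill the fibre. The only difference is presentational: the paper phrases the last step by pushing the set-truncation inside the $\Sigma$-type and noting the inner truncated path space is contractible, whereas you argue contractibility of the set truncation element-wise via the fibre of the doubling map; the underlying connectivity facts are the same.
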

\begin{proof}
  As we have seen in the two previous proofs:
  \begin{align*}
    H^n(\RP,G) &\simeq \bigg\Vert K(G,n) \times \sum_{x : K(G,n-1)} x +_k x = 0_k \bigg\Vert_0 \\
    &\simeq \bigg\Vert \sum_{x : K(G,n-1)} x +_k x = 0_k \bigg\Vert_0
  \end{align*}
  We may move in the truncation one step further to get
  \begin{align*}
    \bigg\Vert \sum_{x : K(G,n-1)} \truncT{x +_k x = 0_k}_0\bigg\Vert_0
  \end{align*}
  Since $n > 2$, any path space over $K(G,n-1)$ is (at least)
  $0$-connected. Thus, the truncation kills the space $(x +_k x =
  0_k)$. Hence, the above type is simply $\truncT{K(G,n-1)}_0$, which
  also vanishes due to connectedness.
\end{proof}

This gives us all the cohomology groups of $\RP$. We will now see how
this, in a rather direct way, gives us all the cohomology groups of
$K^2$ as well. Trivially, $H^0(K^2,G) \cong G$ for connectedness
reasons.

\begin{proposition}\label{prop:Hn-K2}
  $H^n(K^2,G) \cong \begin{cases} H^n(\RP,G) & n \neq 1 \\ G \times H^1(\RP,G) & n = 1 \end{cases}$
\end{proposition}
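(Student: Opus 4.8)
The plan is to mirror the direct computations for $\RP$ (Propositions~\ref{prop:H1-RP2}--\ref{prop:H-high-RP2}): I would analyse the function space $K^2 \to K(G,n)$ via the elimination principle of the Klein bottle, and only afterwards apply set truncation. The case $n = 0$ is immediate, since both $H^0(K^2,G)$ and $H^0(\RP,G)$ are isomorphic to $G$ by connectedness (Proposition~\ref{prop:cohom-connected}), as already noted before the statement. So assume $n \geq 1$. Unfolding the elimination principle for $K^2$ gives
\[(K^2 \to K(G,n)) \simeq \sum_{x : K(G,n)}\sum_{p_1 : x = x}\sum_{p_2 : x = x} \Square{p_2}{p_2}{p_1^{-1}}{p_1},\]
where the last factor is the type of fillers of the Klein bottle square, i.e.\ a proof that $p_1 \cdot p_2 \cdot p_1 = p_2$.

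The key observation is that this constraint decouples from $p_2$. Since path composition in $K(G,n)$ is commutative at every point (Proposition~\ref{prop:comm-path}), we have $p_1 \cdot p_2 \cdot p_1 = p_2 \cdot (p_1 \cdot p_1)$, and left-cancelling $p_2$ turns the filler type into the equivalent type $p_1 \cdot p_1 = \refl$, which no longer mentions $p_2$. Both steps are equivalences, so $\Square{p_2}{p_2}{p_1^{-1}}{p_1} \simeq (p_1 \cdot p_1 = \refl)$. This is exactly the constraint in the $\RP$ computation, where $(\RP \to K(G,n)) \simeq \sum_{x}\sum_{p : x = x}(p \cdot p = \refl)$. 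Reassociating the $\Sigma$-types, and using that the proof of $p_1 \cdot p_1 = \refl$ is independent of $p_2$, the $K^2$ function space becomes
\[\sum_{x : K(G,n)}\Big(\big(\textstyle\sum_{p_1 : x = x}(p_1 \cdot p_1 = \refl)\big) \times (x = x)\Big),\]
that is, the $\RP$ function space fibred over the extra loop factor $(x = x)$. Exactly as in the torus computation, homogeneity of $K(G,n)$ supplies a uniform equivalence $(x = x) \simeq \Omega K(G,n)$, which lets me split off the extra factor to obtain $(K^2 \to K(G,n)) \simeq (\RP \to K(G,n)) \times \Omega K(G,n)$.

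Taking set truncations (which commutes with products) then yields
\[H^n(K^2,G) \cong H^n(\RP,G) \times \truncT{\Omega K(G,n)}_0 \cong H^n(\RP,G) \times \truncT{K(G,n-1)}_0,\]
using $\Omega K(G,n) \simeq K(G,n-1)$ (Theorem~\ref{thm:deloop}). The proof finishes by case analysis on the extra factor: when $n = 1$ it is $\truncT{K(G,0)}_0 = \truncT{G}_0 \cong G$, giving $G \times H^1(\RP,G)$; when $n \geq 2$ the space $K(G,n-1)$ is $(n-2)$-connected, hence $0$-connected, so its set truncation is trivial and the factor vanishes, leaving $H^n(\RP,G)$.

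The main obstacle I anticipate is the decoupling step: making the equivalence $\Square{p_2}{p_2}{p_1^{-1}}{p_1} \simeq (p_1 \cdot p_1 = \refl)$ precise, since it rests on the coherence data of commutativity (Proposition~\ref{prop:comm-path}) rather than a bare path identity, and one must check that the cancellation of $p_2$ and the ensuing reassociation of $\Sigma$-types cohere. The final verification that the resulting isomorphism is a group homomorphism is technical but follows the same pattern as in the proofs of Propositions~\ref{prop:H1-RP2} and~\ref{prop:H2-RP2}, by computing the induced map on the canonical generators.
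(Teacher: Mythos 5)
Your proposal is correct and follows essentially the same route as the paper: both unfold the $K^2$ function space to $\sum_{x}\sum_{p,q:x=x}(p\cdot q\cdot p = q)$, use commutativity of loop composition (Proposition~\ref{prop:comm-path}) to cancel $q$ and reduce the filler type to $p\cdot p = \refl$, identify the result as the $\RP$ function space times an extra loop factor via homogeneity, and dispose of $\truncT{\Omega K(G,n)}_0$ by connectedness for $n>1$ and by $\truncT{G}_0\cong G$ for $n=1$. The paper likewise defers the homomorphism check as "straightforward but somewhat technical."
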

\begin{proof}
  Consider the function space $(K^2 \to K(G,n))$:
  \begin{align*}
    (K^2 \to K(G,n)) \simeq \sum_{x : K(G,n)}\sum_{p,q: x = x}(p
    \cdot q \cdot p = q)
  \end{align*}
  Again, we may use homogeneity of $K(G,n)$ to show that this is
  equivalent to the type $$K(G,n) \times \sum_{p,q: \Omega K(G,n)}(p
  \cdot q \cdot p = q)$$ We now consider the path space $p \cdot q
  \cdot p = q$ for $p,q : \Omega K(G,n)$. By commutativity of
  $\Omega\,K(G,n)$, composing both sides by $q^{-1}$ will cancel out
  $q$ everywhere. Thus, the type is simply $p\cdot p = \refl$. Hence, we have
  \begin{align*}
    H^n(K^2,G) &:= \truncT{K^2 \to K(G,n)}_0 \\
               &\simeq \bigg\Vert K(G,n) \times \Omega K(G,n) \times \sum_{p : \Omega K(G,n)} (p \cdot p \equiv \refl)\bigg\Vert_0
  \end{align*}
  Excluding the $\Omega K(G,n)$ component, this is precisely the
  characterisation of $H^n(\RP,G)$. Thus, the above type is equivalent
  to
  \begin{align*}
    \truncT{\Omega K(G,n)}_0 \times H^n(\RP,G)
  \end{align*}
  The factor $\truncT{\Omega K(G,n)}_0$ vanishes for $n > 1$. When
  $n = 1$, we have $\truncT{\Omega K(G,1)}_0 \simeq \truncT{G}_0
  \simeq G$. Verifying that this is a homomorphism is straightforward
  but somewhat technical.
\end{proof}

Computing the integral cohomology ring of $\RP$ is straightforward.
Naturally, the following proposition easily generalises to $H^*(\RP,R)$
for any $2$-torsion free ring $R$.

\begin{proposition}
  $H^*(\RP,\bZ) \cong \bZ[x]/(2x,x^2)$.
\end{proposition}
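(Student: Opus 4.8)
The plan is to read the ring structure directly off the group computations already in hand, since for $\RP$ the cup product is almost entirely forced by the vanishing of cohomology outside two degrees. First I would collect the additive structure: by Proposition~\ref{prop:cohom-connected} we have $H^0(\RP,\bZ) \cong \bZ$; by Proposition~\ref{prop:H1-RP2} we have $H^1(\RP,\bZ) \cong \bZ[2]$, which is trivial since $\bZ$ is torsion-free; by Proposition~\ref{prop:H2-RP2} we have $H^2(\RP,\bZ) \cong \bZ/2$; and by Proposition~\ref{prop:H-high-RP2} we have $H^n(\RP,\bZ) \cong \mathbbm{1}$ for $n > 2$. Hence the only non-vanishing groups sit in degrees $0$ and $2$.

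Next I would analyse the multiplicative structure, following the strategy of Proposition~\ref{prop:H*-Sn}. Because all cohomology outside degrees $0$ and $2$ vanishes, the only cup products that can be non-trivial are those landing in $H^0$ or $H^2$. The product $H^0 \times H^0 \to H^0$ is multiplication in $\bZ$, with $1_{\bZ}$ as the ring unit. The products $H^0 \times H^2 \to H^2$ and $H^2 \times H^0 \to H^2$ realise the $\bZ$-module action on $H^2 \cong \bZ/2$; I would argue this is the \emph{standard} scalar multiplication by appealing to the left- and right-unit laws for $\smile$ together with distributivity, which give $n \smile y = n \cdot y$. Finally, the product $H^2 \times H^2 \to H^4 \cong \mathbbm{1}$ is trivial for dimension reasons.

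Finally I would assemble the graded ring isomorphism. Letting $x : H^2(\RP,\bZ)$ denote the class corresponding to the generator of $\bZ/2$ under the isomorphism of Proposition~\ref{prop:H2-RP2}, I would define a map $\bZ[x]/(2x,x^2) \to H^*(\RP,\bZ)$ sending constants into $H^0$ and the degree-$2$ generator $x$ to this class. The relation $2x = 0$ holds precisely because $H^2(\RP,\bZ) \cong \bZ/2$, so $x +_k x = 0_k$, and $x^2 = 0$ holds because $x \smile x$ lives in $H^4(\RP,\bZ) \cong \mathbbm{1}$. That this assignment is a degreewise bijection is immediate from the group isomorphisms above (with all odd degrees vanishing on both sides), and it respects the ring structure by the product analysis, so it is a graded ring isomorphism.

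The hard part will be genuinely mild: the only point needing a short argument is confirming that the $H^0$-action on $H^2$ is the expected scalar multiplication rather than some twisted action, which is settled by the unit and distributivity laws for $\smile$ established in Section~\ref{sec:cupprodringcoeff}. Everything else is forced by the vanishing of the remaining cohomology groups, so no substantial obstacle remains.
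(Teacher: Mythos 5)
Your proof is correct and follows essentially the same route as the paper: observe that $H^1(\RP,\bZ)\cong\bZ[2]$ is trivial by torsion-freeness of $\bZ$, so only degrees $0$ and $2$ survive, and then read off the ring structure exactly as in the sphere case (Proposition~\ref{prop:H*-Sn}), with the relation $2x=0$ coming from the $2$-torsion in $H^2$ and $x^2=0$ forced by the vanishing of $H^4$. The only difference is cosmetic (you grade the polynomial generator by cohomological degree $2$ where the paper uses polynomial degree $1$).
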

\begin{proof}
  By Proposition \ref{prop:H1-RP2}, we have
  $H^1(\RP,\bZ) \cong \bZ[2]$. Since $\bZ$ has no non-trivial torsion
  elements, this tells us that $H^1(\RP,\bZ)$ is trivial. Hence, we
  only have as non-trivial cohomology groups the groups
  $H^0(\RP,\bZ) \cong \bZ$ and $H^2(\RP,\bZ) \cong \bZTwo$. The proof
  now proceeds just like that of Proposition \ref{prop:H*-Sn},
  representing elements from $H^0(\RP,\bZ)$ and elements from
  $H^2(\RP,\bZ)$ respectively by degree-$0$ and degree-$1$ elements in
  $H^*(\RP,\bZ)$. The $2x$-quotient comes from the $2$-torsion in
  $H^2(\RP,\bZ)$.
\end{proof}

Let us now turn to the Klein bottle. For the coming results, the
following construction will be crucial.
\begin{definition}
  There is a homotopy
  $$\killDiag : (p : \Omega K(G,n))  \to  \app{\smile_k}{p}{p} = 0_k$$
  defined by the composite path\normalfont{
\[\begin{tikzcd}[ampersand replacement=\&]
	{\mathsf{ap}^2_{\smile_k}(p,p)} \&\&\& {\mathsf{ap}_{(-)\smile_k 0_k}(p)\cdot \mathsf{ap}_{0_k\smile_k (-)}(p)} \&\& \refl
	\arrow["{h(p)}", from=1-4, to=1-6]
	\arrow["{\textsf{ap}_{\smile_k}^2\textsf{-funct}(p,p)}", from=1-1, to=1-4]
      \end{tikzcd}\]}
   \!Here $h(p)$ is given by pointwise application of the right and
    left annihilation laws for $\smile_k$ using that they agree on
    $0_k$.
\end{definition}

We will rely on this construction to kill off certain cup products on
\RP and $K^2$. For instance, it makes the following computation easy.

\begin{proposition}
  $H^*(K^2,\bZ) \cong \bZ[x,y]/(2y,x^2,y^2,xy)$.
\end{proposition}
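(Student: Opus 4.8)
The plan is to first determine the additive structure of $H^*(K^2,\bZ)$ and then reduce the multiplicative structure to a single nontrivial cup product. For the groups, I would combine Proposition~\ref{prop:Hn-K2} with the $\RP$ computations: since $H^1(\RP,\bZ) \cong \bZ[2]$ is trivial and $H^2(\RP,\bZ) \cong \bZ/2\bZ$, Proposition~\ref{prop:Hn-K2} yields $H^0(K^2,\bZ) \cong \bZ$, $H^1(K^2,\bZ) \cong \bZ \times H^1(\RP,\bZ) \cong \bZ$, $H^2(K^2,\bZ) \cong H^2(\RP,\bZ) \cong \bZ/2\bZ$, and $H^n(K^2,\bZ) \cong H^n(\RP,\bZ) \cong \mathbbm{1}$ for $n > 2$ by Proposition~\ref{prop:H-high-RP2}. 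I would fix $y$ as a generator of $H^2$ and $x$ as the generator of the free $\bZ$-summand of $H^1$, namely the class of the map $f_x : K^2 \to K(\bZ,1)$ sending $\ell_2 \mapsto \LoopG{1}$ and $\ell_1 \mapsto \refl$ (this is the summand coming from the $\truncT{\Omega K(\bZ,1)}_0 \cong \bZ$ factor, i.e.\ the free loop $\ell_2$, in the proof of Proposition~\ref{prop:Hn-K2}).

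The hard part will be showing $x \smile x = 0$; this is exactly where $\killDiag$ is needed. The class $x \smile x$ is represented by the map $w := z \mapsto f_x(z) \smile_k f_x(z) : K^2 \to K(\bZ,2)$, and I would show $\trunc{w} = 0_h$ by constructing a homotopy from $w$ to the constant map $z \mapsto 0_k$ via $K^2$-induction. At $\star$ one uses $\refl$, since $0_k \smile_k 0_k = 0_k$. Along $\ell_1$ the relevant path is $\mathsf{ap}_w(\ell_1) = \app{\smile_k}{\refl}{\refl}$, which is trivially $\refl$. The crucial case is $\ell_2$, where $\mathsf{ap}_w(\ell_2) = \app{\smile_k}{\LoopG{1}}{\LoopG{1}}$; this is identified with the trivial loop precisely by $\killDiag(\LoopG{1})$. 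The coherence over the $\mathsf{sq}$-constructor then goes through because the fibres $w(z) = 0_k$ are path types in the $2$-type $K(\bZ,2)$, hence $1$-types. A cleaner alternative, avoiding the $\mathsf{sq}$-coherence entirely, is to note that $f_x$ factors through the projection $g : K^2 \to \mathbb{S}^1$ collapsing $\ell_1$ (which respects $\mathsf{sq}$ since the image square $\Square{\Loop}{\Loop}{\refl}{\refl}$ is fillable), so that $x = g^*\omega$ for $\omega$ the generator of $H^1(\mathbb{S}^1,\bZ)$; then $x \smile x = g^*(\omega \smile \omega) = 0$ because $\omega \smile \omega$ lives in $H^2(\mathbb{S}^1,\bZ) \cong \mathbbm{1}$.

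Finally, I would assemble the ring. The remaining products vanish for dimensional reasons: $x \smile y \in H^3(K^2,\bZ)$ and $y \smile y \in H^4(K^2,\bZ)$ are both trivial groups, so $xy = 0$ and $y^2 = 0$, while the relation $2y = 0$ is just the $2$-torsion of $H^2 \cong \bZ/2\bZ$. Multiplication by degree-$0$ classes is the $\bZ$-action, handled exactly as in Proposition~\ref{prop:H*-Sn}. Hence $H^*(K^2,\bZ)$ is generated as a graded ring by $x$ in degree $1$ and $y$ in degree $2$, subject precisely to the relations $2y = 0$, $x^2 = 0$, $y^2 = 0$ and $xy = 0$, which is exactly the presentation $\bZ[x,y]/(2y,x^2,y^2,xy)$, letting $x$ and $y$ represent the degree-$1$ and degree-$2$ generators respectively.
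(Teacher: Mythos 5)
Your additive computation, the choice of generator $f_x$ (sending $\ell_2 \mapsto \LoopG{1}$ and $\ell_1 \mapsto \refl$, which is exactly the paper's $\alpha$), and the dimension arguments for $xy = y^2 = 0$ and the $2$-torsion relation $2y = 0$ all match the paper. For the key step $x \smile x = 0$ you give two arguments. The first is essentially the paper's ($K^2$-induction, with $\killDiag(\LoopG{1})$ handling the nontrivial loop), but your justification of the $\mathsf{sq}$-coherence is off by one truncation level: since the fibres $w(z) = 0_k$ are $1$-types, the dependent square over $\mathsf{sq}$ lives in a \emph{proposition}, not a contractible type, so it still has to be exhibited rather than being automatic (it would be automatic only if the fibres were sets). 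The paper supplies it explicitly as the canonical filler of the degenerate square with $\killDiag(\LoopG{1})$ on two opposite sides and $\refl$ on the other two. Your second argument is genuinely different from the paper and is fully correct: $f_x$ factors through the collapse map $g : K^2 \to \mathbb{S}^1$ (well-defined since the image of $\mathsf{sq}$ is a degenerate square), so $x = g^*\omega$ and $x \smile x = g^*(\omega \smile \omega) = 0$ by naturality of $\smile$ under pullback (immediate from its pointwise definition) together with $H^2(\mathbb{S}^1,\bZ) \cong \mathbbm{1}$. This route is cleaner and avoids $\killDiag$ entirely; what the paper's direct computation buys instead is the setup ($\killDiag$ and eventually Lemma~\ref{lem:killDiag-comm}) that is genuinely needed for the $\bZTwo$-coefficient computations on $\RP$ and $K^2$, where the cup square is nonzero and no such factorisation argument is available.
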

\begin{proof}
  Using Proposition \ref{prop:Hn-K2} and the preceding results, we know that the
  only non-trivial cohomology groups of $K^2$ are
  \begin{align*}
    H^0(K^2,\bZ) &\cong \bZ \\
    H^1(K^2,\bZ) &\cong \bZ \times \bZ[2] \cong \bZ \\
    H^2(K^2,\bZ) & \cong \bZTwo
  \end{align*}
  Consider first the ring $\bZ[x,y]$. Using the above isomorphisms, we
  may naively let degree-$0$ elements in $\bZ[x,y]$ represent elements
  from $H^0(K^2,\bZ)$, degree-$1$ elements with $x$-term represent
  elements from $H^1(K^2,\bZ)$ and degree-$1$ elements with $y$ term represent
  elements from $H^2(K^2 ,\bZ)$.
  Under this identification, we see
  that the relations described by the ideal $(2y,xy,y^2)$ are
  satisfied. Hence, we have a map $\bZ[x,y]/(2y,xy,y^2) \to
  H^1(K^2,\bZ)$.  In order to lift this to an isomorphism
  $\bZ[x,y]/(2y,x^2,y^2,xy) \cong H^*(K^2,\bZ)$, we need to show that
  the cup product is trivial in dimension $1 \times 1$. It suffices to
  show that it vanishes on generators. We can easily define a map
  $\alpha : K^2 \to K(1,\mathbb{Z})$ such that $\trunc{\alpha}$
  generates $H^1(K^2,\bZ)$. We define it by
\begin{align*}
  \alpha(\star) &:= 0_k \\
  \ap{\alpha}{\ell_1} &:= \refl \\
  \ap{\alpha}{\ell_2} &:= \LoopG{1}
\end{align*}
where, $1 : \bZ$ denotes the generator. The final step of the
definition, i.e.\ the action of $\alpha$ on $\mathsf{sq}$, corresponds
to providing a proof that $\LoopG{1} = \LoopG{1}$, which we do by
$\refl$. Let $x : K^2$. We show that $\alpha(x) \smile_k \alpha(x) =
0_k$ by induction on $x$. We prove $\alpha(\star)\smile_k \alpha(\star) = 0_k $ by $\refl$
and $\app{\smile_k}{\ap{\alpha}{\ell_2}}{\ap{\alpha}{\ell_2}} = \refl$ by $\refl$. For
$\app{\smile_k}{\ap{\alpha}{\ell_1}}{\ap{\alpha}{\ell_1}}$, we need to provide a path
$\app{\smile_k}{\LoopG{1}}{\LoopG{1}} = \refl$, which we give by
$\killDiag(\LoopG{1})$. Finally, for the action of $\mathsf{sq}$, we
need to provide a filler of the square
\[\begin{tikzcd}[ampersand replacement=\&]
	{\app{\smile_k}{\LoopG{1}}{\LoopG{1}}} \&\& {\refl} \\
	\\
	{\app{\smile_k}{\LoopG{1}}{\LoopG{1}}} \&\& {\refl}
	\arrow["{\killDiag(\LoopG{1})}", from=1-1, to=1-3]
	\arrow["{\mathsf{refl}}"', Rightarrow, no head, from=1-1, to=3-1]
	\arrow["{\killDiag(\LoopG{1})}"', from=3-1, to=3-3]
	\arrow["{\mathsf{refl}}"', Rightarrow, no head, from=3-3, to=1-3]
\end{tikzcd}\]
which is trivial. Thus, we have shown, in particular, that
$\trunc{\alpha} \smile \trunc{\alpha}$ is trivial, which means that
$\smile$ vanishes everywhere in dimension $1 \times 1$. This concludes
the proof.
\end{proof}
Let us now turn to the $\bZTwo$-cohomology of $K^2$ and \RP. When
working in $\bZTwo$-coefficients, it turns out that we will need to
investigate $\killDiag$ further. In particular, it turns out that
applying $\killDiag$ may give rise to non-trivial homotopies. In
particular, the twist in \RP and $K^2$ will force us to investigate
how much $\killDiag$ commutes with path inversion in the sense that
there exists a loop $q : \Omega^2 K(\bZTwo,2n)$ solving the following
square filling problem
\[\begin{tikzcd}[ampersand replacement=\&]
	{\mathsf{ap}^2_{\smile_k}(p,p)} \&\&\& \refl \\
	{(\mathsf{ap}^2_{\smile_k}(p^{-1},p^{-1}))^{-1}} \&\&\& \refl
	\arrow[from=1-1, to=2-1]
	\arrow["{\ap{(-)^{-1}}{\killDiag(p^{-1})}}"', from=2-1, to=2-4]
	\arrow["{\killDiag(p)}", from=1-1, to=1-4]
	\arrow["q", dashed, from=1-4, to=2-4]
\end{tikzcd}\]
We remark that the only case of interest to us is the case $n = 1$, due to the
vanishing of higher homotopy groups. We will need
the following construction.

\begin{definition}\label{def:res}
  Given two points $x ,y : A$ and a binary function function $F : A
  \to A \to B$, we construct the homotopy $$ \mathsf{Res}_F : (p : x = y) \to (\ap{F(-,y)}{p^{-1}} \cdot \ap{F(x,-)}{p^{-1}})^{-1} = \ap{F(-,x)}{p} \cdot \ap{F(y,-)}{p}$$ by the outer square of the following composite square
\[\begin{tikzcd}[ampersand replacement=\&]
	{F(x,x)} \&\&\&\&\&\& {F(y,y)} \\
	\\
	\& {F(x,x)} \&\&\&\& {F(y,x)} \\
	\\
	\& {F(x,y)} \&\&\&\& {F(y,y)} \\
	\\
	{F(x,x)} \&\&\&\&\&\& {F(y,y)}
	\arrow["{\mathsf{ap}_{F(y,-)}(p^{-1})}"{description}, from=5-6, to=3-6]
	\arrow["\refl"{description}, from=7-1, to=1-1]
	\arrow["\refl"{description}, from=3-2, to=1-1]
	\arrow["{\mathsf{ap}_{F(x,-)}(p^{-1})}"{description}, from=5-2, to=3-2]
	\arrow["{\mathsf{ap}_{F(x,-)}(p^{-1})}"{description}, from=5-2, to=7-1]
	\arrow[""{name=0, anchor=center, inner sep=0}, "{\mathsf{ap}_{F(-,y)}(p)}"', from=5-2, to=5-6]
	\arrow[""{name=1, anchor=center, inner sep=0}, "{\mathsf{ap}_{F(-,x)}(p)}", from=3-2, to=3-6]
	\arrow["{\mathsf{ap}_{F(y,-)}(p)}"{description}, from=3-6, to=1-7]
	\arrow["{\mathsf{ap}_{F(-,x)}(p) \cdot \mathsf{ap}_{F(y,-)}(p)}"{description}, from=1-1, to=1-7]
	\arrow["{(\mathsf{ap}_{F(-,y)}(p^{-1}) \cdot \mathsf{ap}_{F(x,-)}(p^{-1}))^{-1}}"{description}, from=7-1, to=7-7]
	\arrow["\refl"{description}, from=7-7, to=1-7]
	\arrow["\refl"{description}, from=5-6, to=7-7]
	\arrow["{\mathsf{ap}_{a \mapsto \mathsf{ap}_{F(-,a)}(p)}(p^{-1})}"{description}, shorten <=9pt, shorten >=9pt, Rightarrow, from=0, to=1]
\end{tikzcd}\]
where all the outermost squares are given by their obvious fillers.
\end{definition}
The following lemma is immediate by path induction.
\begin{lemma}
  For any point $x: A$ and $F : A \to A \to B$, we have $\mathsf{Res}_F(\refl_{x}) = \refl$.
\end{lemma}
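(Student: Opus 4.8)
The plan is to prove the equation $\mathsf{Res}_F(\refl_x) = \refl$ by unfolding the composite square of Definition~\ref{def:res} at $p = \refl_x$ and observing that it collapses to a composite of degenerate squares. Equivalently, since every filler appearing in the pasting diagram is built by path induction on $p$, the entire homotopy $\mathsf{Res}_F$ may be regarded as defined by path induction on $p$ (generalising over the right endpoint); its value at the base point $\refl_x$ is then exactly the filler one obtains when all the edges are reflexive, which is $\refl$. So rather than inducting on the already-reflexive path in the statement, I would read the result off the path-induction used to construct $\mathsf{Res}_F$ in the first place.

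Concretely, I would first note that at $p = \refl_x$ every edge of the diagram of the form $\ap{F(-,y)}{p^{-1}}$, $\ap{F(x,-)}{p^{-1}}$, $\ap{F(-,x)}{p}$ or $\ap{F(y,-)}{p}$ reduces to $\refl$, since $\ap{g}{\refl} = \refl$ for any $g$. Next, the central two-cell $\ap{a \mapsto \ap{F(-,a)}{p}}{p^{-1}}$ becomes $\ap{a \mapsto \refl}{\refl}$, which is again $\refl$. Hence each of the outermost ``obvious'' squares and the central functoriality square degenerates, and the whole pasting diagram becomes a composite of squares all of whose edges and fillers are reflexive. It therefore remains only to invoke the elementary fact that the composite (and flip) of degenerate squares is itself $\refl$, which closes the argument.

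The only real subtlety---and the single step worth spelling out---is this last fact. In \CubicalAgda it is immediate, since composition and flipping of squares act on the underlying cubes and fix the degenerate one definitionally. In Book HoTT the square operations are defined using $\mathsf{transport}$ and whiskering, which do not compute away on $\refl$, so ``a composite of reflexive squares is reflexive'' is a genuine (though trivial) lemma, proved by a short path induction reducing each whiskering to the identity. Once this is in hand, the lemma follows with no further work; in particular there is no delicate coherence to check, because the boundary of the square at $p = \refl_x$ is reflexive on all four sides, so the target type $\mathsf{Res}_F(\refl_x) = \refl$ is inhabited as soon as the interior is seen to degenerate.
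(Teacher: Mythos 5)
Your proposal is correct and matches the paper's proof, which simply records that the lemma is ``immediate by path induction'': all the constituent fillers of $\mathsf{Res}_F$ are built by path induction on $p$, so at $p = \refl_x$ they compute to degenerate squares and the composite collapses to $\refl$. Your additional remark that one cannot induct on the already-reflexive argument but must instead read the value off the construction, and that the degenerate-composite step needs a (trivial) lemma in Book HoTT, is a faithful unpacking of the same argument rather than a different route.
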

The usefulness of $\mathsf{Res}$ is that it measures how much
$\mathsf{ap}^2_F\textsf{-funct}$ fails commute with path inversion, as
made clear by the following lemma.
\begin{lemma}\label{lem:res}
  Given a binary function $F : A \to A \to B$ and a path $p : x =_A y$, we have a filler
\[\begin{tikzcd}[ampersand replacement=\&]
	{\mathsf{ap}^2_F(p,p)} \&\&\&\& {\ap{F(-,\star_A)}{p}\cdot \ap{F(\star_A,-)}{p}} \\
	{(\mathsf{ap}^2_F(p^{-1},p^{-1}))^{-1}} \&\&\&\& {(\ap{F(-,\star_A)}{p^{-1}}\cdot \ap{F(\star_A,-)}{p^{-1}})^{-1}}
	\arrow[from=2-1, to=1-1]
	\arrow["{\mathsf{Res}_F(p)}"', from=2-5, to=1-5]
	\arrow["{\apfunct{p}{p}}", from=1-1, to=1-5]
	\arrow["{\ap{(-)^{-1}}{\apfunct{p^{-1}}{p^{-1}}}}"', from=2-1, to=2-5]
\end{tikzcd}\]
where the left-most path is the obvious coherence (defined by path induction in Book HoTT and simply by $\refl$ in cubical type theory).
\end{lemma}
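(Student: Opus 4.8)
The plan is to prove the filler by path induction on $p$. The statement is already phrased for a free path $p : x =_A y$, so the only preliminary is to read the displayed square with the correct endpoints in place of $\star_A$: well-typedness, together with Definitions~\ref{def:ap2} and~\ref{def:res}, forces the top edge to be
\[
\apfunct{p}{p} : \app{F}{p}{p} = \ap{F(-,x)}{p}\cdot\ap{F(y,-)}{p},
\]
the right edge to be $\mathsf{Res}_F(p)$ exactly as in Definition~\ref{def:res}, and the bottom edge to be $\ap{(-)^{-1}}{\apfunct{p^{-1}}{p^{-1}}}$. With these readings each of the four corners is a path $F(x,x) = F(y,y)$, so the square lives in this single path type and based path induction on $p$ is available.

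Once in this form the argument is a single path induction. Setting $p := \refl_x$ forces $x \equiv y$ and collapses all four corners to $\refl_{F(x,x)}$, since $\app{F}{\refl}{\refl} = \refl$ and each composite $\ap{F(-,x)}{\refl}\cdot\ap{F(x,-)}{\refl}$ reduces to $\refl\cdot\refl = \refl$. It then remains to check that each of the four edges degenerates to $\refl$. The left edge is the obvious coherence $(\app{F}{p^{-1}}{p^{-1}})^{-1} = \app{F}{p}{p}$, which is $\refl$ by construction; the right edge is $\mathsf{Res}_F(\refl_x)$, which is $\refl$ by the preceding lemma; and the top and bottom edges are $\apfunct{\refl_x}{\refl_x}$ and its image under $\ap{(-)^{-1}}$, both of which are $\refl$ since $\apfunct{\refl}{\refl} = \refl$. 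A square all of whose sides are $\refl$ is filled by $\refl$, which concludes the induction.

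The only genuinely delicate step—and the one I would be most careful about—is the initial bookkeeping: the asymmetry in Definition~\ref{def:ap2} (the first factor fixes the \emph{source} $x$ in the right slot, the second fixes the \emph{target} $y$ in the left slot) must be matched precisely against the asymmetry built into $\mathsf{Res}_F$, otherwise the corners fail to line up as paths $F(x,x) = F(y,y)$ and path induction does not apply. After this is settled the proof is mechanical, because the base case is entirely degenerate. In a setting where $\apfunct{\refl}{\refl}$ is not \emph{definitionally} $\refl$, one first rewrites along the propositional identity $\apfunct{\refl}{\refl} = \refl$ (itself immediate by path induction) before filling the trivial square.
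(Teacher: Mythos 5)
Your proof is correct and matches the paper's, which simply reads ``Path induction on $p$''; your elaboration of the base case (all corners and edges collapsing to $\refl$, using $\mathsf{Res}_F(\refl_x)=\refl$ from the preceding lemma) is exactly the content that one-liner leaves implicit.
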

\begin{proof}
  Path induction on $p$.
\end{proof}

What is rather surprising is that $\mathsf{Res}_F(p)$ is not just a
mere coherence path: it may be homotopically non-trivial when $p$ is a
loop. In particular, when $p$ is $\LoopG{1} : \Omega K(G,1)$ and $F$ is
the cup product $\smile_k : K(\bZTwo,1) \to K(\bZTwo,1) \to
K(\bZTwo,2)$, the inner square in Definition \ref{def:res} is, modulo
cancellation laws, precisely the two-cell $\sigma^2(1) :
\Omega^2 K(\bZTwo,2)$ (here $\sigma^2$ is shorthand for $\mathsf{ap}_{\sigma_1}\circ \sigma_0 : K(\bZTwo,0) \to \Omega^2 K(\bZTwo,2)$). This follows directly by construction of the cup
product. Hence we arrive at the following lemma, which is key to analysing  the behaviour of the cup product on $\RP$ and $K^2$.
\begin{lemma}\label{lem:killDiag-comm}
  We have a filler of the following square
\[\begin{tikzcd}[ampersand replacement=\&]
	{\mathsf{ap}^2_{\smile_k}(\LoopG{1},\LoopG{1})} \&\&\& \refl \\
	{(\mathsf{ap}^2_{\smile_k}((\LoopG{1})^{-1},(\LoopG{1})^{-1}))^{-1}} \&\&\& \refl
	\arrow[from=1-1, to=2-1]
	\arrow["{\ap{(-)^{-1}}{\killDiag({\LoopG{1}}^{-1})}}"', from=2-1, to=2-4]
	\arrow["{\killDiag(\LoopG{1})}", from=1-1, to=1-4]
	\arrow["{\sigma^2(1)}", from=1-4, to=2-4]
\end{tikzcd}\]
\end{lemma}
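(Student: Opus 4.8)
The strategy is to reduce the claim to Lemma~\ref{lem:res}, which already isolates the failure of $\mathsf{ap}^2\textsf{-funct}$ to commute with inversion as the term $\mathsf{Res}_{\smile_k}(\LoopG{1})$, and then to compute this term directly from the definition of the cup product. First I would unfold both horizontal edges of the target square using the definition of $\killDiag$. By construction $\killDiag(p)$ is the functoriality path $\mathsf{ap}^2_{\smile_k}\textsf{-funct}(p,p)$ followed by the annihilation cap $h(p) : \ap{(-)\smile_k 0_k}{p}\cdot \ap{0_k\smile_k (-)}{p} = \refl$. Hence the top edge $\killDiag(\LoopG{1})$ splits as this functoriality path followed by $h(\LoopG{1})$, and---since $\ap{(-)^{-1}}{-}$ preserves composition of two-cells---the bottom edge $\ap{(-)^{-1}}{\killDiag((\LoopG{1})^{-1})}$ splits as $\ap{(-)^{-1}}{\mathsf{ap}^2_{\smile_k}\textsf{-funct}((\LoopG{1})^{-1},(\LoopG{1})^{-1})}$ followed by $\ap{(-)^{-1}}{h((\LoopG{1})^{-1})}$.

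This lets me cut the target square vertically into two squares glued along a common edge carrying $\mathsf{Res}_{\smile_k}(\LoopG{1})$. The left square is precisely the filler produced by Lemma~\ref{lem:res} with $F := \smile_k$ and $p := \LoopG{1}$: its left edge is the obvious coherence appearing in our statement and its right edge is $\mathsf{Res}_{\smile_k}(\LoopG{1})$. It then remains to fill the right square, whose top and bottom edges are the caps $h(\LoopG{1})$ and $\ap{(-)^{-1}}{h((\LoopG{1})^{-1})}$, whose left edge is $\mathsf{Res}_{\smile_k}(\LoopG{1})$, and whose right edge is $\sigma^2(1)$. Producing this filler is exactly the assertion, made informally in the paragraph preceding the lemma, that $\mathsf{Res}_{\smile_k}(\LoopG{1})$ equals $\sigma^2(1)$ modulo the cancellation laws bundled into the caps $h$.

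The crux is therefore to evaluate $\mathsf{Res}_{\smile_k}(\LoopG{1})$. Inspecting Definition~\ref{def:res}, all the outer cells of the composite square are canonical fillers and carry no information, so the only homotopically nontrivial contribution is the inner two-cell $\ap{a \mapsto \ap{(-)\smile_k a}{\LoopG{1}}}{(\LoopG{1})^{-1}}$. Here I would apply the defining computation rule of the cup product, $\ap{(-)\smile_k a}{\LoopG{1}} = \sigma_1(1 \smile_k a)$, together with the unit law $1 \smile_k a = a$, to rewrite the inner function as $\sigma_1$, giving $\ap{\sigma_1}{(\LoopG{1})^{-1}} = (\ap{\sigma_1}{\LoopG{1}})^{-1}$. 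Since $\sigma_0(1) = \LoopG{1}$ and $\sigma^2 = \ap{\sigma_1}{-}\circ \sigma_0$, this is $(\sigma^2(1))^{-1}$, which coincides with $\sigma^2(1)$ because $\Omega^2 K(\bZTwo,2)$ is a set ($K(\bZTwo,2)$ being $2$-truncated). Pasting the two squares along $\mathsf{Res}_{\smile_k}(\LoopG{1})$ then yields the required filler.

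The main obstacle I anticipate is the bookkeeping in the last step: faithfully matching the orientations and the many canonical-filler coherences in the composite square of Definition~\ref{def:res} against the annihilation caps $h(\LoopG{1})$ and $\ap{(-)^{-1}}{h((\LoopG{1})^{-1})}$, so that everything outside the inner two-cell genuinely cancels and only $\sigma^2(1)$ survives. In a cubical setting most of these coherences hold definitionally, but making the reduction precise---so that the single two-cell $\sigma^2(1)$ is the unique leftover---is where the actual care is required.
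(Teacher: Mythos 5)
Your decomposition is essentially the one the paper intends: the paper gives no formal proof of this lemma, only the remark preceding it, and that remark is exactly your argument --- cut the square along $\mathsf{Res}_{\smile_k}(\LoopG{1})$ using Lemma~\ref{lem:res}, observe that all outer cells of Definition~\ref{def:res} are canonical, and evaluate the inner two-cell $\ap{a \mapsto \ap{(-)\smile_k a}{\LoopG{1}}}{(\LoopG{1})^{-1}}$ via the defining equation $\ap{(-)\smile_k a}{\LoopG{1}} = \sigma_1(1 \smile_k a)$ and the unit law, obtaining $(\sigma^2(1))^{-1}$. Your account is in fact more explicit than the paper's about how the caps $h(\LoopG{1})$ and $\ap{(-)^{-1}}{h((\LoopG{1})^{-1})}$ absorb the remaining coherences.

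One justification is wrong, though the conclusion it supports is true: you claim $(\sigma^2(1))^{-1} = \sigma^2(1)$ ``because $\Omega^2 K(\bZTwo,2)$ is a set.'' Set-ness of a loop space does not force loops to equal their inverses --- $\Omega^2 K(\bZ,2) \cong \bZ$ is a set and $1 \neq -1$ there. The correct reason is that $\Omega^2 K(\bZTwo,2) \cong \bZ/2\bZ$, in which every element is $2$-torsion; equivalently, $\sigma^2$ is a group isomorphism out of $\bZ/2\bZ$ and $-1 = 1$ in $\bZ/2\bZ$. This is precisely the point where the coefficient group matters (the analogous square for $\bZ$-coefficients fails, which is why the $\bZTwo$-cup product on $\RP$ is nontrivial while the integral one vanishes), so the appeal to truncation rather than torsion obscures the actual content of the lemma. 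With that justification repaired, the argument is sound.
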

We may now characterise the cup product on the cohomology groups of
$\RP$ and $K^2$ with $\mathbb{Z}/2\mathbb{Z}$ coefficients. In fact,
Lemma \ref{lem:killDiag-comm} is the only technical lemma we will
need, and we do not need to rely on any other advanced
theorems/constructions. Let us start with $\RP$.
\begin{proposition}\label{prop:RP-cup}
  The following diagram commutes
\[\begin{tikzcd}[ampersand replacement=\&]
	{\bZTwo \times \bZTwo} \& {\bZTwo} \\
	{H^1(\RP,\bZTwo) \times H^1(\RP,\bZTwo)} \& {H^2(\RP,\bZTwo)}
	\arrow["\smile", from=2-1, to=2-2]
	\arrow["\sim", sloped, from=1-1, to=2-1]
	\arrow["{\mathsf{mult}}", from=1-1, to=1-2]
	\arrow["\sim", sloped, from=1-2, to=2-2]
\end{tikzcd}\]
\end{proposition}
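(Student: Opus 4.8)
The plan is to reduce the statement to a single computation on generators and then extract the essential $2$-cell using Lemma~\ref{lem:killDiag-comm}. Both $H^1(\RP,\bZTwo)$ and $H^2(\RP,\bZTwo)$ are isomorphic to $\bZTwo$ by Propositions~\ref{prop:H1-RP2} and~\ref{prop:H2-RP2}, and since $\smile$ is biadditive (by the distributivity laws for the cup product) while $\mathsf{mult}$ on $\bZTwo$ is biadditive with $1 \cdot 1 = 1$, both composites around the square are biadditive maps $\bZTwo \times \bZTwo \to \bZTwo$. Such a map is determined by its value on the pair $(1,1)$, since it necessarily vanishes on $(0,0)$, $(1,0)$ and $(0,1)$. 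Hence it suffices to show that the generator $\gamma := \trunc{f_1} : H^1(\RP,\bZTwo)$ of Proposition~\ref{prop:H1-RP2}, where $f_1(\star) = 0_k$ and $\ap{f_1}{\ell} = \LoopG{1}$, satisfies $\gamma \smile \gamma = \eta$, where $\eta$ is the generator $\psi[1]$ of $H^2(\RP,\bZTwo)$ from Proposition~\ref{prop:H2-RP2}.

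First I would unfold $\gamma \smile \gamma = \trunc{x \mapsto f_1(x) \smile_k f_1(x)}$ and analyse the map $f_1 \smile_k f_1 : \RP \to K(\bZTwo,2)$ by the elimination principle of $\RP$, identifying its three pieces of data. Its value on $\star$ is $0_k \smile_k 0_k = 0_k$. Its action on $\ell$ is $\ap{x\mapsto f_1(x) \smile_k f_1(x)}{\ell} = \app{\smile_k}{\LoopG{1}}{\LoopG{1}}$, which $\killDiag(\LoopG{1})$ identifies with $\refl$. This matches the fact (established when tracing the equivalences of Proposition~\ref{prop:H2-RP2} backwards through $\Omega K(\bZTwo,2) \simeq K(\bZTwo,1)$ and the transfer of the condition $p \cdot p = \refl$ to $x +_k x = 0_k$) that the generator $\eta$ has trivial loop-component, its content living entirely in the $\Omega^2 K(\bZTwo,2)$-cell obtained from the $\mathsf{sq}$-constructor, namely $\sigma^2(1)$.

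The crux is therefore the action on $\mathsf{sq}$. By binary functoriality, $\ap{\ap{f_1 \smile_k f_1}{}}{\mathsf{sq}}$ is obtained by feeding the canonical $2$-cell $\ap{\ap{f_1}{}}{\mathsf{sq}} : (\LoopG{1})^{-1} = \LoopG{1}$ into $\mathsf{ap}^2_{\smile_k}$ along the diagonal, and this amounts to comparing the two $\killDiag$-trivialisations of $\app{\smile_k}{\LoopG{1}}{\LoopG{1}}$ and of $(\app{\smile_k}{(\LoopG{1})^{-1}}{(\LoopG{1})^{-1}})^{-1}$. This is precisely the square-filling problem solved by Lemma~\ref{lem:killDiag-comm}, whose defect is $\sigma^2(1) \in \Omega^2 K(\bZTwo,2)$. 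Thus the $\mathsf{sq}$-data of $f_1 \smile_k f_1$ is exactly $\sigma^2(1)$, which is the same $2$-cell carried by the generator $\eta$. Comparing the point-, loop-, and $\mathsf{sq}$-components then yields $\gamma \smile \gamma = \eta$, i.e.\ $1 \smile 1 = 1$, and the diagram commutes.

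I expect the main obstacle to be bookkeeping the coherences: translating the $\mathsf{sq}$-action of $f_1 \smile_k f_1$ through the chain of equivalences in Proposition~\ref{prop:H2-RP2} and matching it, on the nose, with the $\sigma^2(1)$ supplied by Lemma~\ref{lem:killDiag-comm}. In particular, one must check that the coherences relating $\mathsf{ap}^2_{\smile_k}$ to path inversion (the content of $\mathsf{Res}$ and Lemma~\ref{lem:res}) line up with the $\mathsf{ap}^2_{\smile_k}\mathsf{-funct}$ rewriting used in the definition of $\killDiag$, so that the two trivialisations differ by exactly $\sigma^2(1)$ and not some other element of $\Omega^2 K(\bZTwo,2)$. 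The homotopical content is entirely concentrated here; once this $2$-cell is correctly identified, the remaining steps are formal manipulations with the already-established unit, distributivity, and $\killDiag$ laws.
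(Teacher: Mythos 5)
Your proposal is correct and follows essentially the same route as the paper's proof: reduce to the single computation $\trunc{\alpha}\smile\trunc{\alpha} = \trunc{\beta}$ on explicit generators, trivialise the action on $\ell$ with $\killDiag(\LoopG{1})$, and resolve the $\mathsf{sq}$-cell by exactly the square-filling problem of Lemma~\ref{lem:killDiag-comm}, whose defect $\sigma^2(1)$ matches the $2$-cell of the degree-two generator. The only cosmetic difference is that the paper defines the target generator $\beta$ directly as a map $\RP \to K(\bZTwo,2)$ rather than tracing $\psi[1]$ back through the equivalences of Proposition~\ref{prop:H2-RP2}.
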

\begin{proof}
  It is enough to show that $\trunc{\alpha} \smile \trunc{\alpha} =
  \trunc{\beta}$ for generators $\alpha : \RP \to K(\bZTwo,1)$ and
  $\beta : \RP \to K(\bZTwo,2)$. We define $\alpha$ by
  \begin{align*}
    \alpha(\star) &:= 0_k \\
    \ap{\alpha}{\ell} &:= \LoopG{1} \\
    \ap{\mathsf{ap}_{\alpha}}{\mathsf{sq}} &:= Q(\LoopG{1})
  \end{align*}
  where $Q : (p : \Omega\,K(\bZTwo,1) \to p = p^{-1}$ is given
  by the $2$-torsion in $K(\bZTwo,n)$ (its explicit construction is
  irrelevant for our purposes). We define $\beta$ by
  \begin{align*}
    \alpha(\star) &:= 0_k \\
    \ap{\beta}{\ell} &:= \refl \\
    \ap{\mathsf{ap}_{\beta}}{\mathsf{sq}} &:= \sigma^2(1)
  \end{align*}
  where $\sigma^2 : \bZTwo \xrightarrow{\sim} \Omega^2 K(\bZTwo,2)$. The fact
  that $\alpha$ and $\beta$ define generators on cohomology is
  straightforward by the constructions in the proofs of Propositions
  \ref{prop:H1-RP2} and \ref{prop:H2-RP2}. We now provide a homotopy
  $h : (x : \RP) \to \alpha(x) \smile_k\alpha(x) = \beta(x)$. We
  define
  \begin{align*}
    h(\star) &:= \refl\\
    \ap{h}{\ell} &:= \killDiag(\LoopG{1})
  \end{align*}
  For the action of $h$ on $\mathsf{sq}$, we need to provide a filler
  of the following square of paths.
\[\begin{tikzcd}[ampersand replacement=\&]
	{\mathsf{refl}} \&\&\&\& {\mathsf{refl}} \\
	\\
	\\
	{\mathsf{ap}_{\smile_k}^2(\LoopG{1},\LoopG{1})} \&\&\&\& {\mathsf{ap}^2_{\smile_k}(\LoopG{1},\LoopG{1})^{-1}}
	\arrow["{\sigma^{2}(1)}", from=1-1, to=1-5]
	\arrow["{\killDiag(\LoopG{1})}", from=4-1, to=1-1]
	\arrow["{\mathsf{ap}_{\mathsf{ap}_{\smile_k \circ \Delta}}(Q(\LoopG{1}))}"', from=4-1, to=4-5]
	\arrow["{\mathsf{ap}_{(-)^{-1}}(\killDiag(\LoopG{1}))}"', from=4-5, to=1-5]
\end{tikzcd}\]
The bottom path acts on the right-hand path by moving in a path inversion around $\LoopG{1}$. Hence, our new square filling problem is
\[\begin{tikzcd}[ampersand replacement=\&]
	{\mathsf{refl}} \&\&\&\& {\mathsf{refl}} \\
	\\
	\\
	{\mathsf{ap}_{\smile_k}^2(\LoopG{1},\LoopG{1})} \&\&\&\& {\mathsf{ap}^2_{\smile_k}((\LoopG{1})^{-1},(\LoopG{1})^{-1})^{-1}}
	\arrow["{\sigma^{2}(1)}", from=1-1, to=1-5]
	\arrow["{\killDiag(\LoopG{1})}", from=4-1, to=1-1]
	\arrow[from=4-1, to=4-5]
	\arrow["{\mathsf{ap}_{(-)^{-1}}(\killDiag((\LoopG{1})^{-1}))}"', from=4-5, to=1-5]
\end{tikzcd}\]
But this is precisely Lemma \ref{lem:killDiag-comm}, and we are done.
\end{proof}

As an immediate corollary, we get the cohomology ring of $\RP$ with
$\bZTwo$-coefficients.

\begin{proposition}
  $H^*(\RP,\bZTwo) \cong \bZTwo[x]/(x^3)$.
\end{proposition}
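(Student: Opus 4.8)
The plan is to realise this as an immediate corollary of Proposition~\ref{prop:RP-cup}. First I would assemble the additive structure by specialising the earlier group computations to $G = \bZTwo$. Proposition~\ref{prop:cohom-connected} gives $H^0(\RP,\bZTwo) \cong \bZTwo$; Proposition~\ref{prop:H1-RP2} gives $H^1(\RP,\bZTwo) \cong \bZTwo[2]$, which is all of $\bZTwo$ since every element is $2$-torsion; Proposition~\ref{prop:H2-RP2} gives $H^2(\RP,\bZTwo) \cong \bZTwo/2$, which is again $\bZTwo$ since the map $2\cdot(-)$ is the zero map on $\bZTwo$ and so has cokernel $\bZTwo$; and Proposition~\ref{prop:H-high-RP2} shows all higher cohomology groups vanish. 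Hence, as a graded abelian group, $H^*(\RP,\bZTwo)$ is $\bZTwo$ concentrated in degrees $0$, $1$, and $2$, which matches $\bZTwo[x]/(x^3)$.

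Next I would set up the candidate ring isomorphism. Let $\trunc{\alpha}$ be the generator of $H^1(\RP,\bZTwo)$ from Proposition~\ref{prop:RP-cup}, and define a graded ring homomorphism $\Phi : \bZTwo[x]/(x^3) \to H^*(\RP,\bZTwo)$ by sending the unit to $1_{\bZTwo} \in H^0(\RP,\bZTwo)$, the degree-one generator $x$ to $\trunc{\alpha}$, and extending multiplicatively. To see that $\Phi$ respects the relation $x^3 = 0$, it suffices to note that $x^3$ would land in $H^3(\RP,\bZTwo)$, which is trivial by Proposition~\ref{prop:H-high-RP2}; hence $\Phi$ is well-defined on the quotient.

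It then remains to check that $\Phi$ is a bijection degreewise. In degree $0$ it is the identity on $\bZTwo$, and in degree $1$ it sends $x$ to the chosen generator, so both are isomorphisms. In degree $2$, $\Phi$ sends $x^2$ to $\trunc{\alpha} \smile \trunc{\alpha}$; by Proposition~\ref{prop:RP-cup} this equals the generator $\trunc{\beta}$ of $H^2(\RP,\bZTwo)$, so $\Phi$ is an isomorphism in degree $2$ as well. In degrees $\geq 3$ both sides are trivial. All remaining multiplicative compatibilities are automatic: the products $H^0 \times H^i \to H^i$ are just the $\bZTwo$-scalar action, the crucial $H^1 \times H^1 \to H^2$ product is exactly the content of Proposition~\ref{prop:RP-cup}, and every product landing in degree $\geq 3$ vanishes because the target group is trivial.

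The substantive work has all been carried out in Proposition~\ref{prop:RP-cup}; the only point requiring care here is confirming that $x^2$ maps to a \emph{generator} of $H^2(\RP,\bZTwo)$ rather than to $0_k$, which is precisely the nontriviality of the cup square established there. Thus $\Phi$ is a graded ring isomorphism and $H^*(\RP,\bZTwo) \cong \bZTwo[x]/(x^3)$.
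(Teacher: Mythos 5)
Your proof is correct and follows essentially the same route as the paper: identify the additive structure degreewise from the earlier group computations and then invoke Proposition~\ref{prop:RP-cup} to see that the cup product $H^1 \times H^1 \to H^2$ is ordinary $\bZTwo$-multiplication, so the obvious degree-preserving correspondence is a ring isomorphism. The paper states this in two lines; your version merely spells out the same argument (the well-definedness of $\Phi$ on the quotient and the degreewise bijectivity) in more detail.
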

\begin{proof}
  We simply let elements in $H^n(\RP,\bZTwo)$ correspond to degree-$n$
  elements in $H^*(\RP,\bZTwo)$. By Proposition \ref{prop:RP-cup}, we
  know that the multiplicative structure on $H^*(\RP,\bZTwo)$ is
  regular $\bZTwo$-multiplication, which proves statement.
\end{proof}

Let us now turn to the Klein bottle. Although the cohomology ring
structure is different, we may still make use of the above. We start
by considering its multiplicative structure. First, we construct the
generators $\alpha ,\beta : K^2 \to K(\bZTwo,1)$ by\newline
\begin{minipage}{.5\textwidth}
\begin{align*}
  \alpha(\star) &:= 0_k \\
  \ap{\alpha}{\ell_1} &:= \LoopG{1} \\
  \ap{\alpha}{\ell_2} &:= \refl \\
  \ap{\mathsf{ap}_\alpha}{\mathsf{sq}} &:= \mathsf{flip}(\refl_{\LoopG{1}})
\end{align*}
\end{minipage}
\begin{minipage}{.5\textwidth}
\begin{align*}
  \beta(\star) &:= 0_k \\
  \ap{\beta}{\ell_1} &:= \refl \\
  \ap{\beta}{\ell_2} &:= \LoopG{1} \\
  \ap{\mathsf{ap}_\beta}{\mathsf{sq}} &:= \refl_{\LoopG{1}}
\end{align*}
\end{minipage}
It is immediate by construction that the isomorphism
$H^1(K^2,\bZTwo) \cong \bZTwo \times \bZTwo$ takes $\trunc{\alpha}$ to
$(1,0)$ and $\trunc{\beta}$ to $(0,1)$. The single generator of
$H^2(K^2,\bZTwo)$ is given by $\gamma : K^2 \to K(\bZTwo,2)$:
\begin{align*}
  \gamma(\star) &:= 0_k \\
  \ap{\gamma}{\ell_1} &:= \refl \\
  \ap{\gamma}{\ell_2} &:= \refl \\
  \ap{\mathsf{ap}_\gamma}{\mathsf{sq}} &:= \sigma^2(1)
\end{align*}
It again follows by construction that the isomorphism
$H^2(K^2,\bZTwo) \cong \bZTwo$ takes $\trunc{\gamma}$ to $1$.
\begin{proposition}\label{prop:H*-K2-rels}
  We have the following relations in $H^*(K^2,\bZTwo)$
  \begin{align*}
    \trunc{\alpha}^2 = \gamma \hspace{3cm} \trunc{\alpha} \smile \trunc{\beta} = \gamma \hspace{3cm} \trunc{\beta}^2 = 0_h
  \end{align*}
\end{proposition}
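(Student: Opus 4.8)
The plan is to reduce each of the three relations to the construction of a homotopy, built by $K^2$-induction in direct analogy with the proof of Proposition~\ref{prop:RP-cup}. Since the cup product on cohomology is computed pointwise, $\trunc{f}\smile\trunc{g}=\trunc{x\mapsto f(x)\smile_k g(x)}$, each claimed identity is an equation in $H^2(K^2,\bZTwo)=\truncT{K^2\to K(\bZTwo,2)}_0$, so it suffices to give a homotopy $h$ between the two underlying maps $K^2\to K(\bZTwo,2)$. The source maps are $x\mapsto\alpha(x)\smile_k\alpha(x)$, $x\mapsto\alpha(x)\smile_k\beta(x)$ and $x\mapsto\beta(x)\smile_k\beta(x)$, and the targets are $\gamma$, $\gamma$ and the constant map $\_\mapsto 0_k$ (representing $0_h$), respectively.

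The point and loop cases I expect to be uniform and easy. In each case I would set $h(\star):=\refl$, which is well-typed since $0_k\smile_k0_k=0_k$. On a loop $\ell_i$ the source map has derivative $\app{\smile_k}{\ap{f}{\ell_i}}{\ap{g}{\ell_i}}$ for the relevant generators $f,g$; by pointedness of $\smile_k$ in each argument this is $\refl$ whenever at least one of $\ap{f}{\ell_i}$, $\ap{g}{\ell_i}$ is $\refl$, and it equals $\app{\smile_k}{\LoopG{1}}{\LoopG{1}}$ when both are $\LoopG{1}$. Reading off the generators, the only loops where both factors hit $\LoopG{1}$ are $\ell_1$ for $\alpha\smile_k\alpha$ and $\ell_2$ for $\beta\smile_k\beta$; there I would set $\ap{h}{\ell_i}:=\killDiag(\LoopG{1})$, and $\ap{h}{\ell_i}:=\refl$ everywhere else. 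Since $\gamma$ and the constant map both act as $\refl$ on $\ell_1$ and $\ell_2$, these choices have the correct boundaries.

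The action of $h$ on $\mathsf{sq}$ is the crux, and the three cases are distinguished entirely by the twist in $\mathsf{sq}$, whose left and right edges are $\ell_1^{-1}$ and $\ell_1$ while its top and bottom are both $\ell_2$. For $\trunc{\alpha}^2$ the nontrivially-cupped loop is $\ell_1$, which lies on the twisted edges, so the square filling problem compares $\killDiag(\LoopG{1})$ on the right with its inversion-twisted counterpart on the left, and the discrepancy against $\gamma(\mathsf{sq})=\sigma^2(1)$ is resolved by exactly Lemma~\ref{lem:killDiag-comm}; this step is identical to the $\RP$ computation. For $\trunc{\beta}^2$ the nontrivially-cupped loop is $\ell_2$, which lies on the untwisted top and bottom edges, so the two occurrences of $\killDiag(\LoopG{1})$ agree on the nose (no inversion intervenes), the filling problem is degenerate and admits a trivial filler matching the action $\refl$ of the constant map, and we get $0_h$. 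For $\trunc{\alpha}\smile\trunc{\beta}$ both loop cases are $\refl$, so the whole class $\sigma^2(1)=\gamma(\mathsf{sq})$ must arise from the $\mathsf{sq}$ case itself: here I would compute $\mathsf{ap}^2_{\smile_k}$ of the two orthogonal squares $\alpha(\mathsf{sq})=\mathsf{flip}(\refl_{\LoopG{1}})$ (carrying $\LoopG{1}$ in the left--right direction) and $\beta(\mathsf{sq})=\refl_{\LoopG{1}}$ (carrying $\LoopG{1}$ in the top--bottom direction), and unfold the definition of $\smile_k$ in terms of $\sigma$, using Lemma~\ref{lem:flip-id} to absorb the $\mathsf{flip}$ and thereby identify this mixed cup-product $2$-cell with $\sigma^2(1)$.

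The main obstacle I anticipate is purely the bookkeeping of these square filling problems (equivalently, cube fillers). The conceptual content is clear: the top class $\sigma^2(1)$ is produced exactly by a twist, which is present for $\alpha$ on the edge $\ell_1$ and absent for $\beta$ on $\ell_2$, and by the orthogonal mixing of the two generators in $\alpha\smile_k\beta$. The technical heart is verifying that each filling problem reduces, up to the unit/cancellation and coherence laws, to Lemma~\ref{lem:killDiag-comm} or Lemma~\ref{lem:flip-id}. As a sanity check, this twist analysis correctly predicts $\trunc{\alpha}^2=\gamma\neq 0_h$ while $\trunc{\beta}^2=0_h$.
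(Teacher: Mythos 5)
Your proposal is correct and matches the paper's proof essentially step for step: the same homotopies built by $K^2$-induction with $\refl$ on points, $\killDiag(\LoopG{1})$ exactly on the loops where both factors are nontrivial, the reduction of the $\trunc{\alpha}^2$ square-filling problem to the one already solved in Proposition~\ref{prop:RP-cup} (hence to Lemma~\ref{lem:killDiag-comm}), the trivial filler for $\trunc{\beta}^2$, and the identification of the mixed $2$-cell with $\sigma^2(1)$ by unfolding $\smile_k$. The only cosmetic difference is your explicit appeal to Lemma~\ref{lem:flip-id} in the $\trunc{\alpha}\smile\trunc{\beta}$ case, where the paper simply unfolds the definition of the cup product (any inversion introduced by the flip is harmless in $\bZTwo$ coefficients anyway).
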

\begin{proof}
  Let us deal with the identities from right to left, in increasing
  difficulty. Showing that $\beta^2 = 0_h$ is easy. First, we construct a
  homotopy $f : (x : K^2) \to \beta(x) \smile_k \beta(x) = 0_k$ by
  induction on $x$.
  \begin{align*}
    f(\star) &:= \refl \\
    \ap{f}{\ell_1} &:= \refl \\
    \ap{f}{\ell_2} &:= \killDiag(\LoopG{1})
  \end{align*}
  For the action on $\mathsf{sq}$, we are reduced to simply providing
  a path $\killDiag(\LoopG{1}) = \killDiag(\LoopG{1})$, which we can by
  $\refl$. This gives us, in particular, that $\trunc{\beta}^2 = 0_h$.

  Let us now provide a homotopy $g : (x : K^2) \to \alpha(x) \smile_k
  \beta(x) = \gamma(x)$. We again proceed by $K^2$-induction.
  \begin{align*}
    g(\star) &:= \refl \\
    \ap{g}{\ell_1} &:= \refl \\
    \ap{g}{\ell_2} &:= \refl
  \end{align*}
The reason that $\refl$ works for the action on $\ell_1$ and $\ell_2$
is because $\ap{0_k \smile_k (-)}{\LoopG{1}}$ and $\ap{(-) \smile_k
  0_k}{\LoopG{1}}$ by definition reduce to $\refl_{0_k}$. The final step amount to showing that
\begin{align*}
  \ap{x \mapsto \ap{y \mapsto \alpha(x) \smile_k
      \beta(y)}{\LoopG{1}}}{\LoopG{1}} = \sigma^2(1)
\end{align*}
which is easy to see by unfolding the definition of $\smile_k$. Thus,
we have $\trunc{\alpha} \smile \trunc{\beta} = \trunc{\gamma}$.

Finally, let us verify that $\trunc{\alpha}^2 = \trunc{\gamma}$. We
construct a homotopy $h : (x : K^2) \to \alpha(x) \smile_k \alpha(x) =
\gamma(x)$ by
  \begin{align*}
    h(\star) &:= \refl \\
    \ap{h}{\ell_1} &:= \killDiag(\LoopG{1}) \\
    \ap{h}{\ell_2} &:= \refl
  \end{align*}
  For the action on $\mathsf{sq}$, we end up having to fill another
  square (or rather: another degenerate cube); fortunately for us,
  this filling problem can easily be reduced to that of filling the
  first square appearing in the proof of
  Proposition~\ref{prop:RP-cup}, which we have already filled. Thus
  $\trunc{\alpha}^2 = \trunc{\gamma}$.
\end{proof}
\begin{corollary} 
  $H^*(K^2,\bZTwo) \cong \bZTwo[x,y]/(x^3,y^2,xy+x^2)$
\end{corollary}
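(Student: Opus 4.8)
The plan is to exhibit an explicit graded ring isomorphism, leaning entirely on Proposition~\ref{prop:H*-K2-rels} for the multiplicative content and on the previously computed groups for the additive bookkeeping. First I would record the $\bZTwo$-cohomology groups obtained by specialising the earlier results: since both the $2$-torsion subgroup $\bZTwo[2]$ and the cokernel $\bZTwo/2$ are all of $\bZTwo$, Proposition~\ref{prop:Hn-K2} (together with the computations of $H^n(\RP,\bZTwo)$) yields $H^0 \cong \bZTwo$, $H^1 \cong \bZTwo \times \bZTwo$, $H^2 \cong \bZTwo$, and $H^n \cong \mathbbm{1}$ for $n \geq 3$. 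I would also note that graded commutativity with $\bZTwo$-coefficients is ordinary commutativity, since every sign $(-1)^{nm}$ is trivial; hence the free object to map out of is the polynomial ring $\bZTwo[x,y]$ with $x,y$ placed in degree $1$.

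Next I would define the graded ring homomorphism $\Phi : \bZTwo[x,y] \to H^*(K^2,\bZTwo)$ determined by $\Phi(x) = \trunc{\alpha}$ and $\Phi(y) = \trunc{\beta}$, where $\alpha,\beta$ are the generators constructed just before Proposition~\ref{prop:H*-K2-rels}. The key step is to check that $\Phi$ annihilates the three generators of the ideal. The relation $y^2$ maps to $\trunc{\beta}^2 = 0_h$, and $x^3$ maps into $H^3(K^2,\bZTwo) \cong \mathbbm{1}$, so both vanish; for $xy + x^2$ we use that $\trunc{\alpha}\smile\trunc{\beta} = \gamma = \trunc{\alpha}^2$, whence $\Phi(xy + x^2) = \gamma +_h \gamma = 0_h$ in $\bZTwo$-coefficients. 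Thus $\Phi$ descends to a ring homomorphism $\overline{\Phi} : \bZTwo[x,y]/(x^3,y^2,xy+x^2) \to H^*(K^2,\bZTwo)$.

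Finally I would prove that $\overline{\Phi}$ is an isomorphism by comparing graded pieces degree by degree. A short monomial reduction shows that the quotient $\bZTwo[x,y]/(x^3,y^2,xy+x^2)$ has $\bZTwo$-basis $\{1\}$, $\{x,y\}$, $\{x^2\}$ in degrees $0,1,2$ and is trivial in degrees $\geq 3$ (using $y^2 = 0$, $xy = x^2$, and $x^3 = 0$ to kill all higher monomials). These dimensions $1,2,1,0,\dots$ match the cohomology groups recorded above. Since $\overline{\Phi}$ sends this basis to $1$, to the generators $\trunc{\alpha},\trunc{\beta}$ of $H^1$, and to the generator $\gamma = \trunc{\alpha}^2$ of $H^2$ respectively, it is surjective in each degree and hence, by equality of finite $\bZTwo$-dimensions, a degreewise isomorphism. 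I do not anticipate a genuine obstacle here: all the homotopical difficulty has already been absorbed into Proposition~\ref{prop:H*-K2-rels}, and what remains is purely the algebraic verification that the stated ideal is exactly the kernel, which the dimension count settles.
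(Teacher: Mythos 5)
Your proposal is correct and follows essentially the same route as the paper: the paper's proof likewise invokes Proposition~\ref{prop:H*-K2-rels} and sends the generators $1$, $\trunc{\alpha}$, $\trunc{\beta}$, $\trunc{\gamma}$ to $1$, $x$, $y$, $x^2$, with the additive bookkeeping supplied by the group computations. You merely spell out the dimension count and the verification that the ideal maps to zero, which the paper leaves implicit.
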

\begin{proof}
  By Proposition \ref{prop:H*-K2-rels}, we obtain the isomorphism by
  mapping the generators $1 : H^0(K^2,\bZTwo)$,
  $\alpha,\beta : H^1(K^2,\bZTwo)$ and $\gamma : H^2(K^2,\bZTwo)$ to
  $1$, $x$, $y$ and $x^2$ respectively.
\end{proof}

\subsection{The complex projective plane}
\label{sec:cp2}

Sometimes, providing direct cohomology computations is rather
difficult. One example of a space whose cohomology is difficult to
compute directly, but is straightforward to compute using the Gysin
sequence is $\mathbb{C}P^2$, i.e.\ the cofibre of the Hopf
map $h : \mathbb{S}^{3} \to \mathbb{S}^{2}$, the details of which will
not be needed here (see \citep[Definition 5]{LICS23} for a
self-contained definition or \citepalias[Section 8.5]{HoTT13} for the
original definition in HoTT). This was first done in HoTT by
\citet{Brunerie16} who applied the Gysin sequence to the fibre
sequence
\begin{align}
  \label{eq:CP2seq}
  \mathbb{S}^1 \to \mathbb{S}^{5}\to \mathbb{C}P^2
\end{align}
It follows from the Mayer-Vietoris sequence that, for any group $G$,
the cohomology of $\mathbb{C}P^2$ is concentrated in degrees $0$, $2$
and $4$:
\begin{align*}
  H^n(\mathbb{C}P^2,G) \cong
  \begin{cases}
    G & \text{if $n \in \{0, 2, 4 \}$} \\
    \mathbbm{1} & \text{otherwise}
    \end{cases}
\end{align*}
The space $\mathbb{C}P^2$ is $1$-connected and so we may apply the
Gysin sequence to \eqref{eq:CP2seq} with $n = 2$. Unfolding the
sequence for $i = 2$ and $i = 4$ tells us that
$e : H^2(\mathbb{C}P^2,\mathbb{Z})$ and
$e^2 : H^4(\mathbb{C}P^2,\mathbb{Z})$ are generators, thus giving us
$H^*(\mathbb{C}P^2,\mathbb{Z}) \cong \mathbb{Z}[x]/(x^3)$. This was
formalised in full detail by~\citet{CohomRings23}.


\subsection{Infinite real projective space}
\label{sec:rpinf}

To further showcase the usefulness of the Gysin sequence, let us
compute $H^*(\RPinf,\bZ/2\bZ)$. This computation is particularly
interesting since it is an example of an application of the Gysin
sequence to a space which is \emph{not} $1$-connected. For it to work,
we hence need to construct the family $c_b$ in
\hyperref[gysin-cond]{($\ast$)}
explicitly. As $\RPinf$ is well-known
to be an Eilenberg-MacLane space we may conveniently define it by
$\RPinf := \KTwoOne$. We remark that this differs from the definition given in~\citep[Definition IV.1]{RPn}, but the argument we provide here should be easy to translate to fit their definition.

Hence, we are to compute $H^*(\KTwoOne,\bZ/2\bZ)$. Using the
equivalence $\psi : \Omega \KTwoOne \simeq_\star \mathbb{S}^0$, we
construct the fibre sequence
\[\mathbb{S}^0 \to \Sigma_{x : \KTwoOne} (0_k = x) \to \KTwoOne\]
This fits the Gysin sequence with $n = 1$ and the fibration $P :
\KTwoOne \to \mathsf{Type}$ being defined by $P(x) := (0_k = x)$. In
order to apply the sequence, we need to construct, for each $x :
\KTwoOne$, a map
\[c_x : \Sigma (0_k = x) \to_\star K(\bZ/2\bZ,1)\] Let us initially
construct $c_x$ is the case when $x := 0_k$. In this case, the domain of $c_x$ is simply $\Sigma (\Omega(K(\bZTwo,1)))$ and hence the counit
of the adjunction $\Sigma \dashv \Omega$ gives us our map:
\[\epsilon : \Sigma (\Omega(K(\bZTwo,1))) \to_\star K(\bZ/2\bZ,1) \]

We would like to define $c_{0_k} := \epsilon$, but it is not entirely
obvious why this definition would define $c_x$ for \emph{all}
$x : \KTwoOne$---the type
$\Sigma (0_k = 0_k) \to_\star K(\bZ/2\bZ,1)$ is not a proposition but
a set, and hence this is not automatic. The trick is to add some
structure to the type of $c_x$ by instead considering the following type:
\begin{align}
  \sum_{e : \Sigma (0_k = x) \to_\star K(\bZ/2\bZ,1)} (e \neq \mathsf{const}) \label{eq:bigtype}
\end{align}
\begin{lemma}\label{lem:contr-fun-type}
  The type in \eqref{eq:bigtype} is contractible. Furthermore,
  $\epsilon$ is non-constant and thus is the centre of contraction
  when $x$ is $0_k$.
\end{lemma}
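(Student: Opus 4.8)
The plan is to reduce immediately to the case $x := 0_k$. Contractibility of the type in~\eqref{eq:bigtype} is a proposition, and by Proposition~\ref{prop:K-connected} the type $\KTwoOne$ is $0$-connected. A family of propositions over a $0$-connected type holds everywhere as soon as it holds at the basepoint: for any $x$ there merely exists a path $0_k = x$, and since the target is a proposition we may use this merely-existing path to transport. Hence it suffices to prove that the type in~\eqref{eq:bigtype} is contractible when $x = 0_k$ and to exhibit $\epsilon$ as its centre; the required statement for general $x$ then follows.

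At $x = 0_k$ the fibre is $\Sigma(0_k = 0_k) = \Sigma(\Omega\KTwoOne)$, so the relevant pointed-mapping type is $M := (\Sigma(\Omega\KTwoOne) \to_\star \KTwoOne)$. I would analyse $M$ via the adjunction $\Sigma \dashv \Omega$ already used for the suspension axiom, which gives $M \simeq (\Omega\KTwoOne \to_\star \Omega\KTwoOne)$; under this equivalence the counit $\epsilon$ transposes to $\mathsf{id}_{\Omega\KTwoOne}$ (the defining triangle identity of the counit), while $\mathsf{const}$ transposes to the map sending everything to $\refl$. By Theorem~\ref{thm:deloop} the map $\sigma_0$ is a pointed equivalence $\Omega\KTwoOne \simeq_\star \bZTwo$, so $M$ is equivalent to the set of pointed endomaps of $\bZTwo$. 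Such a map fixes $0$ and may send $1$ to either $0$ or $1$, so there are exactly two of them — the zero map and the identity — which are precisely $\mathsf{const}$ and $\epsilon$. (Alternatively, one avoids the adjunction and computes $M$ directly from the pushout universal property of $\Sigma$, obtaining $M \simeq \sum_{s : \KTwoOne}(\Omega\KTwoOne \to (0_k = s)) \simeq \Omega\KTwoOne$, tracking that $\epsilon \mapsto \LoopG{1}$ and $\mathsf{const} \mapsto \refl$.) Either way, $M$ is a two-element set whose points $\mathsf{const}$ and $\epsilon$ are distinct, since their images $0 \neq 1$ in $\bZTwo$ are apart; in particular $\epsilon \neq \mathsf{const}$.

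With $M$ identified as a two-element set, the type in~\eqref{eq:bigtype} at $0_k$ is $\sum_{e : M}(e \neq \mathsf{const})$. The summand at $e = \mathsf{const}$ is empty because $\mathsf{const} = \mathsf{const}$ holds, whereas the summand at $e = \epsilon$ is the inhabited proposition $\epsilon \neq \mathsf{const}$; hence the total space is contractible, with $\epsilon$ (paired with the proof of non-constancy) as its centre, which also records that $\epsilon$ is non-constant. The step I expect to be the main obstacle is the careful bookkeeping that pins down $\epsilon$ and $\mathsf{const}$ as the two endomaps of $\bZTwo$ — in particular checking that the counit really transposes to the identity, and that the unreduced $\Sigma(\Omega\KTwoOne)$ used here agrees with the reduced suspension needed for the clean form of the adjunction. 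Everything else, namely the connectedness reduction and the final counting argument on a two-element set, is routine.
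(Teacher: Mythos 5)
Your proposal is correct and follows essentially the same route as the paper: reduce to $x = 0_k$ using that contractibility is a proposition and $\KTwoOne$ is connected, transpose across the $\Sigma \dashv \Omega$ adjunction to identify the pointed mapping type with pointed endomaps of $\Omega\KTwoOne \simeq \bZTwo$ (the paper inserts $\mathbb{S}^0 \to_\star \mathbb{S}^0$ as an intermediate step before landing on $\mathsf{Hom}(\bZTwo,\bZTwo)$, which is only a cosmetic difference), check that $\epsilon$ corresponds to the identity, and conclude since the identity is the unique non-constant such map. The concerns you flag at the end (the counit transposing to the identity, reduced vs.\ unreduced suspension) are non-issues in this setting, as the HIT suspension already satisfies the pointed adjunction used earlier for the suspension axiom.
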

\begin{proof}
  Since we are proving a proposition, it suffices to do so when $x$ is
  $0_k$. We get
  \begin{align*}
  (\Sigma (0_k = 0_k) \to_\star \KTwoOne) &\simeq (\Omega (\KTwoOne) \to_\star \Omega (\KTwoOne))\\
  &\simeq (\mathbb{S}^0 \to_\star \mathbb{S}^0) \\
  &\simeq \mathsf{Hom}(\bZ/2\bZ,\bZ/2\bZ)
  \end{align*}
  By construction, this equivalence sends $\epsilon$ to the identity
  on $\bZ/2\bZ$. The predicate $(-) \neq \mathsf{const}$ is easily
  seen to be preserved by this chain of equivalences, and hence we get
  \[\sum_{e : \Sigma (0_k = 0_k) \to_\star K(\bZ/2\bZ,1)} (e \neq \mathsf{const}) \simeq \sum_{\phi : \mathsf{Hom}(\bZ/2\bZ,\bZ/2\bZ)} (\phi \neq \mathsf{const}) \]
  Since the identity is the unique non-constant map in
  $\mathsf{Hom}(\bZ/2\bZ,\bZ/2\bZ)$, we are done.
\end{proof}
Hence, by Lemma~\ref{lem:contr-fun-type}, we construct our family
$c_x : \Sigma(0_k = x) \to_\star \KTwoOne$ by choosing the unique
element of \eqref{eq:bigtype}.  This gives us our class
$e :H^1(\KTwoOne,\bZ/2\bZ)$. In fact, with this rather explicit
construction, it is easy to show that $e = \trunc{\mathsf{id}}$, but
this additional knowledge will not be needed here. We may now
instantiate the Gysin sequence. We get, for $i \geq 1$, exact
sequences
\[\begin{tikzcd}[ampersand replacement=\&]
	{H^{i-1}(E,\bZ/2\bZ)} \& {H^{i-1}(\RPinf,\bZ/2\bZ)} \& {H^i(\RPinf,\bZ/2\bZ)} \& {H^i(E,\bZ/2\bZ)}
	\arrow["{(-)\smile e}", from=1-2, to=1-3]
        \arrow[from=1-1,to=1-2]
	\arrow[from=1-3, to=1-4]
\end{tikzcd}\]
where, recall, $\RPinf := \KTwoOne$ and $E := \sum_{x: \KTwoOne}(0_k = x)$. Since $E$ is contractible, we get that the $(-)\smile e : {H^{i-1}(\RPinf,\bZ/2\bZ)} \to {H^i(\RPinf,\bZ/2\bZ)} $ is an isomorphism. Since $\RPinf$ is connected, we have $H^0(\RPinf,\bZ/2\bZ) \cong \bZ/2\bZ$, and hence we get a chain of isomorphisms
\[\bZ/2\bZ \cong H^0(\RPinf,\bZ/2\bZ) \cong H^1(\RPinf,\bZ/2\bZ) \cong \dots \cong H^n(\RPinf,\bZ/2\bZ)\]
whose composition is simply the map $(-)\smile e^n : \bZ/2\bZ \to H^n(\RPinf,\bZ/2\bZ)$. Hence, we have computed $H^*(\RPinf,\bZ/2\bZ)$.

\begin{proposition}
  \label{prop:RPinf-cohom}
  $H^*(\RPinf,\bZ/2\bZ) \cong \bZ/2\bZ[x]$.
\end{proposition}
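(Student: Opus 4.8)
The plan is to leverage the computation of the cohomology \emph{groups} carried out immediately above and simply read off the ring structure from it. Recall that the Gysin sequence applied to the fibre sequence $\mathbb{S}^0 \to E \to \RPinf$ with $E := \Sigma_{x : \KTwoOne}(0_k = x)$ contractible yields that $(-) \smile e : H^{i-1}(\RPinf,\bZ/2\bZ) \to H^i(\RPinf,\bZ/2\bZ)$ is an isomorphism for each $i \geq 1$, and hence that the composite
\[
(-) \smile e^n : \bZ/2\bZ \cong H^0(\RPinf,\bZ/2\bZ) \to H^n(\RPinf,\bZ/2\bZ)
\]
is an isomorphism. In particular, $e^n$ is a generator of $H^n(\RPinf,\bZ/2\bZ) \cong \bZ/2\bZ$ for every $n$, which is precisely the data the ring statement repackages.

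First I would define a graded ring homomorphism $\Phi : \bZ/2\bZ[x] \to H^*(\RPinf,\bZ/2\bZ)$ by placing $x$ in degree $1$ and sending it to $e \in H^1(\RPinf,\bZ/2\bZ)$, with the polynomial unit sent to the multiplicative unit of $H^*$. By the universal property of the polynomial ring this extends uniquely to a ring homomorphism, and it respects the grading since $\smile$ adds degrees and $H^*(\RPinf,\bZ/2\bZ)$ is a graded-commutative---hence here genuinely commutative, as $-1 = 1$ in $\bZ/2\bZ$---ring by the results of Section~\ref{sec:cohomology}. Concretely, $\Phi$ sends the monomial $x^n$ to the $n$-fold cup power $e^n$.

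It then remains to check that $\Phi$ is an isomorphism, which I would verify degreewise. In degree $n$ the source is $\bZ/2\bZ \cdot x^n \cong \bZ/2\bZ$, and $\Phi$ restricts to the map $\bZ/2\bZ \to H^n(\RPinf,\bZ/2\bZ)$ sending the generator $x^n$ to $e^n$. But this is exactly the composite $(-) \smile e^n$ shown above to be an isomorphism, so $\Phi$ is an isomorphism in each degree and therefore a ring isomorphism.

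The proof carries essentially no mathematical difficulty beyond the Gysin-sequence computation already completed; the only point requiring a little care is that we must exhibit a genuine \emph{ring} map $\Phi$ rather than merely a degreewise family of group isomorphisms, so that the separate generators $e^n$ are realised coherently as powers of a single element $e$. Since $H^*(\RPinf,\bZ/2\bZ)$ is an honest ring and $e^n$ is by definition the iterated cup product of $e$, this coherence is immediate, and the assembly of the isomorphism is short.
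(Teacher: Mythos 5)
Your proposal is correct and takes essentially the same route as the paper: the paper's proof of this proposition is precisely the preceding Gysin-sequence computation, which shows that $(-)\smile e^n : \bZ/2\bZ \cong H^0(\RPinf,\bZ/2\bZ) \to H^n(\RPinf,\bZ/2\bZ)$ is an isomorphism for every $n$, and the proposition merely packages this as the ring isomorphism $\bZ/2\bZ[x] \cong H^*(\RPinf,\bZ/2\bZ)$ sending $x$ to $e$. Your extra care in exhibiting a genuine ring map $\Phi$ rather than a degreewise family of group isomorphisms is a reasonable and faithful spelling-out of what the paper leaves implicit.
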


\section{Computer computations and synthetic cohomology theory}
\label{sec:computations}

In this section we will discuss various computations which can be
performed using the constructions in the paper when formalised in a
system like \CubicalAgda, where univalence and HITs have computational
content. This section is the only in the paper which cannot be
directly formalised in Book HoTT where univalence and HITs are added
axiomatically and lack computational content.

\subsection{Proofs by computation in Cubical Agda}
\label{subsec:proofsbycomp}

One big advantage of formalising synthetic mathematics in a system
like \CubicalAgda compared to Book HoTT is that many simple routine
results hold definitionally. This sometimes vastly simplifies proofs. Even
more ambitiously, one can hope that entire results can be proved
directly by automatic computation. A classic conjecture of this form
is the computability of the `Brunerie number' \citep{Brunerie16},
i.e.\ the number $n : \bZ$ such that
$\pi_4(\mathbb{S}^3) \cong \bZ/n\bZ$. The number $n$ can, in theory,
be computed by running the function
\[
\mathbbm{1}\xrightarrow{(1,1)}\bZ \times \bZ \xrightarrow{\cong}
\pi_2(\mathbb{S}^2) \times \pi_2(\mathbb{S}^2) \xrightarrow{[-,-]}
\pi_3(\mathbb{S}^2) \xrightarrow{\cong} \bZ
\]
where $[-,-]$ denotes the Whitehead product. Brunerie dedicates the
second half of the thesis to manually proving that the absolute value of
the number returned by this function is $2$. To this end, he has to
introduce cohomology, the Hopf invariant and several other complex
constructions. If this number could instead be computed on a computer,
these manual computations would not be required.  Unfortunately, we
still have not been able to carry out this computation in
\CubicalAgda, or in any other proof assistant where univalence and
HITs have computational content.

However, while the original Brunerie number is still out of reach, a
much simplified version of the number was successfully computed by the
authors in \citep{LICS23}. We refer the interested reader to that
paper for details. Computations of related numbers have also been
reported by~\citet{TomJack}. However, an even simpler example of the
same type of problem can be found when trying to show that the wedge sum of two
circles and a sphere is not homotopy equivalent to the torus. This is
typically done by distinguishing their cohomology rings and the
problem boils down to showing that the cup product vanishes in degrees
$(1,1)$ for $\mathbb{S}^2 \vee \mathbb{S}^1 \vee \mathbb{S}^1$ but not
for $\mathbb{T}^2$. This was done computationally by
\citet[Section~6]{BLM} by running the following function in
\CubicalAgda
\[
\mathbbm{1}\xrightarrow{(1,0),(0,1)}(\bZ \times \bZ) \times (\bZ \times \bZ) \xrightarrow{\cong} H^1(X,\bZ) \times H^1(X,\bZ) \xrightarrow{\smile} H^2(X,\bZ) \xrightarrow{\cong} \bZ
\]
for $X$ the two spaces in question. Simply by noting that the output
was $0$ for one space and $1$ for the other, one can deduce that
$\mathbb{S}^2 \vee \mathbb{S}^1 \vee \mathbb{S}^1 \not\simeq \mathbb{T}^2$.

Another class of results which we have successfully proved this way
is that of statements concerning the fact that some cohomology group is generated by some
particular element $g : H^n(X,G)$. By first showing that $H^n(X,G)$ is
isomorphic to a simple well-known group like $\bZ$ by some isomorphism $\phi$, it
suffices to check that $\phi(g) = \pm 1$ for $g$ to generate
$H^n(X,G)$. However, the applicability of this method is also
sometimes limited by computations being infeasible. Already in
\citep{Brunerie16}, another such number appears:
namely, the output of the following function
\begin{align}\label{mini-brunerie}
\mathbbm{1}\xrightarrow{(1,1)}\bZ \times \bZ \xrightarrow{\cong}
H^2(\CP,\bZ) \times H^2(\CP,\bZ) \xrightarrow{\smile} H^4(\CP,\bZ)
\xrightarrow{\cong} \bZ
\end{align}
Successfully running it should give an output of absolute value $1$
and would provide a proof that the Hopf invariant
$\pi_3(\mathbb{S}^2) \to \bZ$ is an isomorphism. This computation was
attempted in \CubicalAgda by \citet{BLM} without luck. The definition
of this number is arguably less complex than that of the Brunerie
number, so the fact that \CubicalAgda is stuck on it is an indication
that a careful benchmarking of similar examples should be done.
Further similar numbers approximating the original Brunerie number can
be found in \citep{LICS23} and we have, in fact, also in this paper
implicitly defined other similar numbers. Consider, for instance the
function
\[
\mathbbm{1} \xrightarrow{\trunc{\alpha},\trunc{\alpha}} H^1(\RP,\bZTwo) \times H^1(\RP,\bZTwo) \xrightarrow{\smile} H^2(\RP,\bZTwo) \xrightarrow{\cong} \bZTwo
\]
where $\alpha : \RP \to K(\bZTwo,1)$ is defined as in Proposition
\ref{prop:RP-cup}. Computing this, we should get $1$ as output. This
would completely remove the need for Lemma \ref{lem:killDiag-comm} in
the proof of Proposition
\ref{prop:RP-cup}. Similar computations could also be used to verify
Proposition \ref{prop:H*-K2-rels}. We judge these computations to be
more feasible than that of~\eqref{mini-brunerie} since the degrees of
the cohomology groups involved are lower, although they have thus far not been successfully run in \CubicalAgda. 
\subsection{Benchmarks}

In order to find potential bottlenecks and get a better idea of which computations succeed and which fail, we have run several benchmarks in
\CubicalAgda. These all consider the underlying maps
$\phi : H^n(X,G) \to H$ maps of isomorphisms defined for concrete
groups $G$ and $H$, and spaces/types $X$. Using these maps, we have
tried computing $\phi(h)$ for concrete elements $h : H^n(X,G)$. As $H$
is often a simpler group like $\bZ$, $\bZ \times \bZ$ or $\bZ/2\bZ$,
these values are all easily understood. We typically pick $h$ as a
generator of $H^n(X,G)$, or as some more complicated element constructed from generators. Whenever
$H^n(X,G)$ is a cyclic group, we write $g^{(n)}$ for its generator and
when there are multiple generators, we simply write $g^{(n)}_i$ for the
$i$th generator. These generators are all easily constructed and we
refer the interested reader to the \Agda code for the precise
definitions.

The various benchmarks and choices are all given in
Table~\ref{benchmarks}. The first four columns gives $n$, $X$, $G$,
and $H$, s.t.\ $H^n(X,G) \cong H$. Both the $G$ and $H$ columns often
have two rows to indicate the two choices of $G$ which we have
considered. We have focused on $\bZ$ and $\bZTwo$ coefficients as
these are the most interesting from a computational point of view. The
fifth column contains the values of $h$ which we have considered,
separated by spaces. So, for e.g. $H^1(\mathbb{S}^1,G)$, we considered
$3$ values of $h$ for each of the two coefficient groups under consideration. The results can
be found in the final column. Its entries are aligned with the corresponding values of
$h$ which we have tried.

As we expect similar goals like these to appear in future
formalisations, the tests were run on a regular laptop with $1.60$GHz
Intel processor and $16$GB of RAM. The successful computation were
marked with \cmarkgreen and the failed computations, marked with \xmarkred,
were manually terminated after a few minutes. Details and exact timings can
be found at
\url{https://github.com/aljungstrom/SyntheticCohomologyTests/blob/master/CohomologyComputations.agda}.

As expected, more tests work for lower dimensions, but for
$\mathbb{R}P^2$, and the more complex $K^2$ and $\mathbb{R}P^\infty$,
all tests fail even in dimension $1$. This is not as surprising as it
may seem. For $\mathbb{R}P^2$ and $K^2$, $\phi$ attempts to compute
the winding number of a loop in $\Omega K(\bZ,1)$ which is constructed
in terms of the complex proof that $\sigma^{-1} : \Sigma K(\bZ,2) \to
K(\bZ,1)$ is a morphism of H-spaces. Another observation is that the
choice of coefficients does not seem to make much of a
difference. Somewhat more surprising, however, is the fact that
several computations which terminated successfully in~\citep{BLM}
fail to terminate here. It is not uncommon that generality comes at
the cost of efficiency, but it is puzzling that a minor change
of the definition of $K(\bZ,n)$ (from the definition using
$n$-truncated $n$-spheres to the one used here) would make such a difference.

\begin{table}[h!]
  \centering
  \begin{tabular}{ |c|c|c|c| c ||c| }
   \hline
     $n$ & $X$ & $G$ & $H$ & $h$ & Results \\
   \hline \hline

    $1$
  & $\mathbb{S}^1$
  & \begin{tabular}{@{}cc@{}}
      $\bZ$ \\
      $\bZTwo$
    \end{tabular}
  & \begin{tabular}{@{}cc@{}}
      $\bZ$ \\
      $\bZTwo$
    \end{tabular}
  & \begin{tabular}{@{}cc@{}}
      $g^{(1)}$ \hspace{0.26cm} $g^{(1)} +_h g^{(1)}$ \hspace{0.26cm} $-_h\,g^{(1)}$ \\
      $g^{(1)}$ \hspace{0.26cm} $g^{(1)} +_h g^{(1)}$ \hspace{0.26cm} $-_h\,g^{(1)}$
    \end{tabular}
  & \begin{tabular}{@{}cc@{}}
      \cmarkgreen \hspace{0.26cm} \cmarkgreen \hspace{0.26cm} \cmarkgreen \\
      \cmarkgreen \hspace{0.26cm} \cmarkgreen \hspace{0.26cm} \cmarkgreen
    \end{tabular}
    \\ \hline

    $2$
  & $\mathbb{S}^2$
  & \begin{tabular}{@{}cc@{}}
      $\bZ$ \\
      $\bZTwo$
    \end{tabular}
  & \begin{tabular}{@{}cc@{}}
      $\bZ$ \\
      $\bZTwo$
    \end{tabular}
  & \begin{tabular}{@{}cc@{}}
      $g^{(2)}$ \hspace{0.26cm} $g^{(2)} +_h g^{(2)}$ \hspace{0.26cm} $-_h\,g^{(2)}$ \\
      $g^{(2)}$ \hspace{0.26cm} $g^{(2)} +_h g^{(2)}$ \hspace{0.26cm} $-_h\,g^{(2)}$
    \end{tabular}
  & \begin{tabular}{@{}cc@{}}
      \cmarkgreen \hspace{0.26cm} \xmarkred \hspace{0.26cm} \xmarkred \\
      \cmarkgreen \hspace{0.26cm} \xmarkred \hspace{0.26cm} \xmarkred
    \end{tabular}
  \\ \hline

    $3$
  & $\mathbb{S}^3$
  & \begin{tabular}{@{}cc@{}}
      $\bZ$ \\
      $\bZTwo$
    \end{tabular}
  & \begin{tabular}{@{}cc@{}}
      $\bZ$ \\
      $\bZTwo$
    \end{tabular}
  & \begin{tabular}{@{}cc@{}}
      $g^{(3)}$ \hspace{0.26cm} $g^{(3)} +_h g^{(3)}$ \hspace{0.26cm} $-_h\,g^{(3)}$ \\
      $g^{(3)}$ \hspace{0.26cm} $g^{(3)} +_h g^{(3)}$ \hspace{0.26cm} $-_h\,g^{(3)}$
    \end{tabular}
  & \begin{tabular}{@{}cc@{}}
      \xmarkred \hspace{0.26cm} \xmarkred \hspace{0.26cm} \xmarkred \\
      \xmarkred \hspace{0.26cm} \xmarkred \hspace{0.26cm} \xmarkred
    \end{tabular}
  \\ \hline

    $1$
  & $\mathbb{T}^2$
  & \begin{tabular}{@{}cc@{}}
      $\bZ$ \\
      $\bZTwo$
    \end{tabular}
  & \begin{tabular}{@{}cc@{}}
      $\bZ^2$ \\
      $\bZTwo^2$
    \end{tabular}
  & \begin{tabular}{@{}cc@{}}
      $g_1^{(1)} +_h g_2^{(1)}$ \hspace{0.26cm} $g^{(1)}_1 -_h g^{(1)}_2$ \\
      $g_1^{(1)} +_h g_2^{(1)}$ \hspace{0.26cm} $g^{(1)}_1 -_h g^{(1)}_2$
    \end{tabular}
  & \begin{tabular}{@{}cc@{}}
      \cmarkgreen \hspace{0.26cm} \cmarkgreen \\
      \cmarkgreen \hspace{0.26cm} \cmarkgreen
    \end{tabular}
  \\ \hline

    $2$
  & $\mathbb{T}^2$
  & \begin{tabular}{@{}cc@{}}
      $\bZ$ \\
      $\bZTwo$
    \end{tabular}
  & \begin{tabular}{@{}cc@{}}
      $\bZ$ \\
      $\bZTwo$
    \end{tabular}
  & \begin{tabular}{@{}cc@{}}
      $g^{(2)}$ \hspace{0.26cm} $g_1^{(1)} \smile g_2^{(1)}$ \hspace{0.26cm} $\left(g^{(1)}_1 +_h g^{(1)}_1\right) \smile g^{(1)}_2$ \\
      $g^{(2)}$ \hspace{0.26cm} $g_1^{(1)} \smile g_2^{(1)}$ \hspace{0.26cm} $\left(g^{(1)}_1 +_h g^{(1)}_1\right) \smile g^{(1)}_2$
    \end{tabular}
  & \begin{tabular}{@{}cc@{}}
      \xmarkred \hspace{0.26cm} \xmarkred \hspace{0.26cm} \xmarkred \\
      \xmarkred \hspace{0.26cm} \xmarkred \hspace{0.26cm} \xmarkred
    \end{tabular}
  \\ \hline

    $1$
  & $\underset{{i =2,1,1}}{\bigvee} \mathbb{S}^i$
  & \begin{tabular}{@{}cc@{}}
      $\bZ$ \\
      $\bZTwo$
    \end{tabular}
  & \begin{tabular}{@{}cc@{}}
      $\bZ^2$ \\
      $\bZTwo^2$
    \end{tabular}
  & \begin{tabular}{@{}cc@{}}
      $g_1^{(1)} +_h g_2^{(1)}$ \hspace{0.26cm} $g^{(1)}_1 -_h g^{(1)}_2$ \\
      $g_1^{(1)} +_h g_2^{(1)}$ \hspace{0.26cm} $g^{(1)}_1 -_h g^{(1)}_2$
    \end{tabular}
  & \begin{tabular}{@{}cc@{}}
      \cmarkgreen \hspace{0.26cm} \cmarkgreen \\
      \cmarkgreen \hspace{0.26cm} \cmarkgreen
    \end{tabular}
  \\ \hline

    $2$
  & $\underset{{i =2,1,1}}{\bigvee} \mathbb{S}^i$
  & \begin{tabular}{@{}cc@{}}
      $\bZ$ \\
      $\bZTwo$
    \end{tabular}
  & \begin{tabular}{@{}cc@{}}
      $\bZ$ \\
      $\bZTwo$
    \end{tabular}
  & \begin{tabular}{@{}cc@{}}
      $g^{(2)}$ \hspace{0.26cm} $g_1^{(1)} \smile g_2^{(1)}$ \hspace{0.26cm} $\left(g^{(1)}_1 +_h g^{(1)}_1\right) \smile g^{(1)}_2$ \\
      $g^{(2)}$ \hspace{0.26cm} $g_1^{(1)} \smile g_2^{(1)}$ \hspace{0.26cm} $\left(g^{(1)}_1 +_h g^{(1)}_1\right) \smile g^{(1)}_2$
    \end{tabular}
  & \begin{tabular}{@{}cc@{}}
      \cmarkgreen \hspace{0.26cm} \cmarkgreen \hspace{0.26cm} \cmarkgreen \\
     \cmarkgreen \hspace{0.26cm} \cmarkgreen \hspace{0.26cm} \cmarkgreen
    \end{tabular}
    \\ \hline
    $1$
  & $\mathbb{R}P^2$
  & $\bZTwo$
  & $\bZTwo$
  & \begin{tabular}{@{}cc@{}}
      $g^{(1)}$ \hspace{0.26cm} $g^{(1)} +_h g^{(1)}$ \hspace{0.26cm} $-_h\,g^{(1)}$
    \end{tabular}
  & \begin{tabular}{@{}cc@{}}
      \xmarkred \hspace{0.26cm} \xmarkred \hspace{0.26cm} \xmarkred
    \end{tabular}
  \\ \hline

    $2$
  & $\mathbb{R}P^2$
  & \begin{tabular}{@{}cc@{}}
      $\bZ$ \\
      $\bZTwo$
    \end{tabular}
  & \begin{tabular}{@{}cc@{}}
      $\bZTwo$ \\
      $\bZTwo$
    \end{tabular}
  & \begin{tabular}{@{}cc@{}}
      $g^{(2)}$ \hspace{0.26cm} $g^{(2)} +_h g^{(2)}$ \hspace{0.26cm} $-_h\,g^{(2)}$ \\
      $g^{(2)}$ \hspace{0.26cm} $g_1^{(1)} \smile g_1^{(1)}$
    \end{tabular}
  & \begin{tabular}{@{}cc@{}}
      \xmarkred \hspace{0.26cm} \xmarkred \hspace{0.26cm} \xmarkred  \\
      \xmarkred \hspace{0.26cm} \xmarkred
    \end{tabular}
  \\ \hline

    $1$
  & $K^2$
  & \begin{tabular}{@{}cc@{}}
      $\bZ$ \\
      $\bZTwo$
    \end{tabular}
  & \begin{tabular}{@{}cc@{}}
      $\bZ$ \\
      $\bZTwo^2$
    \end{tabular}
  & \begin{tabular}{@{}cc@{}}
      $g^{(1)}$ \hspace{0.26cm} $g^{(1)} +_h g^{(1)}$ \hspace{0.26cm} $-_h\,g^{(1)}$ \\
      $g_1^{(1)} +_h g_2^{(1)}$ \hspace{0.26cm} $g^{(1)}_1 -_h g^{(1)}_2$
    \end{tabular}
  & \begin{tabular}{@{}cc@{}}
      \xmarkred \hspace{0.26cm} \xmarkred \hspace{0.26cm} \xmarkred \\
      \xmarkred \hspace{0.26cm} \xmarkred
    \end{tabular}
  \\ \hline

    $2$
  & $K^2$
  & \begin{tabular}{@{}cc@{}}
      $\bZ$ \\
      $\bZTwo$
    \end{tabular}
  & \begin{tabular}{@{}cc@{}}
      $\bZTwo$ \\
      $\bZTwo$
    \end{tabular}
  & \begin{tabular}{@{}cc@{}}
      $g^{(2)}$ \hspace{0.26cm} $g_1^{(1)} \smile g_1^{(1)}$ \\
      $g^{(2)}$ \hspace{0.26cm} $g_1^{(1)} \smile g_2^{(1)}$ \hspace{0.26cm} $\left(g^{(1)}_1 +_h g^{(1)}_1\right) \smile g^{(1)}_2$
    \end{tabular}
  & \begin{tabular}{@{}cc@{}}
      \xmarkred \hspace{0.26cm} \xmarkred \\
      \xmarkred \hspace{0.26cm} \xmarkred \hspace{0.26cm} \xmarkred
    \end{tabular}
  \\ \hline

    $1$
  & $\RPinf$
  & $\bZTwo$
  & $\bZTwo$
  & \begin{tabular}{@{}cc@{}}
      $g^{(1)}$ \hspace{0.26cm} $g^{(1)} +_h g^{(1)}$ \hspace{0.26cm} $-_h\,g^{(1)}$
    \end{tabular}
  & \begin{tabular}{@{}cc@{}}
      \xmarkred \hspace{0.26cm} \xmarkred \hspace{0.26cm} \xmarkred
    \end{tabular}
  \\ \hline

\hline
\end{tabular}
  \caption{Benchmarks}
  \label{benchmarks}
\end{table}

\section{More related and future work}

In this paper, we have described a synthetic approach to cohomology
theory in HoTT, with the possibility of doing direct computations in
\CubicalAgda. This works builds on lots of prior work which we have
discussed throughout the paper, but there is also additional
related work on which this work is not directly based. In this final
section we will discuss this work, as well as possible future
directions.

First of all, there is some related prior work already formalised in
\CubicalAgda. \citet{Zesen} formalised $K(G,1)$ as a HIT, following
\citet{LicataFinster14}, and proved that it satisfies
$\pi_1(K(G,1)) \cong G$.  \citet{Alfieri19} and \citet{Harington20}
formalised $K(G,1)$ as the classifying space $BG$ using $G$-torsors.
Using this, $H^1(\mathbb{S}^1,\bZ) \cong \bZ$ was proved---however,
computing using the maps in this definition proved to be infeasible.

\defcitealias{spectralsequences}{\systemname{Spectral Sequence Project}}

Certified computations of homology groups using proof assistants have
been considered prior to HoTT/UF. For instance, the \Coq
system~\citep{Coq} has been used to compute homology \citep{ctic12}
and persistent homology \citep{pershomology} with coefficients in a
field. This was later extended to homology with $\bZ$-coefficients by
\citet{edr}. The approach in these papers was entirely algebraic and
spaces were represented as simplicial complexes.  Other formalisations
of various classical (co)homology theories can be found in \Lean's
\mathlib \citep{mathlib2020}, but this work is also not synthetic.
However, an earlier HoTT formalisation of synthetic (co)homology
theory in \Lean~2 can be found in \citep{spectralsequences} which was
developed as part of \citep{FlorisPhd}.

An interesting approach to increase the efficiency of synthetic cohomology
computations would be to develop classical computational approaches to
cohomology synthetically. This was done for cellular cohomology by
\citet{BuchholtzFavonia18} who showed that a cohomology theory akin to the one
considered in this paper can also be computed using synthetic cellular
cohomology for spaces which are CW complexes. In ongoing work with Loïc Pujet, we are formalising
cellular (co)homology in \CubicalAgda with the aim of reducing
(co)homology computations to linear algebra, similar to the certified
computations of homology groups in \Coq mentioned above (which relied
on certified Gaussian elimination and Smith normal form computations, as is customary in traditional computational topology).
One envisioned application of this is in the formalisation of the
recent synthetic proof of Serre's classification theorem for homotopy
groups of spheres by Barton and Campion \citep{SerreFiniteness}.

A synthetic approach to homology in HoTT/UF was developed informally
by \citet{graham18} using stable homotopy groups. This was later
extended with a proof of the Hurewicz theorem by
\citet{christensen2020hurewicz}. There has also been some recent work
on synthetic definitions of other classical tools in homological
algebra, in particular Ext groups \citep{christensen2023ext}. Various
results related to synthetic homology theory, including a
formalisation of the Eilenberg-Steenrod axioms for homology were
formalised as part of \citep{spectralsequences}. It would be
interesting to also formalise this in \CubicalAgda and try to compute
also homology groups synthetically.

The definition of $\smile$ in the setting of $\bZ$-cohomlogy in HoTT/UF is due to
\citet[Chapter~5.1]{Brunerie16}. This definition, however, relies on the
smash product which has proved very complex to reason about
formally~\citep{brunerie18}.
Despite this, \citet{Baumann18}
generalised this to $H^n(X,G)$ and managed to formalise graded
commutativity in \HoTTAgda. Baumann's formal proof of this property
is $\sim 5000$ LOC while our formalisation is just $\sim 1200$ LOC. This
indicates that it would be infeasible to formalise other algebraic
properties of $H^*(X,R)$ with this definition. Associativity seems
particularly infeasible, but with our definition the formal proof is
only $\sim 600$ LOC.
In fact, recent work by \citet{AxelSmash} on the symmetric
monoidal structure of the smash product fills in the gaps in \citep{Brunerie16}, thus making
Brunerie's arguments, in theory, formalisable. Nevertheless, the
definition we have presented here still appears to us to be more
convenient to work with.



\end{document}